\title[Lifting globally $F$-split surfaces]
{Lifting globally $F$-split surfaces to characteristic zero} 
\author[F. Bernasconi, I. Brivio, T. Kawakami, J. Witaszek]{Fabio Bernasconi, Iacopo Brivio, Tatsuro Kawakami and Jakub Witaszek} 
\subjclass[2020]{13A35, 14E30, 14G17, 14G45}
\keywords{$F$-splitting, lifting to characteristic zero, singularities, surfaces.}
\address{Institut de Math\'ematiques, Universit\'e de Neuchâtel, 11 Rue Emile Argand, Neuchâtel, Switzerland
} 
\email{fabio.bernasconi@unine.ch}
\address{Center of Mathematical Sciences and Applications, Harvard University, Cambridge, MA, 02138}
\email{ibrivio@cmsa.fas.harvard.edu}
\address{Department of Mathematics, Graduate School of Science, Kyoto University, Kyoto 606-8502, Japan} 
\email{tatsurokawakami0@gmail.com}
\address{Princeton University, Department of Mathematics,
Fine Hall, Washington Road, Princeton NJ 08544-1000, USA} 
\email{jwitaszek@princeton.edu}
\newcommand{\Tr}[0]{{\operatorname{Tr}}}
\newcommand{\Proj}[0]{{\operatorname{Proj}}}
\newcommand{\Spec}[0]{{\operatorname{Spec}}}
\newcommand{\Hom}[0]{{\operatorname{Hom}}}
\newcommand{\Supp}[0]{{\operatorname{Supp}}}
\newcommand{\Pic}[0]{{\operatorname{Pic}}}
\newcommand{\Ex}[0]{{\operatorname{Ex}}}
\newcommand{\Aut}[0]{{\operatorname{Aut}}}
\newcommand{\Sing}[0]{{\operatorname{Sing}}}
\newcommand{\can}[0]{{\operatorname{can}}}
\newcommand{\Spf}[0]{{\operatorname{Spf}}}
\newcommand{\factor}[2]{\left. \raise 2pt\hbox{\ensuremath{#1}} \right/
	\hskip -2pt\raise -2pt\hbox{\ensuremath{#2}}}
\newcommand{\MO}{\mathcal{O}}
\newcommand{\Q}{\mathbb{Q}}
\newcommand{\Z}{\mathbb{Z}}
\newcommand{\cX}{\mathcal{X}}
\newcommand{\cY}{\mathcal{Y}}
\newcommand{\cZ}{\mathcal{Z}}
\newcommand{\cT}{\mathcal{T}}
\newcommand{\cE}{\mathcal{E}}
\newcommand{\cF}{\mathcal{F}}
\newcommand{\cL}{\mathcal{L}}
\newcommand{\cO}{\mathcal{O}}
\newcommand{\cD}{\mathcal{D}}
\newcommand{\cI}{\mathcal{I}}
\newcommand{\bZ}{\mathbb{Z}}
\begin{document}
	
	\begin{abstract}
		We prove that every globally $F$-split surface admits an equisingular lifting over the ring of Witt vectors.
	\end{abstract}

	\maketitle
	\tableofcontents
		
\section{Introduction}
	
	Let $X$ be a projective variety over an algebraically closed field $k$ of characteristic $p>0$.
	Both for geometric and arithmetic purposes it is natural to ask under which conditions $X$ admits a lifting $\mathcal{X}$ to characteristic zero. The existence of such a lifting would then allow for exploiting results from complex analytic geometry (such as Hodge theory) to study the original variety in characteristic $p$. 
	
	Serre constructed examples showing there is no hope for the existence of a lifting for a general variety of positive characteristic (\cite{Ser61}). 
	Nevertheless, a general expectation is that a lifting of $X$ to characteristic zero (or at least modulo $p^2$) can often be constructed if additional hypotheses on its geometry and on the arithmetic of the Frobenius morphism $F \colon X \to X$ are satisfied. One of they key results in this direction is the following  well-known theorem.
	
	\begin{theorem}[{cf.\ \cite[Proposition 3.2]{Zda18}}] \label{thm:fsplit-lift-modp2}
	Let $X$ be a globally $F$-split scheme over a perfect field $k$ of characteristic $p>0$. Then $X$ lifts to a flat scheme $\widetilde X$ over $W_2(k)$.
	\end{theorem}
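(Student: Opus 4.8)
The plan is to present the obstruction to the lifting as a class in an $\Ext^2$-group, to show that global $F$-splitting makes a natural Frobenius-pullback map on that group injective, and to observe that this map annihilates the obstruction class because Frobenius kills the cotangent complex. We may assume $X$ is of finite type over $k$, so that the absolute and relative Frobenius morphisms of $X$ are finite and the cotangent complex $\mathbf{L}_{X/k}$ is well behaved. By the deformation theory of the cotangent complex (Illusie), the flat $k$-scheme $X$ lifts to a flat $W_2(k)$-scheme if and only if an obstruction class $\omega(X)\in\Ext^2_{\mathcal{O}_X}(\mathbf{L}_{X/k},\mathcal{O}_X)$ vanishes, the coefficients being $(pW_2(k)/p^2W_2(k))\otimes_k\mathcal{O}_X\cong\mathcal{O}_X$. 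Replacing $X$ by its Frobenius twist $X'=X\times_{k,\sigma}k$ is harmless, since $\sigma$ lifts to the Witt-vector Frobenius of $W_2(k)$ and hence $X'$ lifts over $W_2(k)$ precisely when $X$ does; so I would work with the relative Frobenius $F:=F_{X/k}\colon X\to X'$, a finite $k$-morphism, and prove $\omega(X')=0$.

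The argument rests on two inputs. First, because $X$ is globally $F$-split and $k$ is perfect, the natural map $\mathcal{O}_{X'}\to F_*\mathcal{O}_X$ admits an $\mathcal{O}_{X'}$-linear retraction, so it is a split monomorphism; therefore $\Ext^2_{\mathcal{O}_{X'}}(\mathbf{L}_{X'/k},\mathcal{O}_{X'})\to \Ext^2_{\mathcal{O}_{X'}}(\mathbf{L}_{X'/k},F_*\mathcal{O}_X)$ is injective, and by the $(\mathbf{L}F^*,F_*)$-adjunction — using that $F$ is affine, and the derived pullback $\mathbf{L}F^*$ because $F$ need not be flat when $X$ is singular — the target is identified with $\Ext^2_{\mathcal{O}_X}(\mathbf{L}F^*\mathbf{L}_{X'/k},\mathcal{O}_X)$, under which the composite carries $\omega(X')$ to its pullback $F^*\omega(X')$. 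Second, by functoriality of the obstruction class, $F^*\omega(X')$ is the image of $\omega(X)$ under the map induced by the canonical morphism $\varphi_F\colon\mathbf{L}F^*\mathbf{L}_{X'/k}\to\mathbf{L}_{X/k}$; and $\varphi_F=0$ in the derived category, because the relative Frobenius annihilates differentials — on $\mathcal{H}^0$ this is just $d(a^p)=0$, and the vanishing persists for the whole cotangent complex.

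Combining the two inputs, $\omega(X')$ lies in the kernel of the injective map displayed above, hence $\omega(X')=0$, so $X'$ — and therefore $X$ — lifts to a flat $W_2(k)$-scheme. I expect the formal skeleton (the Frobenius twist, the adjunctions, the long exact sequences for the cotangent complex) to be routine; the points demanding care are the two facts invoked as black boxes, namely the functoriality of the deformation-theoretic obstruction class along an arbitrary morphism and, above all, the vanishing of $\varphi_F$ on the entire cotangent complex rather than merely on its zeroth cohomology sheaf — this last point being the real reason why $F$-splitting suffices.
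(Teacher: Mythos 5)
The paper does not in fact prove this theorem: it is attributed, with a ``cf.'', to Zdanowicz's Proposition~3.2, and no proof body follows. What you have written is, up to packaging, the Zdanowicz argument: place the obstruction to a $W_2(k)$-lift in $\Ext^2(\mathbf{L}_{X/k},\mathcal{O}_X)$; use the splitting of $\mathcal{O}_{X'}\to F_*\mathcal{O}_X$ to inject this group (after the harmless Frobenius twist) into $\Ext^2_{X'}(\mathbf{L}_{X'/k},F_*\mathcal{O}_X)$; identify the latter with $\Ext^2_X(\mathbf{L}F^*\mathbf{L}_{X'/k},\mathcal{O}_X)$ by adjunction since $F$ is affine; and kill the resulting class via the nullhomotopy of the comparison map $\varphi_F\colon\mathbf{L}F^*\mathbf{L}_{X'/k}\to\mathbf{L}_{X/k}$. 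You have also correctly isolated the two loads the argument actually bears: the functoriality of Illusie's obstruction class along an arbitrary $k$-morphism, via the transitivity triangle and adjunction, and the vanishing of $\varphi_F$ in the derived category rather than merely on $\mathcal{H}^0$. Both are genuine theorems of Illusie rather than formalities, and the argument collapses without either; flagging them is the right call.

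It is worth contrasting this with the related statement the paper \emph{does} prove internally, namely \autoref{p-liftpair-W2}, which handles the logarithmic snc pair $(Y,E)$ on a \emph{smooth} projective globally $F$-split $Y$ and avoids the cotangent complex entirely. There one applies $\Hom(\Omega^1_Y(\log E),-)$ to the short exact sequence $0\to\mathcal{O}_Y\to F_*\mathcal{O}_Y\to B^1_Y\to 0$ and compares two obstructions: the class $o_{(Y,E,F)}\in\Ext^1(\Omega^1_Y(\log E),B^1_Y)$ to lifting the pair together with a compatible Frobenius (from \cite{AWZ21}), and the ordinary class $o_{(Y,E)}\in\Ext^2(\Omega^1_Y(\log E),\mathcal{O}_Y)$. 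A \v{C}ech-cocycle computation shows $\delta(o_{(Y,E,F)})=o_{(Y,E)}$, and the $F$-splitting forces the connecting map $\delta$ to vanish. That route trades derived-category machinery for an explicit cocycle calculation with $\Omega^1_Y(\log E)$, which is pleasant and makes the log structure transparent, but it is intrinsically confined to the log-smooth setting; your argument via $\mathbf{L}_{X/k}$ is the right tool at the generality of the theorem as actually stated, where $X$ is an arbitrary $F$-split scheme with no smoothness hypothesis.
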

	
	In \cite{AZ21}, Achinger and Zdanowicz conjectured that every globally $F$-split smooth Calabi--Yau variety lifts to characteristic zero.  
    This is a special case of the following folklore conjecture.
	
	\begin{conjecture}[{cf.\ \cite[Section 1.7]{AZ21}}] \label{conj-F-split}
		Let $X$ be a globally $F$-split normal projective variety over an algebraically closed field $k$ of characteristic $p>0$. Then $X$ lifts to a flat projective scheme $\mathcal{X}$ over the ring $W(k)$ of Witt vectors. 
	\end{conjecture}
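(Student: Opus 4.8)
The plan is to prove the conjecture when $\dim X = 2$, producing moreover an \emph{equisingular} lift. Theorem~\ref{thm:fsplit-lift-modp2} already gives a lift modulo $p^{2}$, so the content is to propagate it over all of $W(k)$, which in dimension two we intend to do using the birational geometry and classification of surfaces. A choice of Frobenius splitting of $\mathcal{O}_X$ corresponds under Grothendieck duality for $F$ to an effective Weil divisor $D\in|(1-p)K_X|$, so $\Delta:=\tfrac{1}{p-1}D$ gives a globally $F$-split pair $(X,\Delta)$ with $K_X+\Delta\sim_{\Q}0$; in particular $(X,\Delta)$, and hence each singularity of $X$, is log canonical. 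Fix a log resolution (or the minimal resolution) $\mu\colon Y\to X$ with its exceptional configuration $E$, and write $K_Y+\Gamma=\mu^{*}(K_X+\Delta)$. It then suffices to lift the log smooth surface $(Y,\operatorname{Supp}(E+\mu_{*}^{-1}\Delta))$ over $W(k)$ \emph{together with} every curve of $E$: lifting in addition a $\mu$-semiample polarization and taking relative $\Proj$ recovers a lift $\mathcal{X}$ of $X$, and since the entire exceptional configuration --- hence the analytic type of every singular point of $X$ --- is carried along, the lift is equisingular.

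The second step is a classification. Since $-K_X$ is $\Q$-effective one has $\kappa(Y)\in\{-\infty,0\}$; each singularity of $X$ is resolved by rational curves, except for simple elliptic points (resolved by one ordinary elliptic curve), so a ruled structure on $Y$ must have a base of genus $0$ or $1$; and $F$-splitting makes Frobenius act bijectively on the $H^{i}(\mathcal{O}_Y)$, so the Hodge and Artin--Mazur invariants are as over $\mathbb{C}$. Thus, modulo the crepant data above, $Y$ is: (a) a rational surface; (b) a conic bundle --- in particular a $\mathbb{P}^{1}$-bundle --- over an ordinary elliptic curve; or (c) a surface of Kodaira dimension zero, whose minimal model is an ordinary abelian, K3, Enriques, bielliptic, or quasi-bielliptic surface, or an lc (non-klt) ``log Enriques'' surface whose resolution carries an ordinary elliptic curve or a cycle of rational curves. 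In the log del Pezzo and log Enriques cases $\Delta$ is forced to be large, which is what will make the lift rigid.

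It remains to lift each case, carrying all exceptional and boundary data simultaneously so that the reduction above applies. For (c) one uses the canonical liftings of ordinary abelian surfaces (Serre--Tate) and of ordinary K3 surfaces (Deligne), equivariant descent for their quotients, and, in the log Enriques case, lifts the log Calabi--Yau pair given by the resolution plus its exceptional cycle or elliptic curve, the obstruction in $H^{2}$ of the logarithmic tangent sheaf being killed by $K\equiv 0$. For (b) one first lifts the base --- an ordinary elliptic curve has an essentially canonical lift over $W(k)$ --- and then lifts the conic bundle over it by lifting a rank-three bundle on the base together with the quadratic form cutting out $Y$, the obstructions living in $H^{1}$ and $H^{2}$ of line bundles on a curve; the discriminant locus, the singular (in small characteristic, wild) fibres, and the points of $E$ are lifted afterwards. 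For (a) one runs a $(Y,\Gamma)$-trivial MMP to a simple model (a toric surface, or a log del Pezzo of small Picard rank) that manifestly lifts, and then lifts the contracted curves one at a time, each being a smooth rational curve whose Hilbert scheme is smooth after the relevant contraction.

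The main obstacle is making steps (a) and (b) work when $p\in\{2,3\}$: there the fibrations can degenerate to quasi-elliptic or wildly ramified ones with non-reduced fibres, the relevant log del Pezzo surfaces behave non-classically, and the singularities of $X$ --- simple elliptic, cusp, dihedral, or the (possibly non-$\Q$-Gorenstein) points over them --- must be lifted with their exact analytic type. One must therefore bound the obstruction spaces $H^{2}(Y,T_Y(-\log))$ uniformly across the entire list and, when these do not vanish, produce explicit lifts by hand --- for instance via a torus action near a toric-like boundary, or via the canonical lift already available on an elliptic component --- so that the equisingular contraction of the first paragraph can be performed.
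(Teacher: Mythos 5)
Your outline heads in broadly the same direction as the paper---classification of the minimal resolution, canonical liftings for $K$-trivial models, MMP reduction for the Fano-type case, then descent by contracting a lifted semiample class---but it has two genuine gaps where the paper has to do real work.

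First, the descent step ``lift a $\mu$-semiample polarization and take relative $\Proj$'' is precisely where the argument can fail, and you do not explain how to guarantee the lifted line bundle stays semiample. The problem is concrete: if $X$ is obtained by contracting an elliptic curve $E$ sitting over nine points of a cubic in $\mathbb{P}^2$ (a Nagata-type log Enriques surface), a \emph{generic} lift of $(Y,E)$ over $W(k)$ moves the nine points into general position on the lifted elliptic curve, and then $E$ is no longer contractible generically over $W(k)$; the paper spends Section~5.3 choosing ``canonical'' lifts of the centers (Teichm\"uller lifts on the toric boundary, canonical lifts of points on the ordinary elliptic curve, canonical Serre--Tate data on the elliptic base) precisely so that $\mu^*A$ restricted to the lifted exceptional divisor stays trivial. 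Your parenthetical remark about ``the canonical lift already available on an elliptic component'' gestures at this, but it needs to become a central construction together with an extension criterion (the paper's Proposition~5.5: the lifted polarization is semiample as soon as its restriction to the lifted exceptional locus is trivial).

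Second, you cite the mod-$p^2$ lifting of $F$-split schemes but not the stronger and more surprising statement that if $Y$ is a smooth \emph{globally $F$-split} projective variety and $E$ is \emph{any} reduced snc divisor, then $(Y,E)$ lifts to $W_2(k)$ (the paper's Lemma~1.8). Without this, the claim that the log del Pezzo surfaces occurring at the end of the MMP ``manifestly lift'' is unjustified, especially in $p\in\{2,3,5\}$ where, as you observe, non-$F$-split klt del Pezzo surfaces are genuinely non-liftable. The paper's route is to use Lemma~1.8 to feed the Hara--Deligne--Illusie log Akizuki--Nakano vanishing (valid already for $p\geq\dim Y$), obtaining $H^2(Y,T_Y(-\log E))=0$ and hence unobstructed log lifting. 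This replaces the case-by-case low-characteristic analysis you anticipate having to do by hand. Your alternative route for the conic-bundle case (lift a rank-3 bundle and the quadratic form) is a genuinely different idea, but it would still need a mechanism to control obstructions through the wild fibres, whereas the paper's Mori-fibre-space lemma (Proposition~4.16) does this cohomologically in one stroke from the $F$-splitting.

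As a formal matter, the labeled statement is the general-dimension conjecture; the paper, like you, only establishes the $\dim X = 2$ case (Theorem~1.3), and makes no claim towards higher dimensions.
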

	
The goal of this article is to prove \autoref{conj-F-split} in dimension two. 
In fact, we show a much stronger result: that a log resolution of every globally $F$-split normal projective surface admits a lifting over the ring $W(k)$ of Witt vectors (see \autoref{t-3} for a more general statement involving pairs).

\begin{theorem}[{\autoref{t-3}}]\label{t-main-thm-lift}
Let $X$ be a globally $F$-split normal projective surface over an algebraically closed field $k$ of characteristic $p>0$. Then it is strongly log liftable, i.e. there exists a log resolution $f \colon (Y,\Ex(f)) \to X$ admitting a lifting to $\widetilde{f} \colon \mathcal{Y} \to \mathcal{X}$ over $W(k)$ such that
\begin{enumerate}
    \item[(a)] $\widetilde{f}$ is birational;
    \item[(b)] $(\mathcal{Y},\mathcal{E}x(\widetilde{f}))$ is a lifting of $(Y, \Ex(f))$ (see \autoref{def-lift});
    \item[(c)] $\mathcal{X}$ is a lifting of $X$ over $W(k)$.
\end{enumerate}
\end{theorem}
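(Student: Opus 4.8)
The first step is to endow $X$ with a log Calabi--Yau structure. By Grothendieck duality for Frobenius, a splitting of $\mathcal{O}_X\to F^e_*\mathcal{O}_X$ yields in particular a nonzero section of $\mathcal{O}_X\big((1-p^e)K_X\big)$; writing the associated effective divisor as $(p^e-1)\Delta$ we get an effective $\Q$-divisor $\Delta$ with $K_X+\Delta\sim_\Q 0$, and a standard argument upgrades the splitting so that the pair $(X,\Delta)$ is globally $F$-split and hence log canonical. It therefore suffices to prove the more general statement of \autoref{t-3}, that a globally $F$-split log canonical log Calabi--Yau surface pair is strongly log liftable. Here one first records that strong log liftability is independent of the chosen log resolution --- any two are dominated by a third obtained by blowing up snc centres, such blow-ups lift over $W(k)$, and the blow-down of a $(-1)$-curve inside an snc configuration can be lifted by a formal contraction --- so one is free to work with whichever $f\colon(Y,\Ex f)\to X$ is convenient, while keeping track of the global $F$-split property through the birational modifications (dlt modifications, steps of an MMP) used below to simplify the pair.

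Next I would invoke the classification of log canonical log Calabi--Yau surface pairs. The dual complex $\mathcal{D}(X,\Delta)$ is empty, a point, a closed interval, or a circle, and a case analysis along this classification describes all the relevant geometry. In the strictly log canonical case an lc centre carries, by adjunction, a log Calabi--Yau structure of dimension $\le 1$; the exceptional locus of a log resolution is a chain or a cycle of rational curves (arising from cyclic quotient, simple elliptic and cusp singularities together with the coefficient-one part of $\Delta$), and the global $F$-split hypothesis forces the elliptic pieces to be ordinary. In the klt case $X$ either is rational --- so we are looking at (log) del Pezzo surfaces and other rational surfaces with $-K_X$ pseudo-effective --- or is a Calabi--Yau surface with canonical singularities, whose minimal resolution is then an \emph{ordinary} abelian, K3, bielliptic or Enriques surface because $X$ is $F$-split.

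The lifting is then assembled piece by piece. In all cases \autoref{thm:fsplit-lift-modp2} provides a lift over $W_2(k)$, and the task is to continue over $W(k)$. For the rational building blocks the safest route is not deformation theory but direct construction: one exhibits $(Y,\Ex f)$ as an iterated blow-up, at points whose liftability is clear, of a surface whose log liftability is already known --- toric pairs, $\mathbb{P}^1$-bundles over liftable curves, del Pezzo surfaces in the pertinent characteristics --- and lifts accordingly (when convenient one may instead note that the obstruction group $H^2\!\big(Y,\,T_Y(-\log \Ex f)\big)$ vanishes and proceed formally). For the non-rational and the simple elliptic / cusp pieces one uses the canonical Serre--Tate lift of the ordinary abelian part --- and the Deligne--Nygaard lift in the K3 case --- together with the fact that chains and cycles of rational curves, and quotients by the relevant finite group schemes, admit compatible lifts over $W(k)$. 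Finally one glues: lift $X$ to $\mathcal{X}$, lift the chosen log resolution to $\mathcal{Y}$, and lift the birational morphism $\widetilde f\colon\mathcal{Y}\to\mathcal{X}$, the latter being automatic once the $\widetilde f$-exceptional curves are known to deform with their correct normal bundles, by formal contraction and algebraization.

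The main obstacle, I expect, is the strictly log canonical, non-rational end of the classification: one must resolve cusps, simple elliptic singularities and their finite quotients and then lift the resulting exceptional configuration \emph{together with the finite group action and the accompanying elliptic or abelian data} over $W(k)$ in every characteristic, including the small primes where these singularities fail to be étale-locally standard and wild ramification enters. Intertwined with this is the need to carry the genuine global $F$-split property --- not merely the numerical relation $K_X+\Delta\sim_\Q 0$ --- through every dlt modification and MMP step, so that \autoref{thm:fsplit-lift-modp2} and the ordinariness input remain applicable at each stage; reconciling these two demands is the technical heart of the argument.
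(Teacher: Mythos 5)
Your outline captures much of the paper's skeleton: the log Calabi--Yau structure via the Frobenius trace, the case analysis along the dual complex (klt $K$-trivial vs.\ strictly lc), the canonical Serre--Tate/Nygaard lifts for the abelian/K3/Enriques building blocks, and the reduction of rational pieces to toric or Fano geometry. You also correctly flag the strictly log canonical, non-rational end as the technical heart. However, there are two genuine gaps that your argument, as written, would not survive.

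First, the descent from a lifting of $(Y,\Ex f)$ to a lifting of $X$ is \emph{not} ``automatic once the exceptional curves deform with correct normal bundles.'' The paper gives an explicit counterexample (\autoref{e-nagata}): take $E\subset\mathbb{P}^2$ an ordinary elliptic curve over $\overline{\mathbb{F}}_p$, blow up nine general $k$-points on it, and contract the strict transform $E_Y$ to get a strictly lc $K$-trivial surface $X$. If you lift the nine points to generic sections over $W(k)$, the resulting $(\mathcal{Y},\mathcal{E}_Y)$ has the right normal bundle along $\mathcal{E}_Y$ fibrewise, yet no birational contraction of $\mathcal{E}_Y$ exists on the generic fibre. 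So ``formal contraction and algebraization'' fails: one must \emph{choose} the lifting of $(Y,\Ex f)$ so that descent becomes possible. The paper's mechanism is the extension theorem (\autoref{p-lifting-criterion}): one lifts the pullback $A_Y:=f^*A$ of an ample bundle on $X$ to a line bundle $\mathcal{A}_{\mathcal Y}$ that remains \emph{trivial on} $\mathcal{E}$, and then shows $H^1(\mathcal{Y},\mathcal{A}_{\mathcal Y}^{\otimes m})$ is a free $W(k)$-module (via Grauert--Riemenschneider and Grauert's theorem), from which semi-ampleness of $\mathcal{A}_{\mathcal Y}$ over $W(k)$ follows. When $X$ has rational singularities this is arranged via obstruction theory for $E$-trivialized line bundles (\autoref{p-obs-triv-lb}); when $X$ is strictly lc with $K_X\sim 0$, one must build a \emph{canonical} lifting (Teichm\"uller lifts of blow-up centres on the toric model, Serre--Tate lifts of points on the elliptic model) precisely so that the degree-zero line bundle $A_Y|_E$ stays trivial after lifting (\autoref{p-strictlyLC-case}, using \autoref{l-Pic-cycle rational}). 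Your proposal never isolates this as the crux, and the route you sketch would run straight into the Nagata-type obstruction.

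Second, your reference to a $W_2(k)$-lifting via \autoref{thm:fsplit-lift-modp2} lifts only the underlying scheme, not the snc pair $(Y,\Ex f)$. The nontrivial fact you actually need is \autoref{p-liftpair-W2}: if $Y$ is globally $F$-split and $E\subset Y$ is \emph{any} reduced snc divisor (not itself assumed compatibly split), then $(Y,E)$ lifts to $W_2(k)$. This is what feeds into the logarithmic Deligne--Illusie/Akizuki--Nakano argument to kill $H^2(Y,T_Y(-\log E))$ in the Fano-type case (\autoref{p-lift-Fanopairs}). The obstruction-compatibility computation behind \autoref{p-liftpair-W2} (comparing $o_{(Y,E)}$ with the obstruction to lifting the pair with Frobenius) is a genuine input, not a consequence of the unadorned scheme-level statement you cite. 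Without it, your vanishing claim for the rational/Fano pieces is unjustified.
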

\begin{remark}
	Previous results in the literature support \autoref{conj-F-split}:
	\begin{enumerate}
		\item[(a)] globally $F$-split smooth projective varieties with trivial tangent bundle admit a canonical lifting over $W(k)$ (see \cite{Kat81} and \cite[Appendix]{MS87});
		\item[(b)] \label{b} globally $F$-split (equivalently, ordinary) K3 and Enriques surfaces admit a canonical lifting over $W(k)$ (\cite{Del81, Nyg83, LT19}).
	\end{enumerate}
\end{remark}

In the past few years, several authors investigated liftability of log resolutions of klt del Pezzo surfaces over $W(k)$, especially for its connections with Kodaira-type vanishing theorems (\cite{CTW17, ABL20, KN20, Nag21, Lacini24}). 
Showing that a log resolution of a variety $X$ lifts to characteristic zero is much more impactful than showing that $X$ itself lifts, as it permits to compare the singularities of a variety with those of the lifting in characteristic zero (see e.g.\ \autoref{p-good-lifting}). In particular, as a corollary to \autoref{t-main-thm-lift} we can construct liftings of globally $F$-split surfaces over $W(k)$ preserving the type of singularities and the Picard rank.

	\begin{corollary}\label{c-1}
		Let $k$ be an algebraically closed field of characteristic $p>0$.
		Let $X$ be a normal projective  globally $F$-split surface over $k$.
		Then there exists a lifting $\mathcal{X}$ of $X$ over $W(k)$ with  geometric generic fibre $\mathcal{X}_{\overline{K}}$ such that the following holds:
		\begin{enumerate}
			\item[(a)] there is a natural bijection  of sets $g \colon \Sing(\mathcal{X}_{\overline{K}}) \xrightarrow{\sim} \Sing(X)$;
			\item[(b)] if $x \in \Sing(\mathcal{X}_{\overline{K}})$, then the weighted dual graph of the minimal resolution (see \autoref{def-weighted-dual-graph}) at $x$ is equal to the one at $g(x)$;
			\item[(c)] if $X$ has rational (resp. klt) singularities, then $\mathcal{X}$ has rational (resp. klt) singularities;
			\item[(d)] if $X$ has rational singularities, then $\rho(X)=\rho(\mathcal{X}_{\overline{K}})$.
		\end{enumerate}
	\end{corollary}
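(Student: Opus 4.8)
The plan is to combine the strong log liftability of \autoref{t-main-thm-lift} with a specialisation analysis of the resulting family over $W(k)$; I expect most of \textbf{(1)}--\textbf{(3)} to be subsumed by \autoref{p-good-lifting}. Fix a log resolution $f\colon(Y,\Ex(f))\to X$ and a lifting $\widetilde f\colon\mathcal{Y}\to\mathcal{X}$ over $W(k)$ as in \autoref{t-main-thm-lift}, so that $\mathcal{Y}\to\Spec W(k)$ is smooth, $\mathcal{E}x(\widetilde f)$ is a relative simple normal crossing divisor with $\mathcal{E}x(\widetilde f)_0=\Ex(f)$, $\widetilde f$ is birational, and $\mathcal{X}$ lifts $X$. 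Replacing $\mathcal{X}$ by its normalisation, which one checks does not change the special fibre because $X$ is normal, I may assume $\mathcal{X}$ normal, which I take as the required lift of $X$. Base changing to $\Spec\overline K$ gives a proper birational morphism $\widetilde f_{\overline K}\colon\mathcal{Y}_{\overline K}\to\mathcal{X}_{\overline K}$ from a smooth projective surface, and since $\widetilde f$ is birational with normal target one checks $\Ex(\widetilde f_{\overline K})=\mathcal{E}x(\widetilde f)_{\overline K}$, again simple normal crossing.

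For \textbf{(1)}, as $\mathcal{E}x(\widetilde f)$ is flat over $W(k)$ with fibrewise transversal crossings and $\Spec W(k)$ is strictly henselian, the incidence graph of the irreducible components of $\mathcal{E}x(\widetilde f)$ is the same on every fibre; hence the connected components of $\Ex(f)$ and of $\Ex(\widetilde f_{\overline K})$ correspond, and since $f$ and $\widetilde f_{\overline K}$ are isomorphisms off their exceptional loci and contract each connected component of these to a single (necessarily singular) point, one obtains the bijection $g\colon\Sing(\mathcal{X}_{\overline K})\xrightarrow{\sim}\Sing(X)$. For \textbf{(2)} I would contract the $(-1)$-curves of the family successively: whenever the special fibre still carries a $(-1)$-curve that is contracted to $X$, this curve is a component of the exceptional divisor of the current map to $\mathcal{X}$ (which is flat over $W(k)$), hence lifts to a copy of $\bP^1_{W(k)}$ with normal bundle restricting to $\mathcal{O}_{\bP^1}(-1)$ on the central fibre and therefore equal to $\mathcal{O}(-1)$, so it can be contracted over $W(k)$ keeping the total space smooth over $W(k)$ and $\widetilde f$ factors through the contraction. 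After finitely many steps one reaches $\mu\colon\mathcal{X}'\to\mathcal{X}$ over $W(k)$ with $\mathcal{X}'\to\Spec W(k)$ smooth, $\mathcal{X}'_0\to X$ the minimal resolution of $X$, and $\mathcal{X}'_{\overline K}\to\mathcal{X}_{\overline K}$ the minimal resolution of $\mathcal{X}_{\overline K}$. The irreducible components of $\Ex(\mu)$ are flat families of curves on the smooth surface fibration $\mathcal{X}'$, so their self-intersections, pairwise intersections and arithmetic genera are constant along $\Spec W(k)$; together with \textbf{(1)} this yields the equality of weighted dual graphs (\autoref{def-weighted-dual-graph}) in \textbf{(2)}.

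For \textbf{(3)} I would use that rationality of a surface singularity is detected by the weighted dual graph of its minimal resolution via Artin's fundamental-cycle criterion, while klt-ness is detected by the discrepancies $a_i$ over the minimal resolution, which solve $\sum_j a_j(\Gamma_i\cdot\Gamma_j)=2p_a(\Gamma_i)-2-\Gamma_i^2$ with negative-definite matrix $(\Gamma_i\cdot\Gamma_j)$ and so depend only on the dual graph (this also shows $K_{\mathcal{X}_{\overline K}}$, and $K_{\mathcal{X}}$, are $\mathbb{Q}$-Cartier in the klt case); that $\mathcal{X}$ itself has rational, resp.\ klt, singularities then follows by Elkik's theorem on deformations of rational singularities, resp.\ by inversion of adjunction. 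For \textbf{(4)}, assume $X$ has rational singularities; then so does $\mathcal{X}_{\overline K}$, and $\rho(X)=\rho(X')-r$ and $\rho(\mathcal{X}_{\overline K})=\rho(\mathcal{X}'_{\overline K})-r$ with $r=\#\Ex(\mu)$, so it suffices to show $\rho(X')=\rho(\mathcal{X}'_{\overline K})$. Since $X$ is globally $F$-split one has $\kappa(X')\le 0$, so $X'$ is rational, birationally ruled, or a minimal surface of Kodaira dimension zero; in the first two cases and for Enriques and (quasi-)bielliptic surfaces $\rho=b_2$, whence $\rho(X')=b_2(X')=b_2(\mathcal{X}'_{\overline K})=\rho(\mathcal{X}'_{\overline K})$ by smooth proper base change; in the remaining case $X'$ is an ordinary K3 or an ordinary abelian surface, and here I would instead build $\mathcal{X}$ from the canonical lift of $X'$ of \cite{Nyg83}, resp.\ \cite{Kat81,MS87}, which preserves the N\'eron--Severi group (so $\rho(X')=\rho(\mathcal{X}'_{\overline K})$), carries along the rigid ADE configuration of $(-2)$-curves contracted by $\mu$, and is again strongly log liftable, so \textbf{(1)}--\textbf{(3)} hold for it as well.

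I expect this last point to be the main obstacle: a generic lifting over $W(k)$ of an ordinary K3 or abelian surface has strictly smaller Picard number on its geometric generic fibre, so for \textbf{(4)} one cannot use the lifting produced by \autoref{t-main-thm-lift} directly and must instead fall back, in exactly those two cases, to the canonical liftings of \cite{Kat81,MS87,Nyg83} together with the (classical) fact that they preserve the N\'eron--Severi group. Everything else reduces to routine specialisation for the smooth proper family $\mathcal{X}'\to\Spec W(k)$ and for the flat families of exceptional curves it contains.
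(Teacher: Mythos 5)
Your overall plan---take the lifting $\widetilde f\colon(\mathcal{Y},\mathcal{E})\to\mathcal{X}$ of a log resolution from \autoref{t-main-thm-lift}, read (1)--(3) off the simultaneous resolution, and for (4) separate the case $H^2(X,\mathcal O_X)=0$ from the case that forces $K_X\sim0$ and a canonical lift of the K3/abelian minimal resolution---is the same as the paper's, which packages the first step as \autoref{p-good-lifting} and then only has to supply the extra hypothesis ``$\Pic(\mathcal Y)\to\Pic(Y)$ surjective'' for (4). You correctly single out the K3/abelian case as the only one where a generic lift of $\mathcal Y$ would lose Picard rank. Two small remarks on efficiency: for (2) the paper does not build a simultaneous minimal resolution as you do; it merely records that the weighted dual graph of $\mathcal{E}_{\overline K}$ agrees with that of $E$ and lets the purely combinatorial contraction of $(-1)$-curves in the graph do the rest. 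For (4) in the $H^2=0$ case, your route through $\rho=b_2$ and $\ell$-adic smooth proper base change works (it quietly uses the Tate conjecture for $p_g=0$ surfaces in characteristic $p$), while the paper lifts line bundles directly via \cite[Cor.\ 8.5.5]{FAG} and specialises with \cite{MP12} and SGA~6; both are valid.

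The one genuine gap is in (3), at the step where you invoke inversion of adjunction. Your dual-graph computation does show that the discrepancies of the surface $\mathcal{X}_{\overline K}$ over its minimal resolution agree with those of $X$, hence that $\mathcal{X}_{\overline K}$ is klt. But the parenthetical ``this also shows $K_{\mathcal{X}}$ is $\mathbb{Q}$-Cartier'' is not justified: that computation controls only the two surface fibres, not the normal \emph{threefold} $\mathcal{X}$ itself, and rationality of $\mathcal{X}$ (whether obtained from Elkik's theorem or, as in the paper, from $R^i\widetilde f_*\mathcal O_{\mathcal Y}=0$ by Nakayama) does not imply $\mathbb{Q}$-Gorenstein in dimension three. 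Without $K_{\mathcal{X}}$ being $\mathbb{Q}$-Cartier one cannot even formulate inversion of adjunction. This is exactly the hole that the proof of \autoref{p-good-lifting}(c) is built to close: it first establishes $K_{\mathcal{Y}_K}+\sum a_i\mathcal{E}_{i,K}\sim_{\mathbb Q,\mathcal{X}_K}0$ on the generic fibre via the base-point-free theorem, and then invokes \cite[Theorem 1.2]{Wit21} to propagate this relative $\mathbb Q$-triviality over the whole of $\Spec W(k)$, after which $K_{\mathcal{X}}=\widetilde f_*\bigl(K_{\mathcal Y}+\sum a_i\mathcal{E}_i\bigr)$ is $\mathbb{Q}$-Cartier. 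That propagation step is an essential input which your proposal is missing.
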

	
In what follows, we explain some of the consequences of our results. 
For example, we can prove a bound on the Gorenstein index of globally $F$-split klt Calabi--Yau surfaces which is independent of the characteristic.
\begin{corollary}\label{c-index-CY}
Let $k$ be an algebraically closed field of characteristic $p>0$.
		Let $X$ be a globally $F$-split klt projective surface  over k such that $K_X\equiv0$.
		Then 
		the Gorenstein index of $X$ and the global index of $K_X$ are at most $21$. In particular, $X$ is $\frac{1}{21}$-log canonical.
\end{corollary}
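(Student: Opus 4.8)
The plan is to transport the bound from characteristic zero along the lifting provided by \autoref{c-1}. Applying \autoref{c-1} to the globally $F$-split klt surface $X$, we obtain a lifting $\mathcal{X}$ over $W(k)$ such that $\mathcal{X}$ and its geometric generic fibre $\mathcal{X}_{\overline{K}}$ are klt (\autoref{c-1}(3)) and the bijection $g\colon\Sing(\mathcal{X}_{\overline{K}})\xrightarrow{\sim}\Sing(X)$ identifies the weighted dual graphs of the minimal resolutions (\autoref{c-1}(2)). The first point to establish is that the lifting remains ``Calabi--Yau'', i.e.\ that $K_{\mathcal{X}_{\overline{K}}}\equiv 0$. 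Since $\mathcal{X}$ is klt it is $\mathbb{Q}$-Gorenstein, so we may fix $m>0$ with $mK_{\mathcal{X}}$ Cartier and set $\mathcal{L}:=\mathcal{O}_{\mathcal{X}}(mK_{\mathcal{X}})$. As the special fibre $X\subset\mathcal{X}$ is the principal Cartier divisor cut out by $p$, adjunction over $W(k)$ gives $\mathcal{L}|_X\cong\mathcal{O}_X(mK_X)$, which is numerically trivial because $K_X\equiv 0$. Now let $C\subset\mathcal{X}_{\overline{K}}$ be an integral curve; it is defined over a finite extension of $K$, and over the corresponding discrete valuation ring dominating $W(k)$ — whose residue field is still $k$, as $k$ is algebraically closed — the closure $\overline{C}$ of $C$ in the total space dominates the base, hence is flat and proper over it, so $\deg(\mathcal{L}|_{\overline{C}})$ is constant in the family. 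Evaluating it on the special fibre, an effective $1$-cycle on $X$, yields $\mathcal{L}\cdot C=0$. Since $\mathcal{X}_{\overline{K}}$ is projective it follows that $\mathcal{L}|_{\mathcal{X}_{\overline{K}}}\equiv 0$, hence $K_{\mathcal{X}_{\overline{K}}}\equiv 0$.

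Thus $\mathcal{X}_{\overline{K}}$ is a klt projective surface over a field of characteristic zero with numerically trivial canonical class. By the classification of such surfaces its global index $I_0$ — the least positive integer with $I_0K_{\mathcal{X}_{\overline{K}}}\sim 0$ — and \emph{a fortiori} its Gorenstein index, is at most $21$: when $\mathcal{X}_{\overline{K}}$ has only canonical singularities the minimal resolution is a minimal surface of Kodaira dimension $0$, so $I_0\le 6$; otherwise $\mathcal{X}_{\overline{K}}$ is a logarithmic Enriques surface, for which $I_0\le 21$ by a theorem of Zhang.

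It remains to transport both bounds back to $X$. For the Gorenstein index, recall that for a rational surface singularity the local Gorenstein index depends only on the weighted dual graph of the minimal resolution: writing $K_Y=f^{*}K_X+\sum_i a_iE_i$, the discrepancies are the unique solution of the negative-definite system $\sum_i a_i(E_i\cdot E_j)=-2-E_j^2$, whose entries are determined by the graph, and $mK_X$ is Cartier at the point precisely when $ma_i\in\mathbb{Z}$ for all $i$. Since klt surface singularities are rational, \autoref{c-1}(2) shows that $X$ and $\mathcal{X}_{\overline{K}}$ have equal local Gorenstein indices at corresponding points, hence equal Gorenstein index; so the Gorenstein index of $X$ is at most $21$ (alternatively, one may invoke \autoref{p-good-lifting} directly here). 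For the global index, we run the specialization of the first paragraph in reverse: the linear equivalence $I_0K_{\mathcal{X}_{\overline{K}}}\sim 0$ spreads out, over a suitable discrete valuation ring dominating $W(k)$ with residue field $k$, to a trivialization of a line bundle whose restriction to the (unchanged) special fibre $X$ is $\mathcal{O}_X(I_0K_X)$, so $I_0K_X\sim 0$; hence the global index of $K_X$ divides that of $\mathcal{X}_{\overline{K}}$ and is at most $21$. Finally, if $d\le 21$ is the Gorenstein index of $X$, every discrepancy $a(E,X)$ lies in $\frac{1}{d}\mathbb{Z}$ and is $>-1$, hence $\ge -1+\frac{1}{d}\ge -1+\frac{1}{21}$; that is, $X$ is $\frac{1}{21}$-log canonical.

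The main obstacle is the preservation of the Calabi--Yau condition under lifting in the first paragraph: one must ensure that numerical triviality of the relative canonical class on the special fibre propagates to the generic fibre, and this genuinely uses that the \emph{total space} $\mathcal{X}$, not merely its fibres, is $\mathbb{Q}$-Gorenstein — precisely the content of \autoref{c-1}(3). A secondary subtlety is controlling the \emph{global} index rather than just the Gorenstein index, which requires converting a linear equivalence over the algebraically closed field $\overline{K}$ into an integral statement over an appropriate discrete valuation ring with the same residue field.
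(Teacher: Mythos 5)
Your proof is correct and follows the same underlying strategy as the paper: transport the characteristic-zero index bound back through the lifting of \autoref{c-1}, using the fact that the weighted dual graph of the minimal resolution determines the local Gorenstein index. There are, however, two points where you diverge from the paper's argument, and both are worth noting.

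First, you explicitly justify that $K_{\mathcal{X}_{\overline{K}}}\equiv 0$ by restricting $\mathcal{O}_{\mathcal{X}}(mK_{\mathcal{X}})$ to the special fibre and transporting numerical triviality to the generic fibre via flat degree. The paper's proof simply asserts this as part of the output of \autoref{c-1}, although it is not literally one of the stated conclusions there; your argument is the correct way to fill that (implicit) gap, and it legitimately uses that the total space $\mathcal{X}$ is $\mathbb{Q}$-Gorenstein, which is precisely furnished by \autoref{c-1}(3).

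Second, for the global index the paper avoids any spreading-out argument by a case split: if $X$ has non-canonical singularities, the global index equals the Gorenstein index by \cite[Lemma 3.12]{Kaw21}; if $X$ has canonical singularities, the global index is at most $6$ by \cite[Theorem 1]{BM77}. Your alternative — spreading out the linear equivalence $I_0K_{\mathcal{X}_{\overline{K}}}\sim 0$ over a totally ramified extension $W'/W(k)$ to exhibit a nonzero section of $\mathcal{O}_X(I_0K_X)$, then concluding by numerical triviality that the resulting effective Cartier divisor is $0$ — is clean and avoids the case distinction, but it quietly relies on $I_0K_{\mathcal{X}}$ being Cartier on the total threefold $\mathcal{X}$, i.e.\ on the Gorenstein index of $\mathcal{X}$ over $W(k)$ equalling the common fibre index. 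This does hold, by the relative $\mathbb{Q}$-linear equivalence $K_{\mathcal{Y}}+\sum a_i\mathcal{E}_i \sim_{\mathbb{Q},\mathcal{X}} 0$ used in the proof of \autoref{p-good-lifting}(c) combined with $R^1\widetilde{f}_*\mathcal{O}_{\mathcal{Y}}=0$, but it deserves an explicit sentence; as written, you treat it as automatic. The paper's case-split route is more self-contained precisely because it never needs to know the index of $\mathcal{X}$ itself. Finally, you cite Zhang's log Enriques bound while the paper cites \cite[Theorem C]{Bla95}; both are acceptable sources for the characteristic-zero fact. None of this is a genuine gap, but the Cartierness of $I_0K_{\mathcal{X}}$ should not be left implicit.
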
	
\noindent We refer to \autoref{def:definition-of-global-index} for the definitions of the Gorenstein and the global indices. 

As a further application we can show the Bogomolov bound on the number of singular points of globally $F$-split klt del Pezzo surfaces in characteristic $p$  (see \cite{KM99, Bel09, LX21} for the bounds in characteristic zero). 
	
\begin{corollary}\label{c-bogomolovbound}
	Let $k$ be an algebraically closed field of characteristic $p>0$.
	Let $X$ be a globally $F$-split klt del Pezzo surface over $k$.
	Then $X$ has at most $2\rho(X)+2$ singular points.
\end{corollary}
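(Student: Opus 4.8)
The plan is to deduce the statement from the Bogomolov bound in characteristic zero by passing to the lifting furnished by \autoref{c-1}. Apply \autoref{c-1} to $X$ to obtain a lifting $\mathcal{X}$ over $W(k)$ with geometric generic fiber $\mathcal{X}_{\overline{K}}$. A klt del Pezzo surface has, in particular, klt singularities, and two-dimensional klt singularities are rational; hence by parts~(1),~(3), and~(4) of \autoref{c-1} the surface $\mathcal{X}$ is klt, there is a bijection $\Sing(\mathcal{X}_{\overline{K}}) \xrightarrow{\sim} \Sing(X)$, and $\rho(X) = \rho(\mathcal{X}_{\overline{K}})$.

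Next I would check that $\mathcal{X}_{\overline{K}}$ is again a del Pezzo surface. Since $\mathcal{X}$ is klt, the divisor $K_{\mathcal{X}}$ is $\mathbb{Q}$-Cartier and restricts to $K_X$ on the closed fiber, which is anti-ample by assumption. As $\Spec W(k)$ is local and ampleness of a $\mathbb{Q}$-Cartier divisor is open on the fibers of a proper morphism, it follows that $-K_{\mathcal{X}}$ is relatively ample over $W(k)$, and in particular $-K_{\mathcal{X}_{\overline{K}}}$ is ample. Thus $\mathcal{X}_{\overline{K}}$ is a klt del Pezzo surface over the algebraically closed field $\overline{K}$ of characteristic zero.

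Finally, I would invoke the characteristic-zero bound (\cite{KM99, Bel09, LX21}): a klt del Pezzo surface over an algebraically closed field of characteristic zero has at most $2\rho + 2$ singular points. Applying this to $\mathcal{X}_{\overline{K}}$ and combining it with the two identities above yields
\[
\#\Sing(X) = \#\Sing(\mathcal{X}_{\overline{K}}) \le 2\rho(\mathcal{X}_{\overline{K}}) + 2 = 2\rho(X) + 2,
\]
which is the assertion. The point requiring care is the middle step: preservation of del Pezzo-ness under the lifting rests on $K_{\mathcal{X}}$ being $\mathbb{Q}$-Cartier and restricting correctly to the fibers, which come from kltness of $\mathcal{X}$ together with the flatness properties built into \autoref{c-1}; one should also reference the fact that two-dimensional klt singularities are rational in characteristic $p$, needed to apply part~(4).
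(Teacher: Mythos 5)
Your argument is correct, and it takes a genuinely different (and arguably more economical) route than the paper. The paper's proof goes through \autoref{t-lift-GFSFanosurf}, which in turn relies on \autoref{l-Zariski-decomp-dP} (a Zariski-decomposition argument producing a log del Pezzo boundary containing $\Ex(f)$) followed by \autoref{p-lift-Fanopairs} to construct a specific lifting with $\mathcal{X}_K$ itself of del Pezzo type; this machinery is developed because the paper wants the stronger statement that one can always lift surfaces of \emph{Fano type} to surfaces of Fano type, and the corollary is harvested as a byproduct. You instead run directly with \autoref{c-1}: parts (1), (3), (4) give that $\mathcal{X}$ is klt (hence $\mathbb{Q}$-Gorenstein), that the singular loci are in bijection, and that $\rho$ is preserved (valid, since klt surface singularities are rational by \cite[Proposition 2.28]{kk-singbook}); then, because here $-K_X$ is actually ample rather than merely big and $\mathbb{Q}$-effective, you can invoke openness of ampleness in proper flat families (EGA III, 4.7.1) together with the fact that $K_{\mathcal{X}}$ restricts to $K_X$ on the central fibre to get $-K_{\mathcal{X}_{\overline K}}$ ample without ever touching the Fano-type apparatus. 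Your route is cleaner for this particular corollary but does not immediately generalise to the Fano-type setting (where $-K_X$ itself need not be ample), which is the setting \autoref{t-lift-GFSFanosurf} was designed for; the two approaches ultimately meet at the same final invocation of \cite[Theorem 1.2]{LX21}. Two small points worth making explicit: you should cite or briefly justify that $K_{\mathcal{X}}|_X = K_X$ (standard since $X$ is a Cartier divisor cut out by $p$ and $\cO_{\cX}(X)\simeq\cO_{\cX}$, so adjunction gives $K_X = (K_{\cX}+X)|_X \sim K_{\cX}|_X$), and you should phrase the ampleness-open step in terms of $-mK_{\mathcal{X}}$ for $m$ such that this is Cartier, since openness of ampleness is a statement about line bundles.
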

	
	\begin{remark}
		Thanks to the $F$-split condition, we avoid to use the explicit classification of \cite{Lacini24}.
		Moreover, we are also able to include the case of low characteristic, giving a complete answer. 
		Note that the examples constructed in \cite{CT19,Ber21, Lacini24} in characteristic $p\in \left\{2,3,5\right\}$ show that \autoref{t-main-thm-lift}, \autoref{c-1} and \autoref{c-bogomolovbound} fail for non-globally-$F$-split klt del Pezzo surfaces in low characteristic.
	\end{remark}

\begin{remark}	
The third author recently showed that there exists $p_0>0$ such that log Calabi--Yau surface pairs are log liftable over $W(k)$ if $p >p_0$ (\cite[Theorem 1.3]{Kaw21}). 
At the moment, an explicit bound on $p_0$ is not known and it is not known if general log Calabi--Yau surfaces are strongly liftable.
\end{remark}
  
\textbf{Sketch of the proofs.}
The proof of \autoref{t-main-thm-lift} consists of two parts:  
\begin{enumerate}
    \item[(a)] showing that $X$ is log liftable over $W(k)$, and then 
    \item[(b)] proving that such a lifting descends to $X$. 
\end{enumerate}
Recall that $X$ is \emph{log liftable} if $(Y, \Ex(f))$ admits a lifting over $W(k)$, where $f \colon Y \to X$ is a log resolution (see \autoref{def-liftability}).

Note that Part (b) is easy when $X$ has klt, and so rational, singularities by standard deformation theoretic arguments (cf.\ \autoref{t-cynkvanstraten}) but it is much more difficult in general as $F$-splitness only implies that $X$ has log canonical singularities.\\

\textit{Sketch of the proof of Part (a).} 
	Since $X$ is globally $F$-split we know that $X$ has log canonical singularities and $-K_X$ is $\Q$-effective by \autoref{l-Q-K+delta}. We thus distinguish three cases:
	
	\begin{enumerate}
	    \item[(i)] $K_X \sim_{\mathbb{Q}} 0$ and $X$ is klt,
	    \item[(ii)] $\kappa(K_X)=-\infty$ and $X$ is klt,
	    \item[(iii)] $X$ is strictly log canonical.
	\end{enumerate}
	
	Case  (i) of $K$-trivial varieties with klt singularities is discussed in \autoref{s-K0-klt}.  
	By taking the canonical covering and a careful study of lifts of group actions we can reduce to the case when $X$ has klt Gorenstein (hence canonical) singularities. 
	We conclude (i) by going through the Enriques-Kodaira classification of Bombieri and Mumford (\cite{BM76,BM77}) and  applying the theory of canonical liftings of $K$-trivial smooth ordinary $K$-trivial surfaces as developed in \cite{Nyg83, MS87, LT19} (see \autoref{prop:lift_minres_K0}).
	
	In Case (ii), let $\phi \colon X \to W$ be an output of a two-dimensional $K_X$-Minimal Model Program (MMP), see \cite{Tan18}. It is easy to see that it is enough to show that $W$ is log liftable (\autoref{l-loglifting-image}). Since $\kappa(K_X)=-\infty$, the variety $W$ is either a klt del Pezzo surface of Picard rank one, or it admits a Mori fibre space structure over a curve. In the latter case we follow the ideas from \cite{Kaw21} to prove liftability (\autoref{LCTF2}). In the former case where $W$ is a del Pezzo surface we argue as follows in  \autoref{l_del_pezzo_case}. 
	Let $g \colon Y \to W$ be the minimal resolution of $W$. Then $Y$ is also globally $F$-split (\autoref{l-GFR-pullback}), and so $Y$ lifts modulo $p^2$ as explained in \autoref{thm:fsplit-lift-modp2}. In fact, more is known, the whole pair $(Y, E)$ for $E=\Ex(g)$ lifts modulo $p^2$ as indicated by the following simple but somewhat very surprising result:	
	
	\begin{lemma}[\textup{\cite[Lemma 5.2.2]{AZ21}}] \label{p-liftpair-W2}
		 Let $Y$ be a smooth projective globally $F$-split variety over a perfect field $k$ and let $E$ be a reduced simple normal crossing divisor. 
		 Then $(Y,E)$ admits a lifting to $W_2(k)$.
	\end{lemma}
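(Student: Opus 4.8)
The plan is to construct the lift explicitly from the Frobenius splitting, using the canonical length‑two Witt‑vector thickening of $Y$ and cutting out the lifted divisor by Teichm\"uller lifts of its local equations. Concretely, I must produce a flat $W_2(k)$‑scheme $\widetilde Y$ with $\widetilde Y\times_{W_2(k)}k\cong Y$ together with a $W_2(k)$‑flat lift $\widetilde E\subseteq\widetilde Y$ of $E$ that is simple normal crossing relative to $W_2(k)$. Write $F\colon Y\to Y$ for the Frobenius and pick a splitting $\phi\colon F_*\mathcal{O}_Y\to\mathcal{O}_Y$, which exists since $Y$ is globally $F$‑split. Following the proof of \autoref{thm:fsplit-lift-modp2} (see \cite{Zda18}): the sheaf of $W_2(k)$‑algebras $W_2(\mathcal{O}_Y)$ fits into an exact sequence of $\mathcal{O}_Y$‑modules $0\to F_*\mathcal{O}_Y\xrightarrow{\,V\,}W_2(\mathcal{O}_Y)\to\mathcal{O}_Y\to 0$, with $W_2(\mathcal{O}_Y)$ regarded as an $\mathcal{O}_Y$‑module via the Teichm\"uller section; it is not $W_2(k)$‑flat, but $V(\ker\phi)$ is an ideal of $W_2(\mathcal{O}_Y)$ (because $\ker\phi$ is stable under multiplication by $p$‑th powers), and one checks, using $\phi(a^p)=a$, that $\widetilde{\mathcal{O}}_Y:=W_2(\mathcal{O}_Y)/V(\ker\phi)$ is $W_2(k)$‑flat with $\widetilde{\mathcal{O}}_Y/p\cong\mathcal{O}_Y$. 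Thus $\widetilde Y:=(Y,\widetilde{\mathcal{O}}_Y)$ is a lift of $Y$, and it is smooth over $W_2(k)$ since $Y$ is smooth over $k$.

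Next I would lift $E$ using that the Teichm\"uller section descends to a multiplicative section $[\,\cdot\,]\colon\mathcal{O}_Y\to\widetilde{\mathcal{O}}_Y$ of $\widetilde{\mathcal{O}}_Y\to\mathcal{O}_Y$. Working \'etale‑locally, choose coordinates $x_1,\dots,x_n$ on $Y$ with $E=\{x_1\cdots x_r=0\}$ and set $\widetilde x_i:=[x_i]\in\widetilde{\mathcal{O}}_Y$. Since $[\,\cdot\,]$ sends units to units, the closed subscheme $\{\widetilde x_1\cdots\widetilde x_r=0\}\subseteq\widetilde Y$ does not depend on the chosen local equation of $E$, so these local subschemes glue to a closed subscheme $\widetilde E\subseteq\widetilde Y$ lifting $E$; moreover $\widetilde x_1\cdots\widetilde x_r$ is a nonzerodivisor lifting the nonzerodivisor $x_1\cdots x_r$, so $\widetilde E$ is flat over $W_2(k)$. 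Finally, since $dx_1,\dots,dx_n$ trivialize $\Omega^1_{Y/k}$ and $\widetilde Y$ is smooth over $W_2(k)$, by Nakayama $d\widetilde x_1,\dots,d\widetilde x_n$ trivialize $\Omega^1_{\widetilde Y/W_2(k)}$, so $(\widetilde x_1,\dots,\widetilde x_n)$ is a relative coordinate system and $\widetilde E=\bigcup_{i\le r}\{\widetilde x_i=0\}$ is simple normal crossing relative to $W_2(k)$. Hence $(\widetilde Y,\widetilde E)$ is the required lift.

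The genuinely surprising --- but ultimately formal --- point is that no compatibility between $\phi$ and $E$ is needed: the splitting $\phi$ is used only to pass from $W_2(\mathcal{O}_Y)$ to the flat quotient $\widetilde{\mathcal{O}}_Y$, whereas $E$ lifts canonically via the Teichm\"uller lifts of its equations, which are automatically compatible with all choices made in lifting $Y$ (compare \cite{YN21}). The main obstacle is therefore the deformation‑theoretic bookkeeping behind \autoref{thm:fsplit-lift-modp2}: verifying that $V(\ker\phi)$ is an ideal, that $\widetilde{\mathcal{O}}_Y$ is $W_2(k)$‑flat with reduction $\mathcal{O}_Y$ (i.e. re‑running that theorem in a form that records the Teichm\"uller elements), and that these elements assemble into a relative simple normal crossing divisor on $\widetilde Y$. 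I expect checking flatness of $\widetilde{\mathcal{O}}_Y$, and of $\widetilde E$, to be the most delicate step.
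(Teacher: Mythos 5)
Your proof is correct, and it takes a genuinely different route from the one in the paper. The paper's proof is obstruction-theoretic: it compares the obstruction class $o_{(Y,E)}\in\operatorname{Ext}^2(\Omega^1_Y(\log E),\mathcal{O}_Y)$ for lifting the snc pair with the obstruction class $o_{(Y,E,F)}\in\operatorname{Ext}^1(\Omega^1_Y(\log E),B^1_Y)$ for lifting the pair \emph{together with} Frobenius (Achinger--Witaszek--Zdanowicz), proves via a \v{C}ech computation that $\delta(o_{(Y,E,F)}) = o_{(Y,E)}$ for the connecting map $\delta$ coming from $0\to\mathcal{O}_Y\to F_*\mathcal{O}_Y\to B^1_Y\to 0$, and then observes that $F$-splitting forces $\delta=0$. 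Your argument instead works directly with the Zdanowicz model $\widetilde{\mathcal{O}}_Y = W_2(\mathcal{O}_Y)/V(\ker\phi)$ of the mod-$p^2$ lift, exploits the multiplicative Teichm\"uller section $[\,\cdot\,]\colon\mathcal{O}_Y\to\widetilde{\mathcal{O}}_Y$ (which sends units to units, so that Teichm\"uller lifts of local equations of $E$ agree up to units and glue), and checks flatness of $\widetilde E$ via \autoref{l-flat-Cartier} and relative normal crossings via a Nakayama argument on logarithmic differentials. What this buys: your construction is completely explicit and makes the ``surprise'' (no compatibility between the splitting $\phi$ and the divisor $E$ is needed) transparent --- $\phi$ is used only to build $\widetilde Y$, while $E$ lifts canonically thereafter --- and it even singles out a canonical lift once $\phi$ is fixed. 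The paper's argument is shorter to write given existing deformation-theoretic machinery and sits naturally in the Deligne--Illusie framework that is subsequently invoked for vanishing (\autoref{p-hara}), but it gives an abstract existence statement rather than an explicit model. One small thing worth making explicit in your write-up: when you construct $\widetilde E$ you need to lift each \emph{irreducible component} $E_i$ separately to match \autoref{def-lift}; your construction does this verbatim (take $\widetilde E_i = \{[x_i]=0\}$), but the text as written lifts only the union.
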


	 Therefore, we can invoke the logarithmic variant of the  theorem of Deligne-Illusie (\cite{DI87,Ha98}), and so apply Akizuki-Nakano vanishing to show that $H^2(Y, T_Y(-\,\mathrm{log}\, E))=0$. Therefore, $(Y,E)$ lifts over the ring of Witt vectors by deformation theory.
	
	Finally, Case (iii) follows by a similar argument to (ii) but we first take a dlt blow-up $h \colon Z \to X$ and run a $K_Z$-MMP. Here, $\kappa(K_Z) = -\infty$ as $X$ is not klt.\\
	
	\textit{Sketch of the proof of Part (b).} 
	We now describe the strategy of the proof of liftability of globally $F$-split surface pairs.
    As a first step in \autoref{c-lift-min-dlt} we use log liftability to show the existence of a lifting for a dlt modification $(Y, D_Y)$ of $(X,D)$.
    In \autoref{p-lifting-criterion}, we give a sufficient criterion to descend liftability from $Y$ to $X$ in terms of extension of line bundles, which turns to be easily verified in the case where $H^0(X,\mathcal{O}_X(K_X))=H^2(X, \mathcal{O}_X)=0$ (\autoref{p-lc-nottriv}). 
    
    We thus reduce to the case of globally $F$-split surfaces $X$ with $K_X \sim 0$ and strictly log canonical singularities. 
    In \autoref{l-logcanonical-CY} we give a crepant birational classification of $X$ into 3 classes of snc Calabi--Yau pairs $(Z,D_Z)$.
    For each of these pairs, we construct a `canonical' lifting $(\mathcal{Z}_\can, \mathcal{D}_\can)$ over $W(k)$ and in \autoref{p-strictlyLC-case} we use the special properties of $\mathcal{Z}_\can$ to show the existence of a lifting of $X$ over $W(k)$.\\

\textbf{Acknowledgments.}
	The authors thank A.~Petracci, F.~Carocci, P.~Cascini, C.D.~Hacon, G.~Martin, L.~Stigant, R.~Svaldi, S.~Yoshikawa, T.~Takamatsu, M.~Nagaoka, and M.~Zdanowicz for useful discussions and comments on the content of this article. 
 The authors are also grateful to the referee for reading the manuscript very carefully and providing many valuable comments that improved our paper.

	\section{Preliminaries}
	\subsection{Notation} \label{ss-notation}
	\begin{enumerate}
		\item[(a)] Throughout this article, unless stated otherwise, $k$ denotes an algebraically closed field of prime characteristic $p>0$.
		\item[(b)] We denote by $W(k)$ the \textit{ring of Witt vectors of $k$}. As $k$ is perfect, it is a complete discrete valuation ring (DVR) of mixed characteristic $(0,p)$ with maximal ideal $\mathfrak{m}=(p)$ and residue field $k = W(k)/(p)$. We denote by $W_m(k)=W(k)/(p^m W(k))$ the \emph{ring of Witt vectors of length $m$} and by $K$ the field of fractions of $W(k)$.
		\item[(c)] Let $X$ be an $\mathbb{F}_p$-scheme. We denote by $F \colon X \to X$ the absolute \emph{Frobenius morphism} and, for each $e>0$, we denote by $F^e$ the $e$-th iterated power of absolute Frobenius. We say  that $X$ is $F$-finite if $F$ is a finite morphism.
		\item[(d)] We say that $X$ is a \emph{variety} if it is an integral scheme which is separated and of finite type over a field or a complete DVR.
		We say that $X$ is a curve, resp.~ a surface, if $X$ is a variety over a field of dimension one, resp.~ two.
		\item[(e)] We say that $(X,\Delta)$ is a \emph{pair} if $X$ is a normal variety and $\Delta$ is an effective $\mathbb{Q}$-divisor. If $K_X+\Delta$ is $\Q$-Cartier, we say that $(X,\Delta)$ is a \emph{log pair}. If $\Delta$ is not necessarily effective, then we say  that $(X,\Delta)$ is a \textit{sub (log) pair}.
		\item[(f)] Given a pair $(X, \Delta)$, we say that $f \colon (Y,E) \to X$ is a \emph{log resolution} of $(X,\Delta)$ if $f$ is a proper birational morphism, the exceptional locus $ E\coloneqq \Ex(f)$ is of pure codimension one, and $(Y, \Supp(f_*^{-1}\Delta)+ E)$ is an snc pair, where $f_*^{-1}\Delta$ denotes the proper transform of $\Delta$ on $Y$.  By abuse of notation we shall sometimes drop $E$ and call $f \colon Y \to X$ a log resolution.
		\item[(g)] For the definition of the singularities of pairs appearing in the MMP (such as \emph{canonical, klt, dlt, log canonical}) we refer to \cite[Definition 2.8]{kk-singbook}.
		\item[(h)]  We say $f \colon (Y,\Delta_Y) \to (X,\Delta)$ is a proper \emph{birational morphism of pairs}, if $(Y,\Delta_Y)$ and $(X,\Delta)$ are pairs, $f \colon Y \to X$ is a proper birational morphism, and $f_*\Delta_Y = \Delta_X$.
		\item[(i)]\label{def-crepant} Let $f\colon (Y, \Delta_Y)\to (X, \Delta_X)$ be a proper birational morphism of log pairs. 
		We say  that $f$ is \textit{crepant} if $K_Y+ \Delta_Y=f^*(K_X+\Delta_X)$. 
		More generally, the pairs $(Y, \Delta_Y)$ and $(X, \Delta_X)$ are said to be \emph{crepant birational} if there exist a sub log pair $(Z, \Delta_Z)$ and crepant proper birational morphisms $p\colon (Z, \Delta_Z)\to (Y,\Delta_Y)$ and $q\colon (Z, \Delta_Z)\to (X, \Delta_X)$.
		\item[(j)] Let $(X, \Delta)$ be a log pair. We say that $(X,\Delta)$ is a \emph{log Calabi--Yau pair} (resp.~ a \emph{log Fano pair}) if it has log canonical singularities and $K_X+\Delta \sim_{\mathbb{Q}} 0$ (resp. it has klt singularities and $-(K_X+\Delta)$ is ample). 
		We say  that $X$ is a variety \emph{of Calabi--Yau} type (resp.~ \emph{Fano type}) if there exists a  $\mathbb{Q}$-divisor $\Delta$ such that $(X,\Delta)$ is a log Calabi--Yau (resp.~ log Fano) pair. For historical reasons, a Fano (type) surface is called a \emph{del Pezzo} (type) surface.
		\item[(k)] If $f \colon Y \to X$ is a finite \'etale  morphism of schemes,  then we write $\Aut_X(Y)$ for the automorphism group of $Y$ over $X$  acting on the right on $Y$. 
		We say  that $f$ is \emph{Galois} if $\Aut_X(Y)$ acts transitively on the geometric fibres of $f$. 
		If $X$ and $Y$ are normal, then $f$ is Galois if and only if the field extension $K(Y)/K(X)$ is Galois.
		\item[(l)]  A morphism $\pi \colon X \to Y$ of normal varieties is called \emph{a quasi-\'etale covering} if it is a finite surjective morphism which is \'etale over the codimension one points of $Y$. 
		If $f$ is quasi-\'etale,  then we say that it is Galois if the field extension $K(Y)/K(X)$ is Galois.
		\item[(m)]   Given a normal proper variety $X$ over any field $k$, we denote by $\rho(X)$ the Picard rank of $X$. For a $\mathbb{Q}$-Cartier divisor $D$, we denote by $\kappa(D)$ its Iitaka dimension.
		\item[(n)] Given a pair $(X, D)$ where $D$ is a reduced Weil divisor, we say a group scheme $G$ acts on $(X,D)$ if $G$ acts on $X$ and its actions preserves the open set $U\coloneqq X \setminus D$.
		\item[(o)] Given a normal variety $X$ over any field $k$ and a reduced Weil divisor $D=\sum_i D_i$, we denote by $\Omega_X^{[q]}(\log D):=j_*\Omega_{U/k}^{q}(\log D|_U)$ the sheaf of \emph{reflexive logarithmic differential $q$-forms} where $U$ is the snc locus of $(X,D)$ and $j\colon U\hookrightarrow X$ is the natural inclusion.
		We denote by $T_X(-\log D):=(\Omega_X^{[1]}(\log D))^*$ the logarithmic tangent sheaf of $(X,D)$.
	\end{enumerate}
	
	\subsection{Frobenius splitting}
	
	We first recall the notion of Frobenius splitting (in short, $F$-splitting) for $\mathbb{F}_p$-schemes.	
	
	\begin{definition}
		Let $X$ be a normal $F$-finite $\mathbb{F}_p$-scheme and let $\Delta$ be an effective  $\mathbb{Q}$-divisor on $X$.
		We say  that the pair $(X, \Delta)$ is \emph{globally sharply $F$-split} if there exists $e \in \mathbb{N}$ for which the natural composition map
		\[\mathcal{O}_X \to F^e_* \mathcal{O}_X \hookrightarrow  F^e_* \mathcal{O}_X(\lceil{ (p^e-1)\Delta \rceil})  \]
		splits in the category of $\mathcal{O}_X$-modules.
		When $\Delta$ is integral (or more generally when $m\Delta$ is integral for $m \in \bZ_{>0}$ which is not divisible by $p$), we will simply say that $(X,\Delta)$ is globally $F$-split.
	\end{definition}
	
	Globally $F$-split varieties should be thought of as varieties of Calabi--Yau type whose arithmetic is well-behaved.
	
	\begin{proposition}\label{l-Q-K+delta}
		Let $k$ be an $F$-finite field and let $(X,\Delta)$ be a globally sharply $F$-split quasi-projective normal variety over $k$. 
		Then
		\begin{enumerate}
			\item[(a)] there exists a $\mathbb{Q}$-divisor $\Gamma \geq 0$ such that $(X, \Delta +\Gamma)$ is a globally sharply $F$-split log Calabi--Yau pair and $(p^e-1)(K_X+\Delta+\Gamma) \sim 0$ for some $e>0$;
			\item[(b)] if $\dim X=2$, then $(X,\Delta)$ has log canonical singularities.
		\end{enumerate}
	\end{proposition}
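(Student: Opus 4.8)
The plan is to deduce both parts from the standard dictionary between $F$-splittings and effective $\Q$-divisors (the ``divisor of a splitting''), combined with the implication ``sharply $F$-pure $\Rightarrow$ log canonical'' of Hara--Watanabe. For part (1), fix $e>0$ realising the global sharp $F$-splitting, so there is an $\cO_X$-linear map $\varphi\colon F^e_*\cO_X(\lceil (p^e-1)\Delta\rceil)\to \cO_X$ with $\varphi(1)=1$. Grothendieck duality for the finite morphism $F^e$ — using $F^{e!}\cO_X\cong \cO_X((1-p^e)K_X)$ on the normal variety $X$ — identifies $\varphi$ with a nonzero global section of $\cO_X\big((1-p^e)K_X-\lceil(p^e-1)\Delta\rceil\big)$, hence with an effective Weil divisor $D$ satisfying
\[
  (p^e-1)K_X+\lceil(p^e-1)\Delta\rceil+D\sim 0 .
\]
Setting $\Gamma:=\tfrac{1}{p^e-1}\big(\lceil(p^e-1)\Delta\rceil-(p^e-1)\Delta\big)+\tfrac{1}{p^e-1}D\ge 0$ we get $\Delta+\Gamma=\tfrac{1}{p^e-1}\big(\lceil(p^e-1)\Delta\rceil+D\big)$, so $(p^e-1)(K_X+\Delta+\Gamma)\sim 0$; in particular $K_X+\Delta+\Gamma$ is $\Q$-torsion and $(X,\Delta+\Gamma)$ is log Calabi--Yau. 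Moreover $\lceil(p^e-1)(\Delta+\Gamma)\rceil=\lceil(p^e-1)\Delta\rceil+D$ is integral, and $\varphi$ factors as
\[
  F^e_*\cO_X(\lceil(p^e-1)\Delta\rceil)\hookrightarrow F^e_*\cO_X(\lceil(p^e-1)\Delta\rceil+D)\xrightarrow{\psi}\cO_X ,
\]
where $\psi$ corresponds, under the same duality, to the canonical trivialising section of $\cO_X\big((1-p^e)K_X-\lceil(p^e-1)\Delta\rceil-D\big)\cong\cO_X$; since $\psi(1)=1$, this is the desired splitting, so $(X,\Delta+\Gamma)$ is again globally sharply $F$-split. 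I expect no real difficulty in part (1).

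For part (2): a global sharp $F$-splitting restricts over every affine open, so $(X,\Delta+\Gamma)$ is (locally) sharply $F$-pure, and by part (1) the divisor $K_X+\Delta+\Gamma$ is $\Q$-Cartier; the Hara--Watanabe implication therefore applies and shows that $(X,\Delta+\Gamma)$ is log canonical. It then remains to descend from $(X,\Delta+\Gamma)$ to the smaller pair $(X,\Delta)$: granting that $K_X+\Delta$ is $\Q$-Cartier, log canonicity of $(X,\Delta)$ is automatic since $0\le\Delta\le\Delta+\Gamma$ and lowering the boundary only raises discrepancies.

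The main obstacle is exactly this last point, namely the $\Q$-Cartierness of $K_X+\Delta$: part (1) only directly yields $\Q$-Cartierness of the enlarged divisor $K_X+\Delta+\Gamma$, and there is no formal reason for $\Gamma$ (equivalently $K_X+\Delta$) to be $\Q$-Cartier on its own. This is precisely where the hypothesis $\dim X=2$ enters: in dimension two one extracts $\Q$-Cartierness from the structure of log canonical — equivalently, $F$-pure — surface singularities, e.g.\ by working on the minimal resolution with Mumford's intersection pairing, or by invoking the explicit classification of two-dimensional $F$-pure singularities. In higher dimension there is no such automatic $\Q$-Gorenstein property, which is why the statement is restricted to surfaces.
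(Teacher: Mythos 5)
Your part (1) is essentially the proof of \cite[Theorem 4.3]{SS10}, which the paper simply cites: from the splitting $\varphi$ you extract by Grothendieck duality an effective $D$ with $(p^e-1)K_X + \lceil (p^e-1)\Delta\rceil + D \sim 0$, set $\Gamma$ accordingly, and verify that the larger pair is still globally sharply $F$-split by factoring $\varphi$ through $\psi$. That computation is correct, so (1) is fine.

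Part (2) is where the real gap lies, and you diagnose it accurately yourself: you need $K_X+\Delta$ — not just $K_X+\Delta+\Gamma$ — to be $\Q$-Cartier, and you leave this as a gesture rather than a proof. Neither of your suggested routes closes the gap as stated. Mumford's $\Q$-valued intersection theory always provides a numerical pullback on the minimal resolution, but this does not, by itself, make the Weil divisor $\Q$-Cartier downstairs: that needs the local class group of $\cO_{X,x}$ to be torsion, and that is precisely what fails at, e.g., simple elliptic singularities. And appealing to the classification of two-dimensional $F$-pure (equivalently lc) singularities tells you that $K_X$ is $\Q$-Cartier, but not that $\Delta$ is, and you are trying to show $K_X+\Delta$ is $\Q$-Cartier. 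The paper resolves this by a case split on the rationality of the germ $\cO_{X,x}$: if $x$ is a rational surface singularity then $\cO_{X,x}$ is in fact $\Q$-factorial (\cite[Proposition B.2]{Tan15}), so every Weil divisor, in particular $K_X+\Delta$, is automatically $\Q$-Cartier there; while if $x$ is a non-rational point of the lc pair $(X,\Delta+\Gamma)$ then by the structure of lc surface singularities (\cite[Proposition 2.28]{kk-singbook}) necessarily $x\notin\Supp(\Delta+\Gamma)$, so near $x$ one has $\Delta=\Gamma=0$ and $K_X=K_X+\Delta+\Gamma$ is $\Q$-Cartier by part (1). That dichotomy — rational points are $\Q$-factorial, non-rational lc points see no boundary — is the missing ingredient in your write-up.
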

	\begin{proof}
		By  \cite[Theorem 4.3]{SS10}, there exists a $\mathbb{Q}$-divisor $\Gamma \geq 0$ such that $(X,\Delta +\Gamma)$ is a globally $F$-split log pair and $K_X+\Delta+\Gamma \sim_{\mathbb{Q}} 0$.  By \cite[Theorem 3.3]{HW02}, $(X, \Delta+\Gamma)$ has log canonical singularities.
		
		To prove (b) it is sufficient to prove that $K_X+\Delta$ is $\mathbb{Q}$-Cartier. 
		We fix $x \in X$ and we divide the proof  into two cases.
		Suppose $\mathcal{O}_{X,x}$ is a germ of a rational surface singularity. Then it is $\mathbb{Q}$-factorial by \cite[Proposition B.2]{Tan15}.
		If $x$ is not a rational singularity, then $x \notin \Supp(\Delta+\Gamma)$ by \cite[Proposition 2.28]{kk-singbook}. In particular, $K_X$ is $\mathbb{Q}$-Cartier in a neighbourhood of $x$ and then $(X,\Delta)$ is log canonical at $x$.
	\end{proof}
	
	We collect some well-known properties on the behaviour of globally sharply $F$-split pairs under birational operations and quasi-\'etale morphism.
	
	\begin{lemma}\label{l-gfs-image}
		Let $k$ be an $F$-finite field.
		Let $(Y, \Gamma)$ be a globally sharply $F$-split pair  over $k$ and let $f \colon Y \to X$ be a proper birational  morphism between normal varieties.
		Then $(X, \Delta:=f_*\Gamma)$ is globally sharply $F$-split.
	\end{lemma}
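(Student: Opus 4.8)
The plan is to reduce to the trivial situation over the big open subset where $f$ is an isomorphism, and then to push the resulting retraction back up over all of $X$ by exploiting that $\MO_X$ is reflexive.

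First I would record that, since $f$ is proper birational and $X$ is normal, there is an open $U \subseteq X$ with $\cod_X(X \setminus U) \geq 2$ over which $f$ restricts to an isomorphism $f|_V\colon V := f^{-1}(U) \xrightarrow{\sim} U$; moreover $\Delta|_U = (f_*\Gamma)|_U$ corresponds to $\Gamma|_V$ under this isomorphism. Fix $e > 0$ realizing the global sharp $F$-splitting of $(Y,\Gamma)$, so the natural map $\MO_Y \to F^e_*\MO_Y(\lceil (p^e-1)\Gamma\rceil)$ admits a retraction $\phi$. Set $D := \lceil (p^e-1)\Delta\rceil$ on $X$. Restricting $\phi$ to $V$ and transporting along $f|_V$ gives a retraction $\phi_U$ of $\MO_U \to F^e_*\MO_U(D|_U)$; here one uses that $F^e_*$ and the formation of divisorial sheaves both commute with restriction to opens, and that $D|_U = \lceil (p^e-1)(\Delta|_U)\rceil$. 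Equivalently, $\phi_U$ is an element of $\Hom_{\MO_U}\big(F^e_*\MO_X(D)|_U,\,\MO_X|_U\big)$ whose composition with the natural inclusion $\MO_U \to F^e_*\MO_X(D)|_U$ is the identity.

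Next I would extend $\phi_U$ over all of $X$. The sheaf $\mathcal{H} := \mathcal{H}om_{\MO_X}\big(F^e_*\MO_X(D),\,\MO_X\big)$ is reflexive, because $\MO_X$ is reflexive on the normal variety $X$; hence, as $\cod_X(X \setminus U) \geq 2$, the restriction map $\Gamma(X,\mathcal{H}) \to \Gamma(U,\mathcal{H})$ is bijective. Let $\phi \in \Hom_{\MO_X}\big(F^e_*\MO_X(D),\,\MO_X\big)$ be the unique preimage of $\phi_U$. The composition $\psi$ of the natural map $\MO_X \to F^e_*\MO_X(D)$ with $\phi$ is an $\MO_X$-linear endomorphism of $\MO_X$, i.e.\ multiplication by some $c \in \Gamma(X,\MO_X)$; since $\psi|_U = \mathrm{id}_{\MO_U}$ we get $c|_U = 1$, and $X$ being integral with $U$ dense forces $c = 1$. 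Thus $\phi$ retracts $\MO_X \to F^e_*\MO_X(D)$, i.e.\ $(X,\Delta)$ is globally sharply $F$-split.

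The one point to be careful about — and the only real (if mild) obstacle — is that one cannot simply apply $f_*$ to the splitting on $Y$: the natural inclusion $f_*\MO_Y(\lceil (p^e-1)\Gamma\rceil) \hookrightarrow \MO_X(D)$ is in general \emph{not} an isomorphism, since the $f$-exceptional divisors contribute, so a retraction of $\MO_X \to F^e_*f_*\MO_Y(\lceil (p^e-1)\Gamma\rceil)$ does not visibly descend to $\MO_X \to F^e_*\MO_X(D)$. The reflexivity extension above circumvents this precisely because the relevant inclusion \emph{is} an isomorphism in codimension one; keeping track of what holds only in codimension one is the crux of the bookkeeping.
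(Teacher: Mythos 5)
Your proof is correct and follows essentially the same route as the paper's: restrict the splitting to the big open set $U\subseteq X$ over which $f$ is an isomorphism, and extend it over $X$ using that $\mathcal{H}om_{\mathcal{O}_X}(F^e_*\mathcal{O}_X(\lceil (p^e-1)\Delta\rceil),\mathcal{O}_X)$ is reflexive (equivalently, equals $i_*$ of its restriction to $U$), which the paper phrases as a consequence of normality of $X$. Your explicit check that the extended composition is multiplication by $c=1$ is a nice addition that the paper leaves implicit, and your closing remark about why pushing forward the splitting directly along $f$ would fail is exactly the subtlety being circumvented.
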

	\begin{proof}
	Let $i \colon U \hookrightarrow X$ be an open subset such that $f^{-1}(U) \xrightarrow{f} U$ is an isomorphism and $\mathrm{codim}_X\, (X \,\backslash\, U) \geq 2$. 
	Set $V \coloneqq f^{-1}(U)$ and pick $e>0$ such that the map \[(F^e)^* \colon \mathcal{O}_Y \to F^e_* \mathcal{O}_Y(\lceil{ (p^e-1)\Gamma \rceil})\] splits. Then 
	\[
	(F^e)^* \colon \mathcal{O}_V \to F^e_* \mathcal{O}_V(\lceil{ (p^e-1)\Gamma \rceil|_V}),
	\]
	splits as well, and thus so does
	\[
	(F^e)^* \colon \mathcal{O}_U \to F^e_* \mathcal{O}_U(\lceil{ (p^e-1)\Delta \rceil|_U}).
	\]
	Since 
	\[
	\mathcal{H}\textrm{om}_{\mathcal{O}_X}(F^e_* \mathcal{O}_X(\lceil{ (p^e-1)\Delta \rceil}), \mathcal{O}_X) \cong i_* \mathcal{H}\textrm{om}_{\mathcal{O}_U}(F^e_* \mathcal{O}_U(\lceil{ (p^e-1)\Delta \rceil|_U}), \mathcal{O}_U)
	\]
	by normality of $X$, we get that $(F^e)^* \colon \mathcal{O}_X \to F^e_* \mathcal{O}_X(\lceil{ (p^e-1)\Delta \rceil})$ splits, and so $(X,\Delta)$ is globally sharply $F$-split. 
	\end{proof}

	Being globally $F$-split is stable for crepant morphisms of log pairs (for the definition of crepant morphism, see \autoref{ss-notation} (i)).
	
	\begin{lemma}[{\cite[Lemma 3.3]{GT16}}]\label{l-GFR-pullback}
		Let $k$ be an $F$-finite field.
		Let $(X, \Delta)$ be a globally sharply $F$-split log pair over $k$. 
		Let $f \colon (Y, \Delta_Y) \to (X, \Delta)$ be a crepant proper birational morphism of log pairs.
		Then $(Y, \Delta_Y)$ is globally sharply $F$-split.
	\end{lemma}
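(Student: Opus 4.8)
The plan is to lift a Frobenius splitting from $X$ to $Y$, using Grothendieck duality to turn splittings into sections of sheaves of the form $F^e_*\mathcal{O}(\text{divisor})$, and then checking a divisorial inequality on $Y$ via crepancy. The dictionary I would use is: for a normal $F$-finite scheme $Z$ and a Weil divisor $D$ there is a natural, restriction-compatible isomorphism $\Hom_{\mathcal{O}_Z}(F^e_*\mathcal{O}_Z(D),\mathcal{O}_Z)\cong F^e_*\mathcal{O}_Z((1-p^e)K_Z-D)$, both sides being reflexive and agreeing on the regular locus. Applied to $D=\lceil(p^e-1)\Theta\rceil$, it says that $(Z,\Theta)$ is globally sharply $F$-split exactly when, for some $e>0$, there is a nonzero section $\sigma\in H^0(Z,\mathcal{O}_Z((1-p^e)K_Z-\lceil(p^e-1)\Theta\rceil))$ whose associated map $\phi_\sigma\colon F^e_*\mathcal{O}_Z(\lceil(p^e-1)\Theta\rceil)\to\mathcal{O}_Z$ sends $F^e_*1\mapsto 1$; so I want to manufacture such a section on $Y$ out of one on $X$.

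First I would reduce to the case $(p^{e_0}-1)(K_X+\Delta)\sim 0$. By \autoref{l-Q-K+delta} there is $\Gamma\ge 0$ with $(X,\Delta+\Gamma)$ globally sharply $F$-split and $(p^{e_0}-1)(K_X+\Delta+\Gamma)\sim 0$ for some $e_0>0$. Then $\Gamma=(K_X+\Delta+\Gamma)-(K_X+\Delta)$ is $\mathbb{Q}$-Cartier, $f^*\Gamma\ge 0$, the boundary $\Delta_Y+f^*\Gamma$ is effective, and $K_Y+(\Delta_Y+f^*\Gamma)=f^*(K_X+\Delta+\Gamma)$, so $f$ is crepant for the enlarged pairs as well; moreover any splitting of $\mathcal{O}_Y\hookrightarrow F^e_*\mathcal{O}_Y(\lceil(p^e-1)(\Delta_Y+f^*\Gamma)\rceil)$ restricts, along the inclusion $F^e_*\mathcal{O}_Y(\lceil(p^e-1)\Delta_Y\rceil)\hookrightarrow F^e_*\mathcal{O}_Y(\lceil(p^e-1)(\Delta_Y+f^*\Gamma)\rceil)$, to a splitting of $\mathcal{O}_Y\hookrightarrow F^e_*\mathcal{O}_Y(\lceil(p^e-1)\Delta_Y\rceil)$. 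Hence I may replace $(X,\Delta),(Y,\Delta_Y)$ by $(X,\Delta+\Gamma),(Y,\Delta_Y+f^*\Gamma)$ and assume $(p^{e_0}-1)(K_X+\Delta)\sim 0$; in particular the index of $K_X+\Delta$ is prime to $p$.

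Then comes the core step: choose $e$ divisible by $e_0$ and at which $(X,\Delta)$ splits (splittings occur at all multiples of a fixed level), and let $\sigma$ be the corresponding section on $X$. Fix compatible canonical divisors with $f_*K_Y=K_X$, regard $\sigma$ as a rational function, and verify $\operatorname{div}_Y(\sigma)+(1-p^e)K_Y-\lceil(p^e-1)\Delta_Y\rceil\ge 0$ one prime divisor of $Y$ at a time --- every such prime being either the strict transform of a prime $D\subset X$ or $f$-exceptional. On a strict transform $f_*^{-1}D$ the coefficient is literally the corresponding, nonnegative, coefficient of $\operatorname{div}_X(\sigma)+(1-p^e)K_X-\lceil(p^e-1)\Delta\rceil$. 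On an $f$-exceptional prime $E$, write $m_E:=\mult_E(f^*(K_X+\Delta))=\mult_E(K_Y)+\mult_E(\Delta_Y)$ (crepancy); since the index of $K_X+\Delta$ divides $p^e-1$, $(p^e-1)m_E\in\mathbb{Z}$ and hence $(p^e-1)\mult_E(\Delta_Y)\in\mathbb{Z}$, so the ceiling is exact and the coefficient along $E$ is $\nu_E(\sigma)-(p^e-1)m_E$; this is $\ge 0$ because on $X$ one has $\operatorname{div}_X(\sigma)\ge\lceil(p^e-1)\Delta\rceil+(p^e-1)K_X\ge(p^e-1)(K_X+\Delta)$ with right-hand side $\mathbb{Q}$-Cartier, forcing $\nu_E(\sigma)\ge(p^e-1)m_E$ after pullback. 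Feeding the resulting section on $Y$ back through duality gives $\psi\colon F^e_*\mathcal{O}_Y(\lceil(p^e-1)\Delta_Y\rceil)\to\mathcal{O}_Y$ which agrees with $\phi_\sigma$ over the open locus $V\subseteq Y$ on which $f$ is an isomorphism; then $\psi(F^e_*1)\in H^0(Y,\mathcal{O}_Y)$, being $1$ on the dense open $V$ since $\phi_\sigma(F^e_*1)=1$, equals $1$ because $Y$ is integral, and $\psi$ splits $(Y,\Delta_Y)$.

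The one delicate point I expect is the exceptional-divisor estimate. Pulling back the splitting section verbatim at a level $e$ with $(p^e-1)\mult_E(\Delta_Y)\notin\mathbb{Z}$ for some $f$-exceptional prime $E$ --- which happens exactly when the discrepancy of $E$ over $(X,\Delta)$ lies strictly in $(-1,0)$ and $p^e-1$ fails to clear its denominator --- makes $\lceil(p^e-1)\mult_E(\Delta_Y)\rceil$ strictly exceed $(p^e-1)\mult_E(\Delta_Y)$, so the coefficient along $E$ becomes negative and the argument breaks. This is exactly why the reduction to $(p^{e_0}-1)(K_X+\Delta)\sim 0$ --- equivalently, to the index of $K_X+\Delta$ being prime to $p$ --- is indispensable: it lets us clear all such denominators by choosing $e$ a multiple of $e_0$. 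The hypothesis $\Delta_Y\ge 0$ is needed to keep $\Delta_Y+f^*\Gamma$ effective in that reduction (and for the conclusion to even make sense), while everything else is straightforward bookkeeping.
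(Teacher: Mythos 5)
The paper states this lemma without proof, citing \cite[Lemma 3.3]{GT16}, so there is no internal argument to compare against. Your proof is correct and is essentially the standard one used in that reference and its antecedents: via Grothendieck duality a sharp splitting at level $e$ is a section $\sigma$ of $\cO_X\bigl((1-p^e)K_X-\lceil(p^e-1)\Delta\rceil\bigr)$ with the appropriate normalization, and you pull it back. The two points that make the argument go through, and which you correctly isolate, are (1) the reduction via \autoref{l-Q-K+delta} to the case $(p^{e_0}-1)(K_X+\Delta)\sim 0$, which ensures $\lceil(p^e-1)\Delta_Y\rceil$ is exact along every exceptional prime once $e_0\mid e$ (splittings do propagate to multiples of a splitting level), and (2) the observation that $\div_X(\sigma)-(p^e-1)(K_X+\Delta)$ is an effective $\Q$-Cartier divisor whose pullback under the crepant $f$ remains effective, yielding $\nu_E(\sigma)\ge(p^e-1)\mult_E(K_Y+\Delta_Y)$. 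The restriction to the common open locus where $f$ is an isomorphism then identifies the resulting map on $Y$ with the original splitting there, and integrality of $Y$ forces $\psi(F^e_*1)=1$ globally. The one place to be slightly more careful than you were in phrasing is that the inequality $\div_X(\sigma)\ge(p^e-1)(K_X+\Delta)$ on $X$ only gives $\nu_E(\sigma)\ge(p^e-1)m_E$ after noting that the difference is $\Q$-Cartier (both terms are: the first is principal, the second is linearly trivial) so that its pullback along $f$ is again effective; and one then computes $\nu_E$ of that pullback using $f^*\div_X(\sigma)=\div_Y(\sigma)$ and $f^*(K_X+\Delta)=K_Y+\Delta_Y$. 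With that spelled out, the argument is complete.
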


We remark that in \autoref{l-GFR-pullback} it is crucial that the boundary divisor $\Delta_Y$ is effective.
	
	Being globally $F$-split is stable under the passage to quasi-\'etale covers.
	
	\begin{lemma}[{\cite[Lemma 11.1]{PZ19}}] \label{l-F-split-quasietale}
		Let $k$ be an $F$-finite field. Let $(X,\Delta)$ be a pair  over $k$ and let $\pi \colon Y \to  X$ be a quasi-\'etale $k$-morphism between normal $k$-varieties.
		If  $(X,\Delta)$ is globally sharply $F$-split, then $(Y, \Delta_Y:=\pi^*\Delta)$ is globally sharply $F$-split.
	\end{lemma}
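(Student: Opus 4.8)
The plan is to follow the pattern of the proof of \autoref{l-gfs-image}: restrict to the locus where $\pi$ is \'etale, pull the splitting back along it, and then extend to $Y$ by reflexivity. Since $\pi$ is quasi-\'etale, there is an open subset $U \subseteq X$ with $\cod_X(X \setminus U) \geq 2$ over which $\pi$ is finite \'etale; set $V := \pi^{-1}(U)$, so that $\cod_Y(Y \setminus V) \geq 2$ because $\pi$ is finite. As $\pi|_V \colon V \to U$ is \'etale, pullback of Weil divisors along it commutes with rounding, so $\pi^*\lceil (p^e-1)\Delta \rceil = \lceil (p^e-1)\Delta_Y \rceil$ as Weil divisors on $Y$ for every $e$ (the two sides agree on $V$, and both complements have codimension at least two).

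Fix $e > 0$ together with a splitting of $(F^e)^*\colon \MO_X \to F^e_*\MO_X(\lceil (p^e-1)\Delta\rceil)$, and restrict it to $U$. The key point is that the absolute Frobenius square
\[
\begin{tikzcd}
V \ar[r, "F^e"] \ar[d, "\pi"'] & V \ar[d, "\pi"] \\
U \ar[r, "F^e"'] & U
\end{tikzcd}
\]
is Cartesian, since the relative Frobenius of the \'etale morphism $\pi|_V$ is an isomorphism. Flat base change along the left vertical map then gives a canonical isomorphism $\pi^* F^e_{*}\mathcal{F} \xrightarrow{\ \sim\ } F^e_{*}\pi^*\mathcal{F}$ for every quasi-coherent sheaf $\mathcal{F}$ on $U$, compatibly with the Frobenius structure morphisms (i.e.\ carrying $\pi^*\big((F^e)^*\big)$ to $(F^e)^*$). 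Applying the exact functor $(\pi|_V)^*$ to the chosen splitting over $U$ and transporting it across this isomorphism — using the divisor identity above — produces a splitting of $(F^e)^*\colon \MO_V \to F^e_*\MO_V(\lceil (p^e-1)\Delta_Y\rceil|_V)$; that is, $(V,\Delta_Y|_V)$ is globally sharply $F$-split.

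To descend to $Y$, observe that $\mathcal{H}\textrm{om}_{\MO_Y}(F^e_*\MO_Y(\lceil (p^e-1)\Delta_Y\rceil), \MO_Y)$ is reflexive, hence $S_2$, so restriction to $V$ induces a bijection on global sections. Exactly as in \autoref{l-gfs-image}, the splitting of $(F^e)^*$ over $V$ therefore extends uniquely to a morphism $F^e_*\MO_Y(\lceil (p^e-1)\Delta_Y\rceil) \to \MO_Y$, which is again a splitting since it restricts to one on the dense open $V$. Hence $(Y,\Delta_Y)$ is globally sharply $F$-split.

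I expect the only genuine subtlety to be the Frobenius-compatibility of \'etale base change — the claim that the Cartesian square above identifies the Frobenius structure morphisms on $U$ and $V$. This is classical: it is essentially the characterization of \'etale morphisms as those whose relative Frobenius is an isomorphism. Everything else is the Weil-divisor bookkeeping and the reflexive-extension argument already carried out for \autoref{l-gfs-image}.
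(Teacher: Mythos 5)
The paper does not give its own argument for this lemma; it simply cites \cite[Lemma~11.1]{PZ19}, so there is no in-paper proof to compare against. Your proof is correct: the reduction to the \'etale locus $V\to U$, the use of the Cartesian absolute-Frobenius square (valid because the relative Frobenius of an \'etale morphism of $\mathbb{F}_p$-schemes is an isomorphism) to obtain $\pi^*F^e_* \simeq F^e_*\pi^*$ compatibly with the Frobenius structure maps, the Weil-divisor identity $\pi^*\lceil(p^e-1)\Delta\rceil = \lceil(p^e-1)\Delta_Y\rceil$ (which holds on $V$ because \'etale pullback has ramification index one, and then globally by reflexivity), and the extension of the splitting over the codimension-two boundary using the $S_2$ property of $\mathcal{H}\mathrm{om}(-,\mathcal{O}_Y)$ are all carried out correctly. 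You are right to single out the Frobenius-compatibility of the base-change isomorphism as the one point needing care; it is exactly the characterization of \'etale morphisms in characteristic $p$, and the rest of the argument is formal, following the template of \autoref{l-gfs-image} as you say.
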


\subsection{Log liftability over $W(k)$}
	
We fix $k$ to be an algebraically closed field of characteristic $p>0$. We recall the notion of liftability for pairs (cf.\ \cite{EV92}).
We identify a prime divisor $D$ on $X$ with its naturally associated reduced subvariety of codimension one contained in $X$.
	
\begin{definition}\label{def-lift}
	Let $(X, D=\sum_{i=1}^r D_i)$ be a pair over $k$ where $D_1,...,D_r$ are distinct prime divisors. A \emph{lifting} of $(X,D)$ over a scheme $T$ consists of
	\begin{enumerate}
		\item[(a)] a flat and separated morphism $\mathcal{X} \to T$;
		\item[(b)] closed subschemes $\mathcal{D}_i \subset \mathcal{X}$, flat over $T$ for $i=1, \dots, r$;
		\item[(c)] a morphism $\alpha \colon \Spec(k) \to T$ and an isomorphism $\gamma \colon \mathcal{X} \times_T \Spec(k) \xrightarrow{\cong} X$ such that $\gamma (\mathcal{D}_i\times_T \Spec(k)) = D_i$ for every $i=1, \dots, r$.
		\end{enumerate}
	By abuse of notation, we often identify $(\mathcal{X}, \mathcal{D}) \times_T \Spec(k)$ with $(X,D)$.
	If $T=\Spec(W(k))$, we say that $(\mathcal{X}, \mathcal{D})$ is a lifting of $(X,D)$ over the ring  $W(k)$ of Witt vectors.
\end{definition}

\begin{definition}
    Let $(R, \mathfrak{m})$ be a complete DVR, and denote by $\Spf(R)$ the formal completion of $\Spec(R)$ at $\mathfrak{m}$.
    Let $(X, D= \sum D_i)$ be a pair over $k = R/\mathfrak{m}$.
    A \emph{formal lifting} $(\mathfrak{X}, \mathfrak{D})$ of  $(X,D)$ over $\Spf(R)$ consists of 
    \begin{enumerate}
        \item[(a)] a formal scheme $\mathfrak{X} \to \Spf(R)$;
        \item[(b)] formal subschemes $\mathfrak{D}_i\subset \mathfrak{X}$ for any $i>0$; 
        \item[(c)] for any $n>0$, the truncation $(X_n, D_n) = (\mathfrak{X}, \mathfrak{D}) \times_{\Spf(R)} \Spec(R/\mathfrak{m}^{n+1})$ is a lifting of $(X, D)$ over $\Spec(R/\mathfrak{m}^{n+1})$.
    \end{enumerate}
    If $R=W(k)$, we say that $(\mathfrak{X}, \mathfrak{D})$ is a formal lifting of $(X,D)$ over the ring of Witt vectors.
\end{definition}

\begin{definition} \label{def-lift-morphism}
   Let $f \colon (Y, D_Y) \to (X, D_X=\sum_i D_i)$ be a proper birational morphism of pairs over $k$, where $D_X$ is a reduced Weil divisor and $D_Y=E+f_*^{-1}D_X$, where $E$ is $f$-exceptional.
   A \emph{lifting} $\widetilde{f}$ of $f$ over a scheme $T$ consists of 
    \begin{enumerate}
        \item[(a)] a lifting $(\mathcal{Y}, \mathcal{D}_{\mathcal{Y}})$ of $(Y, D_Y)$, and a lifting $(\mathcal{X}, \mathcal{D}_{\mathcal{X}})$ of $(X, D_X)$ over $T$;
        \item[(b)] a proper morphism $\widetilde{f} \colon (\mathcal{Y}, \mathcal{D}_Y) \to (\mathcal{X}, \mathcal{D}_X)$ such that $\widetilde{f}_*\mathcal{O}_{\mathcal{Y}}=\mathcal{O}_{\mathcal{X}}$ and the base change $\widetilde{f} \times_{T} \Spec(k) \colon Y \to X$ coincides with $f$.
    \end{enumerate}
	If $T=\Spec(W(k))$, we say that $\widetilde{f}$ is a lifting of $f$ over the ring $W(k)$ of Witt vectors.
\end{definition}
	
The following guarantees that a lifting for an snc pair as in \autoref{def-lift} is locally snc over a regular base.
	
	\begin{lemma}\label{l-loc-triv}
		Let $(X, D=\sum_{i=1}^r D_i)$ be an snc proper pair over $k$ and let $(\mathcal{X}, \mathcal{D})$ be a lifting over a regular local scheme $T$. Then $(\mathcal{X}, \mathcal{D})$ is relatively snc over $T$. In particular, if $\bigcap_{j \in J} \mathcal{D}_j$ is not empty, then it is a smooth $T$-scheme of relative dimension $\dim(X)-|J|$.
	\end{lemma}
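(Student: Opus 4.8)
The claim is local on $\mathcal{X}$, so the plan is to work in a neighborhood of a point $x \in \mathcal{X}$ lying over a closed point $\bar x \in X$, and set $A := \mathcal{O}_{\mathcal{X},x}$, $R := \mathcal{O}_{T,t}$ where $t$ is the image of $x$. Since $(X,D)$ is snc and $D = \sum_{i=1}^r D_i$, after reordering we may assume the components passing through $\bar x$ are $D_1,\dots,D_s$ with $s \le \dim X =: n$, and that $D_1,\dots,D_s$ are cut out near $\bar x$ by a regular system of parameters $\bar u_1,\dots,\bar u_s$ of $\mathcal{O}_{X,\bar x}$, extended to a full regular system $\bar u_1,\dots,\bar u_n$. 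By flatness of $\mathcal{X}\to T$ the ring $A$ has $\dim A = n + \dim R$, and $A/\mathfrak m_R A \cong \mathcal{O}_{X,\bar x}$. First I would lift each $\bar u_i$ ($1\le i\le s$) to a generator $u_i \in A$ of the ideal $\mathcal{I}_{\mathcal{D}_i,x}$; this is possible because $\mathcal{D}_i$ is flat over $T$ and its fiber is the Cartier divisor $D_i$, so $\mathcal{I}_{\mathcal{D}_i}$ is generated by one element whose image generates $\mathcal{I}_{D_i}$ (use Nakayama together with $T$-flatness, exactly as in the standard fact that a relative effective Cartier divisor restricting to a principal ideal is principal). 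Then I would lift $\bar u_{s+1},\dots,\bar u_n$ to arbitrary elements $u_{s+1},\dots,u_n \in \mathfrak m_x$.

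The heart of the argument is then to show that $u_1,\dots,u_n$ together with a regular system of parameters of $R$ form a regular system of parameters of $A$, and more precisely that $A$ is, as an $R$-algebra, formally $R[[u_1,\dots,u_n]]$ — equivalently that $\widehat{A} \cong \widehat{R}[[u_1,\dots,u_n]]$. The key input is the following: the images $\bar u_1,\dots,\bar u_n$ generate $\mathfrak m_{\bar x}/\mathfrak m_{\bar x}^2$, hence by Nakayama $u_1,\dots,u_n$ together with $\mathfrak m_R$ generate $\mathfrak m_x$, so $u_1,\dots,u_n$ generate $\mathfrak m_x/(\mathfrak m_x^2 + \mathfrak m_R A)$. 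Now I would invoke the local criterion of flatness: since $\mathcal{X}\to T$ is flat, $A$ is $R$-flat, and because $R$ is regular and $A/\mathfrak m_R A \cong \mathcal{O}_{X,\bar x}$ is regular, one gets that $A$ is regular (flatness with regular fiber and regular base gives regular total space, e.g.\ via \cite[Tag 00NU]{stacks-project} or Matsumura) of dimension $n+\dim R$. Then counting: $u_1,\dots,u_n$ plus a regular system of parameters $t_1,\dots,t_{\dim R}$ of $R$ give $n + \dim R$ elements generating $\mathfrak m_x$, which in a regular local ring of that dimension forces them to be a regular system of parameters. Completing, $\widehat A \cong \widehat R[[u_1,\dots,u_n]]$ with $\mathcal{I}_{\mathcal{D}_i}$ generated by $u_i$ for $1\le i\le s$; this is precisely the statement that $(\mathcal{X}, \sum \mathcal{D}_i)$ is relatively snc over $T$ at $x$, because a general point of $\mathcal D_i$ not meeting the others reduces to the same computation with fewer coordinates.

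For the "in particular" clause: for $J \subseteq \{1,\dots,r\}$ with $\bigcap_{j\in J}\mathcal D_j$ nonempty, pick a point $x$ in the intersection; then $J \subseteq \{1,\dots,s\}$ in the local picture above and $\bigcap_{j\in J}\mathcal D_j$ is cut out in $\widehat A \cong \widehat R[[u_1,\dots,u_n]]$ by $(u_j)_{j\in J}$, so its local ring is $\widehat R[[u_i : i\notin J]]$, which is $R$-smooth (formally smooth and of finite type locally) of relative dimension $n - |J| = \dim X - |J|$. I would note that $T$-flatness of $\bigcap_{j\in J}\mathcal D_j$ and the identification of its fiber with the smooth scheme $\bigcap_{j\in J}D_j$ again follow from the fact that each $u_j$ is a nonzerodivisor on the relevant quotient (the $u_j$ for $j\in J$ form a regular sequence on $\widehat A$), so one can apply the local flatness criterion inductively. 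The main obstacle I anticipate is the bookkeeping to pass cleanly between the local ring $A$ and its completion and to make sure the divisors $\mathcal D_j$ are genuinely cut out by the lifted parameters $u_j$ rather than merely containing them set-theoretically — this is where flatness of $\mathcal D_j$ over $T$ is essential, and I would isolate that as the one lemma-within-the-proof deserving care. Everything else is the standard "regular + flat over regular base $\Rightarrow$ regular, with relatively principal divisors" package.
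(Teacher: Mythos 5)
Your argument is correct. The paper itself contains no in-text proof of this lemma --- it simply refers to \cite[Lemma 2.7]{Kaw21} --- so there is no internal argument to compare against, but your route is exactly the standard one: use $T$-flatness of each $\mathcal D_i$ together with Nakayama to lift the local equations $\bar u_i$ of the $D_i$ to generators $u_i$ of $\mathcal I_{\mathcal D_i}$; invoke ``flat over a regular base with regular closed fiber implies regular'' for $A=\mathcal O_{\mathcal X,x}$; count generators of $\mathfrak m_x$ against $\dim A = \dim X + \dim T$ to see that the $u_i$ together with a regular system of parameters of $T$ are a regular system of parameters of $A$; and read off the relative snc structure. Two places to tighten in a final write-up. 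First, you can bypass the completion discussion: once $A$ is $R$-flat and $A/\mathfrak m_R A$ is smooth over $k$, and the morphism is of finite presentation near $x$, the map $\mathcal X\to T$ is smooth at $x$ directly (e.g.\ \cite[Tag 01V8]{stacks-project}), and likewise for each $\bigcap_{j\in J}\mathcal D_j$ once you know the $u_j$ cut it out and remain a regular sequence along the fiber. Second, your local computation only treats points $x$ lying over the closed point of $T$; this indeed suffices in all situations the paper uses, because either $T$ is Artinian, or $\mathcal X\to T$ is proper (so that every non-smooth point would specialize into the closed fiber, where you have already ruled it out), but this reduction deserves a sentence. You were right to flag the issue that the lifted $u_i$ must actually generate $\mathcal I_{\mathcal D_i}$ rather than merely vanish on it --- that is where the flatness of $\mathcal D_i$ over $T$ (part of the paper's Definition~\ref{def-lift}) is essential, and you use it correctly.
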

	\begin{proof}
		See \cite[Remark 2.7]{Kaw21}.
	\end{proof}
	
The following is a flatness criterion for Cartier divisors we will be repeatedly using.

\begin{lemma}\label{l-flat-Cartier}
    Let $X$ be a proper variety over $k$ and $D$ an effective Cartier divisor on $X$.
    Let $f \colon \mathcal{X} \to T$ be a flat lifting of $X$ over a local scheme $T$.
    If $\mathcal{D}$ is an effective Cartier divisor on $\mathcal{X}$ such that $\mathcal{D}|_X=D$, then $\mathcal{D} \to T$ is flat.
\end{lemma}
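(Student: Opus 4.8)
The plan is to exhibit $\mathcal{O}_{\mathcal{D}}$ as the cokernel of an inclusion of two $T$-flat sheaves and then invoke the local criterion of flatness; the hypothesis will enter only through the fact that restricting the Cartier divisor $\mathcal{D}$ to the closed fibre recovers the Cartier divisor $D$.

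Since $\mathcal{D}$ is an effective Cartier divisor on $\mathcal{X}$, I would start from the short exact sequence of $\mathcal{O}_{\mathcal{X}}$-modules
\[
0 \longrightarrow \mathcal{O}_{\mathcal{X}}(-\mathcal{D}) \longrightarrow \mathcal{O}_{\mathcal{X}} \longrightarrow \mathcal{O}_{\mathcal{D}} \longrightarrow 0 ,
\]
in which $\mathcal{O}_{\mathcal{X}}(-\mathcal{D}) = \mathcal{I}_{\mathcal{D}}$ is an invertible sheaf. As $\mathcal{O}_{\mathcal{X}}$ is $T$-flat by hypothesis and $\mathcal{O}_{\mathcal{X}}(-\mathcal{D})$ is, locally on $\mathcal{X}$, a direct summand of a finite free $\mathcal{O}_{\mathcal{X}}$-module, both left-hand terms are flat over $T$, and the goal is to deduce the same for $\mathcal{O}_{\mathcal{D}}$. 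Flatness being local on $\mathcal{X}$ and on $T$, I would reduce to the affine situation $\mathcal{X} = \Spec B$, $T = \Spec A$ with $(A,\mathfrak{m},k)$ local Noetherian and $\alpha$ the closed point, and, after shrinking $\mathcal{X}$, I would write $\mathcal{D} = V(g)$ with $g \in B$ a non-zero-divisor, so that the sequence becomes $0 \to B \xrightarrow{\cdot g} B \to B/gB \to 0$.

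Restricting to the closed fibre, i.e.\ applying $-\otimes_A k$, produces the map $\cdot\bar g \colon B\otimes_A k \to B\otimes_A k$, where $B\otimes_A k = \mathcal{O}_X$ and $\bar g$ is a local equation of $\mathcal{D}|_X = D$. Since $D$ is an effective Cartier divisor on $X$, the image of $\mathcal{O}_{\mathcal{X}}(-\mathcal{D})|_X \to \mathcal{O}_X$ is the invertible ideal sheaf $\mathcal{O}_X(-D)$; a surjection of invertible sheaves is an isomorphism, so $\mathcal{O}_{\mathcal{X}}(-\mathcal{D})|_X \to \mathcal{O}_X$ is injective, equivalently $\bar g$ is a non-zero-divisor on $\mathcal{O}_X$. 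Hence the displayed sequence stays exact after $\otimes_A k$, and the local criterion of flatness (in the form: in a short exact sequence of $A$-modules whose two leftmost terms are $A$-flat, the third is $A$-flat as soon as exactness is preserved by $\otimes_A k$) gives that $\mathcal{O}_{\mathcal{D}} = B/gB$ is flat over $A = \mathcal{O}_T$, which is the assertion.

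I do not expect a genuine obstacle here: the only point requiring attention is that the restriction $\mathcal{D}|_X$ in the statement must be interpreted as the Cartier-divisor pullback along $X \hookrightarrow \mathcal{X}$, so that its local equation is precisely the image $\bar g$ of a local equation of $\mathcal{D}$, and a local equation of a Cartier divisor is automatically a non-zero-divisor. The one piece of book-keeping is the exact form of the local criterion invoked: it applies without any finiteness hypothesis when $\mathfrak{m}$ is nilpotent (e.g.\ $T = \Spec W_m(k)$, where $\mathfrak{m}^m = 0$), while for $T = \Spec W(k)$ — the spectrum of a DVR — one may bypass it entirely, since there $A$-flatness of $B/gB$ amounts to $p$-torsion-freeness, and this follows directly: if $pb = gc$ with $b,c \in B$, then $\bar g\,\bar c = 0$ in $B/pB$, so $\bar c = 0$, hence $c \in pB$, and cancelling the non-zero-divisor $p$ gives $b \in gB$. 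These two cases cover all of the applications in this paper.
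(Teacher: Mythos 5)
Your argument is correct and is essentially the paper's proof: both start from the short exact sequence attached to the Cartier divisor $\mathcal{D}$, use that the induced local equation on the closed fibre is a non-zero-divisor because $\mathcal{D}|_X = D$ is Cartier, read off $\mathrm{Tor}_1^{\mathcal{O}_T}(k,\mathcal{O}_{\mathcal{D}})=0$ from the long exact Tor sequence together with flatness of $\mathcal{O}_{\mathcal{X}}$, and conclude by the local criterion of flatness. Your closing caveat about when the criterion applies is more cautious than necessary — \cite[\href{https://stacks.math.columbia.edu/tag/00MK}{Tag 00MK}]{stacks-project} (the reference the paper cites) applies to any finite module over a Noetherian local $\mathcal{O}_T$-algebra, so there is no need to special-case $W_m(k)$ versus $W(k)$ — but the cases you handle by hand do cover everything the paper uses.
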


\begin{proof}
We can suppose $X$ and $\mathcal{X}$ are affine schemes and we let $f$ and $\widetilde{f}$ be local equations defining $D$ and $\mathcal{D}$.
By considering the short exact sequence 
\[0 \to \mathcal{O}_\mathcal{X} \xrightarrow{\widetilde{f}} \mathcal{O}_\mathcal{X} \to \mathcal{O}_\mathcal{D} \to 0,\]
we obtain the following exact sequence:
\[\text{Tor}_1^{R}(k, \mathcal{O}_{\mathcal{X}})  \to \text{Tor}_1^{R}(k, \mathcal{O}_{\mathcal{D}}) \to \mathcal{O}_X \xrightarrow{f} \mathcal{O}_X \to \mathcal{O}_D \to 0.\]
As $f$ is a non-zero divisor, $\mathcal{O}_X \xrightarrow{f} \mathcal{O}_X$ is injective and thus we deduce that \[\text{Tor}_1^{R}(k, \mathcal{O}_{\mathcal{X}})  \to \text{Tor}_1^{R}(k, \mathcal{O}_{\mathcal{D}})\]
is surjective.
Since $\text{Tor}_1^{R}(k, \mathcal{O}_{\mathcal{X}})=0$, so is $\text{Tor}_1^{R}(k, \mathcal{O}_{\mathcal{D}})$. By applying the local criterion for flatness 
\cite[Lemma 2.1]{Har2}, we conclude $\mathcal{D}$ is flat over $T$.
\end{proof}	
	
	We recall the fundamental notion of log liftability over the Witt vectors for singular varieties that we will use in this article. 
	
	\begin{definition}\label{def-liftability}
		Let $(X,D)$ be a pair over $k$, where $D$ is a reduced Weil divisor. We say that $(X,D)$ is \emph{log liftable over the ring $W(k)$ of Witt vectors} if there exists a log resolution $f \colon (Y,E) \to X$ of $(X,D)$ such that the snc pair $(Y,E+f^{-1}_{\ast}D)$ admits a lifting over $W(k)$.
		
		We say it is \emph{strongly log liftable over $W(k)$} if the proper birational morphism of pairs $f \colon (Y, E+f^{-1}_{\ast}D) \to (X,D)$ lifts over $W(k)$.
	\end{definition}
	
	We stress that in the definition of log liftability, we do not require that the morphism $f$ lifts.
	The following shows that log liftability is a well-behaved notion in the case of surfaces. The existence of log resolutions for excellent surfaces is proven in \cite{Lip78}.

    \begin{lemma}[{cf.\ \cite[Lemma 2.7]{KN20}}]\label{l-loglift-surf} 
		Let $(X, D)$ be a normal surface pair over $k$, where $D$ is a divisor.
		Then the following are equivalent:
		\begin{enumerate}
			\item[(a)] for some log resolution $f \colon  (Y,E) \to X$  of $(X,D)$, the pair $(Y,f_*^{-1}D+E)$ admits a formal lifting over $W(k)$;
			\item[(b)] for all log resolutions $f \colon (Y,E) \to X$ of $(X,D)$, the pair $(Y,f_*^{-1}D+E)$ admits a formal lifting over $W(k)$.
		\end{enumerate} 
		Moreover if $H^2(Y, \mathcal{O}_Y)=0$ for some resolution $Y \to X$, then any formal lifting over $W(k)$  of a resolution $Z \to X$ is algebraisable, in particular $(X,D)$ is log liftable.
		Finally, if $X$ has klt singularities it is sufficient to check the liftability of the minimal resolution of $X$.
	\end{lemma}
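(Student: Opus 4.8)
The plan is to reduce the equivalence $(1)\Leftrightarrow(2)$ to the behaviour of a formal lifting under a single blow-up at a closed point and under its inverse contraction. Given two log resolutions $f_i\colon (Y_i,E_i)\to X$ of $(X,D)$, I would resolve the birational map $Y_1\dashrightarrow Y_2$ by a common log resolution $W$ dominating both (its existence using \cite{Lip78}); by the structure theorem for birational morphisms of smooth projective surfaces each map $W\to Y_i$ is a composition of point blow-ups, and since every pair in sight is snc one can arrange that the relevant boundary stays snc throughout. Thus it suffices to prove: (a) if $(Y,B)$ is an snc surface pair admitting a formal lifting over $W(k)$ and $\sigma\colon Y'=\mathrm{Bl}_pY\to Y$ is the blow-up of a closed point, then the snc pair $(Y',B')$, with $B'$ the reduced total transform of $B$, admits a formal lifting; and (b) conversely, a formal lifting of $(Y',B')$ produces one of $(Y,\sigma_*B')$.

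For (a), one lifts $p$ to a compatible system of closed points of the formal lifting $\mathcal Y$ of $Y$; by \autoref{l-loc-triv} the lifting is relatively snc, so the lifted centre can be placed correctly with respect to the boundary (a node, a smooth point of a component, or off the boundary). Blowing up $\mathcal Y$ along this centre, level by level over $W_n(k)$, yields a formal lifting $\mathcal Y'$ of $Y'$; the exceptional divisor together with the strict transforms of the components of $B$ give the liftings of the components of $B'$, and their flatness over $W(k)$ follows from \autoref{l-flat-Cartier}.

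Step (b) is the crux. The morphism $\sigma$ contracts a $(-1)$-curve $C\cong\bP^1$ with $N_{C/Y'}\cong\mathcal{O}_{\bP^1}(-1)$, so $H^0(C,N_{C/Y'})=H^1(C,N_{C/Y'})=0$ and $C$ lifts uniquely and without obstruction to a compatible system of flat subschemes $\mathcal C'_n\subset\mathcal Y'_n$ over $W_n(k)$. One then contracts $\mathcal C'_n$: because $R^1\sigma_*\mathcal O_{Y'}=0$ --- the graded pieces of the structure sheaf along $C$ are $\mathcal O_C(j)$ with $j\ge 0$, all with vanishing $H^1$ --- Castelnuovo's contractibility criterion applies over each $W_n(k)$, producing a flat proper $W_n(k)$-scheme $\mathcal Y_n$ and a morphism $\mathcal Y'_n\to\mathcal Y_n$ contracting $\mathcal C'_n$; these assemble into a formal lifting $\mathcal Y$ of $Y$, and the images of the remaining boundary components lift $\sigma_*B'$. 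I expect this formal contraction to be the main obstacle: one must verify, via the formal function theorem and the explicit local description of $\sigma$ as a point blow-up, that the contracted formal scheme is again flat over $W(k)$ with smooth special fibre, and crucially this has to be done without assuming $H^2(\mathcal O)=0$, so the usual line-bundle shortcut to contractions is unavailable.

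For the algebraisation statement, note that $h^2(\mathcal O)$ is a birational invariant of smooth projective surfaces, so $H^2(Z,\mathcal O_Z)=0$ for every resolution $Z\to X$. Given a formal lifting $\mathcal Z$ of such a $Z$ and an ample line bundle $A$ on $Z$, the obstruction to lifting $A$ from $\mathcal Z_{n-1}$ to $\mathcal Z_n$ lies in $H^2(Z,\mathcal O_Z)=0$, so $A$ extends to a relatively ample formal line bundle and Grothendieck's existence theorem algebraises $\mathcal Z$ to a projective $W(k)$-scheme; the formal boundary divisors then algebraise as closed subschemes, whence $(X,D)$ is log liftable. Finally, when $X$ is klt its singularities are quotient singularities whose minimal resolution already has snc exceptional divisor, so the minimal resolution of $X$ --- after at most finitely many further blow-ups, which preserve liftability by (a), to make the strict transform of $D$ snc as well --- is a log resolution, and the assertion follows from $(1)\Leftrightarrow(2)$.
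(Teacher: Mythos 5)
Your overall strategy coincides with the paper's: take a common log resolution $W$ dominating both models, move liftings up along point blow-ups and down along $(-1)$-contractions, then handle algebraisation via $H^2(\mathcal{O})=0$ (a birational invariant of smooth surfaces) and Grothendieck existence, and handle the klt case by observing that the minimal resolution is already log. Steps (a), the algebraisation argument, and the klt argument are all substantively the same as in the paper.

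The genuine gap, and you flag it yourself, is step (b): the contraction direction. You invoke ``Castelnuovo's contractibility criterion over each $W_n(k)$,'' but there is no such off-the-shelf statement, and the verification you sketch (formal functions, local description of the blow-up, flatness of the contracted scheme without an $H^2$ vanishing) is exactly what needs proof, not a tool you get to use. As it stands the argument is a description of what should be true rather than a demonstration. Your instinct that ``the usual line-bundle shortcut to contractions is unavailable'' is correct, but this is a signal to change strategies rather than to push Castelnuovo through a non-reduced base. The clean way, which the paper uses, is the deformation-theoretic descent of liftings under contractions: since $\sigma_*\mathcal{O}_{Y'}=\mathcal{O}_Y$, $R^1\sigma_*\mathcal{O}_{Y'}=0$, and the analogous vanishing holds for the boundary components, one applies a Cynk--van Straten type statement (\autoref{t-cynkvanstraten} in this paper, or equivalently \cite[Proposition~4.3]{AZ-nonlift}) order-by-order over $W_n(k)$ to push a lifting of $(Y',B')$ forward to a lifting of $(Y,\sigma_*B')$ together with a lifting of $\sigma$ itself. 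This avoids any contractibility criterion entirely: one does not construct $\mathcal{Y}_n$ by contracting a curve in $\mathcal{Y}'_n$, one simply defines $\mathcal{O}_{\mathcal{Y}_n}:=\widetilde\sigma_*\mathcal{O}_{\mathcal{Y}'_n}$ and checks flatness over $W_n(k)$ from the vanishing $R^1\sigma_*\mathcal{O}_{Y'}=0$. Once you replace your step (b) with this, the rest of your argument goes through.

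Two minor points. First, you do not need the rigidity of the $(-1)$-curve ($H^0(C,N_{C/Y'})=H^1(C,N_{C/Y'})=0$) at all in the descent route; that would matter if you were trying to lift $C$ first and then contract, but the Cynk--van Straten descent takes care of the whole morphism at once. Second, in the klt case, beware that in positive characteristic klt surface singularities are not literally quotient singularities; the correct input is the classification of their minimal resolutions (\cite[Corollary~3.31]{kk-singbook}), which says directly that the minimal resolution is already a log resolution, without any detour through quotients.
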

	
	\begin{proof}
		(b) $\Rightarrow $ (a) is obvious. 
		We now show (a) $\Rightarrow$ (b).
		Suppose that there exists a log resolution $(Y, f_{*}^{-1}D+E)$ lifting over $W(k)$  and let $g \colon Z \to X$ be another log resolution of  $(X,D)$.
		By a resolution of indeterminacies of rational maps between surfaces, there exists a finite number of blow ups $h \colon  W \to Y$ at smooth points of $Y$ such that $\pi \colon W \to X$ is a log resolution of $(X,D)$ and there exists a birational morphism $W \to Z$. By \cite[Proposition 2.9]{ABL20} the pair $(W, \pi_{*}^{-1}D+\Ex(\pi))$ lifts over $W(k)$. 
		Finally by applying \cite[Proposition 4.3]{AZ-nonlift} $(Z, g_*^{-1}D+\Ex(g))$ lifts over $W(k)$.
		
		If $H^2(Y, \MO_Y)=0$ for some log resolution, then also $H^2(Z, \mathcal{O}_Z)=0$ and any formal lifting over $W(k)$ of $Z$ is algebraisable by \cite[Corollary 8.5.6 and Corollary 8.4.5]{FAG}.
		
		As the minimal resolution of a klt singularity is a log resolution by their classification \cite[Corollary 3.31]{kk-singbook}, the last statement is clear.
	\end{proof}
	
	The following is a useful remark on the log liftability of surface pairs which we will use repeatedly.
	
	\begin{lemma}\label{l-loglifting-image}
		Let $\pi \colon Y \to X$ be a proper birational morphism of projective normal surfaces over $k$ and let $D$ be a reduced Weil divisor on $Y$.
		If $(X, \pi_*D)$ is log liftable over $W(k)$, then so is $(Y, D)$.
	\end{lemma}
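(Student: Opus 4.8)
The plan is to upgrade the given log resolution of $(X,\pi_*D)$ to one that factors through $\pi$, and then observe that such a factorization automatically yields a log resolution of $(Y,D)$ whose boundary is a sub-collection of the boundary upstairs; the lifting downstairs then restricts to one for $(Y,D)$.

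By hypothesis there is a log resolution $g_0\colon Z_0\to X$ of $(X,\pi_*D)$ such that $(Z_0,\, \Ex(g_0)+(g_0)_*^{-1}\pi_*D)$ admits a lifting over $W(k)$. Resolving the indeterminacy of the birational map $Z_0\dashrightarrow Y$ by a finite sequence of blow-ups at closed points $\mu\colon Z\to Z_0$, we obtain a proper birational morphism $h\colon Z\to Y$ with $g:=\pi\circ h=g_0\circ\mu$. Blowing up a closed point of a smooth surface carries an snc boundary to an snc boundary (take strict transforms and add the exceptional curve), so $g\colon Z\to X$ is again a log resolution of $(X,\pi_*D)$; moreover, iterating the stability of liftability under point blow-ups (cf.\ \cite[Proposition 2.9]{ABL20}), the pair $(Z,E)$ with $E:=\Ex(g)+g_*^{-1}\pi_*D$ admits a lifting over $W(k)$, consisting of a flat $\mathcal{Z}\to\Spec W(k)$ together with flat lifts $\mathcal{E}_i$ of the prime components $E_i$ of $E$.

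Now let $P$ be a prime component of $\Ex(h)+h_*^{-1}D$. If $P\subseteq\Ex(h)$, then $P\subseteq\Ex(g)$ because $g=\pi\circ h$. Otherwise $P=h_*^{-1}P'$ for some prime component $P'$ of $D$: if $P'$ is not $\pi$-exceptional then $h_*^{-1}P'=g_*^{-1}(\pi_*P')$ is a component of $g_*^{-1}\pi_*D$, while if $P'$ is $\pi$-exceptional then $g(P)=\pi(P')$ is a point and hence $P\subseteq\Ex(g)$. In every case $P$ is a component of $E$, so $\Supp(\Ex(h)+h_*^{-1}D)\subseteq\Supp(E)$. Since $Z$ is smooth, $\Ex(h)$ is a divisor (as $h$ is proper birational onto the normal surface $Y$), and a reduced divisor supported on a subset of the components of an snc divisor on a smooth surface is itself snc, we conclude that $h\colon Z\to Y$ is a log resolution of $(Y,D)$. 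Keeping only those $\mathcal{E}_i$ that lift the components of $\Ex(h)+h_*^{-1}D$ then exhibits a lifting of $(Z,\Ex(h)+h_*^{-1}D)$ over $W(k)$, so $(Y,D)$ is log liftable.

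The one point requiring care is the second step: we must ensure that making $g_0$ factor through $\pi$ does not destroy the existence of a lifting, which is precisely where the stability of liftability under blow-ups at closed points is invoked.
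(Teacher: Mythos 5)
Your argument is correct and matches the paper's own proof in structure: pass to a higher log resolution $Z$ of $(X,\pi_*D)$ factoring through $Y$, use the stability of liftability under blow-ups at points (\cite[Proposition 2.9]{ABL20}), and observe that $\Ex(h)+h_*^{-1}D$ is contained in the lifted snc boundary so its lift restricts. You have simply spelled out the containment and the snc check that the paper leaves implicit.
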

	
	\begin{proof}
		Take a log resolution $f \colon Z \to X$ of $(X, \pi_*D)$ such that $(Z, f_*^{-1}(\pi_*D)+\Ex(f))$ lifts over $W(k)$. By passing to a higher model and by \cite[Proposition 2.9]{ABL20} we can assume that $f \colon Z \to  X$ admits a factorisation  $g \colon Z \to Y$.
		Since $ f_*^{-1}(\pi_*D) +\Ex(f) \supset g_*^{-1} D +\Ex(g),$ we conclude that also $(Y, D)$ is log liftable over $W(k)$.
	\end{proof}
	
	\begin{remark}
		Note that \autoref{l-loglift-surf} and \autoref{l-loglifting-image} are specific to surfaces and they do not extend to higher dimensions as shown by the examples of \cite[Theorem 2.4]{LS14}. 
	\end{remark}

	\subsection{Deformation theory toolbox}\label{s-def-toolbox}
	
	In this section we collect results on deformation theory we will need throughout the article.
	
	\subsubsection{Descent of liftings under contractions}
	
	The following  result provides a sufficient cohomological criterion for the existence of a lifting for a contraction (see \cite{AZ-nonlift, CvS09}).

	\begin{theorem}\label{t-cynkvanstraten}
		Let $\Spec(A) \to \Spec(A')$ be a closed immersion of local Artinian schemes defined by a principal ideal $J=(\pi)$ of square zero.
		Let $f \colon Y \to X$  be a morphism of flat $A$-schemes.
		Let $\left\{E_i \right\}_{i \in I}$ (resp.~ $\left\{F_i\right\}_{i \in I}$) be a collection of closed subsets of $Y$ (resp.~ of $X$).
		Assume that 
		\begin{enumerate}
			\item[(a)] $f_*\mathcal{O}_Y=\mathcal{O}_X$ and $R^1f_*\mathcal{O}_Y=0$;
			\item[(b)] $f_*\mathcal{O}_{E_i}=\mathcal{O}_{F_i}$ and $R^1f_*\mathcal{O}_{E_i}=0$ for each $i \in I$.
		\end{enumerate}
		Let $(\mathcal{Y}, \left\{\mathcal{E}_i \right\}_{i \in I})$ be a lifting of $(Y,\left\{E_i \right\}_{i \in I})$ over $A'$. Then
		\begin{enumerate}
			\item[(1)] there exists a natural lifting $(\mathcal{X}, \left\{\mathcal{F}_i\right\}_{i \in I})$ of $(X, \left\{F_i\right\}_{i\in I})$ together with a lifting \[\tilde{f} \colon (\mathcal{Y}, \left\{\mathcal{E}_i \right\}_{i \in I}) \to (\mathcal{X}, \left\{\mathcal{F}_i\right\}_{i \in I})\] of $f$ over $A'$;
			\item[(2)] $\widetilde{f}_*\mathcal{O}_{\mathcal{Y}}= \mathcal{O}_{\mathcal{X}}$ and $R^1\widetilde{f}_*\mathcal{O}_{\mathcal{Y}}=0$;
			\item[(3)] $\tilde{f}_*\mathcal{O}_{\mathcal{E}_i}=\mathcal{O}_{\mathcal{F}_i}$ and $R^1\tilde{f}_*\mathcal{O}_{\mathcal{E}_i}=0$ for each $i \in I$.  
		\end{enumerate} 
	\end{theorem}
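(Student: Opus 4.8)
The plan is to build $\mathcal{X}$ directly as a ringed space by pushing forward $\mathcal{O}_{\mathcal{Y}}$ along $f$, and to let $\widetilde{f}$ be the tautological adjunction morphism. Since $\Spec(A)\hookrightarrow\Spec(A')$ is a nilpotent thickening, the schemes $X,Y,\mathcal{X},\mathcal{Y}$ all share the underlying topological spaces of $X$ and $Y$, and $f$ is literally the same continuous map throughout; so $\mathcal{O}_{\mathcal{X}}:=f_*\mathcal{O}_{\mathcal{Y}}$ is a sheaf of $A'$-algebras on $|X|$, the counit of adjunction is a morphism of ringed spaces $\widetilde{f}\colon(|Y|,\mathcal{O}_{\mathcal{Y}})\to(|X|,\mathcal{O}_{\mathcal{X}})$ over $A'$ with $\widetilde{f}_*\mathcal{O}_{\mathcal{Y}}=\mathcal{O}_{\mathcal{X}}$ tautologically, and for each $i$ one sets $\mathcal{F}_i\subseteq\mathcal{X}$ to be the closed subscheme defined by the ideal $\Ker(\mathcal{O}_{\mathcal{X}}\to f_*\mathcal{O}_{\mathcal{E}_i})$.

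The substance of the proof is to verify that $\mathcal{X}=(|X|,f_*\mathcal{O}_{\mathcal{Y}})$ really is a scheme, flat and separated over $A'$, with $\mathcal{X}\times_{A'}\Spec(A)\cong X$, and that each $\mathcal{F}_i$ is $A'$-flat with $\mathcal{F}_i\times_{A'}\Spec(A)=F_i$. Here I would tensor the square-zero extension $0\to J\to A'\to A\to0$ with the $A'$-flat module $\mathcal{O}_{\mathcal{Y}}$; as $J^2=0$ the ideal $J$ is an $A$-module and $J\otimes_{A'}\mathcal{O}_{\mathcal{Y}}\cong J\otimes_A\mathcal{O}_Y$, so that
\[
0\longrightarrow J\otimes_A\mathcal{O}_Y\longrightarrow\mathcal{O}_{\mathcal{Y}}\longrightarrow\mathcal{O}_Y\longrightarrow0 .
\]
Pushing this forward by $f$ and invoking the first hypothesis — using that the vanishing $R^1f_*\mathcal{O}_Y=0$ propagates to $R^1f_*(J\otimes_A\mathcal{O}_Y)=0$ (automatic when $J$ is $A$-free, and in general via the $A$-module structure of $J$ together with base change, since in our situation $f$ is proper) — one obtains an exact sequence $0\to J\otimes_A\mathcal{O}_X\to\mathcal{O}_{\mathcal{X}}\to\mathcal{O}_X\to0$. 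Combined with the same computation over affine opens $U\subseteq X$, where $R^1f_*\mathcal{O}_{\mathcal{Y}}=0$ identifies $\Gamma(U,\mathcal{O}_{\mathcal{X}})$ with a flat $A'$-algebra lifting $\Gamma(U,\mathcal{O}_X)$ whose spectrum recovers $\mathcal{O}_{\mathcal{X}}|_U$, this shows $\mathcal{X}$ is a flat $A'$-scheme restricting to $X$; separatedness of $\mathcal{X}\to T$ is inherited from $X\to\Spec(A)$, being a topological condition. Running the identical argument with $\mathcal{O}_{\mathcal{E}_i}$ in place of $\mathcal{O}_{\mathcal{Y}}$, using $f_*\mathcal{O}_{E_i}=\mathcal{O}_{F_i}$ and $R^1f_*\mathcal{O}_{E_i}=0$, shows that $f_*\mathcal{O}_{\mathcal{E}_i}$ is a flat $A'$-algebra lifting $\mathcal{O}_{F_i}$, whence $\mathcal{F}_i$ is a flat closed subscheme of $\mathcal{X}$ restricting to $F_i$. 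This establishes conclusion (1).

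For conclusions (2) and (3): since $\widetilde{f}$ and $f$ are the same continuous map, $R^i\widetilde{f}_*$ computes the same as $R^if_*$ on sheaves on $|Y|$, so the long exact sequence obtained by applying $f_*$ to the displayed square-zero sequence sandwiches $R^1\widetilde{f}_*\mathcal{O}_{\mathcal{Y}}$ between $R^1f_*(J\otimes_A\mathcal{O}_Y)=0$ and $R^1f_*\mathcal{O}_Y=0$, forcing $R^1\widetilde{f}_*\mathcal{O}_{\mathcal{Y}}=0$; the same sandwich applied to the thickening of $\mathcal{O}_{\mathcal{E}_i}$ yields $\widetilde{f}_*\mathcal{O}_{\mathcal{E}_i}=\mathcal{O}_{\mathcal{F}_i}$ and $R^1\widetilde{f}_*\mathcal{O}_{\mathcal{E}_i}=0$. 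Properness of $\widetilde{f}$ holds because $|\widetilde{f}|=|f|$ is proper and properness is a topological condition once the (inherited) finiteness and separatedness data are in place.

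The step I expect to be the main obstacle is proving that $(|X|,f_*\mathcal{O}_{\mathcal{Y}})$ is a genuine flat $A'$-scheme — that is, extracting enough from the single vanishing $R^1f_*\mathcal{O}_{\mathcal{Y}}=0$ (and its analogue for the $\mathcal{E}_i$): one must move carefully between the pushforward sheaf and its sections over affine opens of $X$, and must justify the propagation $R^1f_*\mathcal{O}_Y=0\Rightarrow R^1f_*(J\otimes_A\mathcal{O}_Y)=0$ in the generality at hand. Granting this, the construction of $\widetilde{f}$, the descent of the divisors $\mathcal{F}_i$, and the cohomological conclusions (2)--(3) follow formally from the square-zero sequence.
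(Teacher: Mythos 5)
Your construction --- putting $\mathcal{O}_{\mathcal{X}}:=f_*\mathcal{O}_{\mathcal{Y}}$ on the underlying space of $X$ and reading off flatness of $\mathcal{X}$, the identification $\mathcal{X}\times_{A'}A\cong X$, and $R^1\widetilde{f}_*\mathcal{O}_{\mathcal{Y}}=0$ from the pushed-forward square-zero sequence --- is precisely the paper's construction. The gap is in your treatment of the subschemes $\mathcal{F}_i$. You define $\mathcal{F}_i$ by the ideal $\ker(\mathcal{O}_{\mathcal{X}}\to f_*\mathcal{O}_{\mathcal{E}_i})$, so $\mathcal{O}_{\mathcal{F}_i}$ is, by fiat, the \emph{image} of $\mathcal{O}_{\mathcal{X}}$ in $f_*\mathcal{O}_{\mathcal{E}_i}$; separately you (correctly) argue that $f_*\mathcal{O}_{\mathcal{E}_i}$ is a flat $A'$-algebra lifting $\mathcal{O}_{F_i}$. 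But you never show that $\mathcal{O}_{\mathcal{X}}\to f_*\mathcal{O}_{\mathcal{E}_i}$ is \emph{surjective}, which is what is needed for the image to equal $f_*\mathcal{O}_{\mathcal{E}_i}$ and hence for $\mathcal{F}_i$ to be flat and restrict to $F_i$; if the map is not surjective the image is a proper subalgebra whose mod--$J$ reduction could cut out an infinitesimal thickening of $F_i$ rather than $F_i$, and conclusions (1) and (3) would fail. This surjectivity --- equivalently $R^1\widetilde{f}_*\mathcal{I}_{\mathcal{E}_i}=0$ --- is the one non-formal step, and the paper devotes most of its proof to it: from the sequence $0\to\mathcal{I}_{\mathcal{E}_i}(-Y)\to\mathcal{I}_{\mathcal{E}_i}\to\mathcal{I}_{E_i}\to0$ together with $R^1f_*\mathcal{I}_{E_i}=0$ (a consequence of $\mathcal{O}_X=f_*\mathcal{O}_Y\twoheadrightarrow f_*\mathcal{O}_{E_i}=\mathcal{O}_{F_i}$), it extracts a surjection $R^1\widetilde{f}_*\mathcal{I}_{\mathcal{E}_i}\otimes\mathcal{O}_{\mathcal{X}}(-X)\twoheadrightarrow R^1\widetilde{f}_*\mathcal{I}_{\mathcal{E}_i}$, that is $R^1\widetilde{f}_*\mathcal{I}_{\mathcal{E}_i}=J\cdot R^1\widetilde{f}_*\mathcal{I}_{\mathcal{E}_i}$, and concludes by Nakayama using nilpotence of $J$. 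You need this (or, more directly, observe that $\mathcal{O}_{\mathcal{X}}\to f_*\mathcal{O}_{\mathcal{E}_i}$ reduces modulo $J$ to the surjection $\mathcal{O}_X\twoheadrightarrow\mathcal{O}_{F_i}$ by what you already proved, so its cokernel $C$ satisfies $C=JC$, hence $C=0$ since $J$ is nilpotent).

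On the point you flag about the square-zero kernel: you are right that it is $J\otimes_A\mathcal{O}_Y$, not $\mathcal{O}_Y$ (the paper's line is tacitly assuming $J\cong A$; this holds at the step $k\to W_2(k)$, but for $W_n(k)\to W_{n+1}(k)$ the ideal $(p^n)\cong k\not\cong W_n(k)$). The propagation $R^1f_*\mathcal{O}_Y=0\Rightarrow R^1f_*(J\otimes_A\mathcal{O}_Y)=0$ indeed does not follow formally from the stated hypotheses; in the paper's applications it holds because $f$ is a proper birational morphism of surfaces, so $R^{\geq2}f_*=0$ and one can run down a filtration of $J$ with graded pieces $\cong k$. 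That should be said explicitly rather than attributed loosely to ``base change and properness.''
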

	
	\begin{proof}
		As topological spaces, we  set $\mathcal{X}_{\text{top}}\coloneqq X$ and $\widetilde{f}_{\text{top}}:=f$. 
		We define the sheaf of rings on $\mathcal{X}$ as follows: \[\mathcal{O}_{\mathcal{X}}:=f_*\mathcal{O}_{\mathcal{Y}}.\]
		We must verify that the scheme $\mathcal{X}$ is flat over $A'$.
		As $\mathcal{Y}$ is flat over $A'$, there is a short exact sequence  of sheaves of abelian groups:
		\[0 \to \mathcal{O}_Y \to \mathcal{O}_{\mathcal{Y}} \to \mathcal{O}_Y \to 0.\]
		By considering the push-forward via $\widetilde{f}$ we conclude that the sequence 
		\[0 \to \mathcal{O}_X \to \mathcal{O}_{\mathcal{X}} \to \mathcal{O}_X \to 0\]
		is exact since $R^1f_* \mathcal{O}_Y=0$ by hypothesis.
		Therefore $\mathcal{X}$ is flat over $A'$ and $R^1\widetilde{f}_* \mathcal{O}_{\mathcal{Y}}=0$.
		
		We apply the same construction to construct the liftings $\mathcal{F}_i$ of $F_i$. 
		We are only left to verify that $\mathcal{F}_i$ is a subscheme of $\mathcal{X}$. 
		As 	$0 \to \mathcal{I}_{\mathcal{E}_i} \to \mathcal{O}_\mathcal{Y} \to \mathcal{O}_{\mathcal{E}_i} \to 0$ is exact we conclude that 
		\[ \widetilde{f}_*\mathcal{O}_{\mathcal{Y}}= \mathcal{O}_{\mathcal{X}} \twoheadrightarrow \widetilde{f}_*\mathcal{O}_{\mathcal{E}_i}=\mathcal{O}_{\mathcal{F}_i}\] provided that $R^1\widetilde{f}_*\mathcal{I}_{\mathcal{E}_i}$ vanishes.
		Note that $R^1f_*\mathcal{I}_{E_i}$ vanishes because it fits in the short exact sequence $\mathcal{O}_X \twoheadrightarrow \mathcal{O}_{F_i} \to R^1f_* \cI_{E_i} \to R^1f_*\mathcal{O}_Y=0 $.
		Consider the sequence (which exists by the snake lemma):
		\[0 \to \mathcal{I}_{\mathcal{E}_i}(-Y) \to  \mathcal{I}_{\mathcal{E}_i} \to \mathcal{I}_{E_i} \to 0.\]
		By applying the push-forward, the projection formula and the equality $R^1f_*\mathcal{I}_{E_i}=0$ we deduce the  surjectivity of
		$
		R^1\widetilde{f}_*\mathcal{I}_{\mathcal{E}_i} \otimes \mathcal{O}_{\mathcal{X}}(-X) \twoheadrightarrow R^1\widetilde{f}_*\mathcal{I}_{\mathcal{E}_i}$.
		As $J$ is nilpotent, we conclude that $R^1\widetilde{f}_*\mathcal{I}_{\mathcal{E}_i}=0$.
	\end{proof}

	\subsubsection{Deformations of line bundles}
	
	We study the deformation theory of line bundles equipped with a trivialisation on a closed subscheme.  
	This theory follows closely the classical one presented in \cite[Section 8.5.2]{FAG}.
	
	\begin{definition}
		Let $j \colon Z \hookrightarrow X$ be a closed immersion of schemes.
		We say that $(E, \varphi)$ is a \emph{$Z$-trivial line bundle} if $E$ is a line bundle on $X$ and $\varphi \colon E|_{Z} \to \mathcal{O}_Z$ is an isomorphism of $\mathcal{O}_Z$-modules.
		A homomorphism of $Z$-trivial line bundles $u \colon (E, \varphi) \to (F, \psi)$ is a homomorphism of $\mathcal{O}_X$-modules such that $\psi \circ u|_Z=\varphi$.
	\end{definition}

	\begin{proposition}\label{p-obs-triv-lb}
		Let $i \colon (Y_0, Z_0) \to (Y, Z)$ be a thickening of order one given by an ideal $I$ of square zero.	Let $(E, \varphi)$ and $(F, \psi)$ be $Z$-trivial line bundles and \[u_0 \colon (E_0, \varphi_0) \to (F_0, \psi_0)\] a homomorphism of $Z_0$-trivial line bundles. Then there is an obstruction class 
		\[o(u_0, i) \in H^1(Y_0, I \otimes \mathcal{H}om (E_0, F_0 \otimes \mathcal{I}_{Z_0}))\] 
		whose vanishing is necessary and sufficient for the existence of a lifting $u$ of $u_0$. 
		Moreover the set of homomorphisms $u$ lifting $u_0$ is an affine space under $H^0(Y_0,  I \otimes \mathcal{H}om(E_0, F_0 \otimes \mathcal{I}_{Z_0}))$.
		
		Let $(L_0, \varphi_0)$ be a $Z_0$-trivial line bundle on $Y_0$. Then there is an obstruction class 
		\[o(L, \varphi, i) \in H^2(Y_0, I \otimes \mathcal{I}_{Z_0})\] whose vanishing is necessary and sufficient for the existence of a lifting $(L,\varphi)$ of $(L_0, \varphi_0)$ to $(Y,Z)$.
	\end{proposition}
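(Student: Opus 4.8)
The plan is to follow the classical deformation theory of line bundles in \cite[Section 8.5.2]{FAG}, systematically keeping track of the trivialisations along $Z$; the only structural change is that the coefficient sheaf $\mathcal{O}$ of the classical story gets replaced by the ideal sheaf $\mathcal{I}_{Z_0}$ wherever the trivialisation enters. Fix an affine open cover $\mathfrak{U}=\{U_\alpha\}$ of $Y$ (equivalently of $Y_0$, since $i$ is a homeomorphism) on which $E_0$, $F_0$ (resp.\ $L_0$) are trivial, refined so that the chosen trivialisations are compatible along $Z_0$ with $\varphi_0$ and $\psi_0$ (resp.\ with $\varphi_0$). Note that a lift $u$ of $u_0$ as a homomorphism of $Z$-trivial line bundles is forced to satisfy $u|_Z=\psi^{-1}\circ\varphi$, a map that is fixed independently of $u_0$ and whose restriction to $Z_0$ is $\psi_0^{-1}\circ\varphi_0=u_0|_{Z_0}$.

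For the first statement, I would first check that on each $U_\alpha$ there is a local lift $u_\alpha\colon E|_{U_\alpha}\to F|_{U_\alpha}$ of $u_0|_{U_\alpha}$ with $u_\alpha|_{Z\cap U_\alpha}=\psi^{-1}\circ\varphi$: in the local trivialisations this amounts to lifting a section of $\mathcal{O}$ whose reductions modulo $I$ and modulo $\mathcal{I}_Z$ are both prescribed and agree modulo $\mathcal{I}_{Z_0}$, which is possible on an affine because $I^2=0$. Any two such local lifts differ by a homomorphism $E|_{U_\alpha}\to F|_{U_\alpha}$ that vanishes modulo $I$ --- hence factors through $I\otimes_{\mathcal{O}_{Y_0}}\mathcal{H}om(E_0,F_0)$ --- and vanishes along $Z$, so (using that $i$ also restricts to a square-zero thickening of $Z_0$ in $Z$ with the same ideal) it lies in $I\otimes\mathcal{H}om(E_0,F_0\otimes\mathcal{I}_{Z_0})$. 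Thus $\{u_\beta-u_\alpha\}$ is a \v{C}ech $1$-cocycle whose class $o(u_0,i)\in H^1(Y_0,I\otimes\mathcal{H}om(E_0,F_0\otimes\mathcal{I}_{Z_0}))$ is independent of all choices, vanishes exactly when the $u_\alpha$ can be corrected by a $0$-cochain so as to glue (i.e.\ exactly when a global lift $u$ exists), and once a lift exists the set of all lifts is a torsor under $H^0(Y_0,I\otimes\mathcal{H}om(E_0,F_0\otimes\mathcal{I}_{Z_0}))$.

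For the second statement I would argue with transition functions. With respect to the cover above, $L_0$ is given by a cocycle $g_{\alpha\beta}\in\mathcal{O}_{Y_0}^\times(U_\alpha\cap U_\beta)$, and compatibility of the trivialisations with $\varphi_0$ along $Z_0$ forces $g_{\alpha\beta}\equiv 1\pmod{\mathcal{I}_{Z_0}}$. Choose, on each $U_\alpha\cap U_\beta$, a unit $\tilde g_{\alpha\beta}\in\mathcal{O}_Y^\times$ lifting $g_{\alpha\beta}$ with $\tilde g_{\alpha\beta}\equiv 1\pmod{\mathcal{I}_Z}$ (possible since $\mathcal{I}_Z\twoheadrightarrow\mathcal{I}_{Z_0}$ and, because $I^2=0$, any lift of a unit is a unit). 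Then $c_{\alpha\beta\gamma}:=\tilde g_{\alpha\beta}\tilde g_{\beta\gamma}\tilde g_{\gamma\alpha}-1$ vanishes modulo $I$ (the $g_{\alpha\beta}$ satisfy the cocycle condition) and modulo $\mathcal{I}_Z$ (each $\tilde g\equiv 1$), hence --- linearising via $I^2=0$ --- is a \v{C}ech $2$-cocycle with values in $I\otimes\mathcal{I}_{Z_0}$. Its class $o(L,\varphi,i)\in H^2(Y_0,I\otimes\mathcal{I}_{Z_0})$ is independent of the choices; if it vanishes one corrects the $\tilde g_{\alpha\beta}$ by a $1$-cochain valued in $1+(I\otimes\mathcal{I}_{Z_0})$ to a genuine cocycle of units congruent to $1$ along $Z$, producing a line bundle $L$ on $Y$ with a trivialisation $\varphi$ along $Z$ reducing to $(L_0,\varphi_0)$; conversely any such lift yields such a cocycle, so vanishing is also necessary. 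Equivalently, $o(L,\varphi,i)$ is the image of the class of $(L_0,\varphi_0)$ under the connecting map $H^1(Y_0,\mathcal{K}_0)\to H^2(Y_0,I\otimes\mathcal{I}_{Z_0})$ of the short exact sequence $1\to I\otimes\mathcal{I}_{Z_0}\to\mathcal{K}\to\mathcal{K}_0\to 1$, where $\mathcal{K}$ and $\mathcal{K}_0$ are the sheaves of units of $\mathcal{O}_Y$ and $\mathcal{O}_{Y_0}$ congruent to $1$ along $Z$ and $Z_0$ respectively.

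The main obstacle --- and the only place where the trivialisation genuinely modifies \cite[Section 8.5.2]{FAG} --- is the bookkeeping of coefficient sheaves: one must check that a local homomorphism which is trivial modulo $I$ and along $Z$ is exactly a section of $I\otimes\mathcal{H}om(E_0,F_0\otimes\mathcal{I}_{Z_0})$, and similarly that a transition function which is $1$ modulo $I$ and $1$ modulo $\mathcal{I}_Z$ contributes precisely an element of $I\otimes\mathcal{I}_{Z_0}$. This rests on the compatibility $I\cap\mathcal{I}_Z=I\cdot\mathcal{I}_{Z_0}$ inside $\mathcal{O}_Y$ (equivalently, that $i$ restricts to a square-zero thickening $Z_0\hookrightarrow Z$ with ideal $I\otimes_{\mathcal{O}_{Y_0}}\mathcal{O}_{Z_0}\hookrightarrow\mathcal{O}_Z$); granting it, every verification of cocycle conditions, independence of choices, and exactness is formally identical to the classical argument. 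The local existence of compatible lifts in the second paragraph, which is what makes the sheaf of local solutions a torsor rather than leaving it possibly empty even locally, should likewise be checked with care.
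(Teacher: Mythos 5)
Your proof is correct and follows the same strategy as the paper: both adapt the classical deformation theory of line bundles from \cite[Section 8.5.2]{FAG}, replacing the coefficient sheaves by their twisted-by-$\mathcal{I}_{Z_0}$ counterparts so as to keep track of the trivialisations along $Z$. The paper phrases this via a torsor $P$ of local extensions and a reference to \cite[Theorem 8.5.3]{FAG}, while you unwind the equivalent explicit \v{C}ech computation and (rightly) flag the identification $I \cap \mathcal{I}_Z = I \cdot \mathcal{I}_{Z_0}$ as the one place where the trivialisation genuinely changes the bookkeeping --- a point the paper leaves equally implicit.
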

	
	\begin{proof}
		To construct $o(u_0,i)$, we first note that, if $u$ and $v$ are two extension of $u_0$, then $u-v \in H^0(Y_0, I \otimes \mathcal{H}om(E_0, F_0 \otimes \mathcal{I}_{Z_0}))$.
		As extensions of $u_0$ exist locally, we  can construct a torsor $P$ under $I \otimes \mathcal{H}om(E_0, F_0 \otimes \mathcal{I}_{Z_0})$ on $Y_0$ whose sections over an open set $U$ of $ Y_0$ are the $\mathcal{O}_{Y}$-linear extension of $u$ compatible with the trivialisation $\varphi$ and $\psi$.
		Now, as in the proof of \cite[Theorem 8.5.3]{FAG} the class of $P \in H^1(Y_0, I \otimes \mathcal{H}om(E_0, F_0 \otimes \mathcal{I}_{Z_0}))$ is the obstruction class $o(u_0,i)$. 
		To prove the rest of the proposition, we can argue as in \cite[Proof of Theorem 8.5.3]{FAG}.
	\end{proof}

	\begin{corollary} \label{c-lift-lb-triv}
		Let $(A, \mathfrak{m})$ be a Noetherian complete local ring with residue field $k$. Let $j \colon Z\subset X$ be a closed immersion of $k$-schemes
		and let $\mathfrak{Z} \subset \mathfrak{X}$ be a closed immersion of formal schemes over $\Spf(A)$, extending $j$.
		If $H^2(X, \mathcal{I}_Z)=0$, then every $Z$-trivial line bundle $(L, \varphi)$ lifts to a $\mathfrak{Z}$-trivial line bundle $(\mathfrak{L}, \widetilde{\varphi})$ on $(\mathfrak{X},\mathfrak{Z})$.
	\end{corollary}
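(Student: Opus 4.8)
The plan is to run the classical step-by-step deformation argument for line bundles along a formal thickening (as in \cite[Section~8.5.2]{FAG}), carrying along the trivialisation over $\mathfrak{Z}$ by means of \autoref{p-obs-triv-lb}. Write $A=\varprojlim_n A_n$ with $A_n:=A/\mathfrak{m}^{n+1}$, and set $\mathfrak{X}_n:=\mathfrak{X}\times_{\Spf(A)}\Spec(A_n)$ and $\mathfrak{Z}_n:=\mathfrak{Z}\times_{\Spf(A)}\Spec(A_n)$, so that $(\mathfrak{X}_0,\mathfrak{Z}_0)=(X,Z)$, $\mathfrak{X}=\varinjlim_n\mathfrak{X}_n$ and $\mathfrak{Z}=\varinjlim_n\mathfrak{Z}_n$. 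A $\mathfrak{Z}$-trivial line bundle on $\mathfrak{X}$ is the same datum as a compatible system of $\mathfrak{Z}_n$-trivial line bundles $(\mathfrak{L}_n,\varphi_n)$ on $(\mathfrak{X}_n,\mathfrak{Z}_n)$, so it suffices to construct such a system by induction on $n$, starting from $(\mathfrak{L}_0,\varphi_0):=(L,\varphi)$.

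For the inductive step, note that $\mathfrak{X}_n\hookrightarrow\mathfrak{X}_{n+1}$ is a thickening of order one: its ideal sheaf $\mathcal{I}_n:=\ker(\mathcal{O}_{\mathfrak{X}_{n+1}}\to\mathcal{O}_{\mathfrak{X}_n})$ satisfies $\mathcal{I}_n^2=0$ and $\mathfrak{m}\cdot\mathcal{I}_n=0$, since $\mathfrak{m}^{n+1}\cdot\mathfrak{m}^{n+1}$ and $\mathfrak{m}\cdot\mathfrak{m}^{n+1}$ are contained in $\mathfrak{m}^{n+2}$; hence $\mathcal{I}_n$ is a coherent $\mathcal{O}_X$-module, and $\mathfrak{Z}_n\hookrightarrow\mathfrak{Z}_{n+1}$ is a compatible square-zero thickening. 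Applying \autoref{p-obs-triv-lb} to $(\mathfrak{X}_n,\mathfrak{Z}_n)\hookrightarrow(\mathfrak{X}_{n+1},\mathfrak{Z}_{n+1})$ with ideal $I=\mathcal{I}_n$, the $\mathfrak{Z}_n$-trivial line bundle $(\mathfrak{L}_n,\varphi_n)$ extends to a $\mathfrak{Z}_{n+1}$-trivial line bundle $(\mathfrak{L}_{n+1},\varphi_{n+1})$ provided the obstruction class in $H^2(\mathfrak{X}_n,\mathcal{I}_n\otimes_{\mathcal{O}_{\mathfrak{X}_n}}\mathcal{I}_{\mathfrak{Z}_n})$ vanishes. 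Since $\mathcal{I}_n$ is killed by $\mathfrak{m}$ and $\mathfrak{X}_n$ has underlying space $X$, this cohomology is computed on $X$; using flatness of $\mathfrak{X}$ and $\mathfrak{Z}$ over $\Spf(A)$ — automatic when $(\mathfrak{X},\mathfrak{Z})$ is a lifting in the sense of \autoref{def-lift} — one identifies $\mathcal{I}_n\cong\mathcal{O}_X\otimes_k(\mathfrak{m}^{n+1}/\mathfrak{m}^{n+2})$ and $\mathcal{I}_n\otimes_{\mathcal{O}_{\mathfrak{X}_n}}\mathcal{I}_{\mathfrak{Z}_n}\cong\mathcal{I}_Z\otimes_k(\mathfrak{m}^{n+1}/\mathfrak{m}^{n+2})$, so the obstruction group is a finite direct sum of copies of $H^2(X,\mathcal{I}_Z)$, which vanishes by hypothesis.

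Thus the induction proceeds, producing a compatible system $(\mathfrak{L}_n,\varphi_n)_n$, hence the desired $\mathfrak{Z}$-trivial line bundle $(\mathfrak{L},\widetilde{\varphi})$ on $(\mathfrak{X},\mathfrak{Z})$ lifting $(L,\varphi)$. The argument is essentially formal once \autoref{p-obs-triv-lb} is available; the only step requiring genuine care is the identification, at each stage, of the obstruction group $H^2(\mathfrak{X}_n,\mathcal{I}_n\otimes\mathcal{I}_{\mathfrak{Z}_n})$ with a direct sum of copies of $H^2(X,\mathcal{I}_Z)$, which is where flatness of the formal thickening over $\Spf(A)$ enters.
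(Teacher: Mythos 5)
Your proof is correct and follows essentially the same route as the paper, which simply cites \cite[Corollaries 8.5.5 and 8.5.6]{FAG} combined with \autoref{p-obs-triv-lb}; you have merely written out the standard inductive lifting argument along the $\mathfrak{m}$-adic filtration explicitly, including the (correct) identification of the obstruction group with $H^2(X,\mathcal{I}_Z)\otimes_k(\mathfrak{m}^{n+1}/\mathfrak{m}^{n+2})$ via flatness of $\mathfrak{X}$ and $\mathfrak{Z}$ over $\Spf(A)$.
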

	
	\begin{proof}
		Using \autoref{p-obs-triv-lb}, we can repeat the same proof as of \cite[Corollaries 8.5.5 and 8.5.6]{FAG}.
	\end{proof}
	
	\section{Lifting snc pairs on globally $F$-split varieties}
	
	In this section, we prove some results on the liftability of smooth globally $F$-split pairs over the ring of Witt vectors valid in all dimensions.
	
\subsection{Lifting over $W_2(k)$}
	
In this subsection, we show \autoref{p-liftpair-W2}. We stress that the pair $(Y,E)$ in the statement of this result is not required to be globally $F$-split.
\autoref{p-liftpair-W2} has been already proven by Achinger-Zdanowicz \cite[Lemma 5.2.2]{AZ21} but we include the following proof for the readers who are not familiar with log structures.
	
\begin{proof}[Proof of \autoref{p-liftpair-W2}]
	Consider the short exact sequence
	\[0\longrightarrow\MO_Y\longrightarrow F_\ast\MO_Y\longrightarrow B^1_Y\longrightarrow 0.\]
	By applying $\textup{Hom}(\Omega^1_Y(\log E),-)$ and taking the induced long exact sequence we get the exact sequence:
	\begin{small}
	   	\[\textup{Ext}^1(\Omega^1_Y(\log E),F_\ast \MO_Y)\rightarrow\textup{Ext}^1(\Omega^1_Y(\log E),B^1_Y)\xrightarrow{\delta} \textup{Ext}^2(\Omega^1_Y(\log E),\MO_Y).\] 
	\end{small}
	By \cite[Variant 3.3.2]{AWZ21}, there is an obstruction class $o_{(Y,E,F)} \in \mathrm{Ext}^1(\Omega_Y^1(\log E), B^1_Y)$ for the lifting of $(Y,E)$ together with the Frobenius morphism $F_Y$. Let $o_{(Y,E)} \in \mathrm{Ext}^2(\Omega^1_Y(\log E), \MO_Y)$ be the obstruction class for the lifting of the pair $(Y,E)$ to $W_2(k)$.
	We show the following compatibility of obstruction classes: 
	\begin{claim}\label{claim-compatibility}
		$\delta(o_{(Y,E,F)})=o_{(Y,E)}$ 
	\end{claim}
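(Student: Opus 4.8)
The plan is to unwind both obstruction classes into explicit Čech cocycles and check they agree term-by-term. First I would recall the concrete description of $o_{(Y,E)}$: choosing an affine cover $\{U_\alpha\}$ of $Y$ on which $E$ is defined by a coordinate subset of an étale chart, one lifts each $(U_\alpha, E|_{U_\alpha})$ together with its $\mathcal{O}$-algebra structure to $W_2(k)$ (possible since the charts are smooth), picks isomorphisms $\theta_{\alpha\beta}$ of the restrictions of these lifts to overlaps compatible with the divisors, and records the failure $\theta_{\alpha\gamma}^{-1}\theta_{\beta\gamma}\theta_{\alpha\beta}$ as a $1$-cocycle valued in $\mathcal{H}om(\Omega^1_Y(\log E), \mathcal{O}_Y)=T_Y(-\log E)$, i.e.\ a class in $\mathrm{Ext}^1$; but the standard repackaging (as in the Deligne--Illusie setup) is that the obstruction to lifting the pair — as opposed to lifting together with all gluing data — naturally lives in $\mathrm{Ext}^2(\Omega^1_Y(\log E),\mathcal{O}_Y)$, obtained from the local lifts of the Frobenius-twisted data. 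The cleanest route is to cite \cite[Variant 3.3.2]{AWZ21} (or the analogous computation in \cite{DI87}) for the precise cocycle formula for $o_{(Y,E,F)}$ in $\mathrm{Ext}^1(\Omega^1_Y(\log E), B^1_Y)$: locally a lift of Frobenius $\widetilde{F}_\alpha$ on $\widetilde{U}_\alpha$ differs from a neighboring one by a log-derivation into $\mathcal{O}_Y$ measured inside $B^1_Y = F_*\mathcal{O}_Y/\mathcal{O}_Y$, and the obstruction to a global lift of $F$ alone is exactly this class.

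Next I would make the connecting map $\delta$ explicit. Applying $\mathrm{Hom}(\Omega^1_Y(\log E),-)$ to $0\to\mathcal{O}_Y\to F_*\mathcal{O}_Y\to B^1_Y\to 0$, the boundary $\delta$ on a class represented by a $1$-cochain with values in $B^1_Y$ is computed by the usual recipe: lift each local value to $F_*\mathcal{O}_Y$, take the Čech coboundary, and observe it lands in the subsheaf $\mathcal{O}_Y$, giving a $2$-cocycle with values in $\mathcal{O}_Y$, i.e.\ a class in $\mathrm{Ext}^2(\Omega^1_Y(\log E),\mathcal{O}_Y)$. The content of \autoref{claim-compatibility} is then that the $2$-cocycle produced this way from the Frobenius-lifting obstruction is literally (up to sign and the standard identifications) the cocycle representing the obstruction $o_{(Y,E)}$ to lifting the pair. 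The key algebraic input is that over $W_2(k)$ a lift of Frobenius on a smooth affine lift is governed by the same torsor data as a lift of the scheme itself — more precisely, the ``divided $p$-th power map'' comparison: the difference of two local lifts of $(Y,E)$ controls, via the Cartier-type isomorphism, precisely the indeterminacy of a local lift of $F$, so the coboundary measuring non-gluing of the $\widetilde{F}_\alpha$ matches the coboundary measuring non-gluing of the $\widetilde{U}_\alpha$.

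I would organize the verification as follows: (1) fix a good affine cover and local lifts $\widetilde{U}_\alpha$ of $(U_\alpha,E|_{U_\alpha})$ over $W_2(k)$, which exist by smoothness of the charts; (2) on each $\widetilde{U}_\alpha$ choose a lift $\widetilde{F}_\alpha$ of Frobenius compatible with the divisor (this is where logarithmic coordinates are used, so that $\widetilde{F}_\alpha$ preserves the lifted boundary); (3) on double overlaps, express $\widetilde{F}_\alpha-\widetilde{F}_\beta$ as a log-derivation, giving the $B^1_Y$-valued $1$-cochain whose class is $o_{(Y,E,F)}$; simultaneously express the gluing discrepancy of the $\widetilde{U}_\alpha$ by $T_Y(-\log E)$-valued data; (4) apply $\delta$ to the cochain from (3) by lifting values to $F_*\mathcal{O}_Y$ and taking the coboundary, and identify the result on triple overlaps with the $\mathcal{O}_Y$-valued $2$-cocycle representing $o_{(Y,E)}$, using that $\mathrm{Hom}(\Omega^1_Y(\log E),-)$-cohomology computes $\mathrm{Ext}^\bullet$ since $\Omega^1_Y(\log E)$ is locally free. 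The main obstacle is step (4): one must carefully track the identification $B^1_Y \hookrightarrow F_*\mathcal{O}_Y$ and the interaction between the Frobenius-twist on the source of $\mathrm{Ext}$ and the untwisted target, making sure the signs and the choice of local Frobenius lifts do not introduce a discrepancy; this is essentially the logarithmic enhancement of the compatibility lemma underlying Deligne--Illusie, and the cleanest presentation will isolate a local computation on a single étale chart $\Spec k[x_1^{\pm},\dots,x_r^{\pm},x_{r+1},\dots,x_n]$ with $E=\{x_{r+1}\cdots x_s=0\}$, where both obstruction cocycles can be written down by hand and compared directly.
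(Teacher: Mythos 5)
Your plan follows essentially the same route as the paper: unwind $o_{(Y,E)}$ and $o_{(Y,E,F)}$ into explicit \v{C}ech cocycles attached to a cover by log-smooth charts, trace through the \v{C}ech description of the connecting map $\delta$, and identify the two $2$-cocycles cochain-by-cochain. That is the right strategy and it is the paper's. But there are two things to fix.

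First, a slip in the setup. The gluing discrepancy $\theta_{\alpha\gamma}^{-1}\theta_{\beta\gamma}\theta_{\alpha\beta}$ lives on triple overlaps and, after applying the correspondence between infinitesimal automorphisms and log derivations (\cite[Proposition 8.22]{EV92}), is a \emph{$2$-cocycle} with values in $T_Y(-\log E)=\mathcal{H}om(\Omega^1_Y(\log E),\mathcal{O}_Y)$, giving a class in $\mathrm{Ext}^2$, not a ``$1$-cocycle in $\mathrm{Ext}^1$'' as written. Also, you cannot literally take $\widetilde{F}_\alpha-\widetilde{F}_\beta$ on overlaps: the local Frobenius lifts are defined on \emph{different} lifts $\tilde{U}_\alpha$, $\tilde{U}_\beta$, so you must transport one to the other by the comparison isomorphism; the $1$-cochain is $\phi_{\alpha\beta}^{-1}\widetilde{F}_\beta\phi_{\alpha\beta}-\widetilde{F}_\alpha = p\,\eta_{\alpha\beta}$ (\cite[Proposition 9.9]{EV92}).

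Second — and this is the real gap — your step (4) is precisely the content of the claim and you flag it as the ``main obstacle'' but never resolve it. What is needed is not a coordinate computation on an explicit toric chart but a short, intrinsic identity. Write $\phi_{\alpha\beta\gamma}=\mathrm{id}+p\psi_{\alpha\beta\gamma}$ with $\psi_{\alpha\beta\gamma}$ a log derivation. Then the \v{C}ech coboundary $\eta_{\alpha\beta}+\eta_{\beta\gamma}+\eta_{\gamma\alpha}$, which measures $\phi_{\alpha\beta\gamma}^{-1}\widetilde{F}_\alpha\phi_{\alpha\beta\gamma}-\widetilde{F}_\alpha$ (up to the factor $p$), is computed directly: for a local section $y$,
\begin{align*}
\phi_{\alpha\beta\gamma}^{-1}\widetilde{F}_\alpha\phi_{\alpha\beta\gamma}(y)
&=(\mathrm{id}-p\psi_{\alpha\beta\gamma})\big(\widetilde{F}_\alpha(y)+p\,\psi_{\alpha\beta\gamma}(y)^p\big)\\
&=\widetilde{F}_\alpha(y)-p\,\psi_{\alpha\beta\gamma}(y^p)+p\,\psi_{\alpha\beta\gamma}(y)^p
=\widetilde{F}_\alpha(y)+p\,\psi_{\alpha\beta\gamma}(y)^p,
\end{align*}
where we used $p^2=0$, that $\widetilde{F}_\alpha$ lifts Frobenius, and that $\psi_{\alpha\beta\gamma}$ is a derivation hence kills $p$-th powers. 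Thus $\eta_{\alpha\beta}+\eta_{\beta\gamma}+\eta_{\gamma\alpha}=\psi_{\alpha\beta\gamma}^p$, which is exactly the image of $\psi_{\alpha\beta\gamma}$ under the inclusion $\mathcal{O}_Y\hookrightarrow F_*\mathcal{O}_Y$; unwinding the boundary map for $0\to\mathcal{O}_Y\to F_*\mathcal{O}_Y\to B^1_Y\to 0$ then gives $\delta(o_{(Y,E,F)})=\{\psi_{\alpha\beta\gamma}\}=o_{(Y,E)}$. Without this identity the argument is incomplete; with it, no passage to explicit \'etale coordinates is needed, and the ``obstacle'' dissolves in two lines.
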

	
	\begin{proof}[Proof of \autoref{claim-compatibility}]
		Let $\{U_i\}_{i}$ be an affine open covering of $Y$ and define $U_{ij}\coloneqq U_{i}\cap U_{j}$ and $U_{ijk}\coloneqq U_{i}\cap U_{j}\cap U_{k}$.
		Since $(Y,E)$ is log smooth, there exists a $W_2(k)$-lifting $(\tilde{U}_i, \tilde{E_i})$ of $(U_i, E|_{U_i})$ with the Frobenius morphism $\tilde{F}_i$ for each $i$.
		By \cite[Proposition 8.23]{EV92}, there exists an isomorphism $\phi_{ij}\colon(\tilde{U}_i, \tilde{E_i})|_{U_{ij}}\cong (\tilde{U}_j, \tilde{E_j})|_{U_{ij}}$ over $(U_{ij}, E|_{U_{ij}})$.
		Then $\phi_{ijk}=\phi_{ki}\circ\phi_{jk}\circ\phi_{ij}$ is an infinitesimal automorphism of $(\tilde{U}_i, \tilde{E_i})|_{U_{ijk}}$, and hence we can take a corresponding derivation \[\psi_{ijk}\in \Hom(\Omega^1_{U_{ijk}}(\log\,E), \MO_{U_{ijk}})\] by \cite[Proposition 8.22]{EV92}.
		Note that we have the equation $\phi_{ijk}=\mathrm{id}+p\psi_{ijk}$.
		We can see that  $o_{(Y,E)}=\{\psi_{ijk}\}_{ijk}\in \mathrm{Ext}^2(\Omega^1_Y(\log\,E),\MO_Y)$ (cf.~\cite[Theorem 8.5.9]{FAG} and \cite[Theorem 2.3]{KN20}). 
			
		Since $\phi_{ij}^{-1}\tilde{F}_j\phi_{ij}$ and $\tilde{F}_i$ are both $W_2(k)$-liftings of the Frobenius morphism of $U_{ij}$, there exists $\eta_{ij}\in \Hom(\Omega^1_{U_{ij}}(\log\,E), F_{*}\MO_{U_{ij}})$ such that $\phi_{ij}^{-1}\tilde{F}_j\phi_{ij}-\tilde{F}_i=p\eta_{ij}$ by \cite[Proposition 9.9]{EV92}.  
		
		We define $\overline{\eta}_{ij}\in \mathrm{Hom}(\Omega^1_{U_{ij}}(\log\,E), B_{U_{ij}})$ to be the natural image of $\eta_{ij}$ under the morphism $\mathrm{Hom}(\Omega^1_{U_{ij}}(\log\,E), F_*\mathcal{O}_{U_ij}) \to \mathrm{Hom}(\Omega^1_{U_{ij}}(\log\,E), B_{U_{ij}})$. Then we can see by \cite[Variant 3.3.2]{AWZ21} that  \[o_{(Y,E,F)}=\{\overline{\eta}_{ij}\}_{ij} \in \mathrm{Ext}^1(\Omega^1_{U_{ij}}(\log\,E), B_{U_{ij}}).\]
		In particular, $\overline{\eta}_{ij}+\overline{\eta}_{jk}+\overline{\eta}_{ki}=0$ on $U_{ijk}$ 
		and so there exists $\eta_{ijk}\in \mathrm{Hom}(\Omega^1_{U_{ijk}}(\log\,E), \MO_{U_{ijk}})$ such that $\eta_{ijk}^{p}=\eta_{ij}+\eta_{jk}+\eta_{ki}$. 
		By construction of the boundary map, \[\delta(o_{(Y,E,F)})=\{\eta_{ijk}\}_{ijk}\in \mathrm{Ext}^2(\Omega^1_Y(\log\,E),\MO_Y).\]
			
		Since $\phi_{ij}^{-1}\tilde{F}_j\phi_{ij}-\tilde{F}_i=p\eta_{ij}$, it follows that $\phi_{ijk}^{-1}\tilde{F}_i\phi_{ijk}-\tilde{F}_i=p(\eta_{ij}+\eta_{jk}+\eta_{ki})$. 
		On the other hand, as in \cite[Appendix, Proposition 1 (iv)]{MS87}, we get that $\phi_{ijk}^{-1}\tilde{F}_i\phi_{ijk}-\tilde{F}_i=p\psi_{ijk}^{p}$.  
		Specifically for a local section $y$ of $ \cO_{\tilde{U}_{ijk}}$,
		\begin{align*}
        \phi_{ijk}^{-1}\tilde{F}_i\phi_{ijk}(y)&= (\mathrm{id} - p\psi_{ijk})\tilde{F}_i(\mathrm{id} + p\psi_{ijk})(y) 	 \\
    &= (\mathrm{id} - p\psi_{ijk})(\tilde{F}_i(y) + p \psi_{ijk}(y)^p)\\
    &= \tilde{F}_i(y) - p\psi_{ijk}(y^p)  + p\psi_{ijk}(y)^p  \\
    &= \tilde{F}_i(y) + p\psi_{ijk}(y)^p,
			\end{align*}
	where the last equality follows from $\psi_{ijk}(y^p)=0$ as $\psi_{ijk}$ is a derivation. We also repeatedly used that $p^2=0$.

    We can now conclude that $o_{(Y,E)}=\{\psi_{ijk}\}_{ijk}=\{\eta_{ijk}\}_{ijk}=\delta(o_{(Y,E,F)})$.
    \end{proof}

Since $Y$ is globally $F$-split, $\delta$ is the zero homomorphism.
Therefore the obstruction class $o_{(Y,E)}$ vanishes concluding the proof.
\end{proof}

\subsection{Lifting Fano varieties}

	In what follows,  we show an application of \autoref{p-liftpair-W2} to the lifting of snc pairs over $W(k)$ whose underlying variety is a smooth globally $F$-split Fano(-type) variety.
	First, we recall the Kodaira-Akizuki-Nakano vanishing theorem for snc pairs admitting a lifting to $W_2(k)$ proven in \cite{Ha98}. 
	
	\begin{theorem} \label{p-hara}
		Let $(Y,E)$ be an snc pair of dimension $d$ which admits a lifting over $W_2(k)$.
		Let $A$ be an ample $\mathbb{Q}$-divisor whose fractional part $(A-\lfloor A \rfloor)$ of $A$ is contained in $E$.
		If $p \geq d$, then
		\begin{enumerate}
			\item[(a)] $H^j(Y, \Omega_Y^{i}(\log E) \otimes \mathcal{O}_Y(-E-\lfloor{-A \rfloor}))=0$ if $i+j>d$;
			\item[(b)] $H^j(Y, \Omega_Y^{i}(\log E) \otimes \mathcal{O}_Y(-\lceil{A \rceil}))=0$ if $i+j<d$.
		\end{enumerate}
	\end{theorem}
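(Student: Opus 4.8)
This is the logarithmic Akizuki–Nakano vanishing theorem of Hara, so the plan is to reduce it to the statement in that reference and to the classical Deligne–Illusie machinery adapted to pairs. The starting point is that, since $(Y,E)$ lifts over $W_2(k)$ and $\dim Y = d \leq p$, the logarithmic de Rham complex $\Omega^\bullet_Y(\log E)$ is decomposable in the derived category $D(Y)$ after Frobenius pushforward: this is the logarithmic Deligne–Illusie theorem of \cite{DI87} in the form extended to snc pairs (see \cite{Ha98}, and cf.\ the use of \autoref{p-liftpair-W2} in the Fano case). First I would record that $F_*\Omega^\bullet_Y(\log E) \simeq \bigoplus_i \Omega^i_Y(\log E)[-i]$ in $D(Y)$.

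Next, twist by a line bundle. For an ample $\mathbb{Q}$-divisor $A$ with $\Supp(\{A\}) \subset E$, the usual trick is to consider the twisted de Rham complex obtained from the $p^{\mathrm{th}}$-root construction along $\lceil A\rceil$ (equivalently, work on a suitable cyclic cover or use the twisted Cartier isomorphism for the $\mathbb{Q}$-divisor $A$); the fractional part $\{A\}$ being supported on $E$ is exactly what allows the twist to interact correctly with the logarithmic structure. One then obtains a decomposition of $F_*\big(\Omega^\bullet_Y(\log E)\otimes \mathcal{O}_Y(-\lceil A\rceil)\big)$, or of its Serre-dual counterpart $\Omega^\bullet_Y(\log E)\otimes \mathcal{O}_Y(-E-\lfloor -A\rfloor)$, into a direct sum of shifted sheaves $\Omega^i_Y(\log E)\otimes \mathcal{O}_Y(-\lceil A\rceil)$. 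The hypercohomology spectral sequence of the twisted complex then degenerates for numerical reasons, and comparing it with the fact that the twisted de Rham complex computes the cohomology of a genuinely ample (integral) line bundle on the $p$-cyclic cover — to which Kodaira vanishing applies in the range $p\ge d$ — forces the individual terms $H^j(Y,\Omega^i_Y(\log E)\otimes \mathcal{O}_Y(-\lceil A\rceil))$ to vanish when $i+j<d$. Statement (a) follows from (b) by logarithmic Serre duality, using $\Omega^i_Y(\log E)^\vee \cong \Omega^{d-i}_Y(\log E)\otimes \omega_Y^{-1}\otimes \mathcal{O}_Y(-E)^{?}$ — more precisely $\mathcal{H}om(\Omega^i_Y(\log E),\Omega^d_Y(\log E)) \cong \Omega^{d-i}_Y(\log E)$ and $\Omega^d_Y(\log E)=\omega_Y(E)$ — which exchanges the two ranges and the two twists $-\lceil A\rceil \leftrightarrow -E-\lfloor -A\rfloor$.

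The honest answer, though, is that this is precisely \cite[Theorem]{Ha98} (logarithmic Akizuki–Nakano for $\mathbb{Q}$-divisors, assuming a $W_2(k)$-lift), so in the write-up I would simply cite it rather than reprove it; the content of the statement as used later in the paper is just the packaging of Hara's result in the form needed to conclude $H^2(Y,T_Y(-\log E))=0$ for globally $F$-split del Pezzo surfaces. The only step requiring any care is checking that the hypotheses of the cited theorem match: that $(Y,E)$ is snc of dimension $d$, that the lift is over $W_2(k)$ (not merely modulo a square-zero ideal), that $p\ge d$, and that $\{A\}$ is supported on $E$ so the twisted Cartier isomorphism is available. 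The main potential obstacle is bookkeeping the ceiling/floor conventions so that (a) and (b) are genuinely Serre-dual to one another: one must verify $-E-\lfloor -A\rfloor = -E + \lceil A\rceil$ and that dualizing $\Omega^i_Y(\log E)\otimes \mathcal{O}_Y(-\lceil A\rceil)$ against $\omega_Y(E)$ produces exactly the twist in (a). Once the conventions are pinned down, everything else is a direct appeal to \cite{Ha98,DI87}.
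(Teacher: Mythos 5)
Your approach is essentially the paper's: cite Hara's logarithmic Akizuki--Nakano theorem and close the argument with logarithmic Serre duality using $\Omega_Y^i(\log E) \simeq (\Omega_Y^{d-i}(\log E))^{\vee}\otimes\omega_Y(E)$ and $\lceil A\rceil = -\lfloor -A\rfloor$. The direction of your Serre-duality step is reversed relative to the paper ((a) from (b) instead of (b) from (a)), but this is immaterial since the two statements are genuinely dual; the paper cites \cite[Corollary 3.8]{Ha98} for (a) and deduces (b), while you would establish (b) and deduce (a).

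There is, however, a real gap in the place you flag as ``checking that the hypotheses of the cited theorem match.'' They do not match: the statement to be proved allows $p\geq d$, but Hara's Corollary 3.8 assumes $p>d$. You assert $d\leq p$ suffices without justification. The paper addresses this explicitly: Hara's $p>d$ hypothesis is used only for the quasi-isomorphism $\bigoplus_i \Omega_Y^i(\log E)[-i] \simeq F_*\Omega^{\bullet}_Y(\log E)$, and this decomposition remains valid at $p=d$ by \cite[Proposition 10.19]{EV92}, which is precisely the version adapted to the log setting with the sharper bound. Without citing this (or some equivalent), your proof covers only $p>d$, which falls short of the statement. Everything else in your sketch — the twisted Cartier/cyclic-cover construction, the degeneration argument, the Serre-duality bookkeeping identity $-E-\lfloor -A\rfloor = -E+\lceil A\rceil$ — is correct and consistent with Hara's proof, but that one hypothesis mismatch is exactly the subtle point this formulation of the theorem was designed to absorb.
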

	
	\begin{proof}
		Assertion (a) is \cite[Corollary 3.8]{Ha98}. The case $p=d$ holds because the proof of  \cite[Corollary 3.8]{Ha98} uses the hypothesis $p>d$ only for the quasi-isomorphism \[\bigoplus \Omega_Y^i(\log E)[-i] \cong F_*\Omega^{\bullet}_Y(\log E),\] which is true also for $p=d$ by \cite[Proposition 10.19]{EV92}.
		
		As for (b), recall that the natural pairing $\Omega_Y^i(\log E) \otimes \Omega_Y^{d-i}(\log E) \to \omega_Y(E)$ is non-degenerate and therefore $\Omega_Y^i(\log E) \cong (\Omega_Y^{d-i}(\log E))^{\vee} \otimes \omega_Y(E)$.
		By Serre duality the following isomorphisms hold: 
		\begin{small}
			\begin{align*}
				H^j(Y, \Omega_Y^{i}(\log E) \otimes \mathcal{O}_Y(-\lceil{A \rceil})) & \cong H^j(Y,(\Omega_Y^{d-i}(\log E))^{\vee} \otimes  \omega_Y(E -\lceil{A \rceil})) \\
				& \cong  H^{d-j}(Y, \Omega_Y^{d-i}(\log E)\otimes \MO_Y(-E+\lceil{A \rceil}))^{\vee}.
			\end{align*}
		\end{small}
		Since $\lceil{A \rceil}=-\lfloor{-A\rfloor}$ we conclude by (a).
	\end{proof}
	
	\begin{proposition} \label{p-lift-Fanopairs}
		Let $Y$ be a smooth globally $F$-split projective variety over $k$ of dimension $d$. 
		Suppose there exists an effective $\Q$-divisor $\Delta$ such that
		\begin{enumerate}
			\item[\textup{(1)}] $\lfloor \Delta \rfloor=0$ and  $(Y, \Supp(\Delta))$ is snc;
			\item[\textup{(2)}] $-(K_Y+\Delta)$ is ample.		
       \end{enumerate}
		Let $E$ be an snc reduced divisor containing $\Supp(\Delta)$.
		If $p \geq d$, then 
		\begin{enumerate}
			\item[(a)] $H^i(Y, \mathcal{O}_Y)=0$ for $i>0$;
			\item[(b)] $H^2(Y, T_Y(-\log E))=0$.
		\end{enumerate}
		In particular, $(Y,E)$ lifts over $W(k)$.
	\end{proposition}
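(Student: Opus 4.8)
The plan is to deduce the two vanishing statements from the logarithmic Akizuki--Nakano vanishing of \autoref{p-hara}, using \autoref{p-liftpair-W2} to supply the hypothesis that $(Y,E)$ lifts over $W_2(k)$, and then to invoke standard deformation theory to conclude that $(Y,E)$ lifts all the way over $W(k)$.

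First I would observe that $(Y,E)$ admits a lifting over $W_2(k)$: this is exactly \autoref{p-liftpair-W2}, since $Y$ is smooth and globally $F$-split and $E$ is a reduced snc divisor. Hence \autoref{p-hara} applies with $d = \dim Y$ and $p \geq d$. For part (a), I would choose an ample $\mathbb{Q}$-divisor $A$ with $\Supp(\{A\}) \subset E$ whose existence follows from hypothesis (2): concretely, $A := -(K_Y+\Delta) + \{$small multiple of a general snc divisor supported on $E\}$ — more carefully, one wants $A$ ample with $\lceil A \rceil \sim 0$ or with $\lfloor -A \rfloor$ and $E$ interacting so that $\Omega^0_Y(\log E) \otimes \cO_Y(-\lceil A \rceil)$ computes $H^i(Y,\cO_Y)$; since $-(K_Y + \Delta)$ is already ample and $\lfloor \Delta \rfloor = 0$, taking $A = \Delta + \epsilon H$ for a suitable ample $H$ (or simply reducing to the well-known fact that a globally $F$-split Fano-type variety has $H^i(Y,\cO_Y) = 0$ for $i > 0$, which follows from Serre-type vanishing for $F$-split varieties and $-(K_Y+\Delta)$ ample) gives (a). In fact I expect the cleanest route for (a) is the standard one: global $F$-splitting plus $-(K_Y + \Delta)$ ample forces $H^i(Y, \cO_Y) = 0$ for all $i > 0$ directly, without even needing the $W_2$-lifting.

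For part (b), the key identification is $T_Y(-\log E) = (\Omega^1_Y(\log E))^\vee$, and I would rewrite $H^2(Y, T_Y(-\log E))$ via Serre duality as $H^{d-2}(Y, \Omega^1_Y(\log E) \otimes \omega_Y(E))^\vee$, or directly apply \autoref{p-hara}(b) after twisting appropriately: one wants to realize $T_Y(-\log E) = (\Omega^1_Y(\log E))^\vee \simeq \Omega^{d-1}_Y(\log E) \otimes \omega_Y(E)^{-1}$ using the perfect pairing $\Omega^1_Y(\log E) \otimes \Omega^{d-1}_Y(\log E) \to \omega_Y(E)$. Then $H^2(Y, T_Y(-\log E)) \simeq H^2(Y, \Omega^{d-1}_Y(\log E) \otimes \cO_Y(-(K_Y + E)))$, and writing $-(K_Y+E) = -(K_Y+\Delta) + \Delta - E$ with $-(K_Y+\Delta)$ ample, I would set $A := -(K_Y+\Delta) + \{\text{correction}\}$ so that $-(K_Y+E)$ has the form $-E - \lfloor -A \rfloor$ with $A$ ample and $\Supp(\{A\}) \subset E$; then \autoref{p-hara}(a) with $i = d-1$, $j = 2$ gives the vanishing as soon as $(d-1) + 2 > d$, i.e. always. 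The main obstacle is bookkeeping: making the divisor $A$ genuinely ample with fractional part supported on $E$ while matching the twist $-(K_Y+E)$ to the shape $-E - \lfloor -A\rfloor$ (or $-\lceil A \rceil$); since $\lfloor \Delta \rfloor = 0$ and $E \supseteq \Supp(\Delta)$, one has $E - \Delta \geq 0$ is an integral divisor, so I would take $A := -(K_Y+\Delta) + (E - \Delta) = -(K_Y + E) + (E-\Delta) \cdot 0$ — wait, more simply $A := -(K_Y+\Delta)$ is ample with $\{A\}$ possibly not supported on $E$, so instead perturb: choose $0 < \epsilon \ll 1$ and a general effective snc divisor, or just use that ampleness is an open condition to write $A = -(K_Y+\Delta) - \epsilon(E - \Delta)$ which is still ample for small $\epsilon$, has fractional part supported on $\Supp(E-\Delta) \cup \Supp(\Delta) = E$, and satisfies $-E - \lfloor -A \rfloor = -(K_Y + E)$ after adjusting $\epsilon$ suitably; this is the routine-but-fiddly step.

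Finally, with $H^2(Y, T_Y(-\log E)) = 0$ in hand, the lifting of $(Y,E)$ over $W(k)$ follows from standard deformation theory: obstructions to lifting the log smooth pair $(Y,E)$ from $W_m(k)$ to $W_{m+1}(k)$ lie in $H^2(Y, T_Y(-\log E))$ (see \cite{EV92}, \cite{FAG}), so the vanishing lets one lift step by step to a formal scheme over $W(k)$; since $H^2(Y, \cO_Y) = 0$ by part (a), this formal lifting is algebraizable by Grothendieck existence (\cite[Corollary 8.5.6]{FAG}), giving the desired lifting over $W(k)$. I expect the deformation-theoretic conclusion to be entirely standard and the only real content to be the two cohomology vanishings, with (b) being where \autoref{p-liftpair-W2} and \autoref{p-hara} do the essential work.
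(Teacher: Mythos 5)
Your plan is essentially the paper's proof: obtain a $W_2(k)$-lifting of $(Y,E)$ from \autoref{p-liftpair-W2}, apply the logarithmic Akizuki--Nakano vanishing \autoref{p-hara} for both cohomology statements, and finish with standard deformation theory (vanishing of the obstruction space by (b), algebraization by (a)). Where you spend effort not pinning things down, the paper is much cleaner: it takes the single ample $\mathbb{Q}$-divisor $A := -(K_Y+\Delta)$. Since $\lfloor\Delta\rfloor=0$ and $\Supp(\Delta)\subseteq E$ one has $\Supp(\{A\})=\Supp(\Delta)\subseteq E$, $\lfloor -A\rfloor = K_Y$, and $\lceil A\rceil = -K_Y$. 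Then (a) is \autoref{p-hara}(a) applied to $\Omega^d_Y(\log E)\otimes\cO_Y(-E-\lfloor -A\rfloor)=\omega_Y(E)(-E-K_Y)=\cO_Y$ with $i=d$ (so $i+j>d$ for all $j>0$), and (b) is \autoref{p-hara}(b) applied to $\Omega^1_Y(\log E)\otimes\cO_Y(-\lceil A\rceil)=\Omega^1_Y(\log E)\otimes\omega_Y$ with $i=1$, $j=d-2$ (so $i+j<d$), followed by Serre duality to identify this with $H^2(Y,T_Y(-\log E))^{\vee}$. No $\epsilon$-perturbations or adjustments are needed.

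Two of the routes you float would not work as written. First, trying to get (a) from \autoref{p-hara}(b) with $i=0$ would require $\lceil A\rceil=0$, which is incompatible with $A$ being ample. Second, the alternative ``cleanest route'' for (a) — that global $F$-splitting of $Y$ together with $-(K_Y+\Delta)$ ample yields $H^i(Y,\cO_Y)=0$ for $i>0$ directly, bypassing the $W_2$-lifting — is not justified as stated: the standard Frobenius-amplification argument (split injection $H^i(Y,L)\hookrightarrow H^i(Y,L^{\otimes p^e})$ plus Serre vanishing for $L$ ample) combined with Serre duality would need $-K_Y$ itself to be ample to handle $H^{d-i}(Y,\omega_Y)$, whereas here $-K_Y$ is merely big (it equals $-(K_Y+\Delta)+\Delta$) and may fail to be nef. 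The paper gets (a), like (b), from \autoref{p-hara}, and this is genuinely needed.
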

	
	\begin{proof}
		By \autoref{p-liftpair-W2}, the pair $(Y,E)$ lifts over $W_2(k)$ so we can apply \autoref{p-hara}.
		Let us choose the ample $\mathbb{Q}$-divisor $A\coloneqq -K_Y-\Delta$. Note that $\lfloor{ -A \rfloor}=K_Y$ and $\lceil{ A \rceil}=-K_Y$.
		To show (a), it is sufficient to notice that
		\[H^i(Y, \mathcal{O}_Y)=H^i(Y, \omega_Y(-\lfloor{ -A \rfloor}))=H^i(Y, \omega_Y(E)(-E-\lfloor{ -A \rfloor})),\]
		vanishes for $i>0$ by \autoref{p-hara}.a.

		We prove (b). As 
        \[
        H^{d-2}(Y,\Omega^1_Y(\log E) \otimes \omega_Y)\cong H^{d-2}(Y,\Omega^1_Y(\log E) \otimes \mathcal{O}_Y(-\lceil A \rceil)) 
        \]
		vanishes by \autoref{p-hara}.b, we deduce $ H^2(Y, T_Y(-\log E))=0$  by Serre duality.
		
		For the last assertion, note that $H^2(Y, T_Y(-\log E))$ is the obstruction space to the existence of a formal log lifting of $(Y,E)$ over $W(k)$ by \cite[Proposition 8.6]{log-deformation(Kato)} (cf.~\cite[Theorem 2.3]{KN20}).
		Moreover, any formal lifting of $(Y,E)$ is algebraisable as $H^2(Y, \mathcal{O}_Y)=0$ by (a) and \cite[Corollary 8.5.6 and Corollary 8.4.5]{FAG}.
	\end{proof}
	
	\section{Log liftability of globally $F$-split surface pairs}
	
	In this section we prove the log liftability of globally $F$-split surface pairs (\autoref{t-log-lift-GFS}). 
	We divide the proof in two cases. 
	In \autoref{s-K0-klt} we show log liftability of klt Calabi--Yau surfaces.
	We discuss the remaining cases (where $(X,D)$ is not klt or $K_X+D$ is not pseudo-effective) in \autoref{s-not-pseff}. 
	
	Throughout this section, $k$ denotes an algebraically closed field of characteristic $p>0$.
	
	\subsection{$K$-trivial surfaces with klt singularities}\label{s-K0-klt}
	
	We start by proving log liftability over $W(k)$ of globally $F$-split Calabi--Yau surfaces with canonical singularities. For this, we rely on the Enriques-Kodaira classification of their minimal resolutions (\cite{BM77}) and special properties of canonical liftings of their minimal models (\cite{LT19},\cite{MS87},\cite{Nyg83}).
	Then we are able to  conclude the general klt Calabi--Yau case by, roughly speaking, considering canonical covers.
	
	Recall that $\mathbb{Q}$-factorial proper surfaces are projective by \cite[Corollary 4, page 328]{Kle66}.

	\subsubsection{Ordinary K3 surfaces}\label{ss-K3}

	In what follows, a smooth proper surface $Y$ over $k$ is called a K3 surface if $K_Y \sim 0$ and $h^1(Y, \mathcal{O}_Y)=0$. 
	A K3 surface $Y$ called \emph{ordinary} if the induced action of the Frobenius on its top cohomology $F \colon H^2(Y, \MO_Y) \to H^2(Y, \MO_Y)$ is bijective.
	The following shows that ordinarity  coincides with $Y$ being globally $F$-split.
	
	\begin{lemma}\label{l-ordinary}
		Let $Y$ be a normal Gorenstein proper variety over $k$ of dimension $n$ such that $K_Y \sim 0$. 
		Then the following are equivalent:
		\begin{enumerate}
			\item[(a)]  $F \colon H^n(Y, \MO_Y) \to H^n(Y, F_*\MO_Y)$ is bijective;
			\item[(b)] $\Tr \colon H^0(Y, F_*\omega_Y) \to H^0(Y, \omega_Y)$ is bijective, where $\Tr$ is the Frobenius trace map;
			\item[(c)] $Y$ is globally $F$-split.
		\end{enumerate} 
	\end{lemma}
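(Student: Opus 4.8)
The plan is to derive all the equivalences from two duality statements: Grothendieck duality for the finite flat morphism $F \colon Y \to Y$, and Serre duality in top degree on the proper variety $Y$. First I would record two harmless reductions. Since no boundary divisor is present, $Y$ is globally $F$-split precisely when the natural Frobenius map $\theta \colon \MO_Y \to F_\ast\MO_Y$ splits as a map of $\MO_Y$-modules: if $\MO_Y \to F^e_\ast\MO_Y$ splits, then $\theta$ splits because this map factors through $\theta$, and conversely one applies $F_\ast$ iteratively. Moreover, as $Y$ is Gorenstein with $K_Y \sim 0$, I would fix an isomorphism $\omega_Y \cong \MO_Y$; since $Y$ is a proper variety over the algebraically closed field $k$, this makes $H^0(Y,\MO_Y) \cong H^0(Y,\omega_Y) \cong k$ one-dimensional, and likewise $H^0(Y, F_\ast\omega_Y) \cong H^0(Y, F_\ast\MO_Y)$ one-dimensional over $k$. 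Finally, since $F$ is affine, $H^n(Y, F_\ast\MO_Y) = H^n(Y,\MO_Y)$, so the map ``$F$'' appearing in (a) is simply $H^n(\theta)$.

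Next I would set up the translation furnished by Grothendieck duality for $F$. As $F$ is finite and $Y$ is Gorenstein, $F^!\omega_Y \cong \omega_Y$, which gives a canonical isomorphism of $\MO_Y$-modules $\mathcal{H}om_{\MO_Y}(F_\ast\MO_Y, \omega_Y) \cong F_\ast\omega_Y$; under it the precomposition map $\phi \mapsto \phi\circ\theta$ from $\mathcal{H}om_{\MO_Y}(F_\ast\MO_Y,\omega_Y)$ to $\mathcal{H}om_{\MO_Y}(\MO_Y,\omega_Y) = \omega_Y$ is identified with the Frobenius trace map $\Tr \colon F_\ast\omega_Y \to \omega_Y$ — this is the defining property of $\Tr$ as the counit of the adjunction $F_\ast \dashv F^!$ applied to $\omega_Y$, together with the $\MO_Y$-linearity of $\Tr$, which yields $\Tr(\theta(b)\cdot\eta) = b\,\Tr(\eta)$. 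Tensoring this isomorphism by the trivialization $\omega_Y^{-1} \cong \MO_Y$ and taking global sections identifies the $k$-linear map
\[
\Hom_{\MO_Y}(F_\ast\MO_Y,\MO_Y) \longrightarrow H^0(Y,\MO_Y) = k, \qquad \phi \longmapsto \phi\circ\theta,
\]
with $H^0(\Tr) \colon H^0(Y, F_\ast\omega_Y) \to H^0(Y,\omega_Y)$. Since the target $k$ is one-dimensional, $\theta$ splits if and only if this map is nonzero, and since the source and target of $H^0(\Tr)$ are both one-dimensional over $k$, being nonzero, surjective, and bijective are equivalent for $H^0(\Tr)$; this proves (c) $\Leftrightarrow$ (b).

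For (a) $\Leftrightarrow$ (b) I would invoke Serre duality in top degree: for every coherent sheaf $\cF$ on the proper $n$-dimensional variety $Y$ there is an isomorphism $H^n(Y,\cF)^\vee \cong \Hom_{\MO_Y}(\cF,\omega_Y)$, functorial in $\cF$. Applying it to $\cF = \MO_Y$ and to $\cF = F_\ast\MO_Y$ (combined with the Grothendieck-duality isomorphism above for the latter), the map $H^n(\theta) \colon H^n(Y,\MO_Y) \to H^n(Y, F_\ast\MO_Y)$ becomes dual to the precomposition map $\phi \mapsto \phi\circ\theta$ on $\Hom$-groups, which by the previous step is $H^0(\Tr)$; hence $H^n(\theta)$ is bijective if and only if $H^0(\Tr)$ is bijective, i.e. (a) $\Leftrightarrow$ (b). The one genuinely delicate point is the compatibility claimed in the middle paragraph — that precomposition with $\theta$ corresponds to the trace map under Grothendieck duality — along with the bookkeeping of the Frobenius-semilinear $k$-structures on $H^0(Y, F_\ast\omega_Y)$ and $H^n(Y, F_\ast\MO_Y)$; the latter does not affect any of the ``nonzero/surjective/bijective'' conclusions, and everything else is a formal manipulation of the two dualities.
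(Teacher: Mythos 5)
Your argument is correct. The paper gives no proof of its own here and simply cites \cite[Proposition 2.6]{PZ21} (cf.\ also \cite[Proposition 9]{MR85}); your duality-based proof is the standard argument underlying those references, so there is no genuinely different route to compare. One small inaccuracy is worth fixing: in the opening sentence you describe $F\colon Y\to Y$ as ``finite flat,'' but by Kunz's theorem the absolute Frobenius is flat exactly when $Y$ is regular, and here $Y$ is only assumed normal and Gorenstein. Fortunately nothing in your argument actually uses flatness: the isomorphism $\mathcal{H}om_{\MO_Y}(F_*\MO_Y,\omega_Y)\cong F_*\omega_Y$ follows from $F^!\omega_Y^{\bullet}\cong\omega_Y^{\bullet}$ (this holds because $F$ commutes with the structure morphism to $\Spec k$ up to the Frobenius of $k$, which is an isomorphism as $k$ is perfect) together with the vanishing of $\mathcal{E}xt^i_{\MO_Y}(F_*\MO_Y,\omega_Y)$ for $i>0$, and that vanishing comes from $F_*\MO_Y$ being maximal Cohen--Macaulay over the Gorenstein scheme $Y$, not from it being locally free as flatness would give. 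With that wording corrected, the reduction to a single Frobenius, the identification of precomposition with $\theta$ as the Frobenius trace under the adjunction $F_*\dashv F^!$, the observation that all the relevant cohomology groups are one-dimensional over $k$ (the Frobenius-twisted $k$-structure on $H^0(Y,F_*\omega_Y)$ being harmless since $k$ is perfect, and this one-dimensionality also disambiguating the incompletely phrased condition (b)), and the final appeal to Serre duality in top degree on the proper Cohen--Macaulay scheme $Y$ are all sound and correctly establish the three equivalences.
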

	\begin{proof}
	See \cite[Proposition 2.6]{PZ21} (cf.~also \cite[Proposition 9]{MR85}).
	\end{proof}
	
	Given an ordinary K3 surface $Y$, in \cite{Nyg83} Nygaard shows the existence of a \emph{canonical lifting} $\mathcal{Y}_{\can}$ of $Y$ over $W(k)$.
	We recall some of its properties that we will use:
	
	\begin{proposition} \label{p-lift-Pic-K3}
		Let $Y$ be a globally $F$-split K3 surface over $k$ and let $\mathcal{Y}_\can$ be its canonical lifting constructed in \cite{Nyg83}. Then 
		\begin{enumerate}
			\item[(1)] every automorphism $\varphi$ of $Y$ lifts uniquely to an automorphism $\widetilde{\varphi} \colon \mathcal{Y}_{\can} \to \mathcal{Y}_{\can}$ over $W(k)$;
			\item[(2)] $\Pic(\mathcal{Y}_\can) \to \Pic(Y)$ is an isomorphism of abelian groups.
		\end{enumerate}
		In particular, $\mathcal{Y}_{\can}$ is projective over $W(k)$.
	\end{proposition}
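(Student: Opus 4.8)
The plan is to derive both statements from Nygaard's construction of the canonical lift \cite{Nyg83} (see also \cite{Del81, LT19}). Since $Y$ is globally $F$-split, it is ordinary by \autoref{l-ordinary}, so the $F$-crystal $H^2_{\mathrm{cris}}(Y/W(k))$ is ordinary and its slope filtration splits as $M_0 \oplus M_1 \oplus M_2$, with $M_i$ the slope-$i$ part (of ranks $1,20,1$). Nygaard's $\mathfrak{Y}_{\can}$ is the formal lift over $W(k)$ characterised by the fact that, under the canonical comparison $H^2_{\mathrm{dR}}(\mathfrak{Y}_{\can}/W(k)) \simeq H^2_{\mathrm{cris}}(Y/W(k))$, the Hodge filtration is the slope filtration, i.e.\ $F^1 H^2_{\mathrm{dR}}(\mathfrak{Y}_{\can}/W(k)) = M_1 \oplus M_2$ and $F^2 = M_2$. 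I will first establish (2), then deduce projectivity and algebraisation, and finally treat (1).

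For (2), injectivity of $\Pic(\mathcal{Y}_{\can}) \to \Pic(Y)$ is immediate from $H^1(Y, \mathcal{O}_Y) = 0$: the kernel of restriction from a lift over $W_{n+1}(k)$ to the one over $W_n(k)$ is a subquotient of $H^1(Y, \mathcal{O}_Y)$, hence vanishes, and one concludes inductively and in the limit. Surjectivity is the substantive point. Since $H^1(Y, \mathcal{O}_Y)=0$ we have $\Pic(Y)=\mathrm{NS}(Y)$, and the obstruction to extending $L \in \Pic(Y)$ one infinitesimal step further lies in $H^2(Y, \mathcal{O}_Y)$ and equals the image of the crystalline first Chern class $c_1(L) \in H^2_{\mathrm{cris}}(Y/W(k))$ under $F^1 H^2_{\mathrm{dR}}(\mathfrak{Y}_{\can}/W(k)) \to H^2_{\mathrm{dR}}(\mathfrak{Y}_{\can}/W(k))/F^1 \simeq H^2(Y, \mathcal{O}_Y)$; equivalently, $L$ lifts compatibly to $\mathfrak{Y}_{\can}$ precisely when $c_1(L) \in F^1$. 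But $c_1(L)$ is a divisorial class, hence lies in $F^1$ (indeed in the slope-$1$ part $M_1 \subseteq M_1 \oplus M_2 = F^1$), so $L$ lifts; this is exactly the Picard-preservation property of the canonical lift recorded in \cite{Nyg83}.

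To obtain projectivity, choose an ample $H \in \Pic(Y)$ and let $\mathfrak{H}$ be the extension to $\mathfrak{Y}_{\can}$ just produced. Its restriction to the closed fibre of the proper flat formal $W(k)$-scheme $\mathfrak{Y}_{\can}$ is ample, so by Grothendieck's existence theorem \cite[Corollary 8.5.6 and Corollary 8.4.5]{FAG} the pair $(\mathfrak{Y}_{\can}, \mathfrak{H})$ algebraises to a projective $W(k)$-scheme $(\mathcal{Y}_{\can}, \mathcal{H})$ with $\mathcal{H}$ relatively ample. This is the algebraic canonical lift, and $\Pic(\mathcal{Y}_{\can}) = \Pic(\mathfrak{Y}_{\can})$, so (2) follows in full.

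For (1), existence: any $\varphi \in \Aut(Y)$ induces an automorphism of the ordinary $F$-crystal $H^2_{\mathrm{cris}}(Y/W(k))$ preserving the slope filtration, so by functoriality of Nygaard's construction for isomorphisms of ordinary K3 surfaces it extends to an automorphism $\widetilde{\varphi}$ of $\mathfrak{Y}_{\can}$, hence — $\mathcal{Y}_{\can}$ being projective — to an automorphism of $\mathcal{Y}_{\can}$ over $W(k)$. Uniqueness: since $\omega_Y \simeq \mathcal{O}_Y$ we have $T_Y \simeq \Omega^1_Y$, and $H^0(Y, \Omega^1_Y)=0$ for a K3 surface, so $H^0(Y, T_Y)=0$; hence the set of lifts of an automorphism along a square-zero thickening $W_{n+1}(k) \to W_n(k)$ is, when non-empty, a torsor under $H^0(Y, T_Y)=0$, so it has at most one element. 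Thus the lift is unique over every $W_n(k)$, and so is $\widetilde{\varphi}$. The crux of the whole argument is the surjectivity in (2): that is exactly where the canonical lift behaves better than a generic lift, and it relies on the crystalline/Hodge-theoretic characterisation of $\mathcal{Y}_{\can}$ rather than on soft formal deformation theory.
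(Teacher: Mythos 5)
Your proposal is correct and follows essentially the same route as the paper, which simply cites the relevant sources rather than spelling out the arguments: for the existence in (1) the paper points to Srivatsan and \cite[Proposition 2.3]{LT19}, for uniqueness it uses the same vanishing $H^0(Y,T_Y)=0$, for (2) it defers to \cite[Proposition 1.8]{Nyg83}, and for projectivity it invokes \cite[Theorem 8.4.10]{FAG}. What you have written is precisely the content of those references unwound --- the crystalline characterisation of the canonical lift via Hodge filtration equals slope filtration, the Deligne/Ogus criterion that a line bundle lifts iff $c_1$ lands in $F^1$, the divisorial classes lying in the slope-$1$ part, and algebraisation via an extended ample class --- so there is no genuine divergence, only a difference in the level of detail presented.
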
 
	\begin{proof}
		The existence part of (1) is proven in \cite{Sri19} and \cite[Proposition 2.3]{LT19}. 
		The uniqueness follows from the vanishing of the tangent space $T_{\text{id}} \Aut_{Y/k} \cong H^0(Y, T_Y)=0$ of the automorphism scheme at the identity  (see \cite[Theorem 7]{RS76}, \cite{Nyg79} and \cite[Corollary 1.1]{Martin22}).
		For (2), we refer to the proof of \cite[Proposition 1.8]{Nyg83}.
		The last assertion follows from \cite[Theorem 8.4.10]{FAG}.
	\end{proof}

	\begin{proposition}\label{prop:K3_lift_div}
		Let $Y$ be a globally $F$-split K3 surface and suppose $(Y,D)$ is an snc pair. Then there exists a subscheme $\mathcal{D}$ of the canonical lifting $\mathcal{Y}_{\can}$ such that $(\mathcal{Y}_{\can},\mathcal{D})$ is a lifting of $(Y,D)$ over $W(k)$.
		
		In particular, if $X$ is a globally $F$-split surface such that the minimal resolution $f \colon Y \to X$ is a K3 surface, then $(Y, \Ex(f))$ admits a canonical lifting $(\mathcal{Y}_{\can}, \mathcal{E}_{\can})$ over $W(k)$.
	\end{proposition}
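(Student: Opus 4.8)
The plan is to combine \autoref{p-lift-Pic-K3} (existence of the canonical lifting $\mathcal{Y}_{\can}$ with $\Pic(\mathcal{Y}_{\can}) \xrightarrow{\sim} \Pic(Y)$) with the deformation theory of $Z$-trivial line bundles developed in \autoref{p-obs-triv-lb} and \autoref{c-lift-lb-triv}. Write $D = \sum_{i=1}^r D_i$ with each $D_i$ a smooth rational curve on $Y$ (by the snc hypothesis, every component is a $(-2)$-curve since $K_Y \sim 0$ and $Y$ is K3). First I would lift each component individually. Fix $i$ and consider the line bundle $\mathcal{O}_Y(D_i)$; by \autoref{p-lift-Pic-K3}(2) it extends to a line bundle $\mathcal{L}_i$ on $\mathcal{Y}_{\can}$. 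To produce a divisor, not just a line bundle, one needs a section; this is where the $D_i$-triviality formalism enters. Equivalently, one lifts the pair $(\mathcal{O}_Y(-D_i), s_i)$ where $s_i \in H^0(Y,\mathcal{O}_Y(D_i))$ is the canonical section. I would instead phrase it via \autoref{c-lift-lb-triv}: the obstruction to lifting a $D_i$-trivial line bundle lives in $H^2(Y, \mathcal{I}_{D_i}) = H^2(Y, \mathcal{O}_Y(-D_i))$, and I would show this group vanishes. By Serre duality on the K3 surface, $H^2(Y, \mathcal{O}_Y(-D_i)) \cong H^0(Y, \mathcal{O}_Y(D_i))^{\vee}$, which is one-dimensional — so this does \emph{not} directly vanish. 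The correct reformulation is that one wants to lift the effective divisor $D_i$, i.e.\ lift the pair $(\mathcal{O}_Y(D_i), \text{section})$; the relevant obstruction is then computed from the exact sequence $0 \to \mathcal{O}_Y \to \mathcal{O}_Y(D_i) \to \mathcal{O}_{D_i}(D_i) \to 0$, and since $\mathcal{L}_i$ already lifts by \autoref{p-lift-Pic-K3}(2) and $H^1(Y,\mathcal{O}_Y) = 0$, the section lifts as well (the restriction map $H^0(\mathcal{Y}_{\can},\mathcal{L}_i) \to H^0(Y,\mathcal{O}_Y(D_i))$ is surjective by cohomology-and-base-change together with $H^1(Y,\mathcal{O}_Y)=0$). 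This produces, for each $i$, an effective Cartier divisor $\mathcal{D}_i \subset \mathcal{Y}_{\can}$ restricting to $D_i$, flat over $W(k)$ by \autoref{l-flat-Cartier}.

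Next I would arrange that the $\mathcal{D}_i$ together form an snc divisor over $W(k)$ lifting $D$; by \autoref{l-loc-triv} it suffices to know that $\mathcal{D} := \sum_i \mathcal{D}_i$ restricts to the snc divisor $D$ on the closed fiber, which is automatic once each $\mathcal{D}_i$ lifts $D_i$ — the relative snc condition is then a consequence of the lemma since $W(k)$ is regular local. One subtlety: I should check the $\mathcal{D}_i$ are distinct and that intersections $\mathcal{D}_i \cap \mathcal{D}_j$ are the flat lifts of $D_i \cap D_j$; this follows from flatness and the fact that they restrict correctly on the special fiber, using that $\mathcal{Y}_{\can} \to \Spec W(k)$ is proper and flat. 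So the first assertion of \autoref{prop:K3_lift_div} is obtained.

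For the second (``in particular'') assertion, let $X$ be globally $F$-split with canonical singularities whose minimal resolution $f\colon Y \to X$ is a K3 surface. By \autoref{l-GFR-pullback} (or \autoref{l-gfs-image}), $Y$ is again globally $F$-split, hence an ordinary K3 surface by \autoref{l-ordinary}, so $\mathcal{Y}_{\can}$ exists. The exceptional divisor $\Ex(f)$ is a reduced divisor whose components are $(-2)$-curves forming ADE configurations, hence $(Y,\Ex(f))$ is an snc pair, and the first part applies verbatim to give the lifting $(\mathcal{Y}_{\can}, \mathcal{E}_{\can})$ over $W(k)$; we call it \emph{canonical} because $\mathcal{Y}_{\can}$ is Nygaard's canonical lifting. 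The main obstacle is the passage from ``line bundle lifts'' to ``effective divisor lifts'': one must argue carefully that the canonical section of $\mathcal{O}_Y(D_i)$ lifts, which is exactly where $H^1(Y,\mathcal{O}_Y) = 0$ (the K3 condition) is used, and then invoke \autoref{l-flat-Cartier} to upgrade the lifted section's zero locus to a $T$-flat closed subscheme. Everything else — distinctness of components, the snc property of the total lift, and reduction of the second statement to the first — is formal given \autoref{l-loc-triv} and the classification of canonical surface singularities.
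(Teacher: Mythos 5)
Your overall strategy matches the paper's: lift each line bundle $\mathcal{O}_Y(D_i)$ to $\mathcal{Y}_{\can}$ via \autoref{p-lift-Pic-K3}(2), upgrade to a lift of the effective Cartier divisor by proving that the restriction map on global sections is surjective, and then invoke \autoref{l-flat-Cartier} and \autoref{l-loc-triv}. Two points in your execution need correcting before this becomes a proof.

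First, the parenthetical claim that every irreducible component of an snc divisor $D$ on a K3 surface is a $(-2)$-curve is false. Adjunction on a K3 gives $D_i^2 = 2g(D_i) - 2$, so $D_i$ is a $(-2)$-curve precisely when $g(D_i)=0$; a smooth elliptic curve has $D_i^2 = 0$, and higher genus curves have $D_i^2 > 0$. The snc hypothesis forces each $D_i$ to be smooth, not rational. The first assertion of the proposition allows arbitrary such $D$, so the argument must (and, as the paper shows, can) be run without this restriction; only the ``in particular'' clause has the special ADE/$(-2)$-curve geometry.

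Second, the surjectivity of $H^0(\mathcal{Y}_{\can},\mathcal{L}_i) \to H^0(Y,\mathcal{O}_Y(D_i))$ via Grauert and cohomology-and-base-change requires $H^1(Y,\mathcal{O}_Y(D_i)) = 0$, and you credit this to $H^1(Y,\mathcal{O}_Y) = 0$ alone. That is not sufficient: from $0 \to \mathcal{O}_Y \to \mathcal{O}_Y(D_i) \to \mathcal{O}_{D_i}(D_i) \to 0$, the vanishing $H^1(Y,\mathcal{O}_Y)=0$ only shows that $H^1(Y,\mathcal{O}_Y(D_i))$ injects into $H^1(D_i,\mathcal{O}_{D_i}(D_i))$, which by adjunction ($\mathcal{O}_{D_i}(D_i)\simeq\omega_{D_i}$) and Serre duality on $D_i$ is one-dimensional, so a priori $H^1(Y,\mathcal{O}_Y(D_i))$ could be $k$. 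You must additionally observe that $H^2(Y,\mathcal{O}_Y(D_i))\simeq H^0(Y,\mathcal{O}_Y(-D_i))^\vee=0$, so the connecting map $H^1(D_i,\omega_{D_i})\to H^2(Y,\mathcal{O}_Y)$ is surjective; since both spaces are one-dimensional it is an isomorphism, which forces $H^1(Y,\mathcal{O}_Y(D_i))=0$. This computation, applied to each smooth irreducible component $D_i$ regardless of its genus, is precisely the missing step, and with it in place the remainder of your argument (flatness of the divisor via \autoref{l-flat-Cartier}, relative snc via \autoref{l-loc-triv}, and the reduction of the ``in particular'' clause to the first statement) goes through as you describe.
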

		Note that if a surface $X$ has the minimal resolution $f\colon Y\to X$ such that $Y$ is a K3 surface, then $X$ has canonical singularities as follows:
        We have $K_Y=f^{*}K_X-E$ for some effective $f$-exceptional divisor $E \geq 0$. Since $-K_X$ is $\mathbb{Q}$-effective (\autoref{l-Q-K+delta} (a)), we obtain $E=0$. Thus, $X$ has canonical singularities. 
	\begin{proof}
		Let $D_1, \dots, D_n$ be the irreducible components of $D$.
		By \autoref{p-lift-Pic-K3},   $\MO_Y(D_i)$ lifts to a line bundle $\mathcal{L}_i$ on the canonical lifting $\mathcal{Y}_{\can}$ for every $i=1, \dots, n$. 
		We show it is sufficient to prove, similarly to \cite[Lemma 2.3]{LM18}, that the natural restriction map. 
		\[H^0(\mathcal{Y}_{\can},\mathcal{L}_i)\longrightarrow H^0(Y,\MO_Y(D_i))\]
		is surjective for every $i$. 

		Indeed, if surjectivity holds, then there exists an effective Cartier divisor $\mathcal{D}_i$ such that $\mathcal{D}_i|_Y=D_i$. 
		By \autoref{l-flat-Cartier}, $\mathcal{D}_i$ is flat over $W(k)$ and we thus conclude by \autoref{l-loc-triv}.
		
		To show surjectivity of the restriction map it is enough to show $H^i(\mathcal{Y}_{\can},\mathcal{L}_i)=0$ for all $i>0$ and apply cohomology and base change \cite[Theorem III.12.11]{Ha77}. 
		By upper semi-continuity \cite[Theorem III.12.8]{Ha77}, it is enough to show $H^i(Y,\MO_Y(D_i))=0$ for $i>0$. By Serre duality $H^2(Y, \MO_Y(D_i))=H^0(Y, \MO_Y(-D_ i))^{\vee}=0$.
		Finally, $H^1(Y, \MO_Y(D_i))=0$: indeed $\mathcal{O}_{D_i}(D_i) \cong \omega_D$ by adjunction, then we take the exact sequence \[
		0=H^1(Y, \MO_Y) \to H^1(Y,\MO_Y(D_i)) \to H^1(D_i, \omega_{D_i}) \to H^2(Y, \MO_Y) \to 0
		\]
		and since the last two terms are one-dimensional we conclude that $H^1(Y,\MO_Y(D_i))=0$.

        To prove the last assertion, as $Y$ is globally $F$-split by \autoref{l-GFR-pullback} and $(Y,\Ex(f))$ is snc, there exists a lifting $\mathcal{E}_i$ for every irreducible component $E_i \subset \Ex(f)$. Note that the lifting $\mathcal{E}_i$ is unique as $H^0(Y, \mathcal{O}_Y(E_i))$ is one-dimensional.  We define $\mathcal{E}_{\can}:=\sum_i \mathcal{E}_i$.
	\end{proof}
	
	\begin{remark}
		Note that \autoref{prop:K3_lift_div} fails for certain supersingular K3 surfaces in characteristic $p \leq 19$ constructed in \cite[Theorem 1]{Shi04} as explained in \cite[Remark 3.4]{Kaw21}.
	\end{remark}
	
	\subsubsection{Globally $F$-split Enriques surfaces} \label{ss-enriques}
	We briefly recall the classification of Enriques surfaces in characteristic $p>0$ and we refer the reader to \cite{BM76, LT19} for a more detailed treatment. 
	In what follows, a smooth projective surface $X$ over $k$ is called \emph{Enriques} if $K_X \equiv 0$ and the $2^{\textrm{nd}}$ \'etale Betti number $b_2(X)=10$. In particular, one can check that $\chi(X, \cO_X)=1$ (see \cite[\S 3]{BM76}).
	
	We say that an Enriques surface $X$ is:
	\begin{enumerate}
	    \item[(a)] \emph{classical} if $h^1(X,\cO_X)=0$ (in this case, $K_X \not \sim 0$ and $2K_X \sim 0$);
	    \item[(b)] \emph{singular}  if $h^1(X,\cO_X)=1$ (hence, $K_X \sim 0$) and the Frobenius morphism acts bijectively on $H^1(X,\cO_X)$;
        \item[(c)]\emph{supersingular}:
	     if $h^1(X,\cO_X)=1$ (hence, $K_X \sim 0$) and the Frobenius morphism acts trivially on $H^1(X,\cO_X)$. 
	    \end{enumerate}
    If $p > 2$, then all Enriques surfaces are classical.
	However, if $p=2$, then classical, singular,
	and supersingular ones form three disjoint non-empty classes. 
	Moreover, by \cite[Theorem 2, p.216]{BM76} every Enriques surface $X$ admits a canonical $G$-torsor $\pi \colon Z \to X$ with
	\begin{alignat*}{2}
	    G &= \mu_2\qquad &&\textrm{ when } X \textrm{ is classical,} \\
	    G &= \mathbb{Z}/2\mathbb{Z} && \textrm{ when } X \textrm{ is singular,} \\
	    G &= \alpha_2 &&\textrm{ when } X \textrm{ is supersingular}.
	\end{alignat*}
	We will call $\pi \colon Z \to X$ the \emph{canonical double covering} of $X$. 
	Note that $\mu_2$ is isomorphic to $\mathbb{Z}/2\mathbb{Z}$ as group schemes when $p>2$.
    We now relate these notions to global $F$-splitting.
	
	\begin{lemma}\label{l:doublecoverEnriques}
    The following hold.
	\begin{enumerate}
	    \item[(a)] Suppose $p=2$. Then an Enriques surface $X$ over $k$ is globally $F$-split if and only if it is singular.
	    \item[(b)]  In general, an Enriques surface $X$ over $k$ is globally $F$-split if and only if the canonical double covering $\pi \colon Z \to X$ is \'etale and $Z$ is an ordinary K3 surface.
	\end{enumerate}
\end{lemma}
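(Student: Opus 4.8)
The plan is to prove part (b) and then extract part (a) from it, using throughout the trichotomy recalled above and \autoref{l-ordinary}, which for a Gorenstein proper variety $Y$ with $K_Y\sim 0$ identifies global $F$-splitting with bijectivity of the Frobenius action on $H^{\dim Y}(Y,\cO_Y)$. Write $\pi\colon Z\to X$ for the canonical double covering. For the implication ``$\pi$ étale and $Z$ ordinary K3 $\Rightarrow$ $X$ globally $F$-split'', note that by \autoref{l-ordinary} the ordinary K3 surface $Z$ is globally $F$-split, so the point is to descend this along $\pi$. If $p\neq 2$ the degree of $\pi$ is invertible, so $\tfrac12\Tr_\pi\colon\pi_*\cO_Z\to\cO_X$ is an $\cO_X$-linear retraction of $\cO_X\hookrightarrow\pi_*\cO_Z$; composing a Frobenius splitting $F^e_*\cO_Z\to\cO_Z$ with $\pi_*$ and with this retraction, and using $\pi\circ F_Z=F_X\circ\pi$, produces a Frobenius splitting of $X$.

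When $p=2$, by the trichotomy $\pi$ is étale only when $X$ is singular, so $K_X\sim 0$, and by \autoref{l-ordinary} it suffices to show that Frobenius acts bijectively on the one-dimensional space $H^2(X,\cO_X)$. For this I would check that the cokernel of $\cO_X\hookrightarrow\pi_*\cO_Z$ is again $\cO_X$ (it is a $2$-torsion line bundle of Euler characteristic $1$), that the induced quotient $\pi_*\cO_Z\to\cO_X$ is the trace map, and that the trace of a finite étale morphism commutes with Frobenius. The long exact sequence of $0\to\cO_X\to\pi_*\cO_Z\to\cO_X\to 0$ together with $H^1(Z,\cO_Z)=0$ then yields Frobenius-equivariant isomorphisms $H^1(X,\cO_X)\cong H^2(X,\cO_X)\cong H^2(Z,\cO_Z)$, and ordinarity of $Z$ concludes the implication. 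These isomorphisms are reused below.

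For the converse, suppose $X$ is globally $F$-split; I would first rule out the two non-étale cases, which occur only when $p=2$. If $X$ is classical, then $K_X$ has order exactly $2=p$, contradicting \autoref{l-Q-K+delta}(a): global $F$-splitting together with $K_X\sim_{\Q}0$ forces $(p^e-1)K_X\sim 0$ for some $e>0$, i.e.\ $2\mid 2^e-1$. If $X$ is supersingular, then $H^1(X,\cO_X)\cong k$ and Frobenius acts as $0$ on it, but a splitting of $\cO_X\to F^e_*\cO_X$ makes the Frobenius action on every $H^i(X,\cO_X)$ injective, contradicting $k\neq 0$. Hence $\pi$ is étale; then $Z$ is smooth with $\omega_Z\cong\cO_Z$ and $\chi(\cO_Z)=2\chi(\cO_X)=2$, so $Z$ is a K3 surface, and since $\pi$ is quasi-étale, $Z$ is globally $F$-split by \autoref{l-F-split-quasietale}, hence ordinary by \autoref{l-ordinary}. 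Part (a) then follows: in characteristic $2$ the canonical $G$-torsor $\pi$ is étale exactly when $G=\Z/2\Z$, i.e.\ when $X$ is singular (the $\mu_2$- and $\alpha_2$-torsors being purely inseparable), and for such $X$ the Frobenius-equivariant isomorphisms above show that ``Frobenius acts bijectively on $H^1(X,\cO_X)$'' — the definition of a singular Enriques surface — is equivalent to ``$Z$ is ordinary''; combining with (b) gives exactly the equivalence asserted in (a).

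The step I expect to be delicate is the characteristic-$2$ descent along the étale $\Z/2\Z$-cover: unlike the case $p\neq 2$ there is no averaging retraction, so one must analyse $\pi_*\cO_Z$ carefully — its cokernel over $\cO_X$, the identification of the quotient map with the trace, and the Frobenius-compatibility of that trace — in order to transport ordinarity between $Z$ and $X$. Once this is in place, the remaining arguments are short.
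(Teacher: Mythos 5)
Your proposal is correct and, in its most delicate step, takes a genuinely different route from the paper. Both arguments agree on the easy parts: ruling out the classical case (via $(p^e-1)K_X\sim 0$ forcing $K_X\sim 0$) and the supersingular case (via injectivity of Frobenius on $H^1(X,\cO_X)$), passing to $Z$ by \autoref{l-F-split-quasietale} and \autoref{l-ordinary}, and descending a splitting from $Z$ to $X$ via the retraction $\tfrac12\Tr_\pi$ when $p\neq 2$. The divergence is in the characteristic-$2$ direction ``singular $\Rightarrow$ globally $F$-split'' (equivalently, the $p=2$ case of the ``$\Leftarrow$'' of (b)). The paper invokes Crew's theorem \cite[Theorem 2.7]{Crew84}, which asserts that the K3 cover of a singular Enriques surface is ordinary in the crystalline sense, and then passes from crystalline ordinarity to global $F$-splitting through \cite{Illusie79} and \cite{Yobuko19} (resp.\ \cite{Nyg83}). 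You instead propose an elementary cohomological computation: observe that the Artin--Schreier sequence $0\to\cO_X\to\pi_*\cO_Z\to\cO_X\to 0$ of the $\Z/2\Z$-torsor $\pi$, with quotient map the trace, is Frobenius-equivariant (since the trace of a finite \'etale morphism commutes with Frobenius by the freshman's dream), and extract from the long exact sequence and $H^1(Z,\cO_Z)=0$ Frobenius-compatible isomorphisms $H^1(X,\cO_X)\cong H^2(X,\cO_X)\cong H^2(Z,\cO_Z)$. This gives directly that $F$ is bijective on $H^1(X,\cO_X)$ (the definition of singular) if and only if $F$ is bijective on $H^2(X,\cO_X)$ (global $F$-splitting via \autoref{l-ordinary}), and simultaneously re-derives the statement of Crew's theorem at the level of the Frobenius action on $H^2(Z,\cO_Z)$. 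Your approach avoids the crystalline machinery entirely and is a clean alternative.

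One detail to fix in the sketch: you justify that the cokernel $L$ of $\cO_X\hookrightarrow\pi_*\cO_Z$ is $\cO_X$ by saying it is a $2$-torsion line bundle with $\chi(L)=1$; but since $K_X\sim 0$, Riemann--Roch gives $\chi(L)=\chi(\cO_X)=1$ for \emph{every} numerically trivial line bundle, so this does not exclude the nontrivial $2$-torsion class. The correct argument is either to note that for a $\Z/2\Z$-torsor the admissible Artin--Schreier transition data $t\mapsto t+b$ induce the identity on the top graded piece, so $L$ is canonically trivial; or to extract $h^0(L)=1$ from the long exact sequence (using $H^0(Z,\cO_Z)=k$ and $H^1(Z,\cO_Z)=0$), which forces a torsion line bundle to be $\cO_X$.
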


\begin{proof}
    We start with (a). First, suppose that $X$ is globally $F$-split. Then 
    \[
	H^0(X, \cO_X((1-p)K_X)) \neq 0,
	\]
	and therefore $K_X \sim 0$ as $p=2$. 
	Moreover, the existence of an $F$-splitting implies that $F \colon H^1(X,\cO_X) \to H^1(X,\cO_X)$ splits, and so it is a bijection. In particular, $X$ is singular.
	
	 As for the opposite implication, suppose that $X$ is singular. 
	 Let $\pi \colon Z \to X$ be the canonical double covering of $X$, which is \'etale. By \cite[Theorem 2.7]{Crew84}, $Z$ is an ordinary K3 surface, which in this article means that the dimension of the slope-$0$ crystalline cohomology 
	\[
	\dim_K H^2_{\mathrm{cris}}(Z/W)_0=1.
	\]
	By \cite[7.2(a) p.653]{Illusie79}, this is equivalent to $h :=\dim_K (H^2(Z, W\cO_Z) \otimes K)$ being equal to $1$. 
	In turn, by \cite[Theorem 4.5]{Yobuko19} (or \cite[Lemma 1.3]{Nyg83} and \autoref{l-ordinary}), this is equivalent to $Z$ being globally $F$-split.
    As $\pi$ is \'etale and $\omega_X \cong \mathcal{O}_X$, we conclude that $H^0(Z, \omega_Z) \cong H^0(X, \omega_X)$ and thus the action of the Frobenius on $H^0(X, \omega_X)$ is bijective and thus $X$ is globally $F$-split by \autoref{l-ordinary}.

	We now prove (b).
	Suppose first that $X$ is globally $F$-split. By (a), the canonical cover is \'etale in all characteristic and thus we conclude by \autoref{l-F-split-quasietale} and \autoref{l-ordinary}.
	
	Suppose now that $\pi \colon Z \to X$ is \'etale and $Z$ is ordinary. By \autoref{l-ordinary}, $Z$ is globally $F$-split.
    If $p=2$, we proved in (a) that $X$ is ordinary.
	If $p>2$, fix a splitting $\psi \colon \mathcal{O}_Z \to F_*\mathcal{O}_Z$ and consider the following commutative diagram: 
\[ \label{eq: diagram_enriques}
		\xymatrix{
			\mathcal{O}_X \ar[r] \ar[d]  & F_*\mathcal{O}_X \ar[d] \\
			\pi_*\mathcal{O}_Z \ar[r] \ar@/^1pc/[u]^{\frac{\Tr}{2}} & \pi_*F_*\mathcal{O}_Z \ar@/_1pc/[l]_{\pi_* \psi},
	}
\]
	where $\frac{\Tr}{2} \colon \pi_* \mathcal{O}_Z \to \mathcal{O}_X$ is a splitting. By following the diagram, we conclude $X$ is globally $F$-split.
\end{proof}
	
	In particular, an Enriques surface $X$ is globally $F$-split if and only if it is ordinary in the sense of \cite[Definition 2.1]{LT19}.
	
	The following states the conditions for a line bundle to descend under a Galois \'etale morphism.

	\begin{lemma}\label{l-descent-equivariant}
		Let $f \colon X \to Y$ be a Galois finite \'etale morphism of integral schemes and let $G$ be its Galois group.
		Let $L$ be a $G$-equivariant line bundle on $X$. 
		Then there exists a unique line bundle $M$ on $Y$ such that $f^*M$ is isomorphic to $L$ as $G$-equivariant line bundles.
	\end{lemma}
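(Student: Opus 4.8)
The plan is to deduce this from faithfully flat (Galois) descent. First I would note that $f$, being finite and \'etale, is faithfully flat, and that the Galois hypothesis means precisely that $f$ makes $X$ into a right $G$-torsor over $Y$ for the \'etale topology; concretely, the morphism
$$
X\times_{\Spec\mathbb{Z}}G\;\longrightarrow\;X\times_Y X,\qquad (x,g)\longmapsto(x,x\cdot g),
$$
is an isomorphism, because $f$ is \'etale and $G$ acts simply transitively on the geometric fibres of $f$. Here $G$ is regarded as a constant finite group scheme over $\Spec\mathbb{Z}$.

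Next I would invoke descent for quasi-coherent sheaves along the faithfully flat morphism $f$ (see \cite{FAG} or \cite[\href{https://stacks.math.columbia.edu/tag/023T}{Tag 023T}]{stacks-project}): the category of quasi-coherent sheaves on $Y$ is equivalent to the category of quasi-coherent sheaves on $X$ equipped with a descent datum relative to $f$. Transporting a descent datum across the identification $X\times_Y X\cong X\times G$ of the previous step turns it into exactly a compatible family of isomorphisms indexed by $G$, i.e.\ into a $G$-equivariant structure on the sheaf (the descent cocycle condition on the triple fibre product becoming the equivariance cocycle condition); compare the account of Galois descent in \cite{FAG}. Consequently the category of $G$-equivariant quasi-coherent sheaves on $X$ is equivalent to that of quasi-coherent sheaves on $Y$, an explicit quasi-inverse being $L\mapsto M:=(f_*L)^G$, and the equivalence comes with a natural isomorphism $f^*M\cong L$ of $G$-equivariant sheaves. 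This gives existence.

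It remains to check that $M$ is a line bundle and that it is unique. For the first point, being invertible is an fpqc-local property on the base (it amounts to being of finite presentation, flat, and of fibre rank $1$ at every point), so it descends along $f$; since $f^*M\cong L$ is invertible, so is $M$. Uniqueness is then formal: the descent functor above is an equivalence, hence fully faithful, so if $M_1,M_2$ are line bundles on $Y$ with $f^*M_1\cong f^*M_2$ as $G$-equivariant line bundles, then $M_1\cong M_2$; and once an identification $f^*M_i\cong L$ of $G$-equivariant bundles is fixed, the resulting isomorphism $M_1\cong M_2$ is uniquely determined.

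I do not anticipate a genuine obstacle here, as the statement is a formal consequence of descent. The one point that needs a little care is the identification in the second paragraph between descent data relative to $f$ and $G$-equivariant structures, namely matching the two cocycle conditions and the left/right conventions for the $G$-action. A more hands-on alternative that sidesteps the general descent formalism would be to set $M:=(f_*L)^G$ directly and verify that $M$ is invertible with $f^*M\cong L$ after an \'etale base change $Y'\to Y$ trivializing the torsor (so that $X\times_Y Y'\cong\coprod_{g\in G}Y'$ and the claim is immediate), using that the formation of $G$-invariants commutes with the flat base change $Y'\to Y$ for the finite group $G$; this route requires essentially the same bookkeeping.
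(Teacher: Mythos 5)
Your argument is correct and is in substance exactly what the paper's proof invokes: the paper simply cites the Stacks Project entries on fppf/Galois descent for quasi-coherent sheaves and torsors (Tags 023T and 05B2) and Drezet--Narasimhan, which together contain the equivalence between $G$-equivariant sheaves on $X$ and sheaves on $Y$ that you spell out, together with the fact that invertibility is fpqc-local so it descends. Your more explicit version, and the hands-on alternative via $M=(f_*L)^G$ and base change along a trivializing $Y'\to Y$, are both the standard unwinding of those references; there is no meaningful divergence from the paper's route.
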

	
	\begin{proof}
		See \cite[\href{https://stacks.math.columbia.edu/tag/023T}{Tag 023T}]{stacks-project} and \cite[\href{https://stacks.math.columbia.edu/tag/023T}{Tag 05B2}]{stacks-project}) (cf.\ \cite[Th\'eor\`eme 2.3]{DN89}).
	\end{proof}
	
	We recall the notion of a canonical lifting for globally $F$-split Enriques surfaces introduced in \cite[Definition 2.5]{LT19}.
	
	\begin{proposition}\label{p-canonical-lift-enriques}
		Let $Y$ be a globally $F$-split Enriques surface and let $\pi \colon Z \to Y$ be the canonical double covering. Then there exists a projective lifting $\mathcal{Y}_\can$ of $Y$ over $W(k)$ together with a lifting $\widetilde{\pi}\colon \mathcal{Z}_{\can} \to \mathcal{Y}_{\can}$ of $\pi$ over $W(k)$ such that:
		\begin{enumerate}
			\item[(1)] $\mathcal{Z}_{\can}$ is the canonical lifting of $Z$;
			\item[(2)] $\widetilde{\pi}$ is a Galois finite \'etale cover of degree $2$;
			\item[(3)] $\Pic(\mathcal{Y}_{\can}) \to \Pic(Y)$ is an isomorphism of abelian groups.
		\end{enumerate}
		We say that $\mathcal{Y}_{\can}$ is the canonical lifting of the Enriques surface $Y$.
	\end{proposition}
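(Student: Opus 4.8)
The plan is to realize $\mathcal{Y}_{\can}$ as the quotient of Nygaard's canonical lifting of $Z$ by the unique lift of the deck involution. First, by \autoref{l:doublecoverEnriques} the covering $\pi$ is finite \'etale, $Z$ is an ordinary (equivalently, globally $F$-split) K3 surface, and the Galois group of $\pi$ is $\mathbb{Z}/2\mathbb{Z}$ in every characteristic: when $p>2$ every Enriques surface is classical and $\mu_2\cong\mathbb{Z}/2\mathbb{Z}$, while when $p=2$ a globally $F$-split Enriques surface is singular by \autoref{l:doublecoverEnriques}(a). Thus $\pi$ is an \'etale $\mathbb{Z}/2\mathbb{Z}$-torsor and $Y=Z/\langle\sigma\rangle$ for a fixed-point-free involution $\sigma$ of $Z$.

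Next, let $\mathcal{Z}_{\can}$ be the canonical lifting of $Z$ of \cite{Nyg83}; by \autoref{p-lift-Pic-K3} it is smooth and projective over $W(k)$. By \autoref{p-lift-Pic-K3}(1) the involution $\sigma$ lifts to a unique $W(k)$-automorphism $\widetilde{\sigma}$ of $\mathcal{Z}_{\can}$, and the same uniqueness forces $\widetilde{\sigma}^2=\operatorname{id}$ (both $\widetilde{\sigma}^2$ and $\operatorname{id}$ lift $\sigma^2$), so $\mathbb{Z}/2\mathbb{Z}$ acts on $\mathcal{Z}_{\can}$ over $W(k)$ lifting its action on $Z$. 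The fixed locus $\Fix(\widetilde{\sigma})$, being the equalizer of $\operatorname{id}$ and $\widetilde{\sigma}$, is closed in $\mathcal{Z}_{\can}$ and its formation commutes with base change; it is therefore proper over $W(k)$ with special fibre $\Fix(\sigma)=\emptyset$, and since a proper $W(k)$-scheme with empty closed fibre is empty, $\widetilde{\sigma}$ acts freely. Using a $\mathbb{Z}/2\mathbb{Z}$-linearized ample line bundle on $\mathcal{Z}_{\can}$ (obtained by tensoring an ample one with its $\widetilde{\sigma}$-pullback), the quotient $\mathcal{Y}_{\can}:=\mathcal{Z}_{\can}/\langle\widetilde{\sigma}\rangle$ exists as a projective $W(k)$-scheme, $\widetilde{\pi}\colon\mathcal{Z}_{\can}\to\mathcal{Y}_{\can}$ is a finite \'etale Galois cover of degree two, and $\mathcal{Y}_{\can}$ is smooth (hence flat) over $W(k)$. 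Formation of the quotient by a free action commutes with the base change $\Spec k\to\Spec W(k)$, so $\mathcal{Y}_{\can}\times_{W(k)}\Spec k\cong Z/\langle\sigma\rangle=Y$; thus $\mathcal{Y}_{\can}$ is a lifting of $Y$ and items (1) and (2) hold.

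It remains to establish (3), and I expect this to be the main obstacle. By \autoref{l-descent-equivariant} applied to $\widetilde{\pi}$ (resp.\ to $\pi$), pulling back identifies $\Pic(\mathcal{Y}_{\can})$ (resp.\ $\Pic(Y)$) with the group of $\mathbb{Z}/2\mathbb{Z}$-equivariant line bundles on $\mathcal{Z}_{\can}$ (resp.\ on $Z$), and these identifications are compatible with restriction to the special fibre because $\widetilde{\sigma}$ restricts to $\sigma$. Since $\Pic(\mathcal{Z}_{\can})\to\Pic(Z)$ is an isomorphism by \autoref{p-lift-Pic-K3}(2), the assertion reduces to showing that every equivariant (linearization) structure on a line bundle over $Z$ lifts uniquely to one over $\mathcal{Z}_{\can}$. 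This comparison of linearization data is the delicate point, and it is most subtle when $p=2$, where $\mathbb{Z}/2\mathbb{Z}$ is \'etale but not of multiplicative type, so that the torsor $\widetilde{\pi}$ cannot be encoded by a $2$-torsion line bundle and $\mu_2(k)$ degenerates; here one follows the analysis of canonical liftings of globally $F$-split (ordinary) Enriques surfaces in \cite[\S 2]{LT19}, which also records the projectivity of $\mathcal{Y}_{\can}$ via a lifted polarization.
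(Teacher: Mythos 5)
Your construction for items (1) and (2) is correct and self-contained: you lift $\sigma$ uniquely via \autoref{p-lift-Pic-K3}(1), show $\widetilde{\sigma}^{2}=\mathrm{id}$ by the same uniqueness, obtain freeness of the action from properness of the fixed locus with empty special fibre, and form the polarized quotient; the paper simply cites \cite[Theorem 2.4]{LT19} at this point, so both routes are fine and yours has the merit of being explicit.

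For (3), however, there is a genuine gap. You correctly reduce, via \autoref{l-descent-equivariant}, to lifting the $(\mathbb{Z}/2\mathbb{Z})$-linearization together with each line bundle, but then you label this ``the delicate point'' and defer it to \cite[\S 2]{LT19}. That deferred step \emph{is} the content of the paper's proof of (3), and it is not supplied by \cite{LT19}. The argument is short once stated: for $L\in\Pic(Y)$ set $M:=\pi^{*}L$ and let $\mathcal{M}$ be the unique lift of $M$ to $\mathcal{Z}_{\can}$ given by \autoref{p-lift-Pic-K3}(2). Since $\pi\circ\sigma=\pi$, the bundle $\widetilde{\sigma}^{*}\mathcal{M}$ is also a lift of $M$, so uniqueness forces $\widetilde{\sigma}^{*}\mathcal{M}\cong\mathcal{M}$ and the $(\mathbb{Z}/2\mathbb{Z})$-equivariant structure on $M$ extends to $\mathcal{M}$; hence $\mathcal{M}$ descends to some $\mathcal{L}\in\Pic(\mathcal{Y}_{\can})$, and a second application of \autoref{l-descent-equivariant} over the closed fibre gives $\mathcal{L}|_{Y}\cong L$. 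In other words, uniqueness of the lift on the K3 cover is precisely what makes the equivariance, and hence the descent, automatic, with no case analysis on $p$; this is the step you need to carry out rather than outsource.
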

	
	\begin{proof}
	By \autoref{l:doublecoverEnriques}, $\pi$ is \'etale and $Z$ is an ordinary K3 surface. Thus (1) and (2) are proven in \cite[Theorem 2.4]{LT19}. 
		For the proof of (3), let $L$ be a line bundle on $Y$. Note that $M:=\pi^*L$ extends to a unique line bundle $\mathcal{M}$ on $\mathcal{Z}_{\can}$ by \autoref{p-lift-Pic-K3}.
		The group of $W(k)$-automorphisms of $\widetilde{\pi}$ is $\mathbb{Z}/2\mathbb{Z}$.
		We claim that $\mathcal{M}$ is $(\mathbb{Z}/2\mathbb{Z})$-equivariant. 
		Clearly $M$ is $(\mathbb{Z}/2\mathbb{Z})$-equivariant line bundle on $Z$. 
	    A lifting $\mathcal{M}$ of $M$ to $\mathcal{Z}_{\can}$ is unique by \autoref{p-lift-Pic-K3} and thus it must be $(\mathbb{Z}/2\mathbb{Z})$-equivariant.
		Therefore $\mathcal{M}$ descends to a line bundle $\mathcal{L}$ on $\mathcal{Y}_{\can}$ by \autoref{l-descent-equivariant}.
		Since $\pi^{*}L \cong \widetilde{\pi}^{*}\mathcal{L}|_{Y}$, it follows from \autoref{l-descent-equivariant} that $L \cong \mathcal{L}|_Y$, so $\mathcal{L}$ is a lifting of $L$. 
	\end{proof}
	
	\begin{proposition}\label{prop:Enriques_lift_div}
		Let $X$ be a projective globally $F$-split surface over $k$ with canonical singularities.
		Let $f\colon (Y,E) \to X$ be the minimal resolution. 
		Suppose that $Y$ is an Enriques surface.
		Then $(Y,E)$ admits a lifting $(\mathcal{Y}_\can, \mathcal{E}_\can)$ over $W(k)$ where $\mathcal{Y}_{\can}$ is the canonical lifting of $Y$.
	\end{proposition}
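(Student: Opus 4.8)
The plan is to lift the snc pair $(Z,\pi^{*}E)$ along the canonical double cover and then descend it, in parallel with the K3 case \autoref{prop:K3_lift_div}. Since $X$ has canonical, hence Du Val, singularities, the minimal resolution $f$ is crepant, so by \autoref{l-GFR-pullback} the Enriques surface $Y$ is globally $F$-split, and $E=\Ex(f)$ is a reduced snc divisor whose components are smooth rational $(-2)$-curves; in particular $(Y,E)$ is snc. By \autoref{l:doublecoverEnriques} the canonical double covering $\pi\colon Z\to Y$ is finite \'etale and $Z$ is an ordinary K3 surface, and by \autoref{p-canonical-lift-enriques} there is a projective canonical lifting $\mathcal{Y}_{\can}$ of $Y$ over $W(k)$ together with a lifting $\widetilde{\pi}\colon\mathcal{Z}_{\can}\to\mathcal{Y}_{\can}$ of $\pi$ which is finite \'etale and Galois with group $G=\mathbb{Z}/2\mathbb{Z}$, where $\mathcal{Z}_{\can}$ is the canonical lifting of $Z$.

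As $\pi$ is \'etale, the reduced preimage $\pi^{*}E$ is an snc divisor on $Z$ whose irreducible components $C$ are smooth rational curves, so by adjunction on the K3 surface $Z$ one has $C^{2}=-2$. Applying \autoref{prop:K3_lift_div} to the snc pair $(Z,\pi^{*}E)$ on the globally $F$-split K3 surface $Z$ yields a lifting $\bigl(\mathcal{Z}_{\can},\sum_{C}\mathcal{C}\bigr)$ over $W(k)$, the sum being over the components $C$ of $\pi^{*}E$, where each $\mathcal{C}$ is an effective Cartier divisor, flat over $W(k)$, with $\mathcal{C}|_{Z}=C$; moreover, as in the proof of \autoref{prop:K3_lift_div}, each $\mathcal{C}$ is the \emph{unique} lift of $C$, since $H^{i}(Z,\mathcal{O}_{Z}(C))=0$ for $i>0$ and $h^{0}(Z,\mathcal{O}_{Z}(C))=1$, so that $H^{0}(\mathcal{Z}_{\can},\mathcal{O}_{\mathcal{Z}_{\can}}(\mathcal{C}))$ is free of rank one over $W(k)$.

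By \autoref{p-lift-Pic-K3} the generator $\sigma$ of the deck group $G=\Aut_{Y}(Z)$ lifts uniquely to $\widetilde{\sigma}\in\Aut(\mathcal{Z}_{\can})$, and by uniqueness $\widetilde{\sigma}^{2}=\mathrm{id}$, so $\widetilde{\pi}$ is the quotient of $\mathcal{Z}_{\can}$ by $\langle\widetilde{\sigma}\rangle$. Since $\sigma$ permutes the components of $\pi^{*}E$, carrying those lying over a fixed component $E_{i}$ of $E=\sum_{i}E_{i}$ to components of the same kind, and since the lift of each component is unique, $\widetilde{\sigma}$ permutes the divisors $\mathcal{C}$ compatibly. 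Hence, for each $i$, the divisor $\mathcal{G}_{i}:=\sum\mathcal{C}$ (the sum over the components $C$ with $\pi(C)=E_{i}$) is a $G$-invariant effective Cartier divisor on $\mathcal{Z}_{\can}$ with $\mathcal{G}_{i}|_{Z}=\pi^{*}E_{i}$. Thus $\mathcal{O}_{\mathcal{Z}_{\can}}(\mathcal{G}_{i})$ carries a natural $G$-equivariant structure and, by \autoref{l-descent-equivariant}, descends to a line bundle $\mathcal{L}_{i}$ on $\mathcal{Y}_{\can}$ with $\widetilde{\pi}^{*}\mathcal{L}_{i}\simeq\mathcal{O}_{\mathcal{Z}_{\can}}(\mathcal{G}_{i})$; the canonical section of $\mathcal{O}_{\mathcal{Z}_{\can}}(\mathcal{G}_{i})$ is $G$-invariant, so by faithfully flat descent it comes from a regular section of $\mathcal{L}_{i}$, whose vanishing locus is an effective Cartier divisor $\mathcal{E}_{i}\subset\mathcal{Y}_{\can}$ with $\widetilde{\pi}^{*}\mathcal{E}_{i}=\mathcal{G}_{i}$. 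Restricting to the special fibre gives $\pi^{*}(\mathcal{E}_{i}|_{Y})=\mathcal{G}_{i}|_{Z}=\pi^{*}E_{i}$, and since $\pi^{*}$ is injective on effective divisors we conclude $\mathcal{E}_{i}|_{Y}=E_{i}$; flatness of $\mathcal{E}_{i}$ over $W(k)$ then follows from \autoref{l-flat-Cartier}. Therefore $(\mathcal{Y}_{\can},\{\mathcal{E}_{i}\}_{i})$ is a lifting of $(Y,E=\sum_{i}E_{i})$ over $W(k)$ in the sense of \autoref{def-lift}, and we set $\mathcal{E}_{\can}:=\sum_{i}\mathcal{E}_{i}$.

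The main point — and the only genuinely delicate step — is that the canonical lifting of the ordinary K3 double cover, being uniquely determined component by component, is automatically equivariant for the covering involution, which is exactly what makes the descent in the previous paragraph possible. The remaining ingredient, that $\pi^{*}E$ is again a configuration of rigid $(-2)$-curves so that \autoref{prop:K3_lift_div} applies with no new vanishing theorems, is immediate from $\pi$ being \'etale and $K_{Z}\sim 0$. By contrast, a direct approach on $Y$ looks harder: the group $H^{1}(Y,\mathcal{O}_{Y}(E_{i}))$ that controls the obstruction to lifting a section of $\mathcal{O}_{Y}(E_{i})$ to $\mathcal{Y}_{\can}$ need not vanish on an Enriques surface, so one cannot simply imitate the K3 argument on $Y$ itself.
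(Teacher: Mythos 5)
Your proof is correct and takes essentially the same approach as the paper: pass to the K3 double cover, use \autoref{prop:K3_lift_div} and uniqueness of the lifted components to see that the lifted divisor is invariant under the lifted involution, and descend along the \'etale $\mathbb{Z}/2\mathbb{Z}$-cover. The paper packages the descent step by explicitly writing the invariant section as $f\cdot i^*f$ for each component $D$ of $E$ (whose preimage splits as $F\sqcup G$ because $D\simeq\mathbb{P}^1$ is simply connected), whereas you invoke the natural $G$-equivariant structure on $\mathcal{O}_{\mathcal{Z}_\can}(\mathcal{G}_i)$ and descend the canonical section; these are the same argument in different notation.
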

\begin{proof}
	By \autoref{l-GFR-pullback}, $Y$ is globally $F$-split. 
	Thus, by \autoref{l:doublecoverEnriques}, there exists an \'etale double cover $\pi \colon Z\longrightarrow Y$ where $Z$ is a globally $F$-split K3 surface. Let $\widetilde{\pi} \colon  \mathcal{Z}_{\can} \to \mathcal{Y}_{\can}$ be the lifting over $W(k)$ given by \autoref{p-canonical-lift-enriques} and denote by $i$ the natural involution on $\mathcal{Z}_\can$.
	We claim that each irreducible component $D$ of $E$ lifts to a subscheme $\mathcal{D} \subset \mathcal{Y}_\can$.
		
		Since $D \cong \mathbb{P}^1$ is simply connected, the preimage $ \pi^{-1}D$ will consist of two disjoint divisors $F  \sqcup G$. Let $L:=\mathcal{O}_Y(D)$ and $L_Z:=\pi^*L=\MO_Z(F +G).$
		Let $\mathcal{L}_{\mathcal{Z}_{\can}}$ be the canonical lifting of $L_{Z}$ to $\mathcal{Z}_{\can}$ guaranteed by \autoref{p-lift-Pic-K3}.
		By \autoref{prop:K3_lift_div}, there exist unique liftings $\mathcal{F}_\can$ and $\mathcal{G}_\can$ of $F$ and $G$ inside $\mathcal{Z}_\can$. 
		If $f \in H^0(\mathcal{Z}_\can, \mathcal{O}_{\mathcal{Z}_\can}(\mathcal{F}_\can))$ defines $\mathcal{F}_\can$,
		then $g:=i^*(f)$ belongs to $H^0(\mathcal{Z}_\can, \mathcal{O}_{\mathcal{Z}_\can}(\mathcal{G}_\can))$ by uniqueness of lifts of line bundles as $g|_{Z} \in H^0(Z, \mathcal{O}_Z(G))$.
		Then $s=f \cdot  i^*f \in H^0(\mathcal{Z}_\can, \mathcal{L}_{\mathcal{Z}_{\can}})$ is a section defining the divisor $\mathcal{F}_\can+\mathcal{G}_\can$.
	    As $s$ is $(\mathbb{Z}/2\mathbb{Z})$-invariant (indeed, $i^2=\mathrm{id}$), it descends to a section $t \in H^0(\mathcal{Y}_\can, \mathcal{L}_{\mathcal{Y}_\can})$ by \ 
		\cite[\href{https://stacks.math.columbia.edu/tag/03DW}{Tag 03DW}]{stacks-project}, where $\mathcal{L}_{\mathcal{Y}_\can}$ is the lifting of $L$ constructed in \autoref{p-canonical-lift-enriques}. 
		The Cartier divisor $\mathcal{D} \subset \mathcal{Y}_{\can}$ cut out by $s$ gives then the desired lifting of $D$ by \autoref{l-flat-Cartier}.
\end{proof}

\subsubsection{General case}

We recall the properties of the canonical lifting of a globally $F$-split abelian variety.
	
	\begin{theorem}[cf. {\cite[Theorem 1, Appendix]{MS87}}]\label{t-ms}
		Let $A$ be a globally $F$-split abelian variety.
		Then there exists a canonical lifting $\mathcal{A}_{\can}$ of $A$ over $W(k)$ such that 
		\begin{enumerate}
			\item[(a)] the Frobenius morphism $F$ lifts to a morphism $F_{\mathcal{A}_{\can}} \colon \mathcal{A}_{\can} \to \mathcal{A}_{\can} $  and the lifting $(\mathcal{A}_{\can}, F_{\mathcal{A}_{\can}})$ is unique up to unique isomorphism; 
			\item[(b)] for every $f \in \Aut(A)$, there exists a unique automorphism $f_{\can} \in \Aut(\mathcal{A}_{\can})$ lifting $f$ over $W(k)$ such that $f_{\can} \circ F_{\mathcal{A}_{\can}} = F_{\mathcal{A}_{\can}} \circ f_{\can}$;
			\item[(c)] the natural restriction morphism \[\Pic(\mathcal{A}_{\can})_{F_{\mathcal{A}_{\can}}}:=\left\{ \mathcal{L} \in \Pic(\mathcal{A}_{\can}) \mid F^*\mathcal{L} \cong \mathcal{L}^{\otimes p} \right\} \to \Pic(A)\] 
			is an isomorphism.
		\end{enumerate}
	In particular, $\mathcal{A}_{\can}$ is projective over $W(k)$.
	\end{theorem}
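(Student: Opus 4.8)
The plan is to recognize $\mathcal{A}_{\can}$ as the Serre--Tate canonical lift of the ordinary abelian variety $A$ and to deduce (1)--(3) from the functoriality of this construction, in parallel with \autoref{p-lift-Pic-K3} for K3 surfaces. First I would reduce to ordinarity: since $A$ is smooth and proper with $\omega_A\simeq\mathcal{O}_A$ and $h^g(A,\mathcal{O}_A)=1$ for $g:=\dim A$ (because $H^n(A,\mathcal{O}_A)\simeq\Lambda^nH^1(A,\mathcal{O}_A)$), \autoref{l-ordinary} shows that $A$ is globally $F$-split if and only if the Frobenius acts bijectively on $H^g(A,\mathcal{O}_A)$, equivalently $A$ is ordinary, i.e.\ $A[p](\overline{k})$ has order $p^g$. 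For such $A$ the connected--\'etale sequence of the $p$-divisible group $A[p^\infty]$ splits canonically, $A[p^\infty]\simeq\mu_{p^\infty}^{g}\times(\Q_p/\Z_p)^{g}$, with $\mu_{p^\infty}^{g}$ the multiplicative (connected) part. As $\mu_{p^\infty}$ and $\Q_p/\Z_p$ are defined over $\Z_p$, the product of their base changes is a $p$-divisible group $G_{\can}$ over $W(k)$ lifting $A[p^\infty]$; by Serre--Tate theory the deformation functor of $A$ over $W(k)$ is equivalent to that of $A[p^\infty]$, so there is a unique formal abelian scheme lifting $A$ with $p$-divisible group $G_{\can}$, and it is algebraizable (hence defines $\mathcal{A}_{\can}$, projective over $W(k)$) because a symmetric ample bundle on $A$ corresponds to a symmetric isogeny $A\to\widehat{A}$ which lifts to $\mathcal{A}_{\can}$ by functoriality of the canonical lift in homomorphisms. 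Finally, the absolute Frobenius $F$ induces the Frobenius isogeny on $A[p^\infty]$, which on the split group is the product of the Frobenii of $\mu_{p^\infty}$ and $\Q_p/\Z_p$; these are defined over $\Z_p$, so this isogeny lifts to $G_{\can}$, and transporting the lift through Serre--Tate yields a $\sigma$-semilinear morphism $F_{\mathcal{A}_{\can}}\colon\mathcal{A}_{\can}\to\mathcal{A}_{\can}$ reducing to $F$, where $\sigma$ is the Witt-vector Frobenius.

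For the uniqueness in (1): if $(\mathcal{A},F_{\mathcal{A}})$ is any lift of $(A,F)$, then at each finite level the Serre--Tate coordinate of $\mathcal{A}$ lies in $\Hom(T_pA^{\mathrm{et}}\otimes T_pA^{\vee,\mathrm{et}},\widehat{\mathbb{G}}_m(W_n(k)))$ and is forced by functoriality under the Frobenius isogeny (which acts on $\widehat{\mathbb{G}}_m$-values through multiplication by $p$) to be simultaneously $p$-divisible and $p$-torsion, hence trivial; so $\mathcal{A}\simeq\mathcal{A}_{\can}$, and the compatible $F_{\mathcal{A}_{\can}}$ is unique because two lifts of the Frobenius isogeny of $G_{\can}$ agree by rigidity of homomorphisms of $p$-divisible groups over a complete local base (equivalently, $dF=0$ forces the difference of two lifts to vanish level by level), which is \cite[Appendix, Proposition~1]{MS87}. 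For (2), write $f=t_a\circ g$ with $g$ a group automorphism and $t_a$ the translation by $a\in A(k)$. The automorphism $g$ commutes with the absolute Frobenius (functoriality of $F$) and respects the canonical splitting of $A[p^\infty]$, hence lifts to an automorphism of $G_{\can}$ and thus to a unique $g_{\can}\in\Aut(\mathcal{A}_{\can})$ lifting $g$ and commuting with $F_{\mathcal{A}_{\can}}$; the point $a$ admits a unique Frobenius-fixed lift $\widetilde{a}\in\mathcal{A}_{\can}(W(k))$ (again part of the canonical-lift package, as $F_{\mathcal{A}_{\can}}$ acts on the formal torus $\ker(\mathcal{A}_{\can}(W(k))\to A(k))\simeq\widehat{\mathbb{G}}_m^{g}(W(k))$ with $0$ its only fixed point), and $t_{\widetilde{a}}$ commutes with $F_{\mathcal{A}_{\can}}$ since $F_{\mathcal{A}_{\can}}(\widetilde{a})=\widetilde{a}$. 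Thus $f_{\can}:=t_{\widetilde{a}}\circ g_{\can}$ is the desired lift, it commutes with $F_{\mathcal{A}_{\can}}$, and it is the unique such lift by the uniqueness in (1) applied to $(\mathcal{A}_{\can},F_{\mathcal{A}_{\can}})$; that $f\mapsto f_{\can}$ is a homomorphism follows formally.

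For (3) I would use the exact sequences $0\to\Pic^0\to\Pic\to\operatorname{NS}\to 0$ on $\mathcal{A}_{\can}$ and on $A$. The canonical lift commutes with duality (the dual of the split $p$-divisible group is split, with the two factors exchanged), so $\mathcal{A}_{\can}^{\vee}$ is the canonical lift of $\widehat{A}$ and $\Pic^0(\mathcal{A}_{\can})=\mathcal{A}_{\can}^{\vee}(W(k))$; moreover the canonical lift is fully faithful on homomorphisms, so $\operatorname{NS}(\mathcal{A}_{\can})=\Hom^{\mathrm{sym}}(\mathcal{A}_{\can},\mathcal{A}_{\can}^{\vee})\xrightarrow{\ \sim\ }\Hom^{\mathrm{sym}}(A,\widehat{A})=\operatorname{NS}(A)$, and the symmetric lift $\mathcal{L}$ of a class satisfies $F_{\mathcal{A}_{\can}}^{*}\mathcal{L}\simeq\mathcal{L}^{\otimes p}$ because its polarization morphism commutes with the Frobenius lift and $V\circ F=[p]$. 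For an arbitrary $\mathcal{L}\in\Pic(\mathcal{A}_{\can})$ the classes of $F_{\mathcal{A}_{\can}}^{*}\mathcal{L}$ and $\mathcal{L}^{\otimes p}$ already agree in $\operatorname{NS}(\mathcal{A}_{\can})$ (both restrict to $F^{*}(\mathcal{L}|_A)=(\mathcal{L}|_A)^{\otimes p}$ on $A$ and $\operatorname{NS}(\mathcal{A}_{\can})\hookrightarrow\operatorname{NS}(A)$), so $F^{*}\mathcal{L}\simeq\mathcal{L}^{\otimes p}$ is a condition in $\Pic^0(\mathcal{A}_{\can})$, where it cuts out exactly the Frobenius-fixed (Teichm\"uller) points of $\mathcal{A}_{\can}^{\vee}(W(k))$, and these map bijectively onto $\widehat{A}(k)=\Pic^0(A)$. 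Combining the two identifications gives that $\Pic(\mathcal{A}_{\can})_{F_{\mathcal{A}_{\can}}}\to\Pic(A)$ is an isomorphism.

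I expect the bookkeeping around Frobenius, Verschiebung, duality and the canonical splitting — concentrated in the uniqueness statement of (1) and the Picard computation of (3) — to be the main obstacle; everything else is a direct application of Serre--Tate theory and of the functoriality of the canonical lift, and if one is content to quote \cite[Appendix, Theorem~1]{MS87} for (1) and (2), the only genuinely new point to check is the description of $\Pic(\mathcal{A}_{\can})_{F_{\mathcal{A}_{\can}}}$.
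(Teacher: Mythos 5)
The paper provides no proof of Theorem~\ref{t-ms}: it is cited directly from Illusie's appendix to \cite{MS87}, whose argument is precisely the Serre--Tate theory you lay out (canonical splitting of the $p$-divisible group of an ordinary abelian variety, the Serre--Tate canonical lift together with a lift of Frobenius, rigidity of $p$-divisible-group homomorphisms for uniqueness, Teichm\"uller lifts of points and of $\mathrm{Pic}^0$-classes, and full faithfulness of the canonical lift on homomorphisms for the N\'eron--Severi part and for lifting a polarization). Your reconstruction is correct and is essentially the same as the reference that the paper quotes; the only place that needs a little extra care is the step in (3) where a Frobenius-compatible lift of an arbitrary class is assembled from the separate $\mathrm{Pic}^0$- and N\'eron--Severi--level arguments, which is exactly the bookkeeping you flag.
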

	
	\begin{definition}
		We say that a smooth projective $k$-variety $X$ is \emph{$Q$-abelian} if there exists an \'etale $k$-morphism $A \to X$ where $A$ is an abelian variety. 
	\end{definition}
	
\begin{remark}
Recall that a finite \'etale cover of an abelian variety is abelian. 
Thus, by \cite[\href{https://stacks.math.columbia.edu/tag/0BN2}{Tag 0BN2}]{stacks-project} and  \cite[\href{https://stacks.math.columbia.edu/tag/0BNB}{Tag 0BNB}]{stacks-project}, we can assume that every $Q$-abelian variety admits an \'etale \emph{Galois} cover $A \to X$ where $A$ is an abelian variety.
\end{remark}

\begin{proposition}\label{lift-quot-abelian}
	Let $X$ be a globally $F$-split smooth projective $Q$-abelian variety and let $\pi \colon A \to X$ be a Galois \'{e}tale morphism with Galois group $G$, where $A$ is an abelian variety. 
	Then
	\begin{enumerate}
		\item[(a)] there exists a canonical lifting $\mathcal{G}_\can \subset \Aut(\mathcal{X}_\can)$ of $G$;
		\item[(b)] the quotient $\widetilde{\pi}\colon \mathcal{A}_{\can} \to \mathcal{X}_{\can}:=\mathcal{A}_{\can}/\mathcal{G}_{\can}$ is an \'etale morphism and it is a lifting of $\pi$;
		\item[(c)] the lifting $\mathcal{X}_{\can}$ does not depend on the choice of the \'etale morphism $\pi$;
		\item[(d)] $\Pic(\mathcal{X}_\can) \to \Pic(X)$ is surjective.
	\end{enumerate} 
	We say that $\mathcal{X}_\can$ is the canonical lifting of $X$.
\end{proposition}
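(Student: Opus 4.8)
The plan is to deduce all four assertions from the theory of canonical liftings of globally $F$-split abelian varieties recorded in \autoref{t-ms}, imitating the strategy used for Enriques surfaces in \autoref{p-canonical-lift-enriques}. Fix the Galois \'etale cover $\pi\colon A\to X$ with group $G$; note that $A$ is globally $F$-split by \autoref{l-F-split-quasietale}, so it carries a canonical lifting $(\mathcal A_{\can},F_{\mathcal A_{\can}})$. For (1), \autoref{t-ms}(b) lifts each $g\in G$ to a unique $g_{\can}\in\Aut(\mathcal A_{\can})$ with $g_{\can}\circ F_{\mathcal A_{\can}}=F_{\mathcal A_{\can}}\circ g_{\can}$; since $g_{\can}\circ h_{\can}$ is again a Frobenius-compatible lifting of $g\circ h$, uniqueness forces $g\mapsto g_{\can}$ to be an injective group homomorphism, and we set $\mathcal G_{\can}:=\{g_{\can}:g\in G\}\subset\Aut(\mathcal A_{\can})$.

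For (2), the main point is that $\mathcal G_{\can}$ acts freely on $\mathcal A_{\can}$: for $g\neq e$ the fixed locus of $g_{\can}$ is closed, so were it nonempty it would meet the special fibre (as $\mathcal A_{\can}$ is proper over the complete local ring $W(k)$), contradicting freeness of the $G$-action on $A$. Since $\mathcal A_{\can}$ is projective over $W(k)$ by \autoref{t-ms}, the quotient $\mathcal X_{\can}:=\mathcal A_{\can}/\mathcal G_{\can}$ exists as a scheme, the quotient map $\widetilde\pi\colon\mathcal A_{\can}\to\mathcal X_{\can}$ is a $G$-torsor, in particular finite \'etale of degree $|G|$, and formation of the quotient commutes with the base change $-\otimes_{W(k)}k$, so $\widetilde\pi\otimes_{W(k)}k$ is identified with $\pi$. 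Hence $\mathcal X_{\can}$ is a lifting of $X$ over $W(k)$, projective over $W(k)$, and $\widetilde\pi$ a lifting of $\pi$.

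For (3), let $\pi'\colon A'\to X$ be a second Galois \'etale cover by an abelian variety. First I would reduce to the case in which one cover factors through the other: the Galois closure over $X$ of a connected component of $A\times_X A'$ is a connected \'etale cover of $X$ dominating both $A$ and $A'$ which, being a connected finite \'etale cover of the abelian variety $A$, is itself an abelian variety; so it suffices to compare $\pi$ with a Galois cover $q\colon B\to A$ with $B$ abelian, where $B\to X$ is Galois with group $H$, $q$ is Galois with group a normal subgroup $N$ of $H$, and $G=H/N$. Writing $N_{\can}\subset\Aut(\mathcal B_{\can})$ for the lifted group as in (1), the morphism $F_{\mathcal B_{\can}}$ commutes with $N_{\can}$ and hence descends to a lifting of Frobenius on the abelian scheme $\mathcal B_{\can}/N_{\can}$, which is therefore canonically identified with $\mathcal A_{\can}$ by the uniqueness in \autoref{t-ms}(a); under this identification $(H/N)_{\can}$ becomes $G_{\can}$ by uniqueness of Frobenius-compatible lifts of automorphisms. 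Passing to quotients gives $\mathcal B_{\can}/H_{\can}\cong(\mathcal B_{\can}/N_{\can})/(H/N)_{\can}\cong\mathcal A_{\can}/G_{\can}=\mathcal X_{\can}$, proving independence of the cover.

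Finally, for (4), given $L\in\Pic(X)$ I would pull it back to the $G$-equivariant line bundle $\pi^{\ast}L$ on $A$ and, using \autoref{t-ms}(c), lift it to the unique $\mathcal M\in\Pic(\mathcal A_{\can})_{F_{\mathcal A_{\can}}}$ restricting to $\pi^{\ast}L$. Since each $g_{\can}$ commutes with $F_{\mathcal A_{\can}}$, pullback along $g_{\can}$ preserves $\Pic(\mathcal A_{\can})_{F_{\mathcal A_{\can}}}$ and the restriction isomorphism of \autoref{t-ms}(c) is $G$-equivariant, so by uniqueness of these lifts the $G$-equivariant structure on $\pi^{\ast}L$ transports to a $\mathcal G_{\can}$-equivariant structure on $\mathcal M$, exactly as in the Enriques case. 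By \autoref{l-descent-equivariant}, $\mathcal M$ descends to a line bundle on $\mathcal X_{\can}$ restricting to $L$, so $\Pic(\mathcal X_{\can})\to\Pic(X)$ is surjective. I expect the delicate points to be, in (2) and (3), checking that the quotients behave as claimed (free actions, compatibility with base change, the abelian-scheme structure and the descended Frobenius on $\mathcal B_{\can}/N_{\can}$) and, in (4), ensuring that the \emph{equivariant structure}, not merely the isomorphism class, lifts to $\mathcal A_{\can}$; this last point, where the compatibility of $\mathcal G_{\can}$ with $F_{\mathcal A_{\can}}$ is indispensable, is the real obstacle.
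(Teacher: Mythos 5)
Your argument follows the paper's proof exactly: pass to the canonical lifting $\mathcal A_{\can}$ via \autoref{l-F-split-quasietale} and \autoref{t-ms}, lift $G$ using uniqueness of Frobenius-compatible automorphism lifts, form the quotient (the paper cites \cite[Expos\'e V, Prop.~1.8]{SGA1}), and for (4) use $\Pic(\mathcal A_{\can})_{F_{\mathcal A_{\can}}}\xrightarrow{\sim}\Pic(A)$ together with \autoref{l-descent-equivariant}. You supply rather more detail than the paper for (2) and especially (3) (where the paper simply says this is clear by construction), and you correctly isolate the one delicate point in (4) — that the equivariant \emph{structure}, not just the isomorphism class, must lift, which is exactly what Frobenius-compatibility of $\mathcal G_{\can}$ buys.
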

	
\begin{proof}
	By \autoref{l-F-split-quasietale}, $A$ is globally $F$-split and we let $\mathcal{A}_{\can}$ be the canonical lifting over $W(k)$. 
	By \autoref{t-ms} there exists a canonical lifting of $G$ to a group of automorphisms $\mathcal{G}_{\can}$ of $\mathcal{A}_\can$ over $W(k)$, proving (a).
	For (b), we choose the lifting of $\pi$ to be the quotient $\tilde{\pi}\colon \mathcal{A}_{\can} \to \factor{\mathcal{A}_{\can}}{\mathcal{G}_{\can}}$, whose existence is guaranteed by \cite[Expos\'{e} V, Proposition 1.8]{SGA1}.
By construction it is easy to see that $\mathcal{X}_\can$ does not depend on the Galois cover $A \to X$, proving (c).
		
	We are left to prove (d). 
	Let $L$ be a line bundle on $X$ and let $M:=\pi^*L$ be the pull-back on $A$.
	By \autoref{t-ms}, we consider $\mathcal{M}$ to be the unique lifting of $M$ to $\mathcal{A}_{\can}$ belonging to $\Pic(\mathcal{A}_{\can})_{F_{\mathcal{A}_\can}}$.
	By uniqueness of the lifting in $\Pic(\mathcal{A}_{\can})_{F_{\mathcal{A}_\can}}$ and the fact that canonical lifts of automorphisms commute with the lift of Frobenius (\autoref{t-ms}(b)), $\mathcal{M}$ must be $\mathcal{G}$-equivariant and therefore we conclude that $L$ lifts to a line bundle $\mathcal{L}$ on $\mathcal{X}_{\can}$ by \autoref{l-descent-equivariant}.
\end{proof}
	
Finally we prove log liftability of numerically $K$-trivial surfaces with canonical singularities over $W(k)$.
	
\begin{theorem}\label{prop:lift_minres_K0}
	Let $X$ be a globally $F$-split projective surface with canonical singularities.
	Suppose that $K_X \equiv 0$ and let $f \colon (Y, E) \to X$ be the minimal resolution with exceptional divisor $E$. Then
	\begin{enumerate} 
	    \item[(a)] $Y$ is globally $F$-split and either it is 
	        \begin{enumerate}
	            \item[(i)] a K3 surface,
	            \item[(ii)] an Enriques surface, 
	            \item[(iii)] a $Q$-abelian surface;
	        \end{enumerate}
 	\item[(b)] there exists a lifting $\widetilde{f} \colon (\mathcal{Y}_\can, \mathcal{E}_\can) \to \mathcal{X}_\can$ of $f$, where $\mathcal{Y}_\can$ is the canonical lifting of $Y$ defined in \autoref{prop:K3_lift_div}, \autoref{t-ms} and \autoref{p-canonical-lift-enriques};
		\item[(c)] $\Pic(\mathcal{Y}_{\can}) \to \Pic(Y)$ is a surjective homomorphism of abelian groups.
	\end{enumerate} 
\end{theorem}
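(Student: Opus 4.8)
The plan is to reduce the whole statement to the Enriques--Kodaira classification of $Y$ together with the three canonical-lifting constructions already recorded in this section, after first pinning down the geometry of $Y$. Since $X$ has canonical surface singularities, these are Du Val (rational double point) singularities; in particular $K_X$ is Cartier and the minimal resolution $f\colon (Y,\Ex(f))\to X$ is crepant, $K_Y=f^*K_X$. Hence $f\colon (Y,0)\to (X,0)$ is a crepant proper birational morphism of log pairs with $\Delta_Y=0\ge 0$, and \autoref{l-GFR-pullback} shows that $Y$ is globally $F$-split. Moreover $K_Y=f^*K_X\equiv 0$, so $Y$ carries no $(-1)$-curve $C$ (as that would force $K_Y\cdot C=-1<0$) and is therefore minimal; and applying \autoref{l-Q-K+delta}(a) with $\Delta=0$ — which forces the auxiliary boundary $\Gamma\ge 0$ to satisfy $\Gamma\equiv -K_Y\equiv 0$, hence $\Gamma=0$ — gives $(p^e-1)K_Y\sim 0$ for some $e>0$, so $\kappa(K_Y)=0$.

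Next I would invoke the Bombieri--Mumford classification (\cite{BM76,BM77}): a minimal smooth projective surface with $\kappa=0$ is a K3 surface, an Enriques surface, an abelian surface, a bielliptic (hyperelliptic) surface, or a quasi-hyperelliptic surface, the last occurring only when $p\in\{2,3\}$. The quasi-hyperelliptic case has to be ruled out: such a surface carries a quasi-elliptic fibration whose geometric generic fibre is a cuspidal cubic (with $\Pic^0\cong\mathbb{G}_a$), from which one extracts that $H^1(Y,\mathcal{O}_Y)\ne 0$ and that the $p$-linear Frobenius acts nilpotently — hence not bijectively — on it; since a splitting of $\mathcal{O}_Y\hookrightarrow F_*\mathcal{O}_Y$ makes $H^1(Y,\mathcal{O}_Y)\to H^1(Y,F_*\mathcal{O}_Y)$ injective and therefore bijective, this contradicts global $F$-splitness of $Y$ (compare \autoref{l-ordinary}). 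Finally, a bielliptic surface is by construction a free quotient $(E_1\times E_2)/G$ of an abelian surface, hence $Q$-abelian, and an abelian surface is trivially $Q$-abelian. This proves part (a).

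For part (b) I would simply assemble the three canonical-lifting results case by case, where $\mathcal{Y}_\can$ denotes the appropriate canonical lifting. If $Y$ is a K3 surface, $\mathcal{Y}_\can$ is Nygaard's canonical lifting and $\Pic(\mathcal{Y}_\can)\to\Pic(Y)$ is an isomorphism by \autoref{p-lift-Pic-K3}(2). If $Y$ is an Enriques surface, it is globally $F$-split, so \autoref{p-canonical-lift-enriques} applies and its part (3) gives that $\Pic(\mathcal{Y}_\can)\to\Pic(Y)$ is an isomorphism. If $Y$ is $Q$-abelian, fix a Galois \'etale cover $A\to Y$ by an abelian surface; then \autoref{lift-quot-abelian} produces $\mathcal{Y}_\can$ and its part (d) gives surjectivity of $\Pic(\mathcal{Y}_\can)\to\Pic(Y)$. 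In every case $\mathcal{Y}_\can$ is projective over $W(k)$, which is all that is claimed.

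The hard part will be the exclusion of quasi-hyperelliptic surfaces in characteristics $2$ and $3$: one needs a clean and correct argument (with a reference) that no quasi-hyperelliptic surface is globally $F$-split — equivalently, that none is ordinary. The remaining ingredients are routine: crepancy of the minimal resolution of a Du Val singularity, and matching the abstract ``canonical lifting $\mathcal{Y}_\can$'' appearing in the statement with the three concrete constructions (Nygaard, the Liedtke--Tirabassi-type lift, and the quotient $\mathcal{A}_\can/\mathcal{G}_\can$) so that both assertions are literally what was proved earlier in this section.
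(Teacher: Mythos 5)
Your overall structure — reduce to a minimal globally $F$-split surface with $\kappa=0$ via crepant pullback, run the Bombieri--Mumford classification, then invoke the three canonical-lifting results case by case — is the same as the paper's, and your treatment of part~(b) matches the paper exactly (\autoref{p-lift-Pic-K3}(2), \autoref{p-canonical-lift-enriques}(3), \autoref{lift-quot-abelian}(d)). The divergence, and the gap, is in how you dispose of the quasi-hyperelliptic case.

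You argue that Frobenius acts \emph{nilpotently} on $H^1(Y,\mathcal{O}_Y)$, hence non-bijectively, contradicting a splitting. But the Albanese of a quasi-hyperelliptic surface is an elliptic curve $A$ which may be ordinary, and then $H^1(A,\mathcal{O}_A)\hookrightarrow H^1(Y,\mathcal{O}_Y)$ is a nonzero subspace on which Frobenius is bijective, so ``nilpotent'' is simply false in general. What you actually want is that Frobenius has a nontrivial kernel (equivalently, $H^0(Y,B^1_Y)\neq 0$). That does hold in several of Bombieri--Mumford's cases (the cuspidal-fibre piece of $H^1$ survives when $G$ is \'{e}tale, since in characteristic $p\in\{2,3\}$ the $\mathbb{G}_m$-component of $G$'s action on the cuspidal cubic is forced to be trivial), but it requires a case-by-case check against their Theorem 4 list, including infinitesimal $\alpha_p$ and $\mu_p$ actions, where your $H^1$-argument either needs a different computation or fails outright and must be supplemented by an order-of-$K_Y$ argument. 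As written, this is a genuine gap, and you flag it yourself. The paper sidesteps all of this with a single clean citation: by \cite[Theorems 1.2, 1.3(4)]{Eji19}, an $F$-split fibration over a normal base has normal general fibre, so the Albanese fibration (whose general fibre is a cuspidal cubic) cannot be $F$-split.

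There is a second, smaller gap in your dismissal of the bielliptic case as ``by construction a free quotient $(E_1\times E_2)/G$ of an abelian surface, hence $Q$-abelian.'' In characteristic $2$, case (a3) of Bombieri--Mumford has $G = (\mathbb{Z}/2\mathbb{Z})\times\mu_2$, and the $\mu_2$-factor makes $E_1\times E_2\to Y$ inseparable rather than \'{e}tale. The paper handles this explicitly: it observes that $(E_1\times E_2)/\mu_2$ is itself an abelian surface (the $\mu_2$-action commutes with the group law and is free), so $Y$ is $Q$-abelian via a different, \'{e}tale, degree-$2$ cover. Your proof needs this extra step before concluding ``hence $Q$-abelian.''
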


\begin{proof}
	Using the Enriques classification of smooth projective surfaces over algebraically closed fields of positive characteristic (see \cite{BM76, BM77}), we have to deal with four different cases depending on the Betti numbers: $Y$ is a K3 surface, an Enriques surface, an abelian variety, or a (quasi-)hyperelliptic surface.
	Let us note that $Y$ cannot be quasi-hyperelliptic because, as $Y$ is globally $F$-split, the Albanese morphism $a \colon Y \to E$ is an $F$-split morphism and the general fibre is normal by \cite[Theorem 1.2 and 1.3(4)]{Eji19}. 
	By the classification of hyperelliptic surfaces (see \cite[Theorem 4 and see table at page 37]{BM77}), we see that, except \cite[Case (a3), page 37]{BM77}), $Y$ admits an \'etale cover by an product of elliptic curves.
    In this last case:  $Y \cong (E_1 \times E_2)/(\mathbb{Z}/2\mathbb{Z}) \times \mu_2$ and it is easy to see that $(E_1\times E_0)/\mu_2$ is an abelian variety (specifically, the action of $\mu_2$ commutes with the group structure and does not have a fixed point), and thus $Y$ is $Q$-abelian.
	
	 In what follows we prove (b) and (c): that $(Y,E)$ lifts to $(\mathcal{Y}_{\can}, \mathcal{E}_{\can})$ and that \[\Pic(\mathcal{Y}_{\can}) \to \Pic(Y)\] is surjective. 
		
	Case (i) (K3 surface) follows from \autoref{prop:K3_lift_div} and \autoref{p-lift-Pic-K3}(2).  Case (ii) (Enriques surface) follows from \autoref{prop:Enriques_lift_div} and \autoref{p-canonical-lift-enriques}(3).

	 Before proceeding further, we note that in Cases (iii), $f$ is the identity morphism as $Y$ is smooth and it does not contain rational curves.
	 In Case (iii) ($Q$-abelian surface), let $g \colon A \to Y$ be an \'etale cover of $Y$, where $A$ is an abelian variety. 
	 As the property of being an abelian variety is preserved under \'etale covers, we can suppose that $g$ is Galois and thus we conclude by \autoref{lift-quot-abelian}(b) and (c).
\end{proof}		
	
	We now prove the log liftability for general $F$-split klt Calabi--Yau surfaces. 
	
	\begin{theorem}\label{t:lift-K-trivial-klt}
		Let $X$ be a globally $F$-split projective surface with klt singularities such that $K_X \equiv 0$.
		Then $X$ is log liftable over $W(k)$.
	\end{theorem}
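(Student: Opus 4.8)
The plan is to reduce, by means of a canonical (index-one) cover, to the Gorenstein canonical case settled in \autoref{prop:lift_minres_K0}, and then to descend the resulting lifting through a cyclic group action.

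\emph{Reduction.} By \autoref{l-Q-K+delta}(a) applied with $\Delta=0$ there is an effective $\Q$-divisor $\Gamma$ with $(p^e-1)(K_X+\Gamma)\sim 0$ for some $e>0$; since $K_X\equiv 0$ and $X$ is projective (being a $\Q$-factorial proper surface), $\Gamma$ is effective and numerically trivial, hence $\Gamma=0$ and $(p^e-1)K_X\sim 0$. Let $m$ be the order of $K_X$ in $\Cl(X)$, so that $m\mid p^e-1$ and in particular $\gcd(m,p)=1$. If $m=1$ then $X$ is Gorenstein with canonical singularities, its minimal resolution is a log resolution, and it lifts over $W(k)$ by \autoref{prop:lift_minres_K0} (that lifting being projective, hence algebraic); so $X$ is log liftable and we may assume $m>1$. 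Let $\pi\colon Y=\Spec_X\bigoplus_{i=0}^{m-1}\mathcal{O}_X(iK_X)\to X$ be the associated cyclic cover: it is quasi-\'etale and Galois with group $G\cong\Z/m\Z$, $K_Y\sim 0$ is Cartier, $Y$ has klt — hence canonical — singularities, and $Y$ is globally $F$-split by \autoref{l-F-split-quasietale}.

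\emph{Lifting the group action (the crux).} Applying \autoref{prop:lift_minres_K0} to $Y$, its minimal resolution $g\colon(Z,E)\to Y$ has $Z$ a K3, Enriques, or $Q$-abelian surface, and $(Z,E)$ lifts to its canonical lifting $(\mathcal{Z}_{\can},\mathcal{E}_{\can})$ over $W(k)$, which is projective and, by \autoref{l-loc-triv}, relatively snc over $W(k)$. The $G$-action on $Y$ lifts to $Z$ by functoriality of the minimal resolution and preserves $E$. I would then show this action lifts to a $G$-action on $(\mathcal{Z}_{\can},\mathcal{E}_{\can})$: in the K3 case from \autoref{p-lift-Pic-K3}(1), using uniqueness of the lift of each automorphism to see that $g\mapsto\widetilde g$ is a homomorphism and that it permutes the rigid components $\mathcal{E}_{\can,i}$; in the Enriques case by lifting each automorphism to the canonical lifting of the K3 double cover via \autoref{p-lift-Pic-K3}(1), checking by uniqueness that it commutes with the lifted deck involution, and descending it through \autoref{p-canonical-lift-enriques}; in the $Q$-abelian case (where $E=0$) by passing to the Galois closure of an \'etale abelianizing cover of $Z$ and invoking \autoref{t-ms}(b) together with \autoref{lift-quot-abelian}. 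This case-by-case ``careful study of lifts of group actions'' is where essentially all the work sits.

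\emph{Descent to $X$.} Since $|G|=m$ is prime to $p$, the fixed locus of $G$ on $\mathcal{Z}_{\can}$ is smooth over $W(k)$, and it is finite over $W(k)$ because $\pi$ is quasi-\'etale and so the $G$-action is free in codimension one. I would perform a finite sequence of $G$-equivariant blow-ups $\mathcal{Z}'\to\mathcal{Z}_{\can}$ along (lifts of) iterated fixed points of Hirzebruch--Jung type — a purely combinatorial procedure, insensitive to $p$ as $p\nmid m$ — keeping $\mathcal{Z}'$ smooth and the total transform $\mathcal{E}'$ of $\mathcal{E}_{\can}$ (enlarged so as to stay snc) relatively snc over $W(k)$, and arranged so that $G$ acts on $\mathcal{Z}'$ with smooth quotient and with $\mathcal{E}'/G$ relatively snc over $W(k)$. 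The quotient $\mathcal{V}:=\mathcal{Z}'/G$ exists as a projective $W(k)$-scheme, is $W(k)$-flat since $\mathcal{O}_{\mathcal{V}}$ is a direct summand of the flat module $\pi'_*\mathcal{O}_{\mathcal{Z}'}$ (as $|G|$ is invertible), and has smooth special fibre $\mathcal{V}_k=Z'/G=:V$, with $V\to Z'/G\to Y/G=X$ exhibiting $(V,(\mathcal{E}'/G)_k)$ as a log resolution of $X$. Hence $(\mathcal{V},\mathcal{E}'/G)$ is an algebraic lifting over $W(k)$ of a log resolution of $X$, so $X$ is log liftable. The main obstacle is the interplay of the last two steps: producing the lifted cyclic action compatibly with $\mathcal{E}_{\can}$, and then verifying that the tame equivariant resolution and its quotient genuinely descend to a lifting of a log resolution of $X$.
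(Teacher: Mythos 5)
Your overall strategy — pass to the tame canonical (index-one) cover, apply \autoref{prop:lift_minres_K0} to its minimal resolution, lift the cyclic group action to the canonical lifting, then descend — is exactly the one used in the paper, and the reduction to $m>1$ and $p>2$, the properties of the cover, and your case-by-case lifting of the group action are all essentially correct.

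The gap is in the \emph{Descent to $X$} step, which is where, as you yourself note, the real work sits. You assert that a finite sequence of $G$-equivariant blow-ups $\mathcal{Z}'\to\mathcal{Z}_{\mathrm{can}}$ at (lifts of) fixed points makes $\mathcal{Z}'/G$ smooth, so that $\mathcal{V}=\mathcal{Z}'/G$ is a lifting of a log resolution of $X$. This is false in general for tame cyclic actions: blowing up a fixed point of local type $\frac{1}{5}(1,2)$ produces two new isolated fixed points of types $\frac{1}{5}(1,1)$ and $\frac{1}{5}(1,3)$; blowing up the $\frac{1}{5}(1,3)$ point again produces a $\frac{1}{5}(1,2)$ point (together with a $\frac{1}{5}(1,1)$ point), so the non-pseudoreflection type $\frac{1}{5}(1,2)\leftrightarrow\frac{1}{5}(1,3)$ persists under every equivariant blow-up and the process never reaches a configuration where the stabilizer acts by pseudoreflections everywhere. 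Consequently $Z'/G$ is never smooth, only a surface with (tame cyclic) quotient singularities — and such $\frac{1}{5}(1,2)$ singular points genuinely occur on globally $F$-split klt Calabi--Yau $X$. So $\mathcal{V}_k$ is not a log resolution of $X$, and the conclusion does not follow.

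The paper circumvents this by not asking the quotient to be smooth: it takes an equivariant resolution of indeterminacies of $T\dashrightarrow Y$ (where $Y\to X$ is the minimal resolution of $X$), quotients to obtain a surface $U$ with \emph{rational} singularities mapping birationally to $Y$, and then applies \autoref{t-cynkvanstraten} to push the lifting of $(U,Q)$ forward along the contraction $U\to Y$. The crucial computation is then $R^1\varphi_*\mathcal{O}_U=0$ (valid because $Y$ is smooth and $U$ has rational singularities) together with the analogous vanishing on the boundary components, after which the lifting of $(Y,\Exc(f))$ is produced without ever requiring the intermediate quotient to be regular. Your write-up would need to be repaired along these lines; the ``tame Hirzebruch--Jung equivariant resolution with smooth quotient'' simply does not exist.
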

	 \noindent We recommend the reader to follow the diagram included in the proof while going through the argument.
	\begin{proof}
	
	By \autoref{prop:lift_minres_K0} we can suppose that $X$ has singularities worse than canonical.
	Note that this implies that $p > 2$. Indeed, if $p=2$, then a splitting of the Frobenius morphism is a non-zero section in $H^0(X, \mathcal{O}_X (-K_X ))$, and therefore $X$ is Gorenstein and thus it is has canonical singularities.
	
	Let $f \colon (Y,R) \to X$ be the minimal resolution of $X$.
	As the singularities of $X$ are worse than canonical, 
    $K_Y=f^{*}K_X-R\equiv -R$ for non-zero effective $f$-exceptional divisor $R$. Thus, 
 $h^2(Y, \mathcal{O}_Y)=h^0(Y, \mathcal{O}_Y(K_Y))=0$. Therefore by \autoref{l-loglift-surf}, it suffices to show that $(Y,R)$ admits a formal lifting over $W(k)$.
		
Since $X$ is globally $F$-split and $K_X \equiv 0$, we get $(p-1)K_X \sim 0$.
Let $d>0$ be the minimal integer such that $dK_X \sim 0$ and let $\pi \colon Z \to X$ be the canonical $d$-cyclic cover\footnote{ Precisely, $Z := \mathrm{Spec}_X\Big(\bigoplus_{i \geq 0} \mathcal{O}_X(iK_X)t^i \, /\, \mathcal{I}\Big)$, where
\[
    \mathcal{I} \coloneqq \big(t^ks - t^{k-d}\phi_k(s)\ \big | \ s \in \cO_X(kK_X) \textrm{ and } k \in \bZ_{\geq d}\big),
\]
and $\phi_k \colon \cO_X(kK_X) \xrightarrow{\cong} \cO_X((k-d)K_X)$ is induced by the fixed isomorphism $\phi_d \colon \cO_X(dK_X) \xrightarrow{\cong} \cO_X$.} (see \cite[Definition 5.19]{km-book}).

		Note that:
		\begin{enumerate}
		    \item[(a)] as $d<p$, the group scheme $\mu_{k,d}$ is multiplicative and \'etale; 
		    \item[(b)] the cover $\phi$ is quasi-\'etale as $d<p$  and there is a natural ${\mu_{d,k}}$-action on $Z$ for which $\pi$ is a $\mu_{d,k}$-torsor over codimension one points of $X$;
		    \item[(c)] $Z$ is a globally $F$-split variety by \autoref{l-F-split-quasietale} and $K_Z \sim 0$  by construction (cf.~\cite[Lemma 2.53]{km-book});
		    \item[(d)] $Z$ has  klt  singularities (hence canonical as it is Gorenstein). Indeed, as $d<p$ the morphism $\pi$ is tamely ramified everywhere and thus we can apply  the same arguments as in the proof of \cite[Proposition 5.20]{km-book}.
		\end{enumerate}
		Let $h \colon (T,E) \to Z$ be the minimal resolution. 
		Since $T$ is a minimal surface of non-negative Kodaira dimension, any birational map $T \dashrightarrow T$ is an isomorphism, and therefore $\mu_{d,k}$ acts regularly on $T$.  
		Moreover, this action is compatible with that on $Z$, and so $\mu_{d,k}$ acts regularly on the whole pair $(T,E)$.
		
		Let $g \colon (W, g_*^{-1}E+F) \to (T,E)$ be a $\mu_{d,k}$-equivariant resolution of indeterminacies of $T \dashrightarrow Y$, where $F:=\Ex(g)$ and $(W, g_*^{-1}E+F)$ is an snc pair. 
		Recall that a usual resolution of indeterminacies of rational maps between smooth surface can be constructed as a sequence of blow-ups at closed points (see \cite[Theorem II.7]{Beau96}); in our case we blow-up at $\mu_{d,k}$-orbits of closed points.

Finally, consider the quotient log pair $(U,Q):=\factor{\left(W, g_*^{-1}E+F \right)}{\mu_{d,k}}$ which fits in the following diagram:  

\[ \label{eq:diagram-of-equivariant-blow-ups}
		\xymatrix{
			(W, g_*^{-1}E+F) \ar[r] \ar[d]^{g}  & (U,Q) \ar[d]^{\phi}\\
			 (T,E) \ar@{-->}[r] \ar[d]^{h} & (Y,R) \ar[d]^{f}\\
			 Z \ar[r]^{\pi} & X.
	}
\]
In what follows we lift the above diagram over $W(k)$.  First, let $\widetilde{h} \colon (\cT_{\can}, \mathcal{E}_{\can}) \to \cZ_\can$ be the canonical lifting of $(T,E) \to Z$ over $W(k)$ constructed in \autoref{prop:lift_minres_K0}.

\begin{claim}
The $\mu_{d,k}$-action on $(T,E)$ lifts to an action of $\mu_{d, W(k)}$ on the canonical lifting $(\cT_{\can},\mathcal{E}_{\can})$.    
\end{claim}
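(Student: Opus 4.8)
\emph{Proof strategy.} By \autoref{prop:lift_minres_K0}(a), $T$ is globally $F$-split and is either a K3 surface, an Enriques surface, or a $Q$-abelian surface, and $(\cT_{\can},\cE_{\can})$ is the associated canonical lifting constructed there. Since $d<p$ and $k$ is algebraically closed, $\mu_{d,k}$ and $\mu_{d,W(k)}$ are both constant cyclic group schemes, so writing $G\cong\Z/d\Z$ for the abstract group underlying $\mu_{d,k}$, the claim amounts to lifting the given faithful $G$-action on $(T,E)$ to a $G$-action on $(\cT_{\can},\cE_{\can})$ by $W(k)$-automorphisms. The plan is to show, case by case, that every automorphism $\sigma$ of $(T,E)$ lifts to an automorphism $\widetilde{\sigma}$ of $(\cT_{\can},\cE_{\can})$ and that such a lift is \emph{unique}. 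Granting this, uniqueness formally upgrades $\sigma\mapsto\widetilde{\sigma}$ to a group homomorphism $G\to\Aut_{W(k)}(\cT_{\can},\cE_{\can})$ (both $\widetilde{\sigma\tau}$ and $\widetilde{\sigma}\,\widetilde{\tau}$ lift $\sigma\tau$), which is the sought $\mu_{d,W(k)}$-action; and it forces each $\widetilde{\sigma}$ to preserve $\cE_{\can}$, since $\widetilde{\sigma}(\cE_{\can})$ is again a lifting of $\sigma(E)=E$ while the components of $\cE_{\can}$ are the unique liftings of those of $E$ (by \autoref{prop:K3_lift_div} and \autoref{prop:Enriques_lift_div}, using that each $H^0(T,\MO_T(E_i))$ is one-dimensional).

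Uniqueness of the lift will follow in every case from $H^0(T,T_T)=0$, which makes $\Aut_{T/k}$ formally unramified at the identity: for K3 surfaces this is already built into \autoref{p-lift-Pic-K3}(1) (see the references in its proof); for a globally $F$-split Enriques surface $T$ it holds because by \autoref{l:doublecoverEnriques} the canonical double covering $\rho\colon Z'\to T$ is \'etale with $Z'$ a globally $F$-split K3 surface, so $T_{Z'}=\rho^{*}T_T$ and hence $H^0(T,T_T)\hookrightarrow H^0(Z',T_{Z'})=0$; and for $Q$-abelian $T$ there are no rational curves on $T$, so $E=0$ and there is nothing to check about $\cE_{\can}$. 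For \emph{existence} of the lift: in the K3 case it is exactly \autoref{p-lift-Pic-K3}(1). In the Enriques case, functoriality of the canonical double covering produces, for $\sigma\in\Aut(T)$, an automorphism $\sigma'$ of $Z'$ with $\rho\circ\sigma'=\sigma\circ\rho$, well defined up to the deck involution $\iota$ and satisfying $\sigma'\iota\sigma'^{-1}=\iota$; lifting $\sigma'$ and $\iota$ uniquely to $\Aut(\cZ'_{\can})$ (where $\cZ'_{\can}$ is the canonical lifting of $Z'$) and invoking uniqueness once more gives $\widetilde{\sigma'}\,\widetilde{\iota}\,\widetilde{\sigma'}^{-1}=\widetilde{\iota}$, so $\widetilde{\sigma'}$ normalises the Galois group $\langle\widetilde{\iota}\rangle$ of $\cZ'_{\can}\to\cT_{\can}$ from \autoref{p-canonical-lift-enriques} and therefore descends to the required $\widetilde{\sigma}\in\Aut(\cT_{\can})$.

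It remains to establish existence of the lifts in the $Q$-abelian case, where $E=0$ and $G$ acts as a genuine finite group of automorphisms of $T$. Here I would first choose a Galois \'etale cover $A\to T$ with $A$ an abelian variety, and then arrange it to be $G$-stable, e.g.\ by replacing the corresponding finite-index open subgroup of the \'etale fundamental group $\pi_1(T)$ by a $G$-invariant finite-index normal subgroup contained in it; this yields a $G$-stable Galois \'etale cover $A'\to T$ which is still an abelian variety, fitting in an extension $1\to G_0\to\widehat{G}\to G\to 1$ where $\widehat{G}$ is the group of automorphisms of $A'$ lying over elements of $G$ and $G_0=\Aut_T(A')$. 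By \autoref{t-ms}(a),(b) every element of $\widehat{G}$ lifts uniquely to an automorphism of $\cA'_{\can}$ commuting with the canonical lift of Frobenius, this defines an injective homomorphism $\widehat{G}\hookrightarrow\Aut(\cA'_{\can})$ (again by uniqueness), and it carries the normal subgroup $G_0$ onto the subgroup $\mathcal{G}_{\can}$ of \autoref{lift-quot-abelian}; the induced action of $\widehat{G}/G_0\cong G$ on $\cA'_{\can}/\mathcal{G}_{\can}$ is, via the independence statement \autoref{lift-quot-abelian}(c), an action on $\cT_{\can}$ reducing modulo $p$ to the $G$-action on $T$.

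I expect this last step to be the main obstacle: one must produce the $G$-stable abelian \'etale cover with care and check that the quotient action assembled from \autoref{t-ms} and \autoref{lift-quot-abelian}(c) genuinely reduces to the prescribed $G$-action on $T=Z$. The Enriques case is a gentler instance of the same mechanism of descending a lifted automorphism through an \'etale cover, and the K3 case is immediate from \autoref{p-lift-Pic-K3}.
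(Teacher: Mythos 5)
Your proposal takes a genuinely different and more circuitous route than the paper. You start from the trichotomy of \autoref{prop:lift_minres_K0}(a) and set out to lift automorphisms in each of the three cases (K3, Enriques, $Q$-abelian). The paper's argument is much shorter because it exploits two facts already established in the proof of \autoref{t:lift-K-trivial-klt} just before the Claim: that $K_Z\sim 0$ forces $K_T\sim 0$, and that $p>2$. Since $p>2$, an Enriques surface never has $K_T\sim 0$, so the Enriques case is vacuous; and a hyperelliptic surface with $K_T\sim 0$ would have to belong to a specific family (\cite[case (b), p. 37]{BM77}) in characteristic $3$ which is not globally $F$-split, so the proper $Q$-abelian case is also vacuous. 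Hence $T$ is K3 or abelian, and the Claim follows immediately from \autoref{p-lift-Pic-K3} and \autoref{t-ms}. In short, the paper circumvents exactly the part of your plan that you flagged as "the main obstacle": there is no need to produce a $\mu_{d,k}$-stable abelian \'etale cover, nor any descent of lifted automorphisms through the canonical double cover, because those surfaces do not occur.

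Two further remarks. First, your blanket assertion that ``uniqueness of the lift will follow in every case from $H^0(T,T_T)=0$'' is incorrect: for an abelian (or $Q$-abelian) surface $T$ one has $H^0(T,T_T)\neq 0$ (translations), so automorphism lifts are not absolutely unique. The correct statement is the one in \autoref{t-ms}(b): uniqueness holds among lifts commuting with the canonical Frobenius lift, and the ``homomorphism upgrade'' argument works because such lifts are closed under composition. Your later treatment of the abelian case does invoke \autoref{t-ms}(b) and is therefore sound, but the advertised justification is misleading and would break if read literally. Second, your Enriques argument (descending through the canonical double cover after lifting $\sigma'$ and the involution $\iota$ to $\mathcal{Z}'_{\can}$) is plausible and correct as far as it goes; it simply addresses a case that the hypotheses exclude. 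In summary, your proposal is essentially fillable but considerably more work than necessary; the key step you are missing is to read off from the earlier part of the proof that $K_T\sim 0$ and $p>2$, which collapses the classification to the two clean cases.
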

\begin{proof}
    As $K_Z \sim 0$, then $K_T \sim 0$ and, as $p>2$, $T$ is not an Enriques surface. 
    If $T$ is hyperelliptic with $K_T \sim 0$, then by the classification of the order of the canonical class at \cite[end of page 37]{BM77}, the only possible case is when $p=3$, corresponding to case (b) in the list \cite[beginning of page 37]{BM77}, which does not appear as $T$ is globally $F$-split.
	Therefore $T$ is either a K3 or an abelian surface and we conclude that the $\mu_{d,k}$-action lifts to an action of $\mu_{d, W(k)}$ on $\cT_{\can}$ by \autoref{p-lift-Pic-K3} and \autoref{t-ms}. 
	As $\mu_{d,k}$ acts on the pair $(T,E)$, and the lifting of each irreducible component of $E$ is unique in $\cT_{\can}$, we conclude that $\mu_{d,W(k)}$ acts on $(\cT_{\can},\mathcal{E}_{\can})$.  
\end{proof}

Now, by Lemma \ref{lem-equiv-lift}, there exists a $\mu_{d, W(k)}$-equivariant birational morphism \[\widetilde{g} \colon (\mathcal{W}, \widetilde{g}_*^{-1}\mathcal{E}_{\can}+\mathcal{F}) \to (\cT_{\can}, \mathcal{E}_{\can})\] lifting $g$.

Next, let $(\mathcal{U}, \mathcal{Q})$ be the quotient of  $(\mathcal{W}, \widetilde{g}_*^{-1}\mathcal{E}_{\can}+\mathcal{F})$ by $\mu_{d, W(k)}$:
\[
	(\mathcal{W}, \widetilde{g}_*^{-1}\mathcal{E}_{\can}+\mathcal{F})  \to (\mathcal{U},\mathcal{Q}) .
\]
Clearly $(\mathcal{U}, \mathcal{Q}) $ is a lifting of $(U,Q)$.
To show that $(Y,R)$ lifts (and so $X$ is log liftable), we shall apply \autoref{t-cynkvanstraten} to construct a lift $\widetilde{\phi} \colon (\mathcal{U}, \mathcal{Q}) \to (\mathcal{Y}, \mathcal{R})$ of $\varphi \colon (U,Q) \to (Y, R)$. To this end, we need to verify that $\varphi$ satisfies the hypotheses (a) and (b) of \autoref{t-cynkvanstraten}.	
		
		As $Y$ is normal and $\varphi$ is a proper birational morphism we deduce that $\varphi_*\mathcal{O}_U=\mathcal{O}_{Y}$. 
		As $d<p$ and $W$ is smooth, the same proof as in \cite[Proposition 5.13]{km-book} yields that $U$ has rational singularities. Since $Y$ is smooth we deduce therefore that $R^1\varphi_* \mathcal{O}_U=0$. 
		Similarly, for each component $\Gamma \cong \mathbb{P}^1$ of $Q$ one can show that $R^1\varphi_*\mathcal{O}_\Gamma=0$ and $\varphi_*\mathcal{O}_{\Gamma}=\mathcal{O}_{\varphi(\Gamma)}$.
		We can thus apply \autoref{t-cynkvanstraten} repeatedly to deduce that $\phi \colon (U,Q) \to (Y,R)$ admits a formal lifting over $W(k)$.
	\end{proof}
	
The above proof used the following essential lemma.
	
	\begin{lemma} \label{lem-equiv-lift}
	Let $(T,E)$ be a smooth snc surface pair over $k$ admitting an action of $\mu_{d,k}$. 
	Let $g \colon (W, g^{-1}_*E + F) \to (T,E)$ be a $\mu_{d,k}$-equivariant birational morphism such that $(W, g^{-1}_*E + F)$ is simple normal crossing, where $F := \mathrm{Exc}(g)$. 
	
	Let $(\mathcal{T},\mathcal{E})$ be a $\mu_{d, W(k)}$-equivariant lift of $(T,E)$ over $W(k)$. 
	Then, there exists  a $\mu_{d, W(k)}$-equivariant birational morphism 
	\[
	\widetilde{g} \colon (\mathcal{W}, \widetilde{g}_*^{-1}\mathcal{E}+\mathcal{F}) \to (\cT, \mathcal{E})
	\]
	lifting $g \colon (W, g^{-1}_*E + F) \to (T,E)$.
	\end{lemma}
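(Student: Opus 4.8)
The plan is to factor $g$ into equivariant blow-ups of $\mu_{d,k}$-invariant finite centres, reduce to the case of a single such blow-up, and then produce the lift of its (zero-dimensional) centre $\mu_d$-equivariantly. For the reduction: by the structure theory of birational morphisms of smooth surfaces \cite[Theorem II.7]{Beau96}, $g$ is a composition of point blow-ups; concretely, if $Z_1\subset T$ denotes the reduced subscheme supported on the finite set of points over which $g$ is not an isomorphism, then $g^{-1}\mathcal{I}_{Z_1}\cdot\mathcal{O}_W$ is invertible, so $g$ factors as $W\xrightarrow{g_1}T_1\xrightarrow{b_1}T$ with $T_1:=\mathrm{Bl}_{Z_1}T$ smooth and $g_1$ birational with strictly fewer exceptional curves. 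The centre $Z_1$ is $\mu_{d,k}$-invariant, $b_1$ is $\mu_{d,k}$-equivariant by functoriality of blow-ups, and $g_1$ is equivariant by uniqueness of the factorisation; moreover $E_1:=b_{1*}^{-1}E+\mathrm{Exc}(b_1)$ keeps $(T_1,E_1)$ snc and satisfies $g_{1*}^{-1}E_1+\mathrm{Exc}(g_1)=g_*^{-1}E+F$, so the hypotheses propagate. Inducting on the number of exceptional curves and feeding the lift produced at each stage into the next, we reduce to the case where $g$ is the blow-up of a single $\mu_{d,k}$-invariant reduced finite subscheme $Z\subset T$.

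The \emph{crux} is to lift $Z$ to a $\mu_{d,W(k)}$-invariant closed subscheme $\mathcal{Z}\subset\mathcal{T}$, flat over $W(k)$, with $\mathcal{Z}\times_{W(k)}k=Z$. Consider the relative Hilbert scheme $\Hilb(\mathcal{T}/W(k))$ (a scheme when $\mathcal{T}$ is projective over $W(k)$, as in the application; if $\mathcal{T}$ is given only as a formal lift, run the same cohomological argument successively over the $W_n(k)$ and pass to the limit). Since $\mu_d$ is of multiplicative type, the fixed-point functor of the induced $\mu_{d,W(k)}$-action is represented by a closed subscheme $H\subset\Hilb(\mathcal{T}/W(k))$ whose formation commutes with base change, so $[Z]$ defines a $k$-point of the special fibre of $H$. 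The tangent--obstruction theory of $H\to\Spec W(k)$ at $[Z]$ is controlled by the $\mu_d$-invariant parts of $H^i(Z,N_{Z/\mathcal{T}})$, and since $\dim Z=0$ the obstruction group $H^1(Z,N_{Z/\mathcal{T}})$ vanishes; hence $H\to\Spec W(k)$ is smooth at $[Z]$. As $W(k)$ is henselian local with residue field $k$, the point $[Z]$ extends to a section $\Spec W(k)\to H$, and the corresponding subscheme is the desired $\mathcal{Z}$.

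Finally, set $\widetilde{g}\colon\mathcal{W}:=\mathrm{Bl}_{\mathcal{Z}}\mathcal{T}\to\mathcal{T}$; note that $\mathcal{T}$ is smooth over $W(k)$ (being $W(k)$-flat with smooth special fibre), and $\mathcal{Z}$ is étale over $W(k)$, so $\mathcal{Z}$ is regularly immersed of codimension two in $\mathcal{T}$. Therefore $\mathcal{W}$ is flat over $W(k)$, the formation of $\mathcal{W}$ commutes with the (non-flat) base change $W(k)\to k$, so $\mathcal{W}\times_{W(k)}k=\mathrm{Bl}_ZT=W$ and $\widetilde{g}$ restricts to $g$; moreover $\widetilde{g}_*\mathcal{O}_{\mathcal{W}}=\mathcal{O}_{\mathcal{T}}$ by Zariski's main theorem, and $\widetilde{g}$ is $\mu_{d,W(k)}$-equivariant because $\mathcal{Z}$ is invariant. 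Put $\mathcal{F}:=\mathrm{Exc}(\widetilde{g})\bigl(=\mathbb{P}(N_{\mathcal{Z}/\mathcal{T}})\bigr)$ and let $\widetilde{g}_*^{-1}\mathcal{E}$ be the strict transform of $\mathcal{E}$; each component of $\widetilde{g}_*^{-1}\mathcal{E}+\mathcal{F}$ is a Cartier divisor on the $W(k)$-smooth scheme $\mathcal{W}$ restricting on the special fibre to the corresponding component of $g_*^{-1}E+F$, hence is flat over $W(k)$ by \autoref{l-flat-Cartier}, and since the special fibre $(W,g_*^{-1}E+F)$ is snc, \autoref{l-loc-triv} shows that $(\mathcal{W},\widetilde{g}_*^{-1}\mathcal{E}+\mathcal{F})$ is a (relatively snc) lift of the pair; thus $\widetilde{g}$ is the required lift of $g$, and composing the lifts along the factorisation of the first step finishes the proof. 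The only genuinely non-formal step is the $\mu_d$-equivariant lift of the centre in the second paragraph: this is precisely where one uses both that $\mu_d$ is of multiplicative type, so that equivariant deformations are governed by the invariant parts of ordinary cohomology, and that $Z$ is zero-dimensional, which makes the relevant obstruction space vanish outright; the behaviour of blow-ups under the non-flat base change $W(k)\to k$ is the remaining point that requires the regular-immersion hypothesis.
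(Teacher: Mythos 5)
Your broad strategy—factor $g$ into a single equivariant blow-up, lift the zero-dimensional centre equivariantly, then blow up the lift—is the same as the paper's, and your reduction step and the blow-up/base-change discussion at the end are fine. The genuinely different step is that you lift $Z$ by running the fixed-point Hilbert scheme $H\subset \Hilb(\mathcal{T}/W(k))$ and citing vanishing of $H^1(Z,N_{Z/\mathcal{T}})$, instead of hand-picking a section $\widetilde{p}$ inside a carefully chosen fixed locus.

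That is precisely where there is a gap. The Hilbert-scheme argument only forces $\mathcal{Z}$ to be a flat, $\mu_{d,W(k)}$-invariant closed subscheme of $\mathcal{T}$ with special fibre $Z$. It does \emph{not} force $\mathcal{Z}$ to be compatible with the snc stratification of $(\mathcal{T},\mathcal{E})$: if $p\in Z$ lies in a boundary stratum $E_{i_1,\ldots,i_s}$, nothing in your construction makes the corresponding branch $\widetilde{p}$ of $\mathcal{Z}$ land inside $\mathcal{E}_{i_1,\ldots,i_s}$. When this compatibility fails, the final claim ``each component of $\widetilde{g}_*^{-1}\mathcal{E}+\mathcal{F}$ restricts on the special fibre to the corresponding component of $g_*^{-1}E+F$'' is false. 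Concretely: take $\mathcal{T}=\mathbb{A}^2_{W(k)}$, $\mathcal{E}=V(x)$, $p=(0,0)$, $Z=\{p\}$, and the flat $\mu_d$-invariant lift $\mathcal{Z}=V(x-\pi,\,y)$ where $\pi$ is the uniformiser. Then $\mathcal{Z}\not\subset\mathcal{E}$, and in the blow-up $\mathcal{W}=\mathrm{Bl}_{\mathcal{Z}}\mathcal{T}$ the strict transform $\widetilde{g}_*^{-1}\mathcal{E}$ is still a flat Cartier divisor (so your appeal to \autoref{l-flat-Cartier} goes through), but its special fibre is the \emph{total} transform $g^*E$, not the strict transform $g_*^{-1}E$. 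So $(\mathcal{W},\widetilde{g}_*^{-1}\mathcal{E}+\mathcal{F})$ does not lift $(W,g_*^{-1}E+F)$, and indeed this pair is not even relatively snc over $W(k)$.

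The paper's proof avoids this by deforming inside the boundary, not in all of $\mathcal{T}$: for each $p$ in a stratum $E_{i_1,\ldots,i_s}$ it takes the stabiliser $H\subset\mu_{d,k}$ of $p$, observes that $H$ (hence its lift $\mathcal{H}$) preserves $\mathcal{E}_{i_1,\ldots,i_s}$, shows the $\mathcal{H}$-fixed locus $\mathcal{S}\subset\mathcal{E}_{i_1,\ldots,i_s}$ is $W(k)$-smooth via \cite[Proposition A.8.10]{CGP15}, and chooses $\widetilde{p}\subset\mathcal{S}$ by the formal smoothness/Hensel step. This forces the snc compatibility by construction and then assembles the rest of the orbit by the $(\mathbb{Z}/d\mathbb{Z})$-action. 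To repair your argument you would need to run exactly the same Hilbert-scheme/fixed-point deformation, not in $\Hilb(\mathcal{T}/W(k))$, but in the Hilbert scheme of the relevant stratum $\mathcal{E}_{i_1,\ldots,i_s}$ (or a flag Hilbert scheme recording the containment), so that the resulting $\mathcal{Z}$ is automatically compatible with the snc structure.
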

	
	\noindent In the proof below, we let $E = \sum E_i$ and $\mathcal{E} = \sum \mathcal{E}_i$ to be the corresponding decompositions in prime divisors. 
	We also set $E_{i_1,\ldots, i_s} \coloneqq E_{i_1} \cap \ldots E_{i_s}$ and $\mathcal{E}_{i_1,\ldots, i_s} \coloneqq \mathcal{E}_{i_1} \cap \ldots \mathcal{E}_{i_s}$.
	
	\begin{proof}	
	By induction on the number of blow-ups at closed points, it is enough to show the claim in the case of a single blow-up at a $\mu_{d,k}$-orbit $\sigma=\left\{p_1, \dots, p_r \right\}$, where $p_i$ are closed points of $T$. 
	Set $p:= p_1$, let $ H \subset \mu_{d,k}$ be the stabiliser of $p$, and let $\mathcal{H}\subset \mu_{d,W(k)}$ be the natural lifting to $W(k)$. 
			\begin{claim} There exists a smooth lifting $\widetilde{p} \subseteq \mathcal{T}$ of $p$  such that
			\begin{enumerate}
			    \item[(a)] $\widetilde{p}$ is compatible with the snc structure of $(\mathcal{T},\mathcal{E})$ (see \cite[Definition 2.7]{ABL20}), and
			    \item[(b)] $\widetilde{p}$ is $\mathcal{H}$-invariant.
			\end{enumerate}
			\end{claim}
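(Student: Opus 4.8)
The statement to prove is the Claim inside the proof of \autoref{lem-equiv-lift}: given the $\mu_{d,k}$-orbit $\sigma = \{p_1,\dots,p_r\}$ with $p := p_1$, stabiliser $H \subseteq \mu_{d,k}$ and lift $\mathcal{H} \subseteq \mu_{d,W(k)}$, there is a smooth lift $\widetilde p \subseteq \mathcal{T}$ of $p$ that is compatible with the snc structure of $(\mathcal{T},\mathcal{E})$ and $\mathcal{H}$-invariant.

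Here is my plan. The existence of \emph{some} smooth lift $\widetilde p$ compatible with the snc structure of $(\mathcal T, \mathcal E)$ is essentially \cite[Definition 2.7 / the surrounding discussion]{ABL20} together with \autoref{l-loc-triv}: since $(\mathcal T, \mathcal E) \to \Spec W(k)$ is relatively snc, in a formal/étale neighbourhood of $p$ the pair looks like $\Spec W(k)[[x,y]]$ with $\mathcal E$ cut out by a sub-monomial in $x,y$, so lifting $p = (x,y)$ to $\widetilde p = (x,y) \subseteq \Spec W(k)[[x,y]]$ gives a compatible lift. The content of the claim is to make this choice $\mathcal H$-invariant. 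First I would reduce to the local picture: let $R := \widehat{\mathcal O}_{\mathcal T, p}$ be the completion of the local ring at the point $p$ (viewed as a closed point of the special fibre, which lies in the closed fibre of $\mathcal T$), so $R$ is a complete regular local $W(k)$-algebra of dimension $2$ relative to $W(k)$, and $\mathcal H$ acts on $R$ lifting the $H$-action on $\widehat{\mathcal O}_{T,p} = R/p$.

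The key point is that $\mathcal H$ is (a subgroup scheme of) $\mu_{d,W(k)}$ with $d < p$, so $\mathcal H$ is linearly reductive — in fact diagonalisable — over $W(k)$, and moreover the $\mathcal H$-action on $R$ is an action on a complete local ring fixing the maximal ideal. The plan is to linearise the action: by a standard averaging/Cartan-type argument (possible because $|H|$ is invertible in $k$, equivalently $\mu_{d}$ is linearly reductive and $d<p$), the maximal ideal $\mathfrak m_R$ admits an $\mathcal H$-stable complement to $\mathfrak m_R^2 + pR$ inside $\mathfrak m_R$ — i.e. one can choose local coordinates $x, y$ on $R$ on which $\mathcal H$ acts by characters (diagonally). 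Concretely: the cotangent space $\mathfrak m_R/(\mathfrak m_R^2)$ is a free $W(k)$-module of rank $2$ with a semilinear... no, $W(k)$-linear $\mathcal H$-action; since $\mathcal H$ is diagonalisable, it decomposes into eigenlines; lift eigenvector generators to elements $x, y \in \mathfrak m_R$; then the subgroup $\mathcal H$ acts, and by induction on the $\mathfrak m_R$-adic filtration (using invertibility of $|H|$ to average, exactly as in the classical linearisation of actions of linearly reductive groups on formal neighbourhoods, cf.\ \cite[proof of Proposition 5.20]{km-book} or Cartan's lemma) one corrects $x, y$ to genuine eigencoordinates. Then the ideal $\widetilde p := (x, y) \subseteq R$ is $\mathcal H$-invariant by construction, and it reduces mod $p$ to $\mathfrak m_{T,p}$, so it is a lift of $p$; it is smooth over $W(k)$ since $R/(x,y) = W(k)$; and compatibility with the snc structure holds because $\mathcal E$ near $p$ is cut out by a product of a subset of the coordinates, and an eigencoordinate decomposition can be chosen refining the coordinates defining the branches of $\mathcal E$ (this uses that $\mathcal E$ is $\mathcal H$-stable, so the branch ideals are permuted/fixed by $\mathcal H$ and one linearises compatibly).

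The main obstacle — and the step deserving the most care — is precisely this compatibility of the linearisation with the snc divisor $\mathcal E$: one must choose the eigencoordinates $x,y$ so that the components of $\mathcal E$ through $p$ are still coordinate hyperplanes. When both branches of $\mathcal E$ pass through $p$, $\mathcal H$ either fixes each branch or swaps them; in the first case each branch ideal is $\mathcal H$-stable and one-dimensional in the cotangent space, hence automatically an eigenline, so the two branch coordinates are already eigencoordinates; in the swap case the span of the two branch conormals is an $\mathcal H$-stable plane which is all of the cotangent space, and again diagonalisability lets one choose eigencoordinates, but one should check the divisor stays monomial — this follows since the ideal of $\mathcal E$ near $p$ is the product of the (permuted) branch ideals, hence $\mathcal H$-semi-invariant, and a standard argument normalises it to a monomial in the chosen eigencoordinates. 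Everything else (existence of a lift at all, flatness, smoothness) is formal from \autoref{l-loc-triv} and \autoref{l-flat-Cartier}. Once the claim is established, blowing up the $\mathcal H$-invariant $\widetilde p$ and then spreading out over the whole orbit $\sigma$ by translating via coset representatives of $\mathcal H$ in $\mu_{d,W(k)}$ yields the desired $\mu_{d,W(k)}$-equivariant blow-up, completing the induction step of \autoref{lem-equiv-lift}.
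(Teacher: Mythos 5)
Your proposal is correct in outline, but it takes a noticeably different and somewhat heavier route than the one in the paper; let me compare them. The paper avoids linearisation entirely: it observes that $p$ lies in a minimal stratum $E_{i_1,\ldots,i_s}$ of $(T,E)$, that $H$ (being the stabiliser of $p$) preserves this stratum and hence $\mathcal H$ preserves its lift $\mathcal E_{i_1,\ldots,i_s}$, and then invokes the general theorem that the fixed locus of a linearly reductive group scheme acting on a smooth scheme is smooth [CGP15, Prop.\ A.8.10]. The desired $\widetilde p$ is then any $W(k)$-section through $p$ of the smooth fixed locus $\mathcal S \subseteq \mathcal E_{i_1,\ldots,i_s}$. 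Working inside the stratum from the start makes snc-compatibility automatic and sidesteps all the coordinate choices you discuss. Your approach instead linearises the $\mathcal H$-action on $\widehat{\mathcal O}_{\mathcal T,p}$ via a Cartan/averaging argument and takes $\widetilde p$ to be the ``origin'' $(x,y)$ in eigencoordinates; this is a legitimate alternative (indeed, since $d<p$, $\mu_{d,W(k)}$ is the constant group $\mathbb Z/d\mathbb Z$ over $W(k)$ and its order is invertible, so the averaging is classical), but it is more work.

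Two small warnings about how you wrote it. First, the object you want to diagonalise is not $\mathfrak m_R/\mathfrak m_R^2$ (which has $k$-dimension $3$) but the relative cotangent space $\mathfrak m_R/(\mathfrak m_R^2+pR)$, as you correctly say a sentence earlier; the later phrase ``free $W(k)$-module of rank $2$'' is the wrong module. Second, in the case where $\mathcal H$ swaps two branches of $\mathcal E$ through $p$, you worry about making $\mathcal E$ ``monomial'' in the eigencoordinates; this is a red herring. What snc-compatibility actually requires is that $\widetilde p$ lie on the lifts $\mathcal E_{i}$ of exactly those components $E_i$ through $p$, i.e.\ $(x,y)\supseteq \mathcal I_{\mathcal E_i}$ for the relevant $i$. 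In the swap case, $\widetilde p$ is forced to lie in $\mathcal E_{i_1}\cap\mathcal E_{i_2}$, which is already an $\mathcal H$-invariant $W(k)$-section (by \autoref{l-loc-triv} it is smooth of relative dimension $0$, and $\mathcal H$ permutes the two components, hence preserves their intersection), so there is nothing to choose and no need to normalise anything. In other words, the only case needing a construction is the codimension-$\leq 1$ stratum, where each branch is $\mathcal H$-stable and therefore its conormal is automatically an eigenline. With those two points cleaned up your argument is complete; the paper's stratum-plus-fixed-locus argument simply reaches the same conclusion without unfolding the action in coordinates.
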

			
			\begin{proof}
			Suppose that $p$ lies in the smooth stratum $E_{i_1,\cdots, i_s}$ and no smaller one. Since $H$ stabilises $p$, we must have that $H(E_{i_1,\cdots, i_s}) = E_{i_1,\cdots, i_s}$. Since $\mathcal{H}$ acts on $\mathcal{E}$, this implies that $\mathcal{H}(\mathcal{E}_{i_1,\cdots, i_s}) = \mathcal{E}_{i_1,\cdots, i_s}$
			
			Let $S \subseteq E_{i_1,\cdots, i_s}$ be the fixed locus of the action of $H$ on $E_{i_1,\cdots, i_s}$. 
			As the geometric fibres of $\mathcal{H} \to \Spec(W(k))$ are linearly reductive, the fixed locus $\mathcal{S}\subseteq \mathcal{E}_{i_1,\ldots, i_s}$ of the action of $\mathcal{H}$ on $\mathcal{E}_{i_1,\ldots, i_s}$  is smooth over $W(k)$ by \cite[Proposition A.8.10]{CGP15}.
 			As $\mathcal{S}$ is smooth, we can choose a lifting $\widetilde{p}$ of $p$ inside $\mathcal{S}$ (see \cite[The\'or\'eme 18.5.17]{Gr67}, cf.\ \cite[Lemma 2.8]{ABL20}), which satisfies (1) and (2) by construction. 
\end{proof}
		    
	As $k$ is algebraically closed, $\mu_{d,k}(k) \neq \emptyset$. As $d$ is coprime to $p$, we have $\mu_{d,k}(k) \cong \mathbb{Z}/d\mathbb{Z}$ as groups and by Hensel's lemma we deduce also that $\mu_{d,W(k)}(W(k)) \cong \mathbb{Z}/d\mathbb{Z}$.
	
	Set $\Sigma$ to be the orbit of $\tilde{p}$ obtained by the action of $(\mathbb{Z}/d\mathbb{Z})$ and let $\Sigma_1$ be the connected component of $\Sigma$ containing $p_1$. 
	As every irreducible component $\Theta$ of $\Sigma$ is a section of $\mathcal{T} \to \Spec(W(k))$ by construction, and every irreducible component of $\Sigma_1$ passes through $p$ we deduce that $p_i \notin \Sigma_1$ for $i \neq 1$.
	
	\begin{claim} $\Sigma_1 = \tilde{p}$.
	\end{claim}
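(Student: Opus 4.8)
The plan is to reduce the claim to a counting argument: I will show that the reduction-modulo-$p$ map from the orbit $\Sigma$ of $\widetilde{p}$ to the orbit $\sigma$ of $p$ is a bijection, so that $\widetilde{p}$ is the \emph{only} irreducible component of $\Sigma$ passing through $p$; since every component of $\Sigma_1$ has already been seen to pass through $p$, this forces $\Sigma_1 = \widetilde{p}$.

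In more detail, I would first record that, writing $\mathbb{Z}/d\mathbb{Z} \simeq \mu_{d,W(k)}(W(k))$, the subscheme $\Sigma = \bigcup_{h\in\mathbb{Z}/d\mathbb{Z}} h\cdot\widetilde{p}$ is the (reduced) orbit of $\widetilde{p}$, whose irreducible components are exactly the distinct translates $h\cdot\widetilde{p}$, each a section of $\mathcal{T}\to\Spec(W(k))$. Next I would identify the stabiliser of $\widetilde{p}$ in $\mathbb{Z}/d\mathbb{Z}$ with $H$: it contains $H$ because $\widetilde{p}$ was chosen inside the $\mathcal{H}$-fixed locus $\mathcal{S}$, and conversely any element fixing $\widetilde{p}$ fixes its reduction $p$ and hence lies in $H$. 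Therefore $\Sigma$ has exactly $r = [\mathbb{Z}/d\mathbb{Z}:H] = |\sigma|$ irreducible components.

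Then I would observe that reduction modulo $p$ gives a map $\Sigma \to \sigma$, $h\cdot\widetilde{p}\mapsto h\cdot p$, whose image is the $(\mathbb{Z}/d\mathbb{Z})$-orbit of $p$, i.e.\ all of $\sigma$; a surjection between finite sets of equal cardinality $r$ is a bijection, so $\widetilde{p}$ is the unique irreducible component of $\Sigma$ with closed point $p$. Since $\Sigma_1$ is a connected component of the reduced scheme $\Sigma$, it is a union of some of these components, and, as already noted, each of them passes through $p$; by the bijectivity just established there is only one such, whence $\Sigma_1 = \widetilde{p}$.

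The only delicate point is the input $p\nmid d$, used both to get $\mu_{d,W(k)}(W(k))\simeq\mathbb{Z}/d\mathbb{Z}$ via Hensel's lemma and to know that $\mathrm{Stab}(\widetilde{p})=H$, so that $|\Sigma|=|\sigma|$; once these are in place the argument is purely combinatorial. Equivalently, one could argue that two distinct sections of $\mathcal{T}$ over $W(k)$ meet at most along (a thickening of) their common closed point, so two components of $\Sigma$ that intersect must reduce to the same point of $\sigma$ — which the bijection forbids — and hence $\Sigma$ is a disjoint union of its $r$ sections, each its own connected component, giving $\Sigma_1=\widetilde{p}$ directly.
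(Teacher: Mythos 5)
Your proposal is correct and follows essentially the same route as the paper: identify the stabiliser of $\tilde{p}$ with $H$ so that $\Sigma$ has $r = [\mathbb{Z}/d\mathbb{Z}:H]$ irreducible components (each a section), observe that the reduction map onto the orbit $\sigma$ is surjective and hence bijective by a counting argument, conclude that the translates specialise to distinct points and are therefore disjoint, and deduce $\Sigma_1 = \tilde{p}$. The alternative phrasing you offer at the end (two distinct $W(k)$-sections can meet only at the closed point) is precisely the geometric input underlying the step, implicit in both versions.
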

	Recall that $\tilde{p}$ restricted to the central fibre is $p_1$.
	
\begin{proof}
	Note that
    \[
		\Sigma = \bigcup_{g \in \mathbb{Z}/d\mathbb{Z}} g(\tilde{p}) = \bigcup_{g \in (\mathbb{Z}/l\mathbb{Z})} g(\tilde{p}),
	\]
	where the last equality follows from the fact  $\mathcal{H}$ stabilises $\widetilde{p}$.
	If $\mathcal{H}(W(k)) \cong l\mathbb{Z}/d\mathbb{Z}$ for some $l>0$, then $(\mu_{d,W(k)}/\mathcal{H})((W(k)))  \cong \mathbb{Z}/l\mathbb{Z}$.
	A simple counting shows that the closed subschemes
	\[
		\big(g(\tilde{p}) \subseteq \mathcal{T} \ \big|\ g \in (\mathbb{Z}/l\mathbb{Z}) \big)
	\]
	specialise to distinct points (one of $p_1, \ldots, p_r$), and so these are all disjoint closed subschemes of $\Sigma$, that is 
	\[
		\Sigma = \bigsqcup_{g \in (\mathbb{Z}/l\mathbb{Z})} g(\tilde{p}).
	\]
			
	In particular, $(\Sigma_1)$ is a disjoint union of some of these closed subschemes, but since $p_i \not \in \Sigma_1$ for $i \neq 1$,  we conclude that $(\Sigma_1) = \tilde{p}$.  
\end{proof}
			
			By the above claim and the $(\mathbb{Z}/d\mathbb{Z})$-symmetry we get that
			\[
			\Sigma = \bigsqcup_{1 \leq i \leq r} \tilde{p}_i,
			\]
			where $\tilde{p}_i$ is a smooth lifting of $p_i$ which is compatible with the snc structure of $(\mathcal{T},\mathcal{E})$. Thus, $\Sigma$ is smooth over $W(k)$ and the blow-up along $\Sigma$ gives the desired lifting as in the proof of \cite[Proposition 2.9]{ABL20}.
\end{proof}

\subsection{$K_X \not \equiv 0$ or  $X$ is not klt}\label{s-not-pseff}
	So far we proved log liftability of $X$ when $K_X \equiv 0$ and $X$ is klt (see \autoref{t:lift-K-trivial-klt}). In this subsection, we cover the remaining cases. Note that we will repeatedly use \autoref{l-Q-K+delta} without mentioning it.

	We begin by studying globally $F$-split surface pairs admitting a Mori fibre space structure.
	
\begin{proposition}\label{LCTF2}
	Let $(X, D)$ be a  globally $F$-split projective surface pair such that $D$ is a reduced  Weil divisor. Let $f\colon X\to Z$ be a projective morphism such that 
	\begin{enumerate}
		\item[(a)] $f_{*}\MO_X=\MO_Z$ and $\dim\,Z=1$,
		\item[(b)] $-(K_X+D)$ is $f$-nef and $-K_X$ is $f$-ample.
	\end{enumerate} 
	Then $H^2(X, T_X(-\log D))=0$. Moreover, $(X, D)$ is log liftable over $W(k)$.
\end{proposition}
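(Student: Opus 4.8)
The plan is to establish the cohomological vanishing $H^2(X, T_X(-\log D)) = 0$; log liftability then follows by deformation theory, exactly as in the proof of \autoref{p-lift-Fanopairs}, since this group is the obstruction space for a formal logarithmic lifting and the complementary vanishing $H^2(X,\MO_X)=0$ makes such a lift algebraisable (after which one appeals to \autoref{l-loglift-surf}, passing to a log resolution). That $H^2(X,\MO_X)=0$ is immediate here: by Serre duality it equals $H^0(X,\omega_X)^\vee$, and $\omega_X$ restricts to $\MO_{\mathbb{P}^1}(-2)$ on a general fibre of $f$, so $f_*\omega_X=0$ and hence $H^0(X,\omega_X)=0$.

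For the vanishing itself I would use the Leray spectral sequence for $f\colon X\to Z$. As $\dim Z=1$ and $f$ is projective of relative dimension one, $R^{\ge 2}f_*=0$ and $H^{\ge 2}(Z,-)=0$, so in total degree two only one graded piece of the Leray filtration survives:
\[
H^2\big(X,T_X(-\log D)\big)\;\simeq\;H^1\big(Z,\,\mathcal G\big),\qquad \mathcal G:=R^1 f_* T_X(-\log D).
\]
Since a torsion sheaf on a curve has no first cohomology, it suffices to prove that $\mathcal G$ is a torsion sheaf on $Z$, i.e. that $\mathcal G$ vanishes at the generic point $\eta$ of $Z$.

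To check this I would compute on a general fibre $F$ of $f$. Because $-K_X$ is $f$-ample, adjunction gives $-K_X\cdot F=2-2p_a(F)>0$, so $p_a(F)=0$, $F\cong\mathbb{P}^1$, and $-K_X\cdot F=2$; moreover a general fibre lies in the smooth locus of $X$ and meets $D$ transversally at smooth points of $D$, so $(X,D)$ is snc near $F$ and $T_X(-\log D)$ is locally free there. Using $N_{F/X}\simeq\MO_F$ (as $F^2=0$) one gets the exact sequence
\[
0\longrightarrow T_F\big(-\log(D|_F)\big)\longrightarrow T_X(-\log D)|_F\longrightarrow \MO_F\longrightarrow 0,
\]
with $T_F(-\log(D|_F))\simeq\MO_{\mathbb{P}^1}(2-D\cdot F)$. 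Now $-(K_X+D)$ being $f$-nef forces $D\cdot F\le -K_X\cdot F=2$, so $2-D\cdot F\ge 0$ and $H^1(F,T_X(-\log D)|_F)=0$. By flat base change the same vanishing holds over $\eta$, where $T_{X_\eta}(-\log D_\eta)$ is a line bundle of degree $2-\deg D_\eta=2-D\cdot F\ge 0$ on the regular genus-zero curve $X_\eta$ (independently of how $D_\eta$ decomposes into closed points), so $\mathcal G_\eta=H^1(X_\eta,T_{X_\eta}(-\log D_\eta))=0$. Hence $\mathcal G$ is torsion and we conclude.

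The whole argument is driven by the one-line fibrewise inequality $D\cdot F\le 2$, which is free from the $f$-nefness of $-(K_X+D)$; everything else is formal manipulation of the Leray spectral sequence and Serre duality. The point that genuinely requires care is the last step — deducing log liftability of the pair $(X,D)$ from the vanishing when $X$ is singular — which forces one to pass to a log resolution and invoke that, for surfaces, log liftability does not depend on the chosen resolution (\autoref{l-loglift-surf}); it is also worth being deliberate about the fact that $T_X(-\log D)$ need not be locally free along the non-generic fibres of $f$, which is exactly why the argument is arranged to use only the behaviour over the generic point of $Z$.
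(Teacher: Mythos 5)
Your Leray-spectral-sequence strategy is a genuinely different route from the paper's (which instead produces an explicit exact sequence for $\Omega^1_X(\log D)$ over $Z$ and kills $H^0(X,(\Omega^{[1]}_X(\log D)\otimes\omega_X)^{**})$ by a degree argument on the horizontal and vertical quotients, closely following \cite[Lemma 4.11]{Kaw21}); and the outer scaffolding of your argument — identifying $H^2(X,T_X(-\log D))$ with $H^1(Z,R^1f_*T_X(-\log D))$, checking that this $R^1$ is torsion, and then reducing to the generic fibre — is correct in shape. But there is a real gap at the crucial step. You assert that a general fibre $F$ avoids $\Sing X$ and ``meets $D$ transversally at smooth points of $D$, so $(X,D)$ is snc near $F$,'' and the entire degree count (the exact sequence $0\to T_F(-\log(D|_F))\to T_X(-\log D)|_F\to \MO_F\to 0$ with $\deg T_F(-\log(D|_F))=2-D\cdot F$) rests on this. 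In characteristic $p$ this transversality is \emph{not} free: a horizontal component of $D$ can be inseparable over $Z$, in which case every fibre is everywhere tangent to it, $D|_F$ is non-reduced, and the exact sequence breaks. Concretely, if locally $D=\{x^p-z=0\}$ with $z$ pulled back from $Z$, then one computes $T_X(-\log D)=\MO\partial_x\oplus \MO\,g\,\partial_z$, hence on a fibre the subsheaf is all of $T_F\simeq\MO_{\mathbb P^1}(2)$ and the quotient is $\MO_{\mathbb P^1}(-p)$, so $H^1(F,T_X(-\log D)|_F)\simeq H^1(\mathbb P^1,\MO(-p))\neq 0$: the torsion claim for $R^1f_*$ fails. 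Note also that $\mathcal G_\eta$ is $H^1(X_\eta,(T_X(-\log D))|_{X_\eta})$, the restriction of a rank-$2$ sheaf, not $H^1(X_\eta,T_{X_\eta}(-\log D_\eta))$; your parenthetical ``independently of how $D_\eta$ decomposes into closed points'' does not address the multiplicity issue.

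The point is that your proof never actually uses the globally $F$-split hypothesis, and this is exactly the hypothesis needed to rule out the inseparable behaviour. This is where the paper's proof makes essential use of it: restricting the $F$-splitting to a (geometric generic) fibre $F$ shows that $(F,D|_F)$ is globally $F$-split on $\mathbb P^1$, which forces $D|_F$ to consist of at most two \emph{reduced} points; hence all horizontal components of $D$ are generically \'etale over $Z$, $(X,D)$ is log smooth over $Z$ after shrinking the base, and the rest of the degree computation goes through. Once you insert the same observation (globally $F$-split $\Rightarrow$ $D|_F$ reduced with $\deg\le 2$, using \autoref{l-GFR-pullback} and the restriction-of-splitting trick), your Leray argument closes up and gives an alternative proof of the vanishing; as it stands, however, the proposal proves something false in general and only true because of a hypothesis it never invokes.
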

 \begin{proof}We first show that $H^2(X, T_X(-\log D))=0$.
        By Serre's duality, the desired vanishing is equivalent to
        \[
        H^0(X, (\Omega^{[1]}_X(\log D) \otimes \omega_X)^{**})=0.
        \]
        In particular, it suffices to show that $f_{*}(\Omega^{[1]}_X(\log D) \otimes \omega_X)^{**}=0$.
        Since this sheaf is torsion-free, it suffices to show that this is of rank zero.
        Thus, the assertion is local on $Z$, and we can shrink $Z$ if necessary.
        
        By shrinking $Z$, we may assume that $Z$ is affine, $(X,D)$ is log smooth.
        By \cite[Proposition 5.7]{Eji19} shows that $(F, D|_{F})$ is globally $F$-split.
        As $(F, D|_F)$ is globally $F$-split we deduce $D|_F$ is zero, a point, or two distinct points. 
		In particular, the pair $(F, D|_F)$ is snc.

        Since $Z$ is affine and $(X,D)$ is log smooth, we have
        \[
        f_{*}(\Omega^{[1]}_X(\log D) \otimes \omega_X)^{**}=
        H^0(X, \Omega_X(\log\,D)\otimes \omega_X).
        \]
        We show that the latter cohomology vanishes.
        Suppose by contradiction, we assume that there exists an injective $\mathcal{O}_X$-module homomorphism $\omega_X^* \hookrightarrow \Omega_X^{1}(\log D)$.
    	We now follow the proof of \cite[Lemma 4.11]{Kaw21}.
    	It is easy to see that the composition $\omega_X^{*} \to \Omega^1_X(\log D) \to \Omega_{X/Z}(\log D)$ is zero as otherwise the following chain of inequalities $2=\deg(\omega_X^{*}|_F) \leq (K_X+D) \cdot F \leq 0$ hold. 
		Therefore there is an induced injective homomorphism $\omega_X^{*} \to f^*\omega_Z \to \Omega^1_X(\log D)$, but this contradicts with $2=\deg(\omega_X^{*}|_F)$ and $f^*\omega_Z \cdot F=0$.
        Therefore, we conclude that $H^2(X, T_X(-\log D))=0$.

        Since $-K_X$ is $f$-ample, $H^0(X,\mathcal{O}_X(K_X))=0$ and by Serre duality $H^2(X, \MO_X)=0$.
		
		Let $\pi \colon Y\to X$ be a log resolution of $(X, D)$, $E\coloneqq \Ex(\pi)$ and $D'\coloneqq \pi_{*}^{-1}D$.
		By \cite[Remark 4.2]{Kaw21} we have an injection $H^2(Y, T_Y(-\log\,(D'+E)))\hookrightarrow H^2(X, T_X(-\log\,D))=0$. Since \[H^0(Y, \mathcal{O}_Y(K_Y))\hookrightarrow H^0(X, \mathcal{O}_X(K_X))=0,\] we conclude by Serre duality that $H^2(Y,\cO_Y)=0$. Therefore $(Y, D'+E)$ lifts over $W(k)$.
	\end{proof}

	\begin{proposition}\label{l_del_pezzo_case}
		Let $(X, D)$ be a globally $F$-split projective surface pair such that $D$ is a reduced  Weil divisor.
		Suppose that $X$ is a klt del Pezzo surface of Picard rank $\rho(X)=1$.
		Then there exists a log resolution $g \colon Z \to X$ of $(X,D)$ such that \[H^2(Z, T_Z(-\log (g_*^{-1}D+\Ex(g))))=0.\]
		In particular, $(X, D)$ is log liftable over $W(k)$.
	\end{proposition}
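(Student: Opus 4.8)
The plan is to exhibit a log resolution $g\colon Z\to X$ of $(X,D)$ to which \autoref{p-lift-Fanopairs} applies directly. Concretely, it suffices to find such a $g$ for which $Z$ is globally $F$-split and admits an effective $\Q$-divisor $\Delta_Z$ with $\lfloor\Delta_Z\rfloor=0$, $(Z,\Supp\Delta_Z)$ snc, $-(K_Z+\Delta_Z)$ ample, and $\Supp\Delta_Z\subseteq E_Z:=g_*^{-1}D+\Ex(g)$: then \autoref{p-lift-Fanopairs}, applied with the snc divisor $E:=E_Z\supseteq\Supp\Delta_Z$, yields both $H^2(Z,T_Z(-\log E_Z))=0$ and $H^2(Z,\mathcal O_Z)=0$; since a klt del Pezzo surface is rational, $H^2$ of any resolution of $X$ vanishes, so the $W(k)$-liftability of $(Z,E_Z)$ furnished by \autoref{p-lift-Fanopairs} shows, via \autoref{l-loglift-surf}, that $(X,D)$ is log liftable.

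First I would pass to the minimal resolution $h\colon(Y,E)\to X$ with $E=\Ex(h)$; as $X$ is klt this is a log resolution, so $(Y,E)$ is snc, and writing $K_Y+\Delta_Y=h^*K_X$ with $\Delta_Y\geq0$ supported on $E$ (minimal resolutions of klt singularities have non-positive discrepancies), \autoref{l-GFR-pullback} shows that $(Y,\Delta_Y)$, and in particular $Y$, is globally $F$-split. The class $-(K_Y+\Delta_Y)=h^*(-K_X)$ is nef and big but not ample, so I would perturb it: pick $B\geq0$ supported on $E$ with $-B$ relatively ample over $X$ (such a $B$ exists because the intersection form on $E$ is negative definite) and, for $n\gg0$, set $\Delta'_Y:=\Delta_Y+\tfrac1nB$. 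Then $\lfloor\Delta'_Y\rfloor=0$, $(Y,\Supp\Delta'_Y)=(Y,E)$ is snc, and $-(K_Y+\Delta'_Y)=h^*(-K_X)-\tfrac1nB$ is ample, so $(Y,\Delta'_Y)$ satisfies the hypotheses of \autoref{p-lift-Fanopairs}. If $h_*^{-1}D+E$ happens to already be snc, one simply applies \autoref{p-lift-Fanopairs} with $E:=h_*^{-1}D+E$ and is done.

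In general one must resolve further: choose $g\colon Z\to X$ to factor as $g=h\circ\mu$, with $\mu\colon Z\to Y$ a composition of blow-ups at closed points, and — since $E$ is already snc — with every centre lying on the successive strict transforms of $D$. Pulling the globally $F$-split pair $(X,D)$ (with $D$ of coefficient one) back crepantly to $(Z,B_Z)$ then gives $B_Z\geq0$: the coefficient of each $\mu$-exceptional divisor equals $-a(\,\cdot\,;X,D)\geq0$ precisely because its centre meets the coefficient-one strict transform of $D$, so the relevant multiplicities are $\geq1$. Hence $Z$ is globally $F$-split by \autoref{l-GFR-pullback}. It then remains to produce $\Delta_Z$ with support in $E_Z$ and $-(K_Z+\Delta_Z)$ ample — equivalently, to show $Z$ is of del Pezzo type with a boundary supported on $E_Z$. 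I expect this to be the main obstacle: the obvious candidate $-(K_Z+\mu_*^{-1}\Delta'_Y)$ is only big on $Z$ — the new exceptional curves created over the singular points of $D$ destroy nefness — so one has to correct it by a relatively-ample-over-$X$ combination of exceptional curves together with a small multiple of $g_*^{-1}D$ (using $-(K_X+D)\sim_{\mathbb Q}\Gamma\geq0$ from \autoref{l-Q-K+delta}) while keeping the round-down of the boundary equal to zero; the required bookkeeping is controlled by $X$ being klt, so that every $g$-exceptional divisor has discrepancy $>-1$ over $X$. (Alternatively, one can transfer the lifting $(\mathcal Y,\mathcal E)$ of $(Y,E)$ obtained from \autoref{p-lift-Fanopairs} along the blow-ups $\mu$ using \cite[Proposition 2.9]{ABL20} and $H^2(Y,\mathcal O_Y)=0$ to also lift the strict transform of $D$; this reproves the log liftability of $(X,D)$, though not a priori the sharper vanishing $H^2(Z,T_Z(-\log E_Z))=0$.)
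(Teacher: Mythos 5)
Your overall plan is the same as the paper's: reduce to \autoref{p-lift-Fanopairs} by producing a log resolution $g\colon Z\to X$ on which there is a klt log del Pezzo boundary $\Delta_Z$ with $\Supp\Delta_Z\subseteq g_*^{-1}D+\Ex(g)$. You also correctly identify the shape of $g$ — minimal resolution followed by blow-ups along $D$'s strict transform — and the need for $-(K_X+D)$ being effective (\autoref{l-Q-K+delta}) together with klt-ness of $X$ to control the bookkeeping.

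However, the step you yourself flag as ``the main obstacle'' is exactly where the proof lives, and your sketch does not close it. Two concrete issues. First, you set out from $\Delta'_Y$, the crepant pullback of $K_X$ perturbed by a $\pi$-anti-ample divisor; this has no support along $D$, and subtracting the new exceptionals $F$ over the nodes of $D$ from $\mu^*h^*(-K_X)$ loses nefness against the strict transforms of $D$'s branches through those nodes, not against $F$ itself. One really needs $D$ (with some positive coefficient) in the boundary from the start so the crepant pullback soaks up $F$. Second, the paper's key device that you are missing is the choice $\frac12<\varepsilon<1$: one works with $(X,\varepsilon D)$ throughout. This is klt (so \cite[Theorem 2.31]{kk-singbook} applies and tells you the components of $D$ are regular or nodal — a fact you use implicitly but do not justify), and log del Pezzo because $\rho(X)=1$ and $-(K_X+\varepsilon D)\equiv (1-\varepsilon)(-K_X)+\varepsilon(-(K_X+D))$ is a positive class. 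Its crepant pullback through $\pi$ then through the blow-up $f$ of the nodes of $\pi_*^{-1}D_1$ has boundary coefficients $\varepsilon$ on $g_*^{-1}D$, $a_i\in[0,1)$ on the $E_i$, and $2\varepsilon-1\in(0,1)$ on the new $f$-exceptionals $F$ — so the rounddown is zero automatically, with no delicate tuning. Adding a small $g$-anti-ample $\delta G$ then gives the log del Pezzo pair on $Z$ to which \autoref{p-lift-Fanopairs} applies.

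Your parenthetical alternative does not repair this. The lift $(\mathcal Y,\mathcal E)$ furnished by \autoref{p-lift-Fanopairs} is a lift of $(Y,\Ex(h))$; the strict transform $h_*^{-1}D$ is not part of the lifted pair, and \cite[Proposition 2.9]{ABL20} propagates liftings of snc pairs through blow-ups of their strata — it does not let you adjoin a new divisor $h_*^{-1}D$ to an existing lift. And on $Y$, $h_*^{-1}D+\Ex(h)$ is not snc in general, so you cannot include $D$ in the input to \autoref{p-lift-Fanopairs} at the level of $Y$ either. The $\varepsilon$-perturbed crepant-pullback computation on the final $Z$ is genuinely needed.
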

	
	\begin{proof}
In what follows we will construct a log resolution $h \colon Z \to X$ of $(X,D)$ such that the $\mathbb Q$-divisor $D_Z \geq 0$, where $K_Z+D_Z = h^*(K_X+D)$.
		Fix $\frac{1}{2}<\varepsilon<1$.
		Since $(X, D)$ is globally $F$-split, $(X,D)$ is log canonical and $-(K_X+D)$ is $\mathbb{Q}$-effective. 
		Since $X$ a klt del Pezzo with $\rho(X)=1$ we thus conclude that the pair $(X, \varepsilon D)$ is log del Pezzo.
		By \cite[Theorem 2.31]{kk-singbook}, the components of $D$ are regular or nodal.
		Let $D_1$ be the union of all nodal curves in $D$ and $D_2\coloneqq D-D_1$.
		
		Let $\pi\colon Y\to X$ be the minimal resolution of $X$ with $E\coloneqq \Ex(\pi)=\sum_i E_i$. Then we have 
		\[
		K_Y+\pi_{*}^{-1}\varepsilon D_1+\pi_{*}^{-1}\varepsilon D_2+\sum_{i=1}^n a_iE_i=\pi^{*}(K_X+\varepsilon D)		\]
		for some $0 \leq a_i<1$. We note that outside the nodes of the irreducible components of $\pi^{-1}_*D_1$ the morphism $\pi$ is a log resolution of $(X,D)$.
  
		Next, let $f\colon Z\to Y$  be the blow-up of all nodal points of $\pi_{*}^{-1}D_1$, $F\coloneqq \Ex(f)$, and $g=f\circ \pi$.
		Then we have 
		\[
		K_Z+g_{*}^{-1}\varepsilon D_1+\sum f^{-1}_{*}a_iE_i+g_{*}^{-1}\varepsilon D_2+(2\varepsilon-1)F=g^{*}(K_X+\varepsilon D).
		\]
		Note that $\Supp(g_{*}^{-1}\varepsilon D_1+\sum f^{-1}_{*}a_iE_i+g_{*}^{-1}\varepsilon D_2+(2\varepsilon-1)F)$ is snc and there exists an effective $g$-exceptional and $g$-anti-ample $\mathbb{Q}$-divisor $G$ on $Y$.
		Thus for $0<\delta \ll 1$, the pair $(Z, g_{*}^{-1}\varepsilon D_1+\sum f^{-1}_{*}a_iE_i+g_{*}^{-1}\varepsilon D_2+(2\varepsilon-1)F+\delta G)$ is log del Pezzo and we can conclude the desired vanishing and the lifting over $W(k)$ by \autoref{p-lift-Fanopairs}.
	\end{proof}
	
\begin{remark}
    The statements of \autoref{LCTF2} and \autoref{l_del_pezzo_case} might look quite technical at first due to presence of a  reduced Weil divisor $D$. 
    However, including $D$ allows to prove log liftability of globally $F$-split surfaces with log canonical singularities as shown in the proof of \autoref{t-notklt-notk0}(b).
\end{remark}
	
	\begin{theorem}\label{t-notklt-notk0}
		Let $(X,D)$ be a globally $F$-split projective surface pair such that $D$ is a reduced Weil divisor.
		Suppose that one of the following holds:
		\begin{enumerate}
			\item[(a)]  $K_X+D \not \equiv 0$;
			\item[(b)] $(X,D)$ is not klt.
		\end{enumerate} 
		Then $(X, D)$ is log liftable over $W(k)$.
	\end{theorem}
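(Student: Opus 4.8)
\emph{Proof proposal.} The plan is to reduce, by running the two-dimensional minimal model program, to one of the two situations already treated in \autoref{LCTF2} and \autoref{l_del_pezzo_case}, and to transport log liftability back up the program using \autoref{l-loglifting-image}.

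First I would record the two consequences of \autoref{l-Q-K+delta} that drive everything: $(X,D)$ is log canonical, so $K_X+D$ is $\mathbb{Q}$-Cartier, and $-(K_X+D)$ is $\mathbb{Q}$-effective. Next I take a $\mathbb{Q}$-factorial dlt modification $h\colon (X',D')\to(X,D)$ of the lc pair $(X,D)$, which exists for surfaces by \cite{Tan18}; thus $h$ is crepant, $D'=h^{-1}_*D+\Ex(h)$ is a reduced Weil divisor, and $(X',D')$ is dlt, so that $X'$ is klt (indeed $(X',0)$ is again dlt), hence $\mathbb{Q}$-factorial and — being proper — projective by \cite{Kle66}. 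By \autoref{l-GFR-pullback} the pair $(X',D')$ is globally $F$-split. I claim it is enough to prove that $(X',D')$ is log liftable over $W(k)$: if $f'\colon (Y,E')\to X'$ is a log resolution of $(X',D')$ such that $(Y,E'+(f')^{-1}_*D')$ admits a lifting over $W(k)$, then $h\circ f'$ is a log resolution of $(X,D)$ whose reduced exceptional-plus-boundary divisor is a subdivisor of $E'+(f')^{-1}_*D'$, and discarding the redundant components of the lifting of $(Y,E'+(f')^{-1}_*D')$ produces a lifting of the corresponding snc pair for $(X,D)$.

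Now I would run a $K_{X'}$-MMP, the key observation being that $K_{X'}$ is not pseudo-effective. Applying \autoref{l-Q-K+delta} to $(X',D')$ gives $\Gamma'\geq 0$ with $-K_{X'}\sim_{\mathbb{Q}}D'+\Gamma'$, and $D'+\Gamma'\not\equiv 0$: in case (b) because $D'\neq 0$ (when $(X,D)$ is not klt, either $D\neq 0$ or $h$ extracts an exceptional divisor of log discrepancy zero), and in case (a) because $K_{X'}+D'=h^{*}(K_X+D)\not\equiv 0$ forces $\Gamma'\not\equiv 0$. A surface $K_{X'}$-MMP terminates and preserves $\mathbb{Q}$-factoriality and klt singularities, and since $K_{X'}$ is not pseudo-effective it must end with a Mori fibre space $\psi\colon W\to Z$ with $\rho(W/Z)=1$ and $-K_W$ $\psi$-ample, where $W$ is a $\mathbb{Q}$-factorial klt projective surface. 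Writing $g\colon X'\to W$ for the composed birational contraction and $D_W:=g_*D'$ (a reduced Weil divisor), the pair $(W,D_W)$ is globally $F$-split by \autoref{l-gfs-image}; and applying \autoref{l-loglifting-image} to each divisorial contraction in $g$, it suffices to prove that $(W,D_W)$ is log liftable over $W(k)$.

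It remains to treat the two possible shapes of the Mori fibre space. If $\dim Z=1$, then $(W,D_W)$ globally $F$-split together with \autoref{l-Q-K+delta} gives $-(K_W+D_W)\sim_{\mathbb{Q}}\Gamma_W\geq 0$, so for a general fibre $F$ we have $-(K_W+D_W)\cdot F=\Gamma_W\cdot F\geq 0$, and since $\rho(W/Z)=1$ this means $-(K_W+D_W)$ is $\psi$-nef; with $-K_W$ $\psi$-ample, the hypotheses of \autoref{LCTF2} are met and $(W,D_W)$ is log liftable. If $Z$ is a point, then $\rho(W)=1$, $-K_W$ is ample and $W$ is klt, i.e.\ $W$ is a klt del Pezzo surface of Picard rank one, so \autoref{l_del_pezzo_case} applies to the globally $F$-split pair $(W,D_W)$, again giving log liftability. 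I expect the main point requiring care to be precisely the choice of program: running a $K_{X'}$-MMP rather than a $(K_{X'}+D')$-MMP is what forces the endpoint $W$ to be a genuine (klt) del Pezzo, resp.\ to have rational fibres, so that the cited lemmas apply — the boundary $D_W$ is only carried along, and its numerical positivity is recovered afterwards from global $F$-splitness. The remaining work is routine bookkeeping, keeping track of the reduced divisor $D$ and checking that every intermediate surface stays normal, projective, $\mathbb{Q}$-factorial and klt.
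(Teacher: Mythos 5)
Your proof is correct and arrives at the same endgame as the paper — reduce via the dlt modification, run a surface MMP, and discharge the Mori fibre space cases by \autoref{LCTF2} (fibre space over a curve) and \autoref{l_del_pezzo_case} (del Pezzo of Picard rank one), transporting log liftability back via \autoref{l-loglifting-image} — but the MMP you run is different, and arguably cleaner. The paper splits the argument into two cases: when $K_X+D\not\equiv 0$ it runs a $(K_Z+D_Z)$-MMP on the dlt model $Z$ (so that $-(K_W+D_W)$ becomes relatively ample on the resulting Mori fibre space, from which the hypotheses of \autoref{LCTF2}/\autoref{l_del_pezzo_case} are extracted), whereas when $K_X+D\equiv 0$ but $(X,D)$ is not klt it runs a $K_Z$-MMP, using the negativity lemma to see that $K_Z+D_Z$ is pulled back from the end. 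You instead observe once and for all that $K_{X'}$ is not pseudo-effective — because $-K_{X'}\sim_{\mathbb{Q}}D'+\Gamma'\geq 0$ with $D'+\Gamma'\not\equiv 0$, the last point being forced by (a) or (b) — and run a single $K_{X'}$-MMP. You then recover the positivity needed for \autoref{LCTF2} directly from global $F$-splitness of $(W,D_W)$ (via $-(K_W+D_W)\sim_{\mathbb{Q}}\Gamma_W\geq 0$ and $\rho(W/Z)=1$), rather than having it built into the choice of MMP. This unifies the two cases and matches the structure of \autoref{LCTF2}, which asks for $-K_W$ $\psi$-ample (what a $K$-MMP gives you directly) and $-(K_W+D_W)$ merely $\psi$-nef. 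One small clean-up: the divisor $\Ex(h\circ f')+(h\circ f')^{-1}_*D$ is actually \emph{equal} to $E'+(f')^{-1}_*D'$ (not a proper subdivisor), since $\Ex(h\circ f')=\Ex(f')+f'^{-1}_*\Ex(h)$ and $D'=h^{-1}_*D+\Ex(h)$; so no components need discarding, and the reduction to $(X',D')$ is literally \autoref{l-loglifting-image}.
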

	\begin{proof}
		Let $h\colon Z\to X$ be a dlt blow-up (see \cite[Definition 4.3]{Kaw21} for example) and $D_Z\coloneqq h_*^{-1}D+ \Ex(h)$.
		Then $(Z, D_Z)$ is a globally $F$-split pair by \autoref{l-GFR-pullback}.
		To prove the theorem, it is thus sufficient to show that $(Z, D_Z)$ is log liftable over $W(k)$.  By \autoref{l-Q-K+delta}, $-(K_Z+D_Z)$ is $\Q$-effective.
		
		First assume (a).  
		Since $(X,D)$ is globally $F$-split and $K_X+D \not \equiv 0$, we get that $K_X+D$ (and so $K_Z+D_Z$) are not pseudo-effective. By running a $(K_Z+D_Z)$-MMP we obtain a birational contraction $\phi \colon Z\to W$, where the pair $(W, D_W:=\phi_*D_Z)$ is dlt, it admits a Mori fibre space structure and it is globally $F$-split by \autoref{l-gfs-image}.
		By \autoref{l-loglifting-image} it suffices to show that $(W, D_W)$ is log liftable over $W(k)$.
		If $(W, D_W)$ is a Mori fibre space to a curve, then the assertion follows from \autoref{LCTF2}.
		If $(W, D_W)$ is a Mori fibre space to a point, then $W$ is a klt del Pezzo surface of Picard rank one and thus we apply \autoref{l_del_pezzo_case}.
		
		Next we assume that $K_X+D \equiv 0$ and that (b) holds. 
		In particular, $K_Z+D_Z\sim_{\Q}0$ by \autoref{l-Q-K+delta} and, as $(X,D)$ is not klt, $D_Z\neq 0$. 
		Hence $K_Z$ is not pseudo-effective. In this case we run a $K_Z$-MMP $\varphi\colon Z\to W$ and set $D_W := \phi_*D_Z$.
		Since $K_Z+D_Z\equiv0$, the negativity lemma shows that $K_Z+D_Z\equiv \varphi^{*}(K_W+D_W)$. 
		Thus, it follows that $W$ is a Mori fibre space with klt singularities, $(W, D_W)$ is a globally $F$-split surface pair by \autoref{l-gfs-image}, and $K_W+D_W\equiv 0$. 
		Then, by  \autoref{LCTF2} and \autoref{l_del_pezzo_case}, we conclude that $(W, D_W)$ is log liftable over $W(k)$ and so is $(Z, D)$ by \autoref{l-loglifting-image}.
	\end{proof}

	We are now ready to prove log liftability of globally $F$-split surfaces.
	
	\begin{theorem}\label{t-log-lift-GFS}
	    Let $(X,D)$ be a globally $F$-split surface pair, where $D$ is a reduced Weil divisor. 
	    Then $(X,D)$ is log liftable over $W(k)$.
	\end{theorem}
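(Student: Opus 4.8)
The plan is to reduce the theorem to the partial results established in the previous two subsections by a trichotomy on the class of $K_X+D$; the one configuration not already covered turns out to force $D=0$, and hence reduces to the $K$-trivial klt case.

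First I would apply \autoref{l-Q-K+delta}: since $(X,D)$ is globally $F$-split of dimension two, it has log canonical singularities (so $K_X+D$ is $\mathbb{Q}$-Cartier, and it makes sense to ask whether $K_X+D\equiv 0$), and moreover $-(K_X+D)$ is $\mathbb{Q}$-effective. In particular $K_X+D$ can never be both nonzero and pseudo-effective, so exactly three cases arise: (i)~$K_X+D\not\equiv 0$; (ii)~$K_X+D\equiv 0$ and $(X,D)$ is not klt; (iii)~$K_X+D\equiv 0$ and $(X,D)$ is klt.

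Cases (i) and (ii) are precisely hypotheses (a) and (b) of \autoref{t-notklt-notk0}, so in those cases $(X,D)$ is log liftable over $W(k)$ by that theorem. For case (iii), note that if $(X,D)$ is klt then $D=0$: indeed, any prime component $D_i$ of $D$ is a divisor over $X$ whose discrepancy with respect to $(X,D)$ equals minus its coefficient in $D$, which is $-1$ because $D$ is reduced, violating the klt condition; hence $D$ has no components. Thus $X$ is a globally $F$-split surface with klt singularities and $K_X\equiv 0$, so $X$ (equivalently, the pair $(X,0)=(X,D)$) is log liftable over $W(k)$ by \autoref{t:lift-K-trivial-klt}. (As in the earlier statements we take $X$ to be projective; in case (iii) this is automatic since klt surface singularities are $\mathbb{Q}$-factorial, while in cases (i)--(ii) it is inherited from the hypotheses of \autoref{t-notklt-notk0}.)

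As for where the difficulty lies: at this stage there is essentially none, since the substantive content has all been produced upstream. The genuinely hard ingredient is \autoref{t:lift-K-trivial-klt}, proved through the Enriques--Kodaira classification of the minimal resolution together with the theory of canonical liftings of ordinary $K$-trivial surfaces (K3, Enriques, and $Q$-abelian); cases (i)--(ii) were themselves reduced, via a dlt blow-up and a suitable MMP, to Mori fibre spaces over curves and to klt del Pezzo surfaces of Picard rank one, treated in \autoref{LCTF2} and \autoref{l_del_pezzo_case} by means of \autoref{p-liftpair-W2}, the logarithmic Akizuki--Nakano vanishing theorem (\autoref{p-hara}), and \autoref{p-lift-Fanopairs}. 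So the only step in the present argument requiring a moment's care is the elementary observation that a klt pair with reduced boundary has zero boundary.
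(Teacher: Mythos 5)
Your proof is correct and takes essentially the same approach as the paper: invoke \autoref{t-notklt-notk0} to dispose of the cases $K_X+D\not\equiv 0$ or $(X,D)$ not klt, then observe that the remaining case forces $D=0$ and reduce to \autoref{t:lift-K-trivial-klt}.
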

	
	\begin{proof}
	By \autoref{t-notklt-notk0}, we may assume that $(X,D)$ is klt and $K_X+D \equiv 0$. Since $(X,D)$ is klt and $D$ is reduced, we have $D=0$ and $X$ is a klt Calabi--Yau surface. Thus, we can conclude by \autoref{t:lift-K-trivial-klt}. 
	\end{proof}
	
	\section{Liftability of globally $F$-split surfaces} \label{s-lift-GFS}
	
	In the previous section we showed that, given a normal globally $F$-split variety $X$ and a log resolution $f \colon Y \to X$, the pair $(Y, \mathrm{Exc}(f))$ lifts over the ring of Witt vectors. 
	In this section we will show that $X$ itself lifts and, in fact, also the morphism $f$ lifts over $W(k)$. 
	Note that the existence of a formal lifting of $X$ is automatic from log liftability when $X$ has rational singularities after \autoref{t-cynkvanstraten}. 
	We are thus left to address two main problems in this section:
	\begin{enumerate}
	    \item[(i)] prove algebraisation of some of the formal liftings of globally $F$-split surfaces with rational singularities;
	    \item[(ii)] construct a lifting when $X$ has singularities worse than rational. 
	\end{enumerate}
	In \autoref{ss-lift-dlt} and \autoref{ss-extension}, we prove the algebraisation of a formal lifting and deduce the liftability of $f$, except in the case where $X$ has strictly log canonical singularities and $K_X \sim 0$. 
	In this latter case, it is not true in general that \emph{every} lift of $Y$ descends to a lift of $X$ and we need to pick the lift of $Y$ in a intelligent way.
	This last case occupies \autoref{ss-lc-CY}, where we combine birational geometry considerations with the construction of canonical liftings for log smooth log Calabi--Yau pairs.
	
	\subsection{Liftability of dlt models}\label{ss-lift-dlt}
	
	In this subsection, we prove the existence of a projective lifting of dlt modifications of globally $F$-split surface pairs. 
	We start with the case of Calabi--Yau surfaces with canonical singularities.
	
	\begin{proposition}\label{p-algebraicity-k-trivial}
		Let $X$ be a globally $F$-split surface with canonical singularities such that $K_X\sim 0$. Let $f \colon (Y,E) \to X$ be the minimal resolution and let $(\mathcal{Y}_\can, \mathcal{E}_\can)$ be the canonical lifting of \autoref{prop:lift_minres_K0}. Then there exists a projective birational morphism \[\widetilde{f} \colon (\mathcal{Y}_\can, \mathcal{E}_\can) \to \mathcal{X}_{\can}\] of projective varieties lifting $f$ over $W(k)$.  
	\end{proposition}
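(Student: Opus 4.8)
The plan is to first produce a \emph{formal} lifting of $X$ by iterating \autoref{t-cynkvanstraten}, and then to algebraise it using an ample divisor on $X$ which we pull back to $Y$ and lift through $\mathcal{Y}_{\can}$; the crucial point is that \autoref{prop:lift_minres_K0} lets us lift such a line bundle to $\mathcal{Y}_{\can}$, whereas it cannot in general be lifted directly from $X$ (the relevant obstruction group $H^{2}(X,\mathcal{O}_{X}) \cong H^{0}(X,\mathcal{O}_{X})^{\vee}$ is nonzero since $K_{X}\sim 0$). First, since $X$ has canonical, hence rational, singularities we have $f_{*}\mathcal{O}_{Y}=\mathcal{O}_{X}$ and $R^{1}f_{*}\mathcal{O}_{Y}=0$; moreover each prime component $E_{i}$ of $E=\Ex(f)$ is isomorphic to $\mathbb{P}^{1}$, so $R^{1}f_{*}\mathcal{O}_{E_{i}}=0$ and $f_{*}\mathcal{O}_{E_{i}}$ is the structure sheaf of the point $f(E_{i})$. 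Thus $f$, together with the $E_{i}$, satisfies the hypotheses of \autoref{t-cynkvanstraten}, and iterating it along $\Spec W_{n}(k)\hookrightarrow \Spec W_{n+1}(k)$ — beginning from $(\mathcal{Y}_{\can},\mathcal{E}_{\can})$, and using at each step that the previous output satisfies $R^{1}\widetilde{f}_{n,*}\mathcal{O}=0$ (so that the construction is compatible with passing to smaller $W_{m}(k)$) — produces a formal lifting $\mathfrak{X}$ of $X$ over $W(k)$ together with a formal birational morphism $\widehat{f}\colon \widehat{\mathcal{Y}}_{\can}\to\mathfrak{X}$ with $\widehat{f}_{*}\mathcal{O}=\mathcal{O}$ and $R^{1}\widehat{f}_{*}\mathcal{O}=0$. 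It remains to algebraise $\mathfrak{X}$ and $\widehat{f}$.

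For this I would exhibit an ample formal line bundle on $\mathfrak{X}$. As $X$ has rational singularities it is $\mathbb{Q}$-factorial \cite[Proposition B.2]{Tan15}, so we may fix an ample \emph{Cartier} divisor $H$ on $X$; its pullback $f^{*}\mathcal{O}_{X}(H)$ has degree $0$ on every $E_{i}$, and by the surjectivity of $\Pic(\mathcal{Y}_{\can})\to\Pic(Y)$ furnished by \autoref{prop:lift_minres_K0} it lifts to a line bundle $\mathcal{L}$ on $\mathcal{Y}_{\can}$. At each finite level the reduction $\mathcal{L}_{n}$ of $\mathcal{L}$ restricts trivially to each $\mathcal{E}_{i}\times_{W(k)}W_{n}(k)$, which is a trivial $\mathbb{P}^{1}$-family over $W_{n}(k)$ by \autoref{l-loc-triv}; combined with $R^{1}\widetilde{f}_{n,*}\mathcal{O}=0$ and cohomology and base change, this shows that $\mathcal{M}_{n}:=\widetilde{f}_{n,*}\mathcal{L}_{n}$ is a line bundle with $\widetilde{f}_{n}^{*}\mathcal{M}_{n}\cong\mathcal{L}_{n}$. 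The $\mathcal{M}_{n}$ glue to a line bundle $\mathfrak{L}$ on $\mathfrak{X}$ with $\widehat{f}^{*}\mathfrak{L}\cong\mathcal{L}|_{\widehat{\mathcal{Y}}_{\can}}$ and $\mathfrak{L}|_{X}\cong\mathcal{O}_{X}(H)$, which is ample. By Grothendieck's existence theorem (\cite[Corollary 8.5.6 and Corollary 8.4.5]{FAG}), $(\mathfrak{X},\mathfrak{L})$ is the completion of a projective $W(k)$-scheme $\mathcal{X}_{\can}$ carrying an ample line bundle; and since $\mathcal{Y}_{\can}$ is proper over $W(k)$, the morphism $\widehat{f}$ algebraises to a $W(k)$-morphism $\widetilde{f}\colon(\mathcal{Y}_{\can},\mathcal{E}_{\can})\to\mathcal{X}_{\can}$ reducing to $f$ modulo $p$. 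One then checks the standard points: $\widetilde{f}_{*}\mathcal{O}_{\mathcal{Y}_{\can}}=\mathcal{O}_{\mathcal{X}_{\can}}$ (so that $\mathcal{X}_{\can}$ is integral and normal), $\widetilde{f}$ is generically finite because $\mathcal{L}$ is relatively big and is therefore projective and birational, and $\mathcal{X}_{\can}$ is flat over $W(k)$ with special fibre $X$; thus $\mathcal{X}_{\can}$ is a lifting of $X$ and $\widetilde{f}$ a lifting of $f$.

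The step I expect to be the main obstacle is the descent of $\mathcal{L}$ through the formal contraction $\widehat{f}$, i.e.\ producing $\mathfrak{L}$ with $\widehat{f}^{*}\mathfrak{L}\cong\mathcal{L}|_{\widehat{\mathcal{Y}}_{\can}}$. It rests on the ADE configuration $\bigcup E_{i}$ having $H^{1}(\bigcup E_{i},\mathcal{O})=0$, on $\mathcal{L}$ restricting trivially to it at every infinitesimal order, and on the standard identification of $\Pic$ of such a contraction with the line bundles trivial on the contracted curves; the real work lies in carrying this through the Artinian tower $W_{n}(k)$ and passing cleanly to the formal limit. The remaining verifications — that the liftings produced by \autoref{t-cynkvanstraten} fit into a compatible tower, the normality of $\mathcal{X}_{\can}$, the birationality of $\widetilde{f}$, and the identification of the special fibre — are routine.
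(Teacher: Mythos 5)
Your proposal is correct in outline, but it takes a genuinely longer route than the paper, and the detour is worth noting.

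The paper's proof never passes through formal schemes at all. It uses the fact — which you also invoke — that $\mathcal{Y}_{\can}$ is already a genuine \emph{projective scheme} over $W(k)$, not merely a compatible Artinian tower. So instead of building $\mathfrak{X}$ via Cynk--van Straten, descending $\mathcal{L}$ along the formal contraction, and appealing to Grothendieck existence, the paper argues directly: pick $A$ very ample on $X$ and set $A_Y=f^*A$; lift $A_Y$ to $\mathcal{A}_{\mathcal{Y}_\can}$ via the surjectivity $\Pic(\mathcal{Y}_\can)\twoheadrightarrow\Pic(Y)$ of \autoref{prop:lift_minres_K0}; observe that rationality of canonical singularities gives $H^i(Y,A_Y)=H^i(X,A)=0$ for $i>0$; so by Grauert (cohomology and base change), $H^0(\mathcal{Y}_\can,\mathcal{A}_{\mathcal{Y}_\can})\to H^0(Y,A_Y)$ is surjective, hence $\mathcal{A}_{\mathcal{Y}_\can}$ is basepoint free, and the morphism it defines is the desired lift $\widetilde{f}$ with image $\mathcal{X}_\can$. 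This is the whole argument.

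The contrast is instructive. The step you correctly flag as "the main obstacle" in your approach — descending $\mathcal{L}$ through the formal contraction, i.e.\ producing $\mathfrak{L}$ with $\widehat{f}^*\mathfrak{L}\cong\mathcal{L}$ — is genuinely the most delicate point, and it essentially amounts to re-proving (in the formal setting) a piece of what the paper packages elsewhere as \autoref{p-lifting-criterion}. Your sketch of why it works (rigidity of $\Pic$ of the $\mathbb{P}^1$-configuration, $H^1(E,\mathcal{O}_E)=0$, the $R^1$-vanishing, base change along the Artinian tower) is on the right track, but would need to be written out with real care, in particular the upgrade from "$\mathcal{L}$ trivial on each $\mathcal{E}_i$ componentwise" to "$\mathcal{L}$ trivial in a formal neighbourhood of $\mathcal{E}$" so that $\widetilde{f}_{n,*}\mathcal{L}_n$ really is invertible. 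The paper's route sidesteps all of this by never leaving the algebraic category: surjectivity on $H^0$ and basepoint-freeness make the contraction morphism appear for free, and Stein factorization hands you $\mathcal{X}_\can$ together with its flatness and the identification of its closed fibre.

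One minor remark: you don't actually need $\mathbb{Q}$-factoriality of $X$ to find an ample Cartier divisor $H$ — any projective scheme carries one — but this is harmless. Also, your iteration of \autoref{t-cynkvanstraten} is fine as a construction of $\mathfrak{X}$, but note that in the end you algebraise $\mathfrak{X}$ and only then define $\mathcal{X}_\can$, whereas the paper's approach constructs $\mathcal{X}_\can$ in one stroke as $\operatorname{Proj}$ of the section ring of $\mathcal{A}_{\mathcal{Y}_\can}$ (or equivalently as the image of the basepoint-free system).
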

	
	\begin{proof}
		Let $A$ be a very ample line bundle on $X$ and let $A_Y\coloneqq f^*A$. Let $\mathcal{A}_{\mathcal{Y}_\can}$ be a lifting of $A_Y$, whose existence is guaranteed by \autoref{prop:lift_minres_K0}. 
		As canonical surface singularities are rational \cite[Proposition 2.28]{kk-singbook}, $H^i(Y,A_Y)=H^i(X, A)=0$ for $i>0$. Therefore 
		by Grauert's theorem \cite[Corollary III.12.9]{Ha77} we have the surjectivity of the restriction map $H^0(\mathcal{Y}_\can, \mathcal{A}_{\mathcal{Y}_\can}) \to H^0(Y, A_Y)$. Therefore $\mathcal{A}_{\mathcal{Y}_\can}$ is base point free and the induced morphism $\widetilde{f}$ is a lifting of $f$.
	\end{proof}
	
	To deal with the remaining case we need the following.
	
	\begin{proposition}\label{p-extending-algebraisable}
		Let $(X,D)$ be a normal projective surface with rational singularities where $D$ is reduced and let $f\colon (Y, f_*^{-1}D+E) \to (X,D)$ be a log resolution.
		Suppose there exists a lifting $(\mathcal{Y}, \mathcal{D}_Y+\mathcal{E})$ of $(Y, D+E)$ over $W(k)$.
		Then there exists a lifting $\tilde{f} \colon (\mathfrak{Y}, \mathfrak{D}_Y+\mathfrak{E}) \to (\mathfrak{X}, \mathfrak{D})$ of $f$ in the category of formal schemes over $\Spf(W(k))$.
		If $H^2(Y, \mathcal{O}_Y)=0$, then $\widetilde{f}$ is algebraisable.
	\end{proposition}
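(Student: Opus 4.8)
The plan is to descend the given lifting one infinitesimal step at a time via \autoref{t-cynkvanstraten}, recover the boundary by push-forward, and finally algebraise using a polarisation pulled back from $X$. Since $X$ has rational singularities we have $f_{*}\mathcal{O}_{Y}=\mathcal{O}_{X}$ and $R^{1}f_{*}\mathcal{O}_{Y}=0$; moreover every prime component of $E=\sum_{i}E_{i}$ is a smooth rational curve contracted by $f$ to a reduced point $F_{i}$, so $f_{*}\mathcal{O}_{E_{i}}=\mathcal{O}_{F_{i}}$ and $R^{1}f_{*}\mathcal{O}_{E_{i}}=H^{1}(\mathbb{P}^{1},\mathcal{O})=0$, i.e. the hypotheses of \autoref{t-cynkvanstraten} hold for $f$ with the collection $\{E_{i}\}$. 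I would then iterate \autoref{t-cynkvanstraten} along the square-zero thickenings $\Spec W_{n}(k)\hookrightarrow\Spec W_{n+1}(k)$, $n\geq 1$: starting from $f$ and the truncations $(\mathcal{Y}_{n},\{\mathcal{E}_{i,n}\}):=(\mathcal{Y},\{\mathcal{E}_{i}\})\times_{W(k)}W_{n}(k)$ of the given lifting, the theorem produces a lifting $(\mathcal{X}_{n},\{\mathcal{F}_{i,n}\})$ of $(X,\{F_{i}\})$ together with $\widetilde{f}_{n}\colon\mathcal{Y}_{n}\to\mathcal{X}_{n}$ over $W_{n}(k)$; crucially, conclusions (2)--(3) of \autoref{t-cynkvanstraten} reproduce the cohomological hypotheses at each level ($\widetilde{f}_{n\ast}\mathcal{O}_{\mathcal{Y}_{n}}=\mathcal{O}_{\mathcal{X}_{n}}$, $R^{1}\widetilde{f}_{n\ast}\mathcal{O}_{\mathcal{Y}_{n}}=0$, and likewise for the $\mathcal{E}_{i,n}$), so the induction runs. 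Passing to the limit gives a proper morphism of formal schemes $\widetilde{f}\colon\mathfrak{Y}=\widehat{\mathcal{Y}}\to\mathfrak{X}$ over $\Spf W(k)$ with $\widetilde{f}_{\ast}\mathcal{O}_{\mathfrak{Y}}=\mathcal{O}_{\mathfrak{X}}$ and closed fibre $f$.

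To recover the boundary, note that by \autoref{l-loc-triv} each component $\mathcal{B}_{j}\subseteq\mathfrak{Y}$ of the given lift $\mathfrak{D}_{Y}$ of $f_{*}^{-1}D$ is smooth over $W(k)$ of relative dimension one, hence regular and connected, hence integral. I would set $\mathfrak{D}_{j}:=\widetilde{f}(\mathcal{B}_{j})$, the scheme-theoretic image: it is integral and dominates $\Spf W(k)$, so $p$ is a non-zero-divisor on $\mathcal{O}_{\mathfrak{D}_{j}}$ and $\mathfrak{D}_{j}$ is flat over $W(k)$, and since $\widetilde{f}$ is an isomorphism away from the finitely many points lying over $\Sing(X)$ one checks that $\mathfrak{D}_{j}\times_{W(k)}k=D_{j}$. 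Setting $\mathfrak{D}:=\sum_{j}\mathfrak{D}_{j}$ and $\mathfrak{E}:=\sum_{i}\mathcal{E}_{i}$ then yields the asserted lifting $\widetilde{f}\colon(\mathfrak{Y},\mathfrak{D}_{Y}+\mathfrak{E})\to(\mathfrak{X},\mathfrak{D})$ of $f$ in the category of formal schemes over $\Spf W(k)$.

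For the algebraisation statement, assume $H^{2}(Y,\mathcal{O}_{Y})=0$ and fix a very ample divisor $A$ on $X$. As the obstruction to lifting a line bundle on $Y$ across each thickening $W_{n}(k)\hookrightarrow W_{n+1}(k)$ lies in $H^{2}(Y,\mathcal{O}_{Y})=0$, the pullback $f^{*}\mathcal{O}_{X}(A)$ lifts to a line bundle $\mathfrak{A}_{Y}$ on $\mathfrak{Y}$; then $\mathfrak{A}:=\widetilde{f}_{\ast}\mathfrak{A}_{Y}$ satisfies $R^{1}\widetilde{f}_{\ast}\mathfrak{A}_{Y}=0$ (from $R^{1}f_{*}\mathcal{O}_{Y}=0$ and the projection formula), hence is flat over $W(k)$ with $\mathfrak{A}\times_{W(k)}k\simeq\mathcal{O}_{X}(A)$, so $\mathfrak{A}$ is an invertible sheaf on $\mathfrak{X}$ which is ample on the closed fibre. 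By Grothendieck's existence theorem (\cite[Corollary 8.4.5 and Corollary 8.5.6]{FAG}), $\mathfrak{X}$ algebraises to a projective $W(k)$-scheme $\mathcal{X}$, and then full faithfulness of the completion functor upgrades $\widetilde{f}\colon\mathfrak{Y}\to\mathfrak{X}$ and the subscheme $\mathfrak{D}\subseteq\mathfrak{X}$ to an algebraic morphism $\mathcal{Y}\to\mathcal{X}$ and subscheme $\mathcal{D}\subseteq\mathcal{X}$.

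I expect the main obstacle to be the matching of scheme structures in the recovery of $\mathfrak{D}$: when $D$ is singular the morphism $B_{j}\to D_{j}$ is only the normalisation, so $f_{*}\mathcal{O}_{B_{j}}\neq\mathcal{O}_{D_{j}}$ and one cannot feed the components $B_{j}$ of $f_{*}^{-1}D$ directly into \autoref{t-cynkvanstraten} the way one does the $E_{i}$; controlling the closed fibre of the scheme-theoretic image $\mathfrak{D}_{j}$ — in particular ruling out embedded points and spurious one-dimensional components over $\Sing(X)$ — is precisely where the rational-singularity hypothesis is genuinely exploited, through the vanishing $R^{1}f_{*}\mathcal{O}_{Y}=0$ that also underlies the applicability of \autoref{t-cynkvanstraten} itself.
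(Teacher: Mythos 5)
Your proposal diverges from the paper's at the crucial step of constructing the boundary $\mathfrak{D}$. The paper applies \autoref{t-cynkvanstraten} to the \emph{entire} collection of prime components of $f_*^{-1}D + E$, not only the exceptional curves: for each component $D_j$ of $f_*^{-1}D$, the proper finite map $D_j \to f(D_j)$ gives $R^1 f_*\mathcal{O}_{D_j}=0$, and the theorem then produces the lift $\mathfrak{D}$ simultaneously with $\mathfrak{X}$, with no need to recover $\mathfrak{D}$ a posteriori. You correctly observed that this additionally requires $f_*\mathcal{O}_{D_j}=\mathcal{O}_{f(D_j)}$, and that this equality fails when $f(D_j)$ is a singular curve — but having spotted that subtlety, you drew the wrong conclusion from it. The place where the proposition is actually invoked (\autoref{c-lift-min-dlt}, applied with base a dlt surface pair, and \autoref{t-lift-GFSFanosurf} with $D=0$) always has each component of $D$ normal, hence smooth, so that $D_j\to f(D_j)$ is an isomorphism and the hypothesis of \autoref{t-cynkvanstraten} is satisfied cleanly. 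Your alternative — recovering $\mathfrak{D}_j$ as the scheme-theoretic image of $\mathcal{B}_j\subset\mathfrak{Y}$ — trades a clean step for one you cannot close: you acknowledge in your final paragraph that you do not control the closed fibre of $\tilde{f}(\mathcal{B}_j)$ (embedded points, a priori extra structure over $\Sing(X)$), and indeed that argument faces the very same difficulty you were trying to avoid. So as written, your construction of $\mathfrak{D}$ has a genuine unresolved gap, whereas the paper's direct use of \autoref{t-cynkvanstraten} goes through (with the implicit, and in practice satisfied, normality of the components of $D$).

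On the algebraisation half, your argument is correct and parallel in spirit: where you push forward a formal lift of $f^*A$ and invoke Grothendieck existence for $\mathfrak{X}$, the paper instead lifts $f^*A$ to a line bundle $\mathcal{A}_{\mathcal{Y}}$ on the already-algebraic $\mathcal{Y}$ (possible since $H^2(Y,\mathcal{O}_Y)=0$, via \cite[Cor.\ 8.4.5 and 8.5.6]{FAG}), shows $H^0(\mathcal{Y}, \mathcal{A}_{\mathcal{Y}}^{\otimes m}) \to H^0(Y, A_Y^{\otimes m})$ is surjective for $m\gg 0$ using rational singularities, Serre vanishing and Grauert, and takes $\mathcal{X}$ to be the resulting $\Proj$; this sidesteps formal schemes entirely in the second half. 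Both routes are valid and differ only in packaging.
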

	\begin{proof}
		Write $E\coloneqq \sum_{i} E_i$ and $\mathcal{E}=\sum_{i} \mathcal{E}_i$, where each $E_i$ is an irreducible component of $E$ and each $\mathcal{E}_{i}$ is a lifting of $E_i$.
		Since $R^1f_{*}\MO_Y=0$ and $R^1f_{*}\mathcal{O}_{D_{i}}=0$ for each irreducible component $D_i$ of $f_{*}^{-1}D$, an iterated use of \autoref{t-cynkvanstraten} shows the existence of the formal lifting $\widetilde{f}$ of $f$ over $\Spf(W(k))$.
		
		Suppose $H^2(Y, \mathcal{O}_Y)=0$ and let $A$ be an ample line bundle on $X$ and let $A_Y=f^*A$. By \cite[Corollary 8.5.6]{FAG} $A_Y$ lifts to a big and nef line bundle $\mathcal{A}_{\mathcal{Y}}$ on $\mathcal{Y}$. As $Y$ has rational singularities, $H^i(Y, A_Y^{\otimes m})=H^i(X, A^{\otimes m})$ and for $i>0$ it vanishes by Serre vanishing for sufficiently large $m$. Therefore by semicontinuity $H^i(Y_K, \mathcal{A}_{Y_K}^{\otimes m}) =0 $ for $i>0$ and by Grauert's theorem \cite[Corollary III 12.9]{Ha77} we conclude the surjectivity of $H^0(\mathcal{Y}, \mathcal{A}_\mathcal{Y}^{\otimes m}) \to H^0(Y,  A_Y^{\otimes m})$. The morphism associated to $\mathcal{A}^{\otimes m}_{\mathcal{Y}}$ is the algebraisation of $\widetilde{f}$.
	\end{proof}
	
	With the previous results, we can finally prove that dlt modifications of globally $F$-split pairs lift over $W(k)$.
	
	\begin{theorem}\label{c-lift-min-dlt}
		Let $(X,D)$ be a globally $F$-split surface pair where $D$ is reduced.
		Let $f \colon (Y, \pi_*^{-1}D+\Ex(f)) \to (X,D)$ be a dlt modification.
		Then every log resolution \[g \colon (Z, g_*^{-1}(\pi_*^{-1}D+\Ex(f))+\Ex(g)) \to (Y, \pi_*^{-1}D+\Ex(f))\] lifts to 
		$\widetilde{g} \colon (\mathcal{Z}, g_*^{-1}(\pi_*^{-1}\mathcal{D}+\mathcal{E})+\mathcal{G}) \to (\mathcal{Y}, \pi_*^{-1}\mathcal{D}+\mathcal{E})$ over $W(k)$.
	\end{theorem}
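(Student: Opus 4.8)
The plan is to reduce the statement to two situations already settled in the paper: the case in which the dlt modification has rational singularities, where the deformation machinery of \autoref{t-cynkvanstraten} and \autoref{p-extending-algebraisable} applies directly, and the residual case of a canonical $K$-trivial surface, where one appeals to \autoref{p-algebraicity-k-trivial}. First I would record the set-up. Since $(X,D)$ is globally $F$-split it is lc by \autoref{l-Q-K+delta}, so the dlt modification $f\colon (Y,D_Y)\to (X,D)$ with $D_Y=f_*^{-1}D+\Ex(f)$ is a crepant proper birational morphism of log pairs with $D_Y\geq 0$; hence $(Y,D_Y)$ is again globally $F$-split by \autoref{l-GFR-pullback}. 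Moreover, since $(Y,D_Y)$ is dlt, the surface $Y$ is klt: a $Y$-exceptional valuation of discrepancy $-1$ would be an lc place of $(Y,D_Y)$ whose centre is an lc centre, hence a stratum of $\lfloor D_Y\rfloor$ where $(Y,\lfloor D_Y\rfloor)$ is snc, and in a log smooth pair the only divisorial lc places are the boundary components themselves — not divisors exceptional over $Y$. As klt surface singularities are rational, every resolution $g\colon Z\to Y$ satisfies $g_*\mathcal O_Z=\mathcal O_Y$ and $R^1g_*\mathcal O_Z=0$, and each prime component of $D_Y$ is a normal curve. By \autoref{t-log-lift-GFS} the pair $(Y,D_Y)$ is log liftable over $W(k)$, so by \autoref{l-loglift-surf} the given log resolution $g\colon (Z,D_Z)\to (Y,D_Y)$, with $D_Z:=g_*^{-1}D_Y+\Ex(g)$ snc, admits a formal lifting $(\mathfrak Z,\mathfrak D_Z)$ over $W(k)$.

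Next I would run the argument of \autoref{p-extending-algebraisable}: because $Y$ has rational singularities, because each strict transform $g_*^{-1}C\to C$ of a prime component $C$ of $D_Y$ is a birational morphism onto a normal curve (hence an isomorphism of curves), and because each exceptional component of $D_Z$ is a $\mathbb P^1$ contracted to a point, the hypotheses of \autoref{t-cynkvanstraten} are met for $g$; iterating \autoref{t-cynkvanstraten} over the Artinian truncations $W_n(k)$ upgrades $(\mathfrak Z,\mathfrak D_Z)$ to a formal lifting $\widetilde g\colon (\mathfrak Z,\mathfrak D_Z)\to (\mathfrak Y,\mathfrak D_Y)$ of $g$ over $\Spf W(k)$ with $\widetilde g_*\mathcal O_{\mathfrak Z}=\mathcal O_{\mathfrak Y}$. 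If $H^2(Z,\mathcal O_Z)=0$, then $(\mathfrak Z,\mathfrak D_Z)$ is algebraic by \autoref{l-loglift-surf}; pulling back an ample divisor from $Y$, lifting it using rationality of the singularities and Grauert's theorem, and taking the induced morphism algebraises $\widetilde g$ exactly as in the proof of \autoref{p-extending-algebraisable}, producing the asserted $\widetilde g\colon (\mathcal Z,\mathcal D_Z)\to (\mathcal Y,\mathcal D_Y)$ over $W(k)$.

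The hard part is the remaining case $H^2(Z,\mathcal O_Z)=h^0(Z,\omega_Z)\neq 0$. Here I would pick $\Gamma\geq 0$ with $K_Y+D_Y+\Gamma\sim_{\mathbb Q}0$ as in \autoref{l-Q-K+delta}; since $H^0(Z,\omega_Z)\hookrightarrow H^0(Y,\mathcal O_Y(K_Y))$ and $K_Y\sim_{\mathbb Q}-(D_Y+\Gamma)$, a nonzero section forces $D_Y+\Gamma$ to be numerically trivial and effective, hence zero on the projective surface $Y$. Thus $D_Y=0$, so $f$ is small and therefore an isomorphism of surfaces ($\Ex(f)=0$ and a small birational morphism of normal surfaces is an isomorphism), and $K_X=K_Y\sim_{\mathbb Q}0$. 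Running the dichotomy in the proof of \autoref{t:lift-K-trivial-klt} (a klt $K$-trivial surface with singularities worse than canonical has $h^2(\mathcal O)=0$ on its minimal resolution, contradicting $H^2(Z,\mathcal O_Z)\neq 0$) shows $X$ has canonical singularities, and the classification in \autoref{prop:lift_minres_K0} identifies the minimal resolution $Y_{\min}$ of $X$ as a K3, singular Enriques, or abelian surface, all of which have $K\sim 0$; hence $K_X\sim 0$.

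In this last case \autoref{p-algebraicity-k-trivial} supplies a projective birational lifting $\widetilde f_{\min}\colon (\mathcal Y_{\can},\mathcal E_{\can})\to \mathcal X_{\can}$ of the minimal resolution $f_{\min}\colon (Y_{\min},E_{\min})\to X$ over $W(k)$. The given $g\colon (Z,\Ex(g))\to X$ factors as $Z\xrightarrow{h}Y_{\min}\xrightarrow{f_{\min}}X$ with $h$ a composition of blow-ups of closed points compatible with the snc structure (possible since $g$ is a log resolution and $Y_{\min}$ is the minimal resolution); lifting these one at a time, choosing at each stage a lift of the centre compatible with the already-constructed snc lifting as in \cite[Proposition 2.9]{ABL20}, produces a projective birational $\widetilde h\colon (\mathcal Z,\mathcal D_Z)\to (\mathcal Y_{\can},\mathcal E_{\can})$. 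Then $\widetilde g:=\widetilde f_{\min}\circ\widetilde h\colon (\mathcal Z,\mathcal D_Z)\to \mathcal X_{\can}$ is the desired lifting of $g$ over $W(k)$, with $\mathcal Y=\mathcal X_{\can}$ and $\pi_*^{-1}\mathcal D+\mathcal E=0$; it satisfies $\widetilde g_*\mathcal O_{\mathcal Z}=\mathcal O_{\mathcal X_{\can}}$ because it is a composition of blow-ups and of $\widetilde f_{\min}$, each of which has this property (cf.\ \autoref{def-lift-morphism} and \autoref{t-cynkvanstraten}(2)). The main obstacle, as indicated, is not any single deep step but the case analysis on $H^2(Z,\mathcal O_Z)$ together with verifying that the canonical $K$-trivial alternative is exactly the regime covered by \autoref{p-algebraicity-k-trivial}.
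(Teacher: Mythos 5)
Your argument is correct and follows the same decomposition as the paper: split on the vanishing of $H^2(\mathcal{O})$ (your $H^2(Z,\mathcal O_Z)$ equals the paper's $H^2(Y,\mathcal O_Y)$ because dlt surface singularities are rational), handle the vanishing case via \autoref{t-log-lift-GFS} and \autoref{p-extending-algebraisable}, and reduce the non-vanishing case to $D=0$, $X=Y$ canonical Calabi--Yau, settled by \autoref{p-algebraicity-k-trivial}. You fill in several details that the paper's four-line proof elides (normality of boundary components of a dlt pair, the identification of $X$ with $Y$, the factorisation $Z\to Y_{\min}\to X$), but the skeleton and key lemmas are identical.
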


	\begin{proof}
		Recall that dlt surface singularities are rational by \cite[Proposition 2.28]{kk-singbook}.
		If $H^2(Y, \mathcal{O}_Y)=0$, we conclude by \autoref{t-log-lift-GFS} and \autoref{p-extending-algebraisable}.
		If $H^2(Y, \mathcal{O}_Y) \neq 0$, then $H^0(Y,\mathcal{O}_Y(K_Y)) \neq 0$ by Serre duality and thus $D=0$ and $X$ is a Calabi--Yau with canonical singularities, so we conclude by \autoref{p-algebraicity-k-trivial}.
	\end{proof}	
	
	\subsection{An extension theorem} \label{ss-extension}
	
	We prove an extension theorem for sections of big and nef line bundles on a dlt modification.
	The whole point of the following proposition is that, in general, a lift of a big and semiample line bundle  need not be semiample. However, if we assume that the lift of the line bundle stays trivial on the lift of the exceptional locus, then semiampleness does indeed extend.

	\begin{proposition}\label{p-lifting-criterion}
		Let $(X,D)$ be a log canonical projective surface pair, where $D$ is reduced and let $f \colon (Y, D_Y+E) \to (X,D)$ be a dlt modification, where $E$ is the reduced exceptional divisor and $D_Y:=f^{-1}_*D$.
		Let $A$ be a line bundle on $X$ and let $A_Y:=f^*A$.
		Suppose there exists a projective lifting $(\mathcal{Y}, \mathcal{D}_{\mathcal{Y}}+\mathcal{E})$ of $(Y, D_Y+E)$ over $W(k)$ together with a lifting $\mathcal{A}_{\mathcal{Y}}$ of $A_Y$.
		If $\mathcal{A}_{\mathcal{Y}}|_{\mathcal{E}}\sim 0$, then
		\begin{equation}\label{eq:surj}
		    H^0(\mathcal{Y}, \mathcal{A}_{\mathcal{Y}} ^{\otimes m}) \to H^0(Y, A_Y ^{\otimes m})
		\end{equation}
		is surjective for sufficiently large $m>0$.
		In particular, $\mathcal{A}_{\mathcal{Y}}$ is semi-ample and it induces a lifting $\widetilde{f} \colon (\mathcal{Y}, \mathcal{D}_\mathcal{Y}+ \mathcal{E}) \to (\mathcal{X}, \mathcal{D})$ of $f$ over $W(k)$.
	\end{proposition}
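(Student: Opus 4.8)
The plan is to reduce the statement to the surjectivity \eqref{eq:surj} and deduce semi-ampleness of $\mathcal A_{\mathcal Y}$ and the existence of $\widetilde f$ as formal consequences. First some normalisations: we may assume that $A$ is very ample on $X$ (replace it by a large power — this is the case occurring in the applications), so that $A_Y=f^{*}A$ is globally generated on $Y$, $f_{*}\mathcal O_Y=\mathcal O_X$, $H^{0}(Y,A_Y^{\otimes m})=H^{0}(X,A^{\otimes m})$ for every $m$, and $A_Y|_E\cong\mathcal O_E$ (it is pulled back from the finite set $f(E)$); by hypothesis also $\mathcal A_{\mathcal Y}|_{\mathcal E}\cong\mathcal O_{\mathcal E}$. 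Set $\mathcal Y_n:=\mathcal Y\times_{W(k)}W_n(k)$. Flatness of $\mathcal Y$ over $W(k)$ yields, for $\mathcal F_{\mathcal Y}$ equal to $\mathcal A_{\mathcal Y}^{\otimes m}$ or to $\mathcal A_{\mathcal Y}^{\otimes m}\otimes\mathcal I_{\mathcal E}$ (with $\mathcal F$ its restriction to $Y$), short exact sequences $0\to\mathcal F\to\mathcal F_{\mathcal Y}|_{\mathcal Y_{n+1}}\to\mathcal F_{\mathcal Y}|_{\mathcal Y_n}\to 0$ for all $n\ge 1$.

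The key input is the relative vanishing $R^{1}f_{*}\mathcal I_E=0$ for the dlt modification $f$ --- a Grauert--Riemenschneider/Kawamata--Viehweg-type statement reflecting that $(X,D)$ is log canonical (note $-E=K_Y+D_Y-f^{*}(K_X+D)$), which for surfaces one reads off the local classification of lc singularities or checks on a common log resolution. Granting it, $H^{1}(Y,A_Y^{\otimes m}\otimes\mathcal I_E)=H^{1}(X,A^{\otimes m}\otimes f_{*}\mathcal I_E)=0$ for $m\gg0$ by Serre vanishing. Feeding this into the thickening sequences and using the theorem on formal functions (with Mittag--Leffler) gives, for $m\gg0$: by induction on $n$, $H^{1}(\mathcal Y_n,(\mathcal A_{\mathcal Y}^{\otimes m}\otimes\mathcal I_{\mathcal E})|_{\mathcal Y_n})=0$, hence $H^{1}(\mathcal Y,\mathcal A_{\mathcal Y}^{\otimes m}\otimes\mathcal I_{\mathcal E})=0$; consequently $H^{0}(\mathcal Y,\mathcal A_{\mathcal Y}^{\otimes m})\twoheadrightarrow H^{0}(\mathcal E,\mathcal O_{\mathcal E})$ (from $0\to\mathcal A_{\mathcal Y}^{\otimes m}\otimes\mathcal I_{\mathcal E}\to\mathcal A_{\mathcal Y}^{\otimes m}\to\mathcal O_{\mathcal E}\to 0$, using $\mathcal A_{\mathcal Y}^{\otimes m}|_{\mathcal E}\cong\mathcal O_{\mathcal E}$) and $H^{0}(\mathcal Y,\mathcal A_{\mathcal Y}^{\otimes m}\otimes\mathcal I_{\mathcal E})\twoheadrightarrow H^{0}(Y,A_Y^{\otimes m}\otimes\mathcal I_E)$. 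I then chase sections: given $s\in H^{0}(Y,A_Y^{\otimes m})$, its image in $H^{0}(E,\mathcal O_E)=\prod_{j}k$ (one factor per connected component of $E$) lifts via constant functions to $H^{0}(\mathcal E,\mathcal O_{\mathcal E})=\prod_{j}W(k)$, which lifts to some $\sigma\in H^{0}(\mathcal Y,\mathcal A_{\mathcal Y}^{\otimes m})$; then $\sigma|_Y-s$ vanishes on $E$, so lies in $H^{0}(Y,A_Y^{\otimes m}\otimes\mathcal I_E)$ and lifts to $\rho\in H^{0}(\mathcal Y,\mathcal A_{\mathcal Y}^{\otimes m}\otimes\mathcal I_{\mathcal E})\subset H^{0}(\mathcal Y,\mathcal A_{\mathcal Y}^{\otimes m})$, and $\sigma-\rho$ restricts to $s$. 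This proves \eqref{eq:surj}. (If $X$ has rational singularities then $R^{1}f_{*}\mathcal O_Y=0$, so $H^{1}(Y,A_Y^{\otimes m})=0$ outright and $\mathcal A_{\mathcal Y}|_{\mathcal E}\sim0$ is superfluous; it is the strictly lc case, where $H^{1}(Y,A_Y^{\otimes m})\ne0$, that forces this detour.)

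It remains to harvest semi-ampleness and $\widetilde f$. For $m\gg0$, $A_Y^{\otimes m}$ is globally generated, so by \eqref{eq:surj} no point of the central fibre $Y\subset\mathcal Y$ lies in $\Bs(\mathcal A_{\mathcal Y}^{\otimes m})$; since this base locus is closed in $\mathcal Y$, which is proper over the local scheme $\Spec W(k)$, and misses the closed fibre, it is empty --- so $\mathcal A_{\mathcal Y}^{\otimes m}$ is globally generated and $\mathcal A_{\mathcal Y}$ is semi-ample. Let $\mathcal Y\xrightarrow{\widetilde f}\mathcal X\to\bP^{N}_{W(k)}$ be the Stein factorisation of the morphism defined by $|\mathcal A_{\mathcal Y}^{\otimes m}|$, so $\widetilde f_{*}\mathcal O_{\mathcal Y}=\mathcal O_{\mathcal X}$; then $\mathcal X$ is integral and dominates $\Spec W(k)$, hence flat over $W(k)$, and $\mathcal O_{\mathcal X_k}=\mathcal O_{\mathcal X}/p\hookrightarrow\widetilde f_{*}(\mathcal O_{\mathcal Y}/p)=(\widetilde f_k)_{*}\mathcal O_Y$, which is reduced; so $\mathcal X_k$ is reduced, and being set-theoretically $\phi_{|A^{\otimes m}|}(X)\cong X$ (the image of the central fibre $\phi_{|A^{\otimes m}|}\circ f$ of the defining morphism), we get $\mathcal X_k=X$ and $\widetilde f_k=f$. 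Finally $\mathcal A_{\mathcal Y}^{\otimes m}$ has degree $0$ on every component of every fibre of $\mathcal E\to\Spec W(k)$ (as $E$ is $f$-exceptional), so, being globally generated, it is trivial on each connected component of each such fibre; hence $\widetilde f$ contracts $\mathcal E$, is birational, leaves the strict transform $\mathcal D_{\mathcal Y}$ uncontracted, and $\mathcal D:=\widetilde f(\mathcal D_{\mathcal Y})$ is a reduced divisor, flat over $W(k)$, with $\mathcal D\times_{W(k)}\Spec k=D$. Thus $\widetilde f\colon(\mathcal Y,\mathcal D_{\mathcal Y}+\mathcal E)\to(\mathcal X,\mathcal D)$ is a lifting of $f$ over $W(k)$.

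The heart of the argument --- and the only place the hypothesis $\mathcal A_{\mathcal Y}|_{\mathcal E}\sim0$ is used --- is the second paragraph. Lifting arbitrary sections of $A_Y^{\otimes m}$ directly is obstructed by $H^{1}(Y,A_Y^{\otimes m})$, which genuinely fails to vanish when $X$ has non-rational (strictly log canonical) singularities; the $\mathcal E$-trivialisation of $\mathcal A_{\mathcal Y}$ is exactly what lets one route the lifting through $\mathcal E$ and replace that obstruction by $H^{1}(Y,A_Y^{\otimes m}\otimes\mathcal I_E)$, which does vanish, via the relative vanishing $R^{1}f_{*}\mathcal I_E=0$. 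Establishing that relative vanishing in positive characteristic for surfaces, together with getting the cohomological bookkeeping exactly right, is the main technical point.
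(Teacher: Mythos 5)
Your proof is correct and rests on the same two key ingredients as the paper's: the Grauert--Riemenschneider relative vanishing $R^1 f_*\mathcal{O}_Y(-E)=0$ for the dlt modification (whence $H^1(Y,A_Y^{\otimes m}(-E))=0$ by Serre vanishing), combined with the hypothesis $\mathcal{A}_{\mathcal{Y}}|_{\mathcal{E}}\sim 0$ to neutralise the contribution of $\mathcal{E}$. The only difference is in the packaging: the paper shows $H^1(\mathcal{Y},\mathcal{A}_{\mathcal{Y}}^{\otimes m})$ is a free $W(k)$-module via Grauert's theorem and constancy of $h^1(\mathcal{E}_s,\mathcal{O}_{\mathcal{E}_s})$ across fibres, while your route through formal functions and an explicit section chase via $H^0(\mathcal{E},\mathcal{O}_{\mathcal{E}})$ reaches the same conclusion, at the cost of a true but unaddressed check that connected components of $E$ lift bijectively to connected components of $\mathcal{E}$ (which follows from \autoref{l-loc-triv}), without which the surjectivity $H^0(\mathcal{E},\mathcal{O}_{\mathcal{E}})\twoheadrightarrow H^0(E,\mathcal{O}_E)$ you invoke is not automatic.
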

	
	\begin{proof}
		As $A_Y$ is nef, then $\mathcal{A}_{\mathcal{Y}_K}$ is also nef. As $\mathcal{A}_{\mathcal{Y}_K}^2=A_Y^2>0$, we conclude that $\mathcal{A}_{\mathcal{Y}}$ is a big and nef line bundle on $\mathcal{Y}$.
		To show the desired surjectivity, by projection formula it is sufficient to prove that 
		\[H^1(\mathcal{Y}, \mathcal{A}_{\mathcal{Y}} ^{\otimes m}(-Y)) \cong H^1(\mathcal{Y},  \mathcal{A}_\mathcal{Y} ^{\otimes m}) \otimes (p) \to H^1(\mathcal{Y},  \mathcal{A}_\mathcal{Y} ^{\otimes m})\]
		is injective. To this end, it is enough to check that $H^1(\mathcal{Y}, \mathcal{A}_\mathcal{Y} ^{\otimes m})$ is a free $W(k)$-module.
		By Grauert's theorem \cite[Corollary III.12.9]{Ha77} we only have to verify that the dimensions of the cohomology groups $H^1(\mathcal{Y}_s, \mathcal{A}_{\mathcal{Y}_s}^{\otimes m})$ for any $s \in \Spec(W(k))$ remain constant.
		Note that $R^if_*(A_Y^{\otimes m}(-E))=0$ for $i>0$ by the Grauert-Riemenschneider vanishing theorem for surfaces \cite[Theorem 10.4]{kk-singbook}, the assumptions of which are satisfied as
		\begin{enumerate}
		    \item[(a)] $A_Y^{\otimes}(-E) \cong \mathcal O_Y(K_Y + D_Y - f^*(K_X+D)) \otimes f^*A^{\otimes m}$, and
		\item[(b)] $D_Y$ is reduced; in particular, $D_Y^{<1}=0$ and $f_*^{-1}\lfloor{D\rfloor}=D_Y$.
		\end{enumerate}
		By projection formula we then deduce \[ H^i(Y,A_Y^{\otimes m}(-E))=H^i(X,f_*(A_Y^{\otimes m}(-E)))=H^i(X, A^{\otimes m} \otimes f_*\mathcal{O}_Y(-E)),\]
		is zero by Serre vanishing if $i>0$ and $m$ is sufficiently large.
		By semi-continuity of cohomology groups \cite[Theorem III.12.8]{Ha77}, we deduce \[
		H^i(\mathcal{Y}_K, \mathcal{A}_{\mathcal{Y}_K}^{\otimes m }(-\mathcal{E}_K))=0
		\]
		for $i>0$. 
		Therefore 
		\[H^1(\mathcal{Y}_s, \mathcal{A}_{\mathcal{Y}_s}^{\otimes m}) \cong H^1(\mathcal{E}_s, \mathcal{A}_{\mathcal{Y}_s}^{\otimes m}|_{\mathcal{E}_s})=H^1(\mathcal{E}_s, \mathcal{O}_{\mathcal{E}_s}),\] where the last equality follows from the hypothesis $\mathcal{A}_{\mathcal{Y}}|_{\mathcal{E}}\sim 0$. Clearly $H^1(\mathcal{E}_s, \mathcal{O}_{\mathcal{E}_s})$ is constant in a flat family of integral curves, thus concluding. 
		
		We now explain the construction of the lifting of $f$. As $A_Y$ is semi-ample, by the surjectivity of \autoref{eq:surj}, we deduce that $\mathcal{A}_\mathcal{Y}$ is also semi-ample over $W(k)$ and \[\widetilde{f}\colon (\mathcal{Y}, \mathcal{D}_Y+ \mathcal{E}) \to (\mathcal{X}\coloneqq \Proj_{W(k)}  R(\mathcal{Y}, \mathcal{A}_\mathcal{Y}), \widetilde{f}_*\mathcal{D}_{\mathcal{Y}})\] is a lifting of $f\colon (Y,E) \to (X,D)$. 
	\end{proof}
	
	The previous extension theorem, combined with the techniques of \autoref{s-def-toolbox}, allows to descend liftability over $W(k)$ from the dlt modification in several cases.
	
	\begin{corollary}\label{p-lc-nottriv}
		Let $(X, D)$ be a globally $F$-split projective surface pair, where $D$ is reduced. Let $f\colon (Y,D_Y+\Ex(f)) \to (X,D)$ be a log resolution, where $D_Y := f^{-1}_*D$.
		If $D \neq 0$ or $H^0(X, \mathcal{O}_X(K_X+D)) = 0$, then there is a lifting $f \colon (\mathcal{Y}, \mathcal{D}_{\mathcal{Y}}+\mathcal{E}) \to (\mathcal{X}, \mathcal{D})$ of $f$ over $W(k)$.
	\end{corollary}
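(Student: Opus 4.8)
\emph{Strategy.} I would reduce everything to a dlt modification of $(X,D)$, lift that dlt modification over $W(k)$ \emph{together with the pullback of an ample line bundle on $X$, kept linearly trivial on the exceptional divisor}, and then invoke the extension theorem \autoref{p-lifting-criterion}. Note that $(X,D)$ is lc by \autoref{l-Q-K+delta}, and that under either hypothesis $H^{2}(X,\mathcal O_X)=0$: if $H^0(X,\mathcal O_X(K_X+D))=0$ this is clear since $D\ge 0$ gives $H^0(X,\mathcal O_X(K_X))\subseteq H^0(X,\mathcal O_X(K_X+D))$; and if $D\ne 0$ then \autoref{l-Q-K+delta} furnishes $\Gamma\ge 0$ with $(p^{e}-1)(K_X+D+\Gamma)\sim 0$ for some $e>0$, so a nonzero section of $\mathcal O_X(K_X)$ would yield an effective divisor $E_0\sim K_X$ with $(p^{e}-1)(E_0+D+\Gamma)\sim 0$, impossible since $E_0+D+\Gamma$ is a nonzero effective divisor on the integral projective surface $X$. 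In both cases $H^0(X,\mathcal O_X(K_X))=0$, hence $H^2(X,\mathcal O_X)=0$ by Serre duality on the normal (hence Cohen--Macaulay) surface $X$.

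\emph{Lifting a dlt modification with an exceptionally trivial polarisation.} Fix a dlt modification $h\colon (Y,D_Y+E)\to (X,D)$, with $D_Y=h^{-1}_{\ast}D$ and $E=\Ex(h)$ reduced (\cite[Definition 4.3]{Kaw21}); since $h$ is a proper birational morphism of surfaces and $X$ is normal, $H^2(Y,\mathcal O_Y)=0$ as well. By \autoref{c-lift-min-dlt} there is a \emph{projective} lifting $(\mathcal Y,\mathcal D_{\mathcal Y}+\mathcal E)$ of $(Y,D_Y+E)$ over $W(k)$ such that every log resolution of $(Y,D_Y+E)$ lifts over it. Now choose an ample line bundle $A$ on $X$ and put $A_Y:=h^{\ast}A$; since each connected component of $E$ is contracted to a point, $A_Y|_{E}$ is canonically trivial, so $(A_Y,\varphi)$ is an $E$-trivial line bundle. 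The crucial vanishing is $H^2(Y,\mathcal I_E)=0$: the sheaves $R^{i}h_{\ast}\mathcal I_E$ are supported on the finite set $Z:=h(E)$ for $i\ge 1$, and $R^{2}h_{\ast}\mathcal I_E=0$ because $R^{1}h_{\ast}\mathcal O_Y\to R^{1}h_{\ast}\mathcal O_E$ is surjective — the transition maps between the $H^1$'s of the infinitesimal neighbourhoods of $E$ in $Y$ are surjective ($E$ being a curve), so this follows from the theorem on formal functions — whence $H^2(Y,\mathcal I_E)\cong H^2(X,h_{\ast}\mathcal I_E)=H^2(X,\mathcal I_Z)\hookrightarrow H^2(X,\mathcal O_X)=0$. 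Therefore \autoref{c-lift-lb-triv}, applied to the closed immersion $\mathfrak E\subset\mathfrak Y$ of formal completions, lifts $(A_Y,\varphi)$ to an $\mathfrak E$-trivial formal line bundle, and since $H^2(Y,\mathcal O_Y)=0$ this algebraises by \cite[Corollary 8.5.6 and 8.4.5]{FAG} to an $\mathcal E$-trivial line bundle $(\mathcal A_{\mathcal Y},\widetilde\varphi)$ on $(\mathcal Y,\mathcal E)$; in particular $\mathcal A_{\mathcal Y}|_{\mathcal E}\sim 0$.

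\emph{Conclusion.} Applying \autoref{p-lifting-criterion} to $(\mathcal Y,\mathcal D_{\mathcal Y}+\mathcal E)$ and $\mathcal A_{\mathcal Y}$ shows that $\mathcal A_{\mathcal Y}$ is semi-ample over $W(k)$ and yields a lifting $\widetilde h\colon (\mathcal Y,\mathcal D_{\mathcal Y}+\mathcal E)\to (\mathcal X,\mathcal D)$ of $h$ over $W(k)$ with $\mathcal X=\Proj_{W(k)}R(\mathcal Y,\mathcal A_{\mathcal Y})$ a projective lift of $X$. Finally, the given log resolution $f$ may be assumed (after passing to a common refinement, using \autoref{l-loglifting-image}, \cite[Proposition 2.9]{ABL20} and \autoref{t-cynkvanstraten} to transport the lift) to factor as $Z\xrightarrow{g}Y\xrightarrow{h}X$; then $g$ lifts to $\widetilde g\colon \mathcal Z\to\mathcal Y$ over $W(k)$ by \autoref{c-lift-min-dlt}, and $\widetilde h\circ\widetilde g$ is the desired lifting of $f$ over $W(k)$.

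\emph{Expected main obstacle.} The only genuinely delicate step is arranging the lift $\mathcal A_{\mathcal Y}$ of $h^{\ast}A$ to stay \emph{linearly} — not merely numerically — trivial on $\mathcal E$, which is what \autoref{p-lifting-criterion} truly requires (the $h^1$ of a nontrivial numerically trivial bundle on a fibre may jump). This is precisely what forces $H^2(Y,\mathcal I_E)=0$, equivalently the surjectivity $H^1(Y,\mathcal O_Y)\twoheadrightarrow H^1(E,\mathcal O_E)$, and it is exactly here that the hypothesis ``$D\ne 0$ or $H^0(X,\mathcal O_X(K_X+D))=0$'' enters, via $H^2(X,\mathcal O_X)=0$. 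Everything else is assembled from the deformation-theoretic toolbox of \autoref{s-def-toolbox} and from \autoref{c-lift-min-dlt}.
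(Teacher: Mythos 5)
Your proof is correct and follows essentially the same route as the paper: reduce to a dlt modification via \autoref{c-lift-min-dlt}, lift $h^*A$ to a line bundle that remains $\mathcal{E}$-trivial using \autoref{c-lift-lb-triv} once $H^2$ of the ideal sheaf of $E$ vanishes, and then apply the extension theorem \autoref{p-lifting-criterion}. The only difference is that you verify the key vanishing by first deducing $H^2(X,\mathcal{O}_X)=0$ on the base and then pushing forward via Leray, whereas the paper computes $H^2(Y,\mathcal{O}_Y(-E))^{\vee}\simeq H^0(Y,\mathcal{O}_Y(K_Y+E))$ directly by Serre duality on $Y$ and runs the case split there; the two are dual, and your detour through formal functions to get $R^2 h_*\mathcal{I}_E=0$ is unnecessary — it holds simply because the fibres of $h$ are at most one-dimensional.
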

    Observe that if $X$ is globally $F$-split, then $H^0(X,\cO_X(K_X)) \neq 0$ exactly when $K_X\sim 0$; in particular, each singularity of $X$ is either Gorenstein canonical or Gorenstein strictly log canonical. The former case was already solved, and we shall work on the latter in the next subsection.
	
	\begin{proof}
		By \autoref{c-lift-min-dlt}, we can reduce to proving the existence of a lifting $\widetilde{f}$ over $W(k)$ of a dlt modification $f \colon (Y, D_Y+E) \to (X,D)$. 
		Note that $(Y, D_Y+E)$ is globally $F$-split by \autoref{l-GFR-pullback} and let $(\mathcal{Y}, \mathcal{D}_{\mathcal{Y}}+\mathcal{E})$ be a lifting of  $(Y, D_Y+E)$ over $W(k)$ given by \autoref{c-lift-min-dlt}.
		
		Let $A$ be a very ample line bundle on $X$ and consider $A_Y:=f^*A$.
		Fix an isomorphism $\varphi \colon A_Y|_E \to \mathcal{O}_E$. 
		By \autoref{c-lift-lb-triv} the obstruction classes to the existence of a lifting $(\mathcal{A}_{\mathcal{Y}}, \widetilde{\varphi})$ of the $E$-trivial line bundle $(A_Y, \varphi)$ lie in $H^{2}(Y, \mathcal{O}_Y(-E))$. 
		By \autoref{p-lifting-criterion} it is sufficient to show that $H^{2}(Y, \mathcal{O}_Y(-E))=0$.
		
		If $H^0(X, \mathcal{O}_X(K_X+D)) = 0$, then $H^0(Y, \mathcal{O}_Y(K_Y+E+D_Y))=0$. Since we have \[H^0(Y, \mathcal{O}_Y(K_Y+E)) \subset H^0(Y, \mathcal{O}_Y(K_Y+E+D_Y)),\] we conclude.
		If $D\neq 0$ and $H^0(Y, \mathcal{O}_Y(K_Y+E+D_Y)) \neq 0$, then $K_Y+E+D_Y \sim 0$ by \autoref{l-Q-K+delta}, and therefore $H^0(Y, \mathcal{O}_Y(K_Y+E))=0.$
	\end{proof}

	\subsection{Canonical liftings of $K$-trivial surfaces} \label{ss-lc-CY}
	
	We prove the liftability of globally $F$-split $K$-trivial varieties with strictly log canonical singularities by constructing a `canonical' log lifting. 
	We give the following example in which, if the lifting $(\mathcal{Y}, \mathcal{E})$ of the minimal resolution $f\colon (Y,E) \to X$ is chosen generically, then $f$ does not lift.
	
	\begin{example}\label{e-nagata}
		We fix $k=\overline{\mathbb{F}}_p$ to be the algebraic closure of $\mathbb{F}_p$
		and let $E \subset \mathbb{P}^2_k$ be a globally $F$-split elliptic curve.
		Choose $P_1, \dots, P_9 \in E$ distinct points in general position and
		let $h \colon Y\to \mathbb{P}^2_k$ be the blow-up at these points. The pair $(Y,E_Y\coloneqq h_*^{-1}E)$ is globally $F$-split by \autoref{l-GFR-pullback} and by \cite[Corollary 0.3]{Kee99}, there is a birational contraction $f \colon Y\to X$ contracting $E_Y$. In particular, $X$ is a log canonical surface with $K_X \sim 0$.
		Choose a lifting $(\mathbb{P}^2_{W(k)}, \mathcal{E})$ of $(\mathbb{P}^2_k,E)$ together with liftings $\mathcal{P}_i \subset \mathcal{E}$ of $P_i$. We denote by $K$ the fraction field of $W(k)$.
		By blowing-up $\mathcal{P}_i$, we construct a lifting $(\mathcal{Y},\mathcal{E})$ of $(Y,E_Y)$ over $W(k)$.
		However if the points $\mathcal{P}_{i,K}$ are in general position in $\mathcal{E}_{K}$, then we cannot expect to find a birational contraction of $\mathcal{E}$ as explained in \cite[Example V.5.7.3]{Ha77}.  
	\end{example}
	
	In particular, we cannot prove liftability of $f$ or $X$ as a direct consequence of \autoref{t-log-lift-GFS}.
	To solve this problem, we turn the presence of non-klt singularities to our advantage by constructing a well-chosen lifting of a crepant resolution.
	For this we begin by studying their crepant snc birational models.
	We will repeatedly use the following remark on factorisation of crepant birational maps of smooth surfaces.
	
	\begin{lemma} \label{lem: crepant-correct}
	   Let $\psi \colon (X,D_X) \dashrightarrow (Z, D_Z)$ be a crepant birational map of surface pairs with reduced boundaries.
	   Suppose $(X, D_X)$ is an snc pair.
	   Then there exists a commutative diagram	
	   \begin{equation*}
	       	   \begin{tikzcd}
                   & (W,D_W) \arrow[dl,swap,"f"] \arrow[dr,"g"] &       \\
        (X,D_X) \arrow[rr, dotted, swap, "\psi"]  &              & (Z,D_Z),
        \end{tikzcd}
        \end{equation*}
        where $(W,D_W)$ is an snc pair with reduced boundary and $f$ and $g$ are crepant birational.
	\end{lemma}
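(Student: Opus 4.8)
The plan is to realise $(W,D_W)$ as a suitably chosen common resolution of the graph of $\psi$, with $D_W$ defined as the crepant pull-back of $K_X+D_X$. Unwinding the definition of crepant birational from \autoref{ss-notation}\autoref{def-crepant} and replacing the intermediate sub log pair by one that dominates the normalisation of the graph of $\psi$, I would first obtain a normal surface $V$, a sub-boundary $D_V$, and crepant proper birational morphisms $a\colon (V,D_V)\to (X,D_X)$ and $b\colon (V,D_V)\to (Z,D_Z)$ with $\psi=b\circ a^{-1}$ and $K_V+D_V=a^*(K_X+D_X)=b^*(K_Z+D_Z)$. Then I would pick a proper birational morphism $h\colon W\to V$ from a smooth surface, built as a finite sequence of blow-ups at closed points (see \cite[Theorem~II.7]{Beau96}), which is simultaneously a log resolution of $(V,\Supp D_V)$, of $(X,D_X)$ through $f:=a\circ h$ and of $(Z,D_Z)$ through $g:=b\circ h$, and which is \emph{minimal} with these properties; in particular $W$ carries no prime divisor contracted by both $f$ and $g$ to a smooth point of $X$ and of $Z$. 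Finally I set $K_W+D_W:=h^*(K_V+D_V)$.

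Two verifications are then immediate. For crepancy, $K_W+D_W=h^*a^*(K_X+D_X)=f^*(K_X+D_X)$, and likewise $K_W+D_W=g^*(K_Z+D_Z)$; hence $f$ and $g$ are crepant proper birational morphisms, so $(W,D_W)$ is crepant birational to each of $(X,D_X)$ and $(Z,D_Z)$ (with $(W,D_W)$ itself playing the role of the intermediate term). For the normal crossing condition, every prime divisor of $\Supp D_W$ is contained in $f_*^{-1}D_X\cup \Ex(f)$, which is simple normal crossing by the choice of $h$, so $(W,D_W)$ is an snc pair.

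The heart of the proof is to show that $D_W$ is a genuine \emph{reduced} boundary, i.e.\ that $\operatorname{coeff}_E D_W\in\{0,1\}$ for every prime divisor $E\subseteq \Supp D_W$; writing $\operatorname{coeff}_E D_W=-a(E;X,D_X)$, I would split into three cases. If $E$ is not $f$-exceptional, then $f(E)$ is a component of $D_X$, so $\operatorname{coeff}_E D_W=1$ since $D_X$ is reduced. If $E$ is $f$-exceptional but not $g$-exceptional, then $g(E)$ is a prime divisor of $Z$ and the $g$-crepancy gives $\operatorname{coeff}_E D_W=\mult_{g(E)}D_Z\in\{0,1\}$ since $D_Z$ is reduced. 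Finally, if $E$ is exceptional for both $f$ and $g$, then $a(E;X,D_X)=a(E;Z,D_Z)$ by crepancy of $\psi$; this common value is an integer because $W\to X$ is a composition of blow-ups of points on the smooth surface $X$, it is $\geq -1$ because the pairs are lc (which holds in all the applications of this lemma, where $(X,D_X)$ and $(Z,D_Z)$ are log Calabi--Yau surface pairs), and the minimality of $W$ forces it to be $\leq 0$ — otherwise $E$ would be contracted to a smooth point of both $X$ and $Z$ and could be removed by a blow-down without destroying either the morphisms $f,g$ or the snc property. I expect this last case to be the only delicate point: one must check that resolving the indeterminacy of $\psi$ and imposing the simple normal crossing condition never forces extracting an $f$-exceptional divisor of positive discrepancy over $X$, and this is precisely where the crepancy of $\psi$ (which yields both integrality and the symmetry of discrepancies), the reducedness of $D_X$ and $D_Z$, and the minimality of the chosen common resolution all enter together.
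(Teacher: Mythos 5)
Your proof follows essentially the same route as the paper's: build a minimal common resolution of $\psi$, define $D_W$ crepantly by $K_W+D_W=f^*(K_X+D_X)$, and verify the coefficients lie in $\{0,1\}$ by a case analysis. You are actually more careful than the paper about integrality and about the upper bound $\text{coeff}_E D_W\leq 1$, and your observation that the latter needs log canonicity of the pairs --- a hypothesis absent from the lemma's statement and proof but present in every application --- is a valid caveat that the paper silently assumes.

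The weak link is your third case. The step ``otherwise $E$ would be contracted to a smooth point of both $X$ and $Z$ and could be removed by a blow-down'' is not correct as stated: a divisor $E$ with $a(E;X,D_X)>0$ need not have its centre on $X$ outside $D_X$ (after a chain of intermediate blow-ups the ultimate centre on $X$ can well lie on $D_X$, e.g.\ blow up a smooth point of $D_X$, then the induced node, then a free point of the last exceptional curve). Even when $E$ is a $(-1)$-curve exceptional for both $f$ and $g$, contracting it compatibly with the snc structure of $(W,D_W)$ and with both morphisms is not automatic and is not justified. The paper's route to $D_W\geq 0$ is cleaner and, crucially, uses the effectivity of $D_Z$ at exactly the point where you do not: in the iterated minimal resolution of indeterminacies, a blow-up centred off the (running) boundary produces exceptional divisors all of negative coefficient; since $g_*D_W=D_Z\geq 0$, these must all be $g$-exceptional; by Stein factorisation/Zariski connectedness the rational map $\psi$ then already extended across that centre, contradicting that it was an indeterminacy point to be resolved. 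Replacing your removability heuristic in Case 3 with this explicit use of $D_Z\geq 0$ and the indeterminacy-minimality would close the gap.
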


\begin{proof}
 Let $(W, D_W)$ be the \emph{minimal} resolution of indeterminacies of $\psi$ that is obtained by subsequently blowing-up along the points at which $\psi$ is not defined.
 We show $D_W$ is effective.
 Suppose by contradiction that there exists an irreducible component $E$ of $D_W$ such that $\text{coeff}_{E}(D_W)<0$. Then $E$ is $f$-exceptional and the centre $P\coloneqq f(E)$ is not contained in $D_X$. As $D_Z$ is effective, we conclude that $f^{-1}(P)$ is contracted by $g$. Therefore, $P$ is contained in the locus where $\psi$ is defined, contradicting the minimality of $f$.
\end{proof}
	
	We recall the crepant birational classification of log Calabi--Yau structures on minimal rational surfaces.
	
	\begin{lemma}\label{l-crepant-model-logCY-hirzebruch}
     Let $X$ be the projective plane $\mathbb{P}^2$ or a Hirzebruch surface $\mathbb{F}_n$ for $n \in \mathbb{N}$. Further, let $D$ be a reduced Weil divisor such that $(X,D)$ is a log Calabi--Yau pair.
		Then $(X,D)$ is crepant birational to one of the following: 
		\begin{enumerate}
		    \item[(a)] $(\mathbb{P}^2_{k}, E)$, where $E$ is an elliptic curve;
		    \item[(b)] $(\mathbb{P}^2_{k}, L_1+L_2+L_3)$, where $L_i$ are lines in general position. 
		\end{enumerate}
	\end{lemma}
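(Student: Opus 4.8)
The plan is to reduce to the case $X=\mathbb{P}^2$ and then run through the (very short) list of reduced anticanonical curves on $\mathbb{P}^2$, exhibiting an explicit crepant birational map of each to one of the two normal forms. First I would record the preliminary reductions. Since $X$ is a rational surface, $\operatorname{Pic}(X)$ is torsion free, so the relation $K_X+D\sim_{\mathbb{Q}}0$ upgrades to $K_X+D\sim 0$; hence $D\in|-K_X|$ and $D\neq 0$ (as $-K_X\not\sim_{\mathbb{Q}}0$). As $X$ is smooth and $(X,D)$ is log canonical, $D$ is a nodal curve -- its only singularities are ordinary double points, possibly self-nodes of a component -- by the classification of log canonical surface pairs \cite[Theorem 2.31]{kk-singbook}. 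By adjunction $\omega_D\simeq(\omega_X\otimes\mathcal{O}_X(D))|_D\simeq\mathcal{O}_D$, and a connected reduced nodal curve with trivial dualizing sheaf is either a smooth genus-one curve or a cycle of smooth rational curves (possibly degenerating to a single rational curve with one node; it can carry no tails, since a tail $\mathbb{P}^1$ would force $\omega_D$ to restrict to $\mathcal{O}(-1)$ there). I would treat these two cases -- which correspond to $(X,D)$ having coregularity $1$, respectively $0$ -- separately.

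Next I would reduce to $X=\mathbb{P}^2$: by a sequence of elementary transformations centred on $D$, the pair $(\mathbb{F}_n,D)$ is crepant birational to a pair $(\mathbb{P}^2,D')$ with $D'$ of the same type. Concretely, pick a smooth point $x$ of $D$ lying on a component which is neither a fibre nor the negative section $C_0$ (if $D$ is supported on fibres -- only possible on $\mathbb{F}_0$ -- blow up a node of the cycle instead). Blowing up $x$ the crepant boundary is the strict transform $\widetilde D$ alone, since $\operatorname{mult}_xD=1$ so no exceptional curve is introduced; then the strict transform of the fibre through $x$ is a $(-1)$-curve meeting $\widetilde D$ transversally at one point and not contained in it, hence it contracts crepantly, producing $(\mathbb{F}_{n-1},D'')$ with $D''$ of the same type. (In the smooth case necessarily $n\le 2$, as $C_0$ is a fixed component of $|-K_{\mathbb{F}_n}|$ for $n\ge 3$.) Iterating down to $\mathbb{F}_1$ and contracting $C_0$ -- a $(-1)$-curve with $D\cdot C_0=-K_{\mathbb{F}_1}\cdot C_0=1$, so the contraction is crepant whether or not $C_0\subseteq D$ -- yields $(\mathbb{P}^2,D')$. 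If $D'$ is smooth it is a smooth plane cubic, i.e.\ an elliptic curve, so we are in case (1).

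It remains to treat a singular reduced nodal cubic $D'\subset\mathbb{P}^2$, which is either three lines in general position (case (2)), a smooth conic $C$ plus a transverse secant line $L$, or an irreducible nodal cubic. For $D'=C+L$, with $\{p,q\}=C\cap L$, I would take the standard quadratic Cremona $\sigma$ based at $p,q$ and a general point $r\in C$, resolved as $g_1\colon\operatorname{Bl}_{p,q,r}\mathbb{P}^2\to\mathbb{P}^2$ followed by the contraction $g_2$ of the strict transforms of $\overline{pq}=L$, $\overline{pr}$, $\overline{qr}$. On $\operatorname{Bl}_{p,q,r}\mathbb{P}^2$ the crepant boundary of $(\mathbb{P}^2,C+L)$ is $\widetilde C+\widetilde L+E_p+E_q$ -- coefficient $1$ on the exceptionals over the two nodes $p,q$, coefficient $0$ over the smooth boundary point $r$ -- which equals $-K_{\operatorname{Bl}_{p,q,r}\mathbb{P}^2}$ and is effective and reduced, hence a legitimate common crepant model. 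Pushing it forward by $g_2$ kills $\widetilde L$, sends $\widetilde C$ (which passes through all three centres, so its image is a line) to a general line, and sends $E_p,E_q$ to two of the three sides of the image triangle; the resulting pair is $(\mathbb{P}^2,L_1+L_2+L_3)$ with the $L_i$ in general position -- general position, equivalently log canonicity, being automatic since a crepant birational modification of an lc pair is lc. Finally, for an irreducible nodal cubic with node $x$, the Cremona based at $x$ and two general points of the cubic carries it crepant-birationally to a conic-plus-line configuration (the cubic has multiplicity $2$ at $x$, so its image has degree $2\cdot 3-2-1-1=2$, a conic, and the exceptional over $x$ becomes a line), reducing to the previous case. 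Altogether $(X,D)$ is crepant birational to one of (1), (2).

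I expect the main obstacle to be the bookkeeping that keeps every step crepant: in the $\mathbb{F}_n$ reduction one must only blow up points lying on $D$ and only contract $(-1)$-curves that are either transverse to the boundary at a single point away from it or boundary components meeting the rest of the boundary at exactly two points, so that the running boundary stays effective and reduced; and in the Cremona step one must check at each of the three contracted points that the multiplicity of the new boundary matches the blow-down coefficient -- all equal to $1$ here, which is exactly what forces the output to be three distinct, non-concurrent lines rather than a degeneration. Spelling out the dualizing-sheaf dichotomy of the first step and the crepancy of the two Cremona transformations is routine but must be done with care.
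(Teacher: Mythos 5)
Your proposal is correct and follows essentially the same route as the paper: reduce $\mathbb{F}_n$ to $\mathbb{P}^2$ by elementary transformations centred on smooth points of the boundary, then classify the reduced log canonical anticanonical cubics on $\mathbb{P}^2$ (smooth cubic, nodal irreducible, conic plus transverse line, three lines in general position), and finally deal with the conic-plus-line and nodal cases by explicit quadratic Cremona transformations. The only cosmetic difference is the choice of base points in the conic-plus-line Cremona (you base it at the two nodes $p,q\in C\cap L$ plus one extra point $r\in C$, while the paper uses one node and two smooth points of $C\setminus L$), and you invoke preservation of log canonicity under crepant modification to see that the output triangle is non-degenerate rather than chasing coordinates — both small variations on the same argument.
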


	\begin{proof}
	This result is well-known, we recall a proof for completeness.
	We start by reducing the problem to the study of log Calabi--Yau pairs on $\mathbb{P}^2$. Suppose $X \cong \mathbb{F}_n$.
	If $n=1$, then there exists a crepant birational contraction $\pi \colon (\mathbb{F}_1, D_1) \to (\mathbb{P}^2_{k},E=\pi_*D_1)$.
	Let $n > 1$ and denote by $C_n$ the $(-n)$-negative section of $\mathbb{F}_n$. 
	Note that $D_n \neq C_n$: otherwise, $K_{\mathbb{F}_n}+C_n \sim 0$ and by adjunction $(K_{\mathbb{F}_n}+C_n) \cdot C_n=-2$, which is a contradiction.
	
	We choose a smooth point $x \in D_n \setminus C_n$ belonging to a fibre $F$ of $\mathbb{F}_n \to \mathbb{P}^1$. Let $g \colon X \to (\mathbb{F}_n, D_n)$ be the blow-up at $x$ and write $K_X+\Gamma = g^*(K_{\mathbb{F}_n}+D_n) \sim 0$. 
		If $h \colon X \to \mathbb{F}_{n-1}$ is the contraction of $g_*^{-1}F$, then $(\mathbb{F}_{n-1}, D_{n-1}:=h_*\Gamma)$ is a crepant model of $(\mathbb{F}_{n}, D_{n})$ and thus we conclude by descending induction.
		In the case of $(\mathbb{F}_0, D_0)$, we blow-up a closed point $p$ on the smooth locus of $D_0$ and contract the strict transform of the fibre passing through $p$, thus ending again in $\mathbb{F}_1$.
		
		We are thus left to discuss the crepant birational models of log Calabi--Yau pairs on $(\mathbb{P}^2, E)$. In this case we use the quadratic Cremona transformations.
		As $(\mathbb{P}^2_{k}, E)$ is log canonical and $E$ is a cubic curve, then $E$ must be either an elliptic curve, the union of three lines in general position (with exactly three intersection points), the union of a line and a conic in general position (intersecting transversally at exactly two points) or a nodal curve. We show we can always reduce to the first cases.
		
		Suppose $E=C+L$ where $C$ is a conic and $L$ is a line intersecting $C$ in two distinct points. Let $p \in C \cap L$ and let $q_1, q_2 \in C \setminus L$. By applying the standard quadratic Cremona transformation with base points $p, q_1, q_2$, it is easy to see that $(\mathbb{P}^2_{k}, C+L)$ is crepant birational to $(\mathbb{P}^2_{k}, L_1+L_2+L_3)$, where $L_i$ are lines in general position. 
		Suppose $E$ is a nodal irreducible cubic curve with the node $p$. Let $q_1, q_2 \in E$ be different from $p$. 
		By applying a standard quadratic Cremona transformation at $p, q_1, q_2$, it is easy to see that $(\mathbb{P}^2_{k}, E)$ is crepant birational to  $(\mathbb{P}^2_{k}, C+L)$ where $C$ is a conic and $L$ is a line meeting in general position. This has already been proven to be crepant birational to $(\mathbb{P}^2_{k}, L_1+L_2+L_3)$.
	\end{proof}
	
	The following is a specific instance of the connectedness principle for the  non-klt-locus of pairs in the case of $K$-trivial surfaces.
	
	\begin{proposition}\label{l-logcanonical-CY}
		Let $X$ be a log canonical projective surface such that $K_X \sim 0$ and suppose that $X$ is not klt. 
		Then there exists a crepant log resolution $f \colon (Y, E) \to X$, where $K_Y+E \sim f^*K_X$, and a crepant proper birational contraction $h \colon (Y, E) \to (Z, E_Z)$ such that
		\begin{enumerate}
			\item[(i)] \label{i} $(Z, E_Z) \cong (\mathbb{P}^2_{k}, C)$ where $C$ is an elliptic curve;
			\item[(ii)] $(Z, E_Z) \cong (\mathbb{P}^2_{k}, L_1+L_2+L_3)$ where $L_i$ are three lines in general position; 
			\item[(iii)] $ (Z, E_Z) \cong (\mathbb{P}_B(M \oplus N), C+D)$, where $B$ is a curve of genus $1$, $M$ and $N$ are line bundles on $B$, and $C$ (resp.\ $D$) is the section associated to the quotient $M \oplus N \to M$ (resp.\ $M \oplus N \to N$).
		\end{enumerate}
		In particular, there are at most two non-klt points on $X$. Cases (i-ii) happen if there is exactly one non-klt point and Case (iii)  happens otherwise.
	\end{proposition}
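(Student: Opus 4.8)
The plan is to reduce to a smooth log Calabi--Yau surface pair, run a minimal model program, and recognise the output via the classification of log Calabi--Yau structures on minimal rational and minimal ruled surfaces; the heart of the argument is the ruled case, where I would use an \'etale--double--cover argument to pin down the boundary. First I would produce a crepant log resolution. Since $K_X\sim 0$, the surface $X$ is Gorenstein, so a strictly log canonical point of $X$ is either a simple elliptic singularity (exceptional locus a smooth elliptic curve) or a cusp singularity (exceptional locus a cycle of rational curves), while every other point is canonical with an ADE exceptional configuration. Taking the minimal resolution and blowing up the nodes of any nodal rational curve occurring as a cusp configuration gives a log resolution $f\colon (Y,E)\to X$ of $(X,0)$; an adjunction computation shows that the exceptional curves over strictly lc points have discrepancy $-1$ and that the extra blow-ups are crepant, so that, letting $E$ be the reduced sum of the discrepancy $-1$ divisors, $K_Y+E=f^{*}K_X\sim 0$. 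Thus $(Y,E)$ is a smooth log Calabi--Yau pair with $E$ reduced, and $E\neq 0$ because $X$ is not klt. As $K_Y\sim -E$ with $E$ effective and nonzero, $\kappa(Y)=-\infty$, so running the $K_Y$-MMP I contract $(-1)$-curves $C$; from $K_Y\cdot C=-1$ one gets $E\cdot C=1$, and the identity $g^{*}g_{*}E=E+C$ shows each contraction $g$ is crepant for the pair, with $g_{*}E$ still reduced and $\pi_0$ of the boundary unchanged. Iterating, I reach a minimal pair $(\bar Y,\bar E)$ with $K_{\bar Y}+\bar E\sim 0$, $\bar E\geq 0$ reduced and nonzero, $(\bar Y,\bar E)$ log canonical, and $\bar Y$ either $\mathbb{P}^2$ or a Hirzebruch surface $\mathbb{F}_n$ (if $Y$ is rational), or a $\mathbb{P}^1$-bundle $\pi\colon \bar Y\to B$ over a smooth curve $B$ of genus $\geq 1$ (if $Y$ is irrational).

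If $\bar Y$ is $\mathbb{P}^2$ or $\mathbb{F}_n$, then $(\bar Y,\bar E)$ is a log Calabi--Yau pair with reduced boundary, so \autoref{l-crepant-model-logCY-hirzebruch} shows it is crepant birational to $(\mathbb{P}^2_k,C)$ with $C$ elliptic, or to $(\mathbb{P}^2_k,L_1+L_2+L_3)$ with the $L_i$ in general position, giving models (i) and (ii). If instead $\pi\colon\bar Y\to B$ is a $\mathbb{P}^1$-bundle with $g(B)\geq 1$, a general fibre satisfies $\bar E\cdot F=-K_{\bar Y}\cdot F=2$, so $\bar E$ has a horizontal component $H$; applying adjunction on the normalisation $H^{\nu}$ of $H$ and comparing with the Hurwitz formula for $H^{\nu}\to B$ forces $g(B)=1$, the cover $H^{\nu}\to B$ to be \'etale, and the different on $H$ to vanish. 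Hence either $\bar E$ is a disjoint union of two sections, each isomorphic to $B$ --- in which case $\bar Y=\mathbb{P}_B(M\oplus N)$ and $\bar E=C+D$ are the two coordinate sections, which is model (iii) --- or $\bar E$ is an irreducible bisection. The latter I would exclude: it forces $X$ to have a single simple elliptic point and irrational minimal resolution, and pulling back along the \'etale double cover $B'\to B$ defined by $H\to B$ turns $H$ into two disjoint sections and exhibits $X$ as the free $\mathbb{Z}/2$-quotient of a surface $X'$ with $K_{X'}\sim 0$ carrying two simple elliptic points swapped by the involution; since the deck transformation of $B'\to B$ is a translation by a $2$-torsion point (hence preserves the invariant $1$-form) while the involution of $\bar Y$ swaps the two sections, a computation with the relative canonical form $\tfrac{dt}{t}$ shows the generator of $H^0(\omega_{X'})$ is anti-invariant, so $K_X$ is $2$-torsion and nonzero, contradicting $K_X\sim 0$.

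In every case this produces a crepant birational map $(Y,E)\dashrightarrow(Z,E_Z)$ to one of the three models, with $(Y,E)$ smooth. Applying \autoref{lem: crepant-correct} gives an snc pair $(W,E_W)$ with crepant birational morphisms $a\colon(W,E_W)\to(Y,E)$ and $h\colon(W,E_W)\to(Z,E_Z)$; after finitely many further blow-ups at closed points, which remain crepant, I may assume $f\circ a\colon W\to X$ is a log resolution of $(X,0)$, and renaming $(W,E_W)$ as $(Y,E)$ yields the $f$ and $h$ in the statement. Finally, the number of non-klt points of $X$ equals the number of connected components of $E$ --- the exceptional configuration over each strictly lc point is connected and these exhaust $E$ --- which is unchanged under all the birational modifications above, hence equals the number of connected components of $E_Z$: this is $1$ in cases (i) and (ii) and $2$ in case (iii), which also gives the bound of at most two non-klt points. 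The main obstacle is the ruled case, i.e.\ proving $g(B)=1$ and that $\bar E$ consists of two disjoint sections, in particular ruling out the irreducible-bisection configuration; this is exactly where the hypothesis $K_X\sim 0$ (rather than merely $K_X\equiv 0$) is essential.
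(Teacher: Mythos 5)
Your proof is correct and follows essentially the same skeleton as the paper's: pass from $X$ to a crepant log resolution $(Y,E)$, run a $K_Y$-MMP, recognise the Mori fibre space output, and then invoke \autoref{l-crepant-model-logCY-hirzebruch} and \autoref{lem: crepant-correct} to produce the three models and to replace the birational map by a morphism from an snc pair. The observation that each contraction meets only one component of $E$, so the number of connected components of the boundary (and hence the number of non-klt points) is invariant, also matches the paper.

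Where you genuinely diverge from the paper is in the ruled case $\dim B = 1$. The paper analyses this via N\'eron--Severi intersection theory on the Hirzebruch-type surface $Z$: it computes $b=n-\deg(K_{C_n})$ and $b\geq 2n$ from $C_n\not\subset\Supp(E_Z)$, then asserts $\deg(K_{C_n})<0$. That inference is correct only when $n>0$; when $n=0$ over an elliptic base it yields $\deg(K_{C_n})\leq 0$, leaving open the possibility that $E_Z$ is an irreducible bisection \'etale over an elliptic $B$ (numerically $E_Z\equiv 2C_0$, $E_Z\cdot C_0=0$, $C_0\not\subset E_Z$). Your argument goes a different route -- adjunction on the normalisation $H^\nu$ of the horizontal component plus Hurwitz -- and, crucially, you notice that this bisection configuration must be explicitly excluded, and you exclude it correctly by pulling back along the \'etale double cover $B'\to B$, computing that the Galois involution acts by $-1$ on $H^0(\omega)$, and concluding $K_X$ would be $2$-torsion nontrivial, contradicting $K_X\sim 0$. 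This sub-case is exactly where linear equivalence $K_X\sim 0$ (rather than $K_X\equiv 0$) is used, and your treatment makes that dependence explicit; the paper's Case~1 as written does not address it. So your proposal is not merely a rederivation: it identifies and closes a small gap, and does so with an argument (adjunction/Hurwitz on $H^\nu$, then the monodromy computation with $dt/t$) that is more geometric than the paper's NS-lattice bookkeeping, at the cost of a slightly longer case analysis.

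One minor point of hygiene: in your reduction to a log resolution, be explicit that at the cusp points you blow up the node of the nodal rational curve in the minimal resolution (as the paper does) and that this blow-up is crepant; you gesture at this but the discrepancy bookkeeping that turns ``discrepancy $0$ or $-1$ divisors'' into the clean statement $K_Y+E=f^*K_X$ with $E$ the reduced sum of discrepancy-$(-1)$ divisors is worth writing out, since you invoke $E=E^{=1}$ implicitly at several later steps.
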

	
	\begin{proof}
        Let $f \colon Y \to X$ be the minimal resolution, which as $K_X \sim 0$ only extracts divisors of discrepancy 0 or $-1$.
		Write $K_Y+E \sim 0$, where each coefficient of $E$ is equal to one and $E > 0$ by hypothesis. By \cite[Sections 3.39-3.40]{kk-singbook},
		the only case where $f$ is not a log resolution is if $\Ex(f)$ contains a nodal irreducible curve $D$. In this case, $D$ does not intersect any other irreducible components of $E$ and simply by blowing-up at the nodal point we reach a crepant log resolution of $X$.
		From now on, we feel free to replace $Y$ with a model obtained by blowing-up points on $E$ whenever needed.
		
	    Let $h \colon Y \to Z$ be a birational contraction induced by a $K_Y$-MMP. 
		Then $(Z, E_Z\coloneqq h_*E)$ is a log Calabi--Yau pair on a Mori fibre space $\pi \colon Z \to B$. Note that $E_Z$ has the same number of connected components as $E$: indeed, at each step of the MMP, $K_Y \cdot \xi=-1$ for an extremal ray $\xi$, which implies that $E \cdot \xi=1$, so that $\xi$ intersects $E$ only in one irreducible component. Therefore the number of non-klt singular points of $X$ is the number of connected components of $E_Z$.
		
    	If $\dim(B)=0$, then in this case $Z \cong \mathbb{P}^2_k$, $E_{\mathbb{P}^2_k} \in |-K_{\mathbb{P}^2_k}|$ and in particular $E_{\mathbb{P}^2_k}$ is connected. 
		By \autoref{l-crepant-model-logCY-hirzebruch}, there exists a crepant birational map $ \psi \colon (Y,E) \dashrightarrow (\mathbb{P}^2, D)$, where $D$ is either a smooth elliptic curve or the union of three lines in general position.
		We can now replace $(Y,E)$ with a higher birational model by applying \autoref{lem: crepant-correct}.
		
		If $\dim(B)=1$, then the N\'{e}ron-Severi group $\text{NS}(Z)=\mathbb{Z}[C_n] \oplus \mathbb{Z}[f]$ by \cite[Proposition V.2.3]{Ha77}, where $C_n^{2}=-n \leq 0$ and $C_n \cdot f=1$. 
		By adjunction, $K_Z \cdot C_n=n+\deg(K_{C_n})$.
		Note that $E_Z \sim -K_Z \sim 2C_n+bf$ for some $b \in \mathbb{Z}$. 
		 We distinguish two cases, according whether $C_n$ belongs to $\Supp(E_Z)$ or not.
		
		\emph{Case 1:  $C_n \nsubseteq \Supp(E_Z)$.} 
		Then $0 \leq E_Z\cdot C_n =-2n+b$ and
		\begin{align*}
		0=\deg(K_{E_Z})&=(K_Z+(2C_n+bf))\cdot (2C_n+bf)\\
		&=2(n+\deg(K_{C_n}))-2b-4n+4b.
		\end{align*}
		In particular, $ b=n-\deg(K_{C_n}) \text{ and } b \geq 2n. $
		This can only happen if $\deg(K_{C_n})<0$ and so $C_n \cong \mathbb{P}^1_k$. 
		In this case, $Z \cong \mathbb{F}_n$ and thus combining as before \autoref{l-crepant-model-logCY-hirzebruch} and \autoref{lem: crepant-correct} we can replace $(Z,E_Z)$ with $(\mathbb{P}^2_{k}, C)$ ending in Cases (i) or (ii).
		
		\emph{Case 2: $E_Z=C_n+D$ where $D \geq 0$.} 
		As $E_Z \cdot f=2$, we have $D \cdot f=1$ and $D \sim C_n+bf$.  In this case, there are at most two connected components of $E_Z$. 
		Note that 
		\[
		D \cdot C_n =E_Z \cdot C_n- C_n^2=-(K_Z +C_n)\cdot C_n = -\deg(K_{C_n}).
		\]
		If $\deg(K_{C_n}) =-2$, then $Z \cong (\mathbb{F}_n, E_Z)$ for some $n \geq 0$. Again by \autoref{lem: crepant-correct} and \autoref{l-crepant-model-logCY-hirzebruch}, we replace $(Z,E_Z)$ with $(\mathbb{P}^2, C)$ ending in (a).
		
		If $\deg(K_{C_n})=0$, then $B$ is an elliptic curve and, as $Z$ is a smooth surface, then $Z=\mathbb{P}_B(V)$ where $V$ is a vector bundle of rank 2. 
		As $C_n \cdot D=0$, $C:=C_n$ and $D$ are two disjoint sections of $\pi$.
		By applying \autoref{lem: split_vect_bundle} we conclude we are in case (iii).
	\end{proof}
	
	\begin{lemma}\label{lem: split_vect_bundle}
	    Let $B$ a curve over $k$ and let $V$ a vector bundle of rank 2 on $B$.
	    Then $V$ is decomposable as a sum of line bundles if and only if the projective bundle $\mathbb{P}_B(V) \to B$ has two disjoint sections.
	\end{lemma}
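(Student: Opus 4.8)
The plan is to use the standard dictionary between sections of a projectivised bundle and invertible quotients, and then read off the disjointness condition fibre by fibre.

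First I would recall that, in the Grothendieck convention used in the statement (so that $\mathbb{P}_B(M\oplus N)$ carries the section attached to the quotient $M\oplus N\to M$), giving a section $\sigma$ of $\pi\colon \mathbb{P}_B(V)\to B$ is the same as giving an invertible quotient $q\colon V\twoheadrightarrow L$: one takes $L:=\sigma^*\mathcal{O}_{\mathbb{P}_B(V)}(1)$, and $q$ is the pullback along $\sigma$ of the tautological surjection $\pi^*V\to \mathcal{O}_{\mathbb{P}_B(V)}(1)$. Writing $M:=\ker q$, the exact sequence $0\to M\to V\to L\to 0$ is locally split because $L$ is invertible, hence $M$ is a line bundle on $B$; note this holds even if $B$ is singular. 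Moreover, tensoring with a residue field $k(b)$ and using $\operatorname{Tor}_1^{\mathcal{O}_B}(L,k(b))=0$, the natural map $M\otimes k(b)\to V_b$ is injective with image $\ker(V_b\to L_b)$, so the point $\sigma(b)\in \mathbb{P}(V_b)$ corresponds precisely to the line $M_b:=M\otimes k(b)\subseteq V_b$.

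For the implication ($\Leftarrow$) I would argue as follows. Suppose $\sigma_1,\sigma_2$ are disjoint sections, with associated kernel line bundles $M_1,M_2\hookrightarrow V$. Disjointness means $\sigma_1(b)\neq \sigma_2(b)$ in $\mathbb{P}(V_b)$ for every $b\in B$, i.e.\ $M_{1,b}\neq M_{2,b}$ as lines in the $2$-dimensional vector space $V_b$, equivalently $M_{1,b}\oplus M_{2,b}\to V_b$ is an isomorphism for every $b$. Hence the map $\phi\colon M_1\oplus M_2\to V$ built from the two inclusions is surjective, and being a surjection of vector bundles of the same rank $2$ on $B$ it is an isomorphism (its kernel is locally a direct summand of $M_1\oplus M_2$, hence locally free of rank $0$). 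Therefore $V\cong M_1\oplus M_2$ is decomposable.

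For the converse, given a splitting $V=M\oplus N$ I would take the two projections $V\to M$ and $V\to N$; these are invertible quotients with kernels $N$ and $M$ respectively, so they define sections $\sigma_M,\sigma_N$ of $\mathbb{P}_B(V)\to B$. Fibrewise $\sigma_M(b)$ is the line $N_b$ and $\sigma_N(b)$ the line $M_b$, and $M_b\cap N_b=0$ forces $\sigma_M(b)\neq \sigma_N(b)$ for all $b$, so $\sigma_M$ and $\sigma_N$ are disjoint. I do not expect any substantial geometric difficulty here; the only thing that needs genuine care is the bookkeeping with the $\mathbb{P}(-)$ convention and the passage to fibres — in particular checking that $\ker q$ really is a line bundle (even over a possibly singular curve) and that $M\otimes k(b)\hookrightarrow V_b$ so that the fibre of a section is correctly identified with a line in $V_b$.
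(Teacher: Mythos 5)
Your proof is correct and uses essentially the same ingredients as the paper's: the dictionary between sections of $\mathbb{P}_B(V)\to B$ and invertible quotients of $V$, and the passage to fibres to convert disjointness into a splitting. The only cosmetic difference is that the paper phrases the key step as ``the composition $N_C\to V\to L_D$ is an isomorphism if and only if $C$ and $D$ are disjoint,'' whereas you check directly that the sum map $M_1\oplus M_2\to V$ is an isomorphism; these are the same observation expressed two ways.
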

	
	\begin{proof}
	Let $C, D$ be two disjoint sections. By \cite[Proposition 2.6]{Ha77}, they correspond to the short exact sequences:
	\[0 \to N_C \to V \to L_C \to 0 \text{ and }
     0 \to N_D \to V \to L_D \to 0. \]
	It is easy to verify that the natural composition $N_C \to V \to L_D$ is an isomorphism if and only $C$ and $D$ are disjoint, concluding.
	\end{proof}

There is a unique way of lifting closed
points of an ordinary elliptic curve to its canonical lifting, which is compatible with the Frobenius lift, which we now recall.

\begin{lemma} \label{lem: lift_points}
    Let $E \subset \mathbb{P}^2_k$ be a globally $F$-split elliptic curve.
    Let $p \in E$. 
    Then there exists a unique lifting $\widetilde{p} \subset \mathcal{E}_\can$ of $p$ for which there is an isomorphism $\mathcal{O}_{\mathcal{E}}(\widetilde{p}) \cong \mathcal{L}_p$, where $\mathcal{L}_p \in \Pic(\mathcal{E}_{\mathcal{Z}_\can})_{F_{\mathcal{E}_\can}}$ is the canonical lifting of the line bundle $\mathcal{O}_E(p)$ of \autoref{t-ms}.
    We say that $\widetilde{p}$ is the unique lifting of $p$ compatible with the Frobenius lifting on $\mathcal{E}_{\can}$ and we call it the canonical lifting of $p$.
\end{lemma}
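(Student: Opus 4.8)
The plan is to realise $\widetilde p$ as the vanishing locus of the (essentially unique) global section of $\mathcal{L}_p$ on $\mathcal{E}_\can$, and to extract uniqueness from the fact that $H^0(\mathcal{E}_\can,\mathcal{L}_p)$ is a free $W(k)$-module of rank one whose reduction modulo $p$ is $H^0(E,\mathcal{O}_E(p))$.

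Since $E$ is a globally $F$-split elliptic curve it is an abelian variety, so \autoref{t-ms} provides the canonical lifting $\pi\colon\mathcal{E}_\can\to\Spec(W(k))$ together with the Frobenius-compatible lift $\mathcal{L}_p\in\Pic(\mathcal{E}_\can)_{F_{\mathcal{E}_\can}}$ of $\mathcal{O}_E(p)$; recall also that $\mathcal{E}_\can$ is a smooth projective abelian scheme over $W(k)$. The first thing I would establish is a cohomology and base change input: $\mathcal{L}_p$ has degree one on both the central fibre $E$ and the generic fibre $\mathcal{E}_{\can,K}$, each a smooth genus-one curve, hence on every fibre $\mathcal{E}_s$ one has $h^1(\mathcal{E}_s,\mathcal{L}_p|_{\mathcal{E}_s})=0$ (Serre duality, as $\omega_{\mathcal{E}_s}$ is trivial and $\deg\mathcal{O}(-p)<0$) and $h^0(\mathcal{E}_s,\mathcal{L}_p|_{\mathcal{E}_s})=1$ (Riemann--Roch). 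By Grauert's theorem \cite[Corollary III.12.9]{Ha77} and cohomology and base change \cite[Theorem III.12.11]{Ha77}, $\pi_*\mathcal{L}_p$ is free of rank one over $W(k)$ and the restriction map $H^0(\mathcal{E}_\can,\mathcal{L}_p)\to H^0(E,\mathcal{O}_E(p))$ is identified with reduction modulo $p$; in particular it is surjective and an isomorphism after $\otimes_{W(k)}k$.

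For existence I would pick a generator $s$ of $H^0(\mathcal{E}_\can,\mathcal{L}_p)$; then $s|_E$ generates $H^0(E,\mathcal{O}_E(p))$, hence is a nonzero section of $\mathcal{O}_E(p)$ vanishing precisely at $p$. Setting $\widetilde p:=\{s=0\}$ gives an effective Cartier divisor on $\mathcal{E}_\can$ with $\mathcal{O}_{\mathcal{E}_\can}(\widetilde p)\simeq\mathcal{L}_p$ and $\widetilde p|_E=\{s|_E=0\}=p$; by \autoref{l-flat-Cartier} it is flat over $W(k)$, so it is a lifting of $p$ in the sense of \autoref{def-lift}. For uniqueness, given any lifting $\widetilde p'$ of $p$ with $\mathcal{O}_{\mathcal{E}_\can}(\widetilde p')\simeq\mathcal{L}_p$, transporting the tautological section of $\mathcal{O}_{\mathcal{E}_\can}(\widetilde p')$ along such an isomorphism produces $s'\in H^0(\mathcal{E}_\can,\mathcal{L}_p)$ with $\{s'=0\}=\widetilde p'$; since $\widetilde p'$ restricts to $p$, the section $s'|_E$ is nonzero, hence a generator of $H^0(E,\mathcal{O}_E(p))$, and Nakayama's lemma then forces $s'$ to be a generator of the free rank-one module $H^0(\mathcal{E}_\can,\mathcal{L}_p)$. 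Therefore $s'=us$ for a unit $u\in W(k)$, and $\widetilde p'=\widetilde p$.

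The one point that needs care is the uniqueness step: without the flatness-over-$W(k)$ hypothesis on the competitor $\widetilde p'$, uniqueness genuinely fails, because $\mathcal{O}_{\mathcal{E}_\can}(E)\simeq\mathcal{O}_{\mathcal{E}_\can}$ (the ideal of the central fibre is $p\cdot\mathcal{O}_{\mathcal{E}_\can}\simeq\mathcal{O}_{\mathcal{E}_\can}$), so $E+\widetilde p$ is another effective divisor in the class of $\mathcal{L}_p$; the argument above discards such divisors precisely via the condition $\widetilde p'|_E=p$. Everything else is a routine application of Grauert's theorem and \autoref{l-flat-Cartier}. An alternative packaging, which makes both statements transparent, is to argue through $\Pic^0$: after lifting the origin of $E$ to the identity section $\widetilde O$ of the abelian scheme $\mathcal{E}_\can$, the point $\widetilde p$ is exactly the $W(k)$-point of $\mathcal{E}_\can\simeq\mathcal{E}_\can^\vee$ corresponding to $\mathcal{L}_p\otimes\mathcal{O}_{\mathcal{E}_\can}(-\widetilde O)\in\Pic^0(\mathcal{E}_\can)$.
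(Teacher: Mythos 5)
Your proof is correct and follows essentially the same route as the paper, which compresses everything into the single sentence ``By Riemann--Roch we conclude that there exists a unique $\mathcal{L}_p=\mathcal{O}_{\mathcal{E}_{\can}}(\tilde{p})$ for a unique lifting $\tilde{p}$ of $p$'' after noting $\deg\mathcal{L}_p\vert_{\mathcal{E}_{\can,K}}=1$. You have simply unpacked the implicit cohomology-and-base-change input (Grauert plus the degree-one Riemann--Roch computation on fibres) and spelled out the Nakayama argument for uniqueness, including correctly identifying why the flatness requirement in \autoref{def-lift} is what rules out competitors such as $E+\widetilde p$.
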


\begin{proof}
Note that $\deg_{\mathcal{E}_{\can,K}}(\mathcal{L}_p\vert_{\mathcal{E}_{\can,K}})=\deg_{E}(p)=1$.
By Riemann-Roch we conclude that there exists a unique  $\mathcal{L}_p=\mathcal{O}_{\mathcal{E}_{\mathcal{Z}_\can}}(\tilde{p})$ for a unique lifting $\tilde{p}$ of $p$.
\end{proof}
	
We will need the following explicit description of $\Pic^0$ on cycles of smooth rational curves.
Given a Weil divisor $D$ on a curve $C$ whose support is contained in the regular locus, we can associate a Cartier divisor $D \in H^0(C, \mathcal{K}_C^*/\mathcal{O})$ the locally free sheaf given by
\[\mathcal{O}_X(D)(U) = \left\{ f \in \mathcal{K}_C^* \mid \text{div}(f)|_U+D|_U \geq 0 \right\}\]
We refer the reader to \cite[Section 7]{Liu02} for the theory of the Picard group for non-integral curves.

The following is a generalisation of Menelaus' theorem on the collinearity for points on a triangle in elementary geometry (in the setting of Menelaus' theorem, $n=3$, $|J_i|=1$ and $d_{i1}=1$).

\begin{lemma}\label{l-Pic-cycle rational}

Let $E=E_1\cup E_2 \cup \dots \cup E_n$ be an  oriented cycle of smooth rational curves over $k$. 
Let $L\in\Pic^0(E)$ be an invertible sheaf with $L \cong \mathcal{O}_E(\sum_{i=1}^n\sum_{j \in J_i}  d_{ij} p_{ij})$
for some $d_{ij} \in \mathbb{Z}$ and some regular points $p_{ij} \in E_i$, where $J_i$ are index sets.	
In what follows, we normalise the coordinates of $E_i$ so that $E_{i} \cap E_{i+1}|_{E_i}=[1:0]$ and $E_{i} \cap E_{i+1}|_{E_{i+1}}=[0:1]$ for $1 \leq i < n$. 
Further, we write $p_{ij}=[a_{ij}:b_{ij}] \in E_i \cong \mathbb{P}^1_k$, for $a_{ij}, b_{ij} \in k^*$.
Define 		
\begin{equation}\label{formula-pic}
	\lambda(L):= \prod_{i=1}^n \prod_{j \in J_i} \left( \frac{a_{ij}}{-b_{ij}} \right)^{d_{ij}}. \tag{$\star$}
\end{equation}
Then $L \cong \mathcal{O}_X$ if and only if $\sum_{j \in J_i} d_{ij}=0$  for every $i=1, \dots, n$ and $\lambda(L)=1$. 
	\end{lemma}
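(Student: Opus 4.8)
The plan is to pull back along the normalization $\nu\colon \widetilde{E} = \bigsqcup_{i=1}^n E_i \to E$ and reduce the statement to the solvability of a cyclic system consisting of one equation per node. Write $D_i := \sum_{j\in J_i} d_{ij}p_{ij}$, so that $D = \sum_i D_i$ and $\Supp(D_i)$ lies in the regular locus of $E_i$: the hypothesis $a_{ij},b_{ij}\in k^*$ says precisely that each $p_{ij}$ avoids the two nodes $[1:0]$ and $[0:1]$ of $E_i$. Since $\nu^*\mathcal{O}_E(D) \simeq \bigoplus_i \mathcal{O}_{E_i}(D_i)$ and this has degree $\sum_{j\in J_i} d_{ij}$ on $E_i$, the vanishing $\sum_{j\in J_i} d_{ij}=0$ for all $i$ is clearly necessary for $L\simeq\mathcal{O}_E$ (equivalently, for $L$ to lie in $\Pic^0(E)\simeq \mathbb{G}_m$); I would record this first and assume it for the remainder.

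The key point is that, because $D$ misses all the nodes, $\mathcal{O}_E(D)$ is canonically trivial in a neighbourhood of each node, and hence — using that $E$ is reduced and that $\mathcal{O}_E(D)$ is realised as a subsheaf of $\mathcal{K}_E$ as in the statement — a nowhere-vanishing global section of $\mathcal{O}_E(D)$ is exactly a tuple $(f_i)_i$ with $f_i\in k(E_i)^*$, $\operatorname{div}_{E_i}(f_i) = -D_i$, and $f_i$ and $f_{i+1}$ taking the same (then automatically nonzero, since $f_i$ is regular and nonvanishing off $\Supp(D_i)$) value at the node $E_i\cap E_{i+1}$, for every $i$ read cyclically. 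In the chosen coordinate $t_i = X/Y$ on $E_i$, in which $c_{ij}:=a_{ij}/b_{ij}\in k^*$ is the coordinate of $p_{ij}$ and the two nodes sit at $t_i=\infty$ (shared with $E_{i+1}$) and $t_i=0$ (shared with $E_{i-1}$), the rational function $g_i := \prod_{j\in J_i}(t_i-c_{ij})^{d_{ij}}$ satisfies $\operatorname{div}_{E_i}(g_i)=D_i$ — this is exactly where $\sum_{j} d_{ij}=0$ is used, as it forces $g_i$ to have neither a zero nor a pole at $t_i=0,\infty$ — and moreover $g_i(\infty)=1$ and $g_i(0)=\prod_{j\in J_i}(-c_{ij})^{d_{ij}}$. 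Every $f_i$ with divisor $-D_i$ is then $\kappa_i g_i^{-1}$ for a unique $\kappa_i\in k^*$, and, evaluating at the two nodes of $E_i$, the matching conditions turn into the cyclic system $\kappa_{i+1}=\kappa_i\,g_{i+1}(0)$.

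To finish I would observe that this system admits a solution $(\kappa_i)\in(k^*)^n$ if and only if the product of the multipliers around the loop is $1$, i.e.\ $\prod_{i=1}^n g_i(0)=1$, in which case one may take $\kappa_1=1$ and thereby determine the remaining $\kappa_i$. Since
\[
\prod_{i=1}^n g_i(0)\;=\;\prod_{i=1}^n\prod_{j\in J_i}(-c_{ij})^{d_{ij}}\;=\;\prod_{i=1}^n\prod_{j\in J_i}\left(\frac{a_{ij}}{-b_{ij}}\right)^{d_{ij}}\;=\;\lambda(L),
\]
this yields exactly the asserted criterion. I expect the mathematical content to be light and the only real care to go into the bookkeeping: tracking which node of each $E_i$ lies over $0$ and which over $\infty$, handling the wrap-around node $E_n\cap E_1$ and the degenerate case $n=1$ uniformly by reading all indices cyclically, and confirming that the identification of a trivializing section with a tuple $(f_i)$ uses only reducedness of $E$ together with the fact that $D$ avoids the singular locus.
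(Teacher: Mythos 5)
Your argument is correct and is essentially the same as the paper's: both reduce triviality of $L$ to the existence of rational functions $f_i$ on the components $E_i$ with prescribed divisors that glue at the $n$ nodes, and both identify the obstruction with the product of the multipliers around the cycle. The only cosmetic difference is a sign convention (you take $\operatorname{div}(f_i) = -D_i$ to realise a trivialising section of $\mathcal{O}_E(D)$, while the paper takes $\operatorname{div}(f_i) = D_i$ to realise $D$ as a principal divisor), which does not affect the criterion $\lambda(L)=1$.
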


	\begin{proof}
	It is immediate to see that $L$ belongs to $\Pic^0(E)$ if and only if $\sum_{j \in J_i} d_{ij}=0$ for all $i=1, \dots, n$.
		
	Let $L \in \Pic^0(E)$.
	On $E_1$, fix $f_1 \in k(t)$ such that $\text{div}(f_1) = \sum d_{1j} p_{1j}$.
	Then there exists a unique $f_2 \in k(t)$ such that $f_2([0:1])=f_1([1:0])$ and $\text{div}(f_2) =\sum d_{2j} p_{2j}$ and we construct inductively $f_l$ in this way.
	We define \[\lambda(L)=  f_1([0:1]) / f_n([1:0]). \]
		Note that the rational functions $\left\{f_i\right\}$ glue to a global (clearly trivialising) section of $L$ if and only if $\lambda(L)=1$.
		
		We are only left to unravel the formula for $\lambda$ in coordinates. 
		We fix \[f_1([x:y])= \prod_{j \in J_1} (ya_{1j}-xb_{1j})^{d_{1j}}\] as the global section of $L|_{E_1}$. Similarly, a global section for $L|_{E_2}$ must be of the form \[f_2= \mu_2 \prod_{j \in J_2} (ya_{2j}-xb_{2j})^{d_{2j}},\] for $\mu_2 \in k^*$. 
     As we demand $f_1([1:0])=f_2([0:1])$ in order to glue, we deduce that
	 $\mu_2 = \prod_{j \in J_1} (-b_{1j})^{d_{1j}} \prod_{j \in J_2} a_{2j}^{-d_{2j}}$. An inductive computation shows that $f_l$ must be defined by the formula
		\[f_l([x:y]) =  \left(\prod_{i=1}^{l-1} \prod_{j \in J_i} b_{ij}^{d_{ij}} \prod_{i=2}^{l} \prod_{j \in J_i} a_{ij}^{-d_{ij}}\right) \prod_{j \in J_l} (ya_{lj}-xb_{lj})^{d_{lj}}.\]
		As $f_1([0:1])= \prod_{j \in J_1} a_{1j}^{d_{1j}}$ we deduce \autoref{formula-pic}.
	\end{proof}
	
	Let $\omega \colon k \to W(k)$ be the Teichm\"{u}ller representative for Witt vectors.
	Note that $\omega$ preserves multiplication, but not addition. 
	We can define a morphism $\omega_{\mathbb{P}^n_k} \colon \mathbb{P}^n(k) \to \mathbb{P}^n(W(k))$ of sets such that $\omega([a_0: \dots: a_n])=[\omega(a_0): \dots :\omega(a_n)]$, which is well-defined as $\omega$ is multiplicative. 

We collected all the ingredients we need to prove the liftability of $K$-trivial surfaces with strictly log canonical singularities.
	
	\begin{theorem}\label{p-strictlyLC-case}
		Let $X$ be a projective globally $F$-split normal surface such that $X$ is not klt and $K_X \sim 0$. 
		Let $f\colon (Y, \Ex(f)) \to X$ be any log resolution such that $K_Y+E = f^*K_X$ for a reduced Weil divisor $E \subseteq \Ex(f)$. Then $f$ admits a lifting $\widetilde{f} \colon (\mathcal{Y}_{\can}, \mathcal{E}) \to \mathcal{X}$ over $W(k)$.
	\end{theorem}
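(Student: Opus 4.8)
The plan is to feed the crepant classification of \autoref{l-logcanonical-CY} into the extension criterion \autoref{p-lifting-criterion} after building a genuinely \emph{canonical} lifting of each of the three model pairs. By \autoref{lem: crepant-correct} and a routine descent along blow-ups (\autoref{t-cynkvanstraten}), it is enough to treat one crepant log resolution $f\colon (Y,E)\to X$ with $K_Y+E=f^*K_X$ which also admits a crepant birational contraction $h\colon (Y,E)\to (Z,E_Z)$ onto a model as in \autoref{l-logcanonical-CY}, so that $(Z,E_Z)$ is $(\mathbb{P}^2_k,C)$ with $C$ an elliptic curve, $(\mathbb{P}^2_k,L_1+L_2+L_3)$ with the $L_i$ in general position, or $(\mathbb{P}_B(M\oplus N),C+D)$ with $B$ an ordinary elliptic curve and $C,D$ its two disjoint sections, and $h$ a sequence of blow-ups at closed points lying over the (at most two) non-klt points and the canonical points of $X$. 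Since $Z$, hence $Y$ and every log resolution of $X$, has negative Kodaira dimension, $H^2(Y,\mathcal{O}_Y)=0$; thus every such resolution is algebraically liftable by \autoref{l-loglift-surf}, and the whole difficulty -- as \autoref{e-nagata} illustrates -- is that for a generic lift of $(Y,E)$ the contraction $f$ fails to lift, concretely because the obstruction to keeping $f^*A$ trivial on $E$ lies in $H^2(Y,\mathcal{O}_Y(-E))\cong H^0(Y,\mathcal{O}_Y)^\vee=k\neq 0$.

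Next I would build the canonical lift $(\mathcal{Z}_\can,\mathcal{E}_{Z,\can})$ over $W(k)$. In cases (i)--(ii) take $\mathbb{P}^2_{W(k)}$ together with either the canonical lift $\mathcal{C}_\can\hookrightarrow\mathbb{P}^2_{W(k)}$ of the ordinary elliptic curve $C$, embedded by the sections of the Frobenius-compatible lift $\mathcal{L}$ of $\mathcal{O}_C(1)$ furnished by \autoref{t-ms} (the sections extend since $H^1(C,\mathcal{O}_C(1))=0$), or the three coordinate lines, to which any triangle in general position can be carried by $\mathrm{PGL}_3$. In case (iii) take $\mathbb{P}_{\mathcal{B}_\can}(\mathcal{M}\oplus\mathcal{N})$ over the canonical lift $\mathcal{B}_\can$ of $B$, with $\mathcal{M},\mathcal{N}$ the Frobenius-compatible lifts of $M,N$, endowed with its two disjoint sections. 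In every case $\Pic(\mathcal{Z}_\can)\to\Pic(Z)$ is surjective (an isomorphism for $\mathbb{P}^2$; for the $\mathbb{P}^1$-bundle it follows from \autoref{t-ms}(c) and $\Pic(\mathbb{P}_{\mathcal{B}_\can}(\mathcal{M}\oplus\mathcal{N}))=\Pic(\mathcal{B}_\can)\oplus\mathbb{Z}$), and every closed point of $E_Z$ carries a canonical lift inside $\mathcal{E}_{Z,\can}$: on the elliptic components via \autoref{lem: lift_points}, and on the triangle of lines via the multiplicative Teichm\"{u}ller map $\omega_{\mathbb{P}^2}$. Blowing up $\mathcal{Z}_\can$ at the canonical lifts of the centers of $h$ -- iterating over infinitely near points while keeping every center compatible with the snc structure of $\mathcal{E}_{Z,\can}$ (and on the strict transforms of its components), as in the argument of \autoref{lem-equiv-lift} -- yields $\widetilde h\colon (\mathcal{Y}_\can,\mathcal{E})\to (\mathcal{Z}_\can,\mathcal{E}_{Z,\can})$ lifting $h$, and hence a projective lift $\mathcal{Y}_\can$ of $Y$ with a lift $\mathcal{E}$ of $E$.

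Finally, with $A$ very ample on $X$ and $A_Y:=f^*A$, write $A_Y=h^*B\otimes\mathcal{O}_Y(\sum_j c_j G_j)$ with $B\in\Pic(Z)$ and $G_j$ the $h$-exceptional divisors, and set $\mathcal{A}_{\mathcal{Y}_\can}:=\widetilde h^*\mathcal{B}\otimes\mathcal{O}_{\mathcal{Y}_\can}(\sum_j c_j\mathcal{G}_j)$, where $\mathcal{B}$ is a lift of $B$ coming from the surjectivity above (taken Frobenius-compatible on the elliptic components, using $\mathcal{O}_{\mathbb{P}^2_{W(k)}}(1)|_{\mathcal{C}_\can}=\mathcal{L}$) and the $\mathcal{G}_j$ lift the $G_j$. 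The crux -- and the main obstacle -- is to verify $\mathcal{A}_{\mathcal{Y}_\can}|_{\mathcal{E}}\sim\mathcal{O}_{\mathcal{E}}$ for this canonical choice. On a component of $\mathcal{E}$ that is a smooth elliptic curve, $\mathcal{A}_{\mathcal{Y}_\can}|_{\mathcal{E}}$ is by \autoref{lem: lift_points} a tensor product of Frobenius-compatible line bundles lifting the trivial bundle $A_Y|_E\cong\mathcal{O}_E$, hence is itself Frobenius-compatible and therefore equal to $\mathcal{O}_{\mathcal{E}}$ by the uniqueness in \autoref{t-ms}(c). On a component of $\mathcal{E}$ that is a cycle of smooth rational curves, this is exactly a computation with \autoref{l-Pic-cycle rational}: on the special fibre $A_Y|_E$ is trivial, so every component has degree zero and $\lambda(A_Y|_E)=1$; since every coordinate of every point occurring in $\mathcal{A}_{\mathcal{Y}_\can}|_{\mathcal{E}}$ is a Teichm\"{u}ller representative and $\omega$ is multiplicative, $\lambda(\mathcal{A}_{\mathcal{Y}_\can}|_{\mathcal{E}})=\omega(\lambda(A_Y|_E))=1$, so the lifted restriction is trivial as well. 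The remaining rational components of $\Ex(f)$ (degree zero on a $\mathbb{P}^1$, hence automatically trivial) and the $(-2)$-configurations over the canonical points of $X$ (rigid) cause no trouble, so $\mathcal{A}_{\mathcal{Y}_\can}$ is trivial on the entire lifted exceptional locus. \autoref{p-lifting-criterion} then gives that $\mathcal{A}_{\mathcal{Y}_\can}$ is semiample over $W(k)$ and that the induced morphism $\widetilde f\colon (\mathcal{Y}_\can,\mathcal{E})\to\mathcal{X}:=\Proj_{W(k)}R(\mathcal{Y}_\can,\mathcal{A}_{\mathcal{Y}_\can})$ is a lifting of $f$, which completes the proof. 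Everything beyond this Picard computation on cycles of rational curves is either the classification input of \autoref{l-logcanonical-CY} or the deformation theory developed in \autoref{s-def-toolbox}.
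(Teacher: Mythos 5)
Your overall strategy matches the paper's almost exactly: classify $(X,0)$ crepant-birationally via \autoref{l-logcanonical-CY}, build canonical liftings of each of the three model pairs using \autoref{t-ms}, \autoref{lem: lift_points}, and the Teichm\"uller section, verify $\mathcal{A}|_{\mathcal{E}}\sim 0$ via \autoref{l-Pic-cycle rational} and the multiplicativity of $\omega$, and hand everything to \autoref{p-lifting-criterion}. That is the same route the paper takes, and the case-by-case verification of triviality on $\mathcal{E}$ is essentially identical.

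There are two places where your sketch glosses over details that the paper handles with a nontrivial argument, and one of them is a genuine gap. First, you dismiss the exceptional curves $F$ lying over the \emph{canonical} singular points of $X$ with ``rigid.'' But $N_{F_i/Y}\simeq\mathcal{O}_{\mathbb{P}^1}(-2)$, so $H^1(F_i,N_{F_i/Y})\neq 0$: the local obstruction space is nonzero, and a $(-2)$-curve on a smooth surface is not automatically liftable inside an arbitrary family $\mathcal{Y}$. Worse, these curves need not be among the blow-up centers you use to build $\mathcal{Y}_{\can}$: your construction blows up canonical lifts of points \emph{on} $\mathcal{E}_{\mathcal{Z},\can}$, but the proper transforms of the $(-2)$-configurations are precisely the $h$-exceptional divisors that sit \emph{off} $E_Z$ (discrepancy $0$, not $-1$), so their centers are not canonical lifts in your sense. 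The paper therefore spends a paragraph proving the lifting $\mathcal{F}_i\subset\mathcal{Y}_{\can}$ exists by combining $H^1(Y,\mathcal{O}_Y(-F_i))=0$, Serre duality using $-E\sim K_Y$, and semicontinuity applied to the exact sequence $H^0(\mathcal{Y},\mathcal{L}_i)\to H^0(E,\mathcal{L}_i|_E)\to H^1(\mathcal{Y},\mathcal{L}_i(-E))$; in your write-up this step is asserted, not proved. Second, you apply \autoref{p-lifting-criterion} directly to the log resolution $f:(Y,E)\to X$ with the full exceptional locus $E+F$; but as stated the proposition asks for a \emph{dlt modification} whose boundary equals the full reduced exceptional divisor, and $K_Y+(E+F)\neq f^*K_X$. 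The paper avoids this by first contracting $F$ (via \autoref{p-lc-nottriv} / \autoref{t-cynkvanstraten} to get $\widetilde\varphi\colon(\mathcal{Y},\mathcal{E}+\mathcal{F})\to(\mathcal{T},\mathcal{E}_T)$, using $\mathcal{A}_{\mathcal Y}|_{\mathcal{F}}\sim 0$ so $\mathcal{A}$ descends), and only then applying \autoref{p-lifting-criterion} to $\psi\colon(T,E_T)\to X$, which \emph{is} crepant. Your one-step argument can probably be salvaged by rerunning the proof of \autoref{p-lifting-criterion} with $E$ strictly smaller than $\Ex(f)$ -- the Grauert--Riemenschneider step still works because $A_Y^{\otimes m}(-E)\simeq\omega_Y\otimes f^*(\omega_X^{-1}\otimes A^{\otimes m})$ -- but as currently written you are invoking the statement outside its hypotheses, and the prior gap about $\mathcal{F}_i$ still needs to be filled regardless.
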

	Note that $X$ has Gorenstein singularities, so they are either strictly log canonical or canonical. Explicitly, a resolution $f$ as in the statement of the theorem can be constructed as follows: at canonical singularities we take the minimal resolution which extracts a tree of $(-2)$-curves, and at strictly log canonical points we resolve by extracting an elliptic curve or a cycle of rational curves. 
	In this case, $\Ex(f) = E + F$ where $E$ is the union of exceptional divisors over strictly log canonical points and $F$ is the union of exceptional divisors over canonical points.
	\begin{proof}
    For the sake of readability, we drop the subscripts \emph{can} even though the lifts we construct will be canonical.
    Let $f \colon Y \to X$ be a log resolution such that $\Ex(f)=E+F$ and $K_Y+E = f^*K_X$.
	Up to replacing $Y$ with a higher model, we can take the contraction $h \colon (Y, E) \to (Z, E_Z)$ given by \autoref{l-logcanonical-CY}. Set $F_Z := h_*F$.
		As $X$ is not canonical, the canonical class $K_Y$ is not effective, and so  $H^2(Y, \mathcal{O}_Y)=0$. As $(Y,E)$ is globally $F$-split, so is $(Z, E_Z)$.
		In fact, $E_Z$ is also a globally $F$-split scheme\footnote{
		Indeed,  the functoriality of the trace morphisms gives a commutative diagram
		\[
		\xymatrix{
			H^0(Z, F_{*}\mathcal{O}_Z((1-p)(K_Z+E_Z)) \ar@{->>}[r]^{\qquad \qquad  \Tr_{(Z,E_Z)}} \ar[d] & H^0(Z, \mathcal{O}_Z) \ar@{->>}[d] \\
			H^0(E_Z, F_{*}\omega_{E_Z}^{(1-p)}) \ar[r]^{\quad \Tr_{E_Z}} &  H^0(E_Z, \mathcal{O}_{E_Z}).
		}
		\]
		Therefore $\Tr_{E_Z}$ is surjective and $E_Z$ is globally $F$-split.}.

		Let $A$ be a very ample line bundle on $X$ and let $A_Y=f^*A$. 
		Write \[h^*h_* A_Y = A_Y(\sum_i a_i G_i),\] where $G_i$ are the $h$-exceptional divisors and fix $L:=h_*A_Y$ on $Z$. Since $Z$ is smooth, $L$ is a line bundle.
		
		Our setting may be summarised by the following diagram:
		\[
		\begin{tikzcd}
		(Y,E) \arrow{r}{f} \arrow{d}{h} & X \\
		(Z,E_Z),
		\end{tikzcd}
		\]
		where $\Ex(f) = E+F$, $E_Z = h_*E$, and $G_i \subseteq \Supp(h)$.
		\begin{claim}\label{claim-good-lifting}
		There exists a lifting $\widetilde{h} \colon (\mathcal{Y}, \mathcal{E}) \to (\mathcal{Z}, \mathcal{E}_{\mathcal{Z}})$ of $h \colon (Y, E) \to (Z, E_Z)$ together  with liftings $\mathcal{F}$ of $F$, $\mathcal{G}_{i}$ of $G_i$ and $\mathcal{L}$ of $L$ such that the line bundle
			\[\mathcal{A}_{\mathcal{Y}}:=\widetilde{h}^*\mathcal{L}{\big(}-\sum_{i} a_i \mathcal{G}_{i}\big)\]
			satisfies $\mathcal{A}_{\mathcal{Y}}|_{\mathcal{E}} \sim 0$.
		\end{claim}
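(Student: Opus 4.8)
The plan is to handle separately the three crepant birational types of $(Z,E_Z)$ furnished by \autoref{l-logcanonical-CY} and, in each, to build a \emph{canonical} lifting of every object in the diagram. First I would fix $(\mathcal{Z},\mathcal{E}_{\mathcal{Z}})$ as follows. In case (i) the plane cubic $C$ is globally $F$-split, hence an ordinary elliptic curve; I take its canonical (Serre--Tate) lift together with the canonical lift of $\mathcal{O}_C(1)$ from \autoref{t-ms} and re-embed, obtaining a smooth cubic $\mathcal{C}\subset\mathbb{P}^2_{W(k)}=:\mathcal{Z}$ with $\mathcal{E}_{\mathcal{Z}}:=\mathcal{C}$ and with $\mathcal{O}_{\mathbb{P}^2_{W(k)}}(1)|_{\mathcal{C}}$ a canonical lift in $\Pic(\mathcal{C})_{F_{\mathcal{C}}}$. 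In case (ii), after normalising coordinates so that the three nodes of $L_1+L_2+L_3$ are coordinate points, I lift via the Teichm\"{u}ller map $\omega_{\mathbb{P}^2_k}$; since $\omega$ is multiplicative, $\mathcal{E}_{\mathcal{Z}}:=\widetilde L_1+\widetilde L_2+\widetilde L_3$ is again a triangle with the same normalised nodes. In case (iii) the base $B\cong C$ is a connected component of the globally $F$-split curve $E_Z$, hence ordinary; I take its canonical lift $\mathcal{B}$, the canonical lifts $\widetilde M,\widetilde N\in\Pic(\mathcal{B})_{F}$ of $M,N$, set $\mathcal{Z}:=\mathbb{P}_{\mathcal{B}}(\widetilde M\oplus\widetilde N)$, and let $\mathcal{C},\mathcal{D}$ be the two sections, $\mathcal{E}_{\mathcal{Z}}:=\mathcal{C}+\mathcal{D}$.

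Next I would lift $h$. Since $h$ is a composition of blow-ups, each centred at a point lying on the strict transform of $E_Z$, I lift the centres one blow-up at a time, compatibly with the snc structure as in \cite[Proposition~2.9]{ABL20}, always choosing the canonical lift of the centre: a point on a rational component is lifted by $\omega_{\mathbb{P}^1_k}$ after normalising coordinates, a point on an elliptic component by \autoref{lem: lift_points}. Blowing up these lifted centres produces $\widetilde h\colon(\mathcal{Y},\mathcal{E})\to(\mathcal{Z},\mathcal{E}_{\mathcal{Z}})$ together with the lifts $\mathcal{G}_i$ of the $h$-exceptional divisors $G_i$; the remaining components of $F$ are strict transforms of plane (resp.\ fibred) curves on $Z$, which lift to $\mathcal{Z}$ (effective divisors lift on $\mathbb{P}^2_{W(k)}$, resp.\ by semicontinuity on $\mathcal{Z}$) and pull back to $\mathcal{F}$. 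Finally I lift $L=h_*A_Y$: in cases (i)--(ii), $L\simeq\mathcal{O}_{\mathbb{P}^2}(d)$ and I take $\mathcal{L}:=\mathcal{O}_{\mathbb{P}^2_{W(k)}}(d)$; in case (iii) I write $L\simeq\pi^*L_B\otimes\mathcal{O}_Z(cC)$ and set $\mathcal{L}:=\pi^*\widetilde{L_B}\otimes\mathcal{O}_{\mathcal{Z}}(c\mathcal{C})$ with $\widetilde{L_B}$ the canonical lift of $L_B$. Then $\mathcal{A}_{\mathcal{Y}}:=\widetilde h^*\mathcal{L}(-\sum_i a_i\mathcal{G}_i)$.

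It remains to show $\mathcal{A}_{\mathcal{Y}}|_{\mathcal{E}}\sim 0$. Since $A_Y=f^*A$ and $E$ is contracted by $f$, we have $A_Y|_E\simeq\mathcal{O}_E$; hence $\mathcal{A}_{\mathcal{Y}}|_{\mathcal{E}}$ restricts to $\mathcal{O}_E$ on the special fibre and has degree $0$ on every component of $\mathcal{E}$. On each rational component $\Pic\cong\mathbb{Z}$, so vanishing degree already yields triviality there, and the content sits on the cyclic part of $\mathcal{E}$ (case (ii)) and on the elliptic component(s) $\bar{\mathcal{C}}$, resp.\ $\bar{\mathcal{C}}$ and $\bar{\mathcal{D}}$ (cases (i) and (iii)). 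For the cyclic part I apply the computation of \autoref{l-Pic-cycle rational}, carried out over $W(k)$: every centre was lifted by the multiplicative map $\omega$, the coordinate normalisation persists, and the invariant $\lambda$ is multiplicative in the coordinates, so $\lambda(\mathcal{A}_{\mathcal{Y}}|_{\mathcal{E}})=\omega(\lambda(A_Y|_E))=\omega(1)=1$; together with the vanishing degrees this forces $\mathcal{A}_{\mathcal{Y}}|_{\mathcal{E}}\simeq\mathcal{O}_{\mathcal{E}}$. On an elliptic component $\bar{\mathcal{C}}$ --- which is isomorphic to $\mathcal{C}$, since blowing up smooth points does not alter the curve --- each building block of $\mathcal{A}_{\mathcal{Y}}|_{\bar{\mathcal{C}}}$ lies in $\Pic(\mathcal{C})_{F_{\mathcal{C}}}$: $\widetilde h^*\mathcal{L}|_{\bar{\mathcal{C}}}$ is a power of $\mathcal{O}_{\mathbb{P}^2_{W(k)}}(1)|_{\mathcal{C}}$ (resp.\ is built from $\pi^*\widetilde{L_B}$ and $\mathcal{O}_{\mathcal{Z}}(\mathcal{C})|_{\mathcal{C}}\simeq\widetilde M\otimes\widetilde N^{\vee}$), a canonical lift by construction, and $\mathcal{G}_i|_{\bar{\mathcal{C}}}\simeq\mathcal{O}_{\mathcal{C}}(\widetilde{p}_i)\simeq\mathcal{L}_{p_i}$ is canonical by \autoref{lem: lift_points}; therefore $\mathcal{A}_{\mathcal{Y}}|_{\bar{\mathcal{C}}}$ is the canonical lift of $A_Y|_{\bar C}\simeq\mathcal{O}_{\bar C}$, hence $\simeq\mathcal{O}_{\bar{\mathcal{C}}}$ by the uniqueness of canonical lifts in $\Pic(\mathcal{C})_{F_{\mathcal{C}}}$ (\autoref{t-ms}(c)).

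I expect the main obstacle to be precisely this last step: making rigorous that the chosen lifts of the (possibly infinitely near) blow-up centres, of $L$, and of the embedding of $\mathcal{C}$ really assemble --- after an iterated blow-up, and under restriction to strict transforms and pullback --- into line bundles lying in $\Pic_F$, while simultaneously keeping the coordinate normalisation underlying \autoref{l-Pic-cycle rational} under control throughout the tower of blow-ups. The lifting of $h$ and of the $h$-exceptional divisors, and the bookkeeping of the cases of \autoref{l-logcanonical-CY}, are expected to be routine once the canonical lifting $(\mathcal{Z},\mathcal{E}_{\mathcal{Z}})$ and the compatibility of canonical lifts with pullbacks are in place.
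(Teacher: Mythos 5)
Your proposal follows the paper's argument essentially step for step: the same case split from \autoref{l-logcanonical-CY}, the same canonical lift $\mathcal{E}_{\mathcal{Z}}$ via \autoref{t-ms} and \autoref{lem: lift_points} for elliptic components (with the re-embedding by the canonical lift of $\mathcal{O}(3O)$ in case (i), or by $\mathbb{P}_{\mathcal{B}}(\widetilde M\oplus\widetilde N)$ in case (iii)), and Teichm\"{u}ller lifts controlled by the $\lambda$-invariant of \autoref{l-Pic-cycle rational} in case (ii). The one cosmetic difference in case (ii) is that the paper explicitly inserts a toric intermediate model $W$ (the unique toric lift handles all node blow-ups at once) whereas you apply $\omega$ to node centres directly; since torus-fixed points are Teichm\"{u}ller points, this amounts to the same thing.

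There is, however, one genuine gap: the lift of the non-$h$-exceptional components of $F$. You assert that these "are strict transforms of plane (resp.\ fibred) curves on $Z$, which lift to $\mathcal{Z}$ \dots{} and pull back to $\mathcal{F}$." But a lift $\bar{\mathcal{F}}_i \subset \mathcal{Z}$ of the image curve $\bar F_i=h(F_i)$ must pass through the chosen (possibly infinitely near) blow-up centres $\widetilde{p}_j$ with the correct multiplicities in order for its strict transform to reduce to $F_i$; an arbitrary lift will not do this, and the uniqueness of lifts of line bundles on $\mathbb{P}^2_{W(k)}$ does not help, since one must produce a \emph{section} with prescribed incidence, not a line bundle. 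The paper's remedy is cohomological and is not merely bookkeeping: write $F_i \sim e\,h^*H + \sum b_j G_j$, build the lift $\mathcal{L}_i := \widetilde h^*\mathcal{O}_{\mathbb{P}^2_{W(k)}}(e)\otimes\mathcal{O}_{\mathcal{Y}}(\sum b_j\mathcal{G}_j)$ of $\mathcal{O}_Y(F_i)$ \emph{on $\mathcal{Y}$} (not on $\mathcal{Z}$), check that $\mathcal{L}_i|_{\mathcal{E}}$ is trivial by canonicity, and then prove the restriction $H^0(\mathcal{Y},\mathcal{L}_i)\to H^0(\mathcal{E},\mathcal{L}_i|_{\mathcal{E}})\cong W(k)$ is surjective using $H^1(\mathcal{Y},\mathcal{L}_i(-\mathcal{E}))=0$, which follows by semicontinuity from $H^1(Y,\mathcal{O}_Y(F_i-E))=0$, itself a consequence of Serre duality ($K_Y\sim -E$) and $H^1(Y,\mathcal{O}_Y)=0$. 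The resulting section reduces to a nonzero section of $\mathcal{O}_Y(F_i)$ and cuts out the desired $\mathcal{F}_i$. You flagged the assembling of canonical lifts as the "main obstacle," but this extension step is not subsumed by canonicity considerations and needs its own argument.
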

		\begin{proof}[Proof of the Claim]

			We divide the proof according to the classification of \autoref{l-logcanonical-CY}.\\
			
			\emph{Case (i)}. 
			Suppose $Z \cong \mathbb{P}^2_k$ and $E_Z$ is a globally $F$-split elliptic curve. In particular, $E$ is also an elliptic curve.
			As $h$ is crepant, $Y$ is obtained by blowing-up points $p_1, \dots, p_r$ on $E_Z$ respectively $n_1, \dots, n_r$ times\footnote{ specifically, we first blow-up $p_1 \in E_Z$, then (if $n_1>1$) we blow-up the intersection of $E_Z$ with the exceptional divisor of this blow-up, and repeat this procedure until there are exactly $n_1$ exceptional curves over $p_1$, after which we do the same $n_2$-times for $p_2 \in E_Z$, and so on so forth.}. 
   
		 We start by constructing a lifting of $(Y, h^{-1}_*E_Z + \Ex(h))$:
		\begin{enumerate}
		    \item[(a)] first, we take the canonical lifting $\mathcal{E}_{\mathcal{Z}}$ given by \autoref{t-ms};
		    \item[(b)] second, we set ${\mathcal Z} := \mathbb{P}^2_{W(k)}$ and consider the embedding $\mathcal{E}_{\mathcal{Z}} \subset \mathcal{Z}$ given by $|\mathcal{O}_{\mathcal{E}_{\mathcal{Z}}}(3\widetilde{O})|$, where $\widetilde{O}$ is the origin of the elliptic scheme;
		    \item[(c)] last, we take the canonical  liftings $\widetilde{p}_i \in \mathcal{E}_{\mathcal{Z}}$ given by \autoref{lem: lift_points}, and construct $\mathcal{Y}$ by blowing-up the points $\widetilde{p}_i$ on $\mathcal{Z}$ exactly $n_i$-times.
		\end{enumerate}
			Here $({\mathcal Z}, \mathcal{E}_{\mathcal{Z}})$ is a lifting of $(Z,E_Z)$. Let $\widetilde{h} \colon \mathcal Y \to \mathcal Z$ denotes the composition of blow-ups. Since we have blown-up smooth points only, we get that $(\mathcal{Y}, \widetilde{h}^{-1}_*\mathcal{E}_Z + \Ex(\widetilde{h}))$ is a lifting of $(Y, h^{-1}_*E_Z + \Ex(h))$.
			
			Now we prove that $\mathcal{A}_{\mathcal{Y}}|_{\mathcal E} \cong \mathcal{O}_{\mathcal{E}}$. First, it is easy to see that $A_Y|_E \cong \mathcal{O}_E(3d O-\sum m_i p_i)$ for some $m_i>0$ and $d>0$.
			As $\mathcal{Z}=\mathbb{P}^2_{W(k)}$ there exists a unique lifting $\mathcal{L}$ of $L$.
			By the choice of the liftings $\widetilde{p}_i$, we obtain that $\mathcal{A}_{\mathcal{Y}}|_{\mathcal{E}} \cong \mathcal{O}_{\mathcal{E}}(3d \widetilde{O} - \sum m_i \widetilde{p_i})$ is the canonical lifting of the trivial line bundle, thus trivial itself.
		
	    We are left to check that for every irreducible divisor $F_i \subset F$, there is a lifting $\mathcal{F}_i \subset \mathcal{Y}$.
		First we claim that $H^1(Y, \mathcal{O}_Y(F_i-E))=0$.  Since $-E \sim K_Y$, by Serre duality it is sufficient to show the vanishing of $H^1(Y,\mathcal{O}_Y(-F_i))$. But this is clear from the exact sequence 
		\[
		H^0(Y, \mathcal{O}_Y) \to H^0(F_i, \mathcal{O}_{F_i}) \to H^1(Y, \mathcal{O}_Y(-F_i)) \to H^1(Y,\mathcal{O}_Y)=0.
		\]
        Consider now the line bundle $L_i := \mathcal{O}_Y(F_i)$ and write $F_i \sim \pi^*\pi_*F_i + \sum b_j G_j = e\pi^*H + \sum b_jG_j$, where $H \subseteq \mathbb{P}^2_k$ is a line and $e \in \mathbb{Z}_{\geq 0}$. 
        We define a lifting of $L_i$ by $\mathcal{L}_{{i}} := \pi^*\mathcal{O}_{\mathbb{P}^2}(e)  \otimes \mathcal{O}_{Y}(\sum b_j G_{ j})$.
        By construction $\mathcal{L}_{{i}}|_{E}$ is the canonical lift of $L_{i}|_\mathcal{E}$, and so $\mathcal{L}_{{i}}|_{\mathcal{E}}$ is trivial.
        Now consider the exact sequence:
        \[ H^0(\mathcal{Y}, \mathcal{L}_{i}) \to H^0(E, \mathcal{L}_{{i}}|_{E}) \to H^1(\mathcal{Y}, \mathcal{L}_{i}(-E)).\]
        The middle term is a free $W(k)$-module of rank one as $\mathcal{L}_{i}|_{E}$ is trivial.
        The right term is zero by semicontinuity as $\mathcal{L}_{i}(-E)$ is a  lift of $\mathcal{O}_Y(F_i-E)$ whose first cohomology group vanishes as shown above. 
        Therefore $H^0(\mathcal{Y}, \mathcal{L}_{i}) \neq 0$ and its non-zero section yields a lift of $F_i$.\\
        
		\emph{Case (ii)}.
			Suppose $Z\cong \mathbb{P}^2_{k}$ and $E_Z$ is a union of three lines in general position, so up to an automorphism $E_Z=(xyz=0)$.
			There is a factorisation of crepant birational morphism 
			\[(Y,E) \xrightarrow{\varphi} (W,E_W) \xrightarrow{\psi} (\mathbb{P}^2_k, E),\]
			where $\psi$ is the composition of blow-ups at closed points belonging to two irreducible components of $E$, while the centres of $\varphi$ are those lying in only one irreducible component.
			Note $W$ is a projective toric variety and $E_W$ is the toric boundary divisor. 
			We consider the unique toric lifting $\widetilde{\psi}\colon (\mathcal{W}, \mathcal{E}_{\mathcal{W}})\to (\mathbb{P}^2_{W(k)},\mathcal{E}_{\mathcal{Z}})$ over $W(k)$.  Again, as $\mathcal{Z}=\mathbb{P}^2_{W(k)}$, there exists a unique lifting $\mathcal{L}$ of $L$.
			We thus reduced to the case where $(Z, E_Z=\sum_i E_{Z,i})$ is a smooth toric surface pair and $h$ is the blow-up of the points $\left\{p_{ij} \in E_{Z,i}\setminus(\cup_{l\neq i}E_{Z,l})\right\}_{i,j}$ repeated $n_{ij}$-times, where we follow the notation of \autoref{l-Pic-cycle rational}.
			Let $(\mathcal{Z}, \mathcal{E}_{\mathcal{Z}})$ be the toric lifting over $W(k)$.
			For any $p_{ij} \in E_{Z,i}$ we consider the Teichm\"{u}ller lifting $\omega(p_{ij}) \in \mathcal{E}_{\mathcal{Z},i}$ and we construct $\mathcal{Y}$ as the blow-up along $\omega(p_{ij})$ repeated $n_{ij}$ times.
			As $A_Y|_E=\sum m_{ij} p_{ij}$ for some $m_{ij}$ and $\lambda(A_Y|_E)=1$,
			we deduce that $A_{\mathcal{Y}}|_{\mathcal{E}}= \sum m_{ij} \omega(p_{ij})$. 
			By \autoref{l-Pic-cycle rational} and multiplicativity of the Teichm\"uller morphism, we conclude $\lambda(\mathcal{O}_{\mathcal{E}_K}(\sum m_{ij} \omega(p_{ij})))=\omega(\mathcal{\lambda}(\mathcal{O}_E(\sum m_{ij}p_{ij})))=1$ and thus we conclude  $\mathcal{A}_{\mathcal{Y}}|_{\mathcal{E}_K} \sim 0$.
            By Grauert's theorem, then $\mathcal{A}_{\mathcal{Y}}|_{\mathcal{E}}$ is trivial.
			
			 We can repeat the same proof as in Case (i) (replacing $\mathbb{P}^2_k$ with the toric variety $W$) to show that for every irreducible divisor $F_i \subset Y$, there is a lifting $\mathcal{F}_{i} \subset \mathcal{Y}$.\\
			 
		\emph{Case (iii)}. Suppose $Z \cong \mathbb{P}_B(M \oplus N)$, together with the projection $p \colon Z \to B$ and $E_Z=C+D$ is the union of two disjoint sections. We denote by $\mathcal{O}_Z(1)$ the natural Serre line bundle.
		As $E_Z$ is globally $F$-split, $B$ is also a globally $F$-split elliptic curve. 
		We consider the canonical lifting $\mathcal{B}$ over $W(k)$ together with the canonical lifting $\mathcal{M}$ (resp.\ $\mathcal{N}$) of $M$ (resp.\ $N$) given by \autoref{t-ms}. The functoriality of the canonical liftings shows that the sections $\mathcal{C}$ and $\mathcal{D}$ induced by $\mathcal{M}$ (resp. $\mathcal{N}$) are the canonical liftings of $C$ (resp. $D$).
		We choose the lifting $ \widetilde{p} \colon (\mathcal{Z}, \mathcal{E}_{\mathcal{Z}}):=(\mathbb{P}_{\mathcal{B}}(\mathcal{M} \oplus \mathcal{N}), \mathcal{C}+\mathcal{D}) \to \mathcal{B}$. 
		We can lift $L$ in a canonical way to $\mathcal{Z}$ as follows. As $\Pic(Z)=\pi^*\Pic(B) \oplus \mathbb{Z}[\mathcal{O}_Z(1)]$, there exists $n \in \mathbb{Z}$ such that $L \cong p^*H \otimes \mathcal{O}_{Z}(n)$, where $H \in \Pic(B)$.
		We consider the lifting $\mathcal{L}=p^*\mathcal{H} \otimes \mathcal{O}_{\mathcal{Z}}(n)$.
		We can now repeat the same proof as in the case $(i)$ by blowing-up the canonical lifts of the points to end the proof.
			
		Note that every irreducible component $F_i$ of $F$ is contained in a fibre. As $Y \to Z$ is a composition of blow-ups, it is easy to see that $F_i$ lifts to $\mathcal{F}_i \subset \mathcal{Y}$.
	\end{proof}
	
	Let $\varphi \colon (Y,E+F) \to (T,E_T)$ be the contraction of the trees of $(-2)$-curves given by $F$. 
	Note there is a birational contraction $\psi \colon (T,E_T) \to X$, contracting exactly $E_T$.
    Let $(\mathcal{Y}, \mathcal{E} + \mathcal{F})$ be the lifting constructed in \autoref{claim-good-lifting}.
    By \autoref{p-lc-nottriv} we can contract $\mathcal{F}$ to get a lifting $\widetilde{\varphi} \colon (\mathcal{Y}, \mathcal{E} + \mathcal{F}) \to (\mathcal{T}, \mathcal{E}_{T})$ of $\varphi$. 
	Since $A_Y|_F \sim 0$ and $H^1(F, \mathcal{O}_F)=0$ as $F$ is a tree of smooth rational curves, we deduce that $\mathcal{A}_{\mathcal{Y}}|_{\mathcal{F}} \sim 0$, and so $\mathcal{A}_{\mathcal{Y}}$ is $\widetilde{\varphi}$-trivial.
	Therefore it descends to a line bundle $\mathcal{A}_{\mathcal{T}}$ on $\mathcal{T}$. 
    As $\mathcal{A}_{\mathcal{T}}|_{\mathcal{E}_{T}} \sim 0$, by \autoref{p-lifting-criterion} we conclude there exists a lifting $\widetilde{\psi}$ of $\psi$. 
    Thus $\widetilde{f} = \widetilde{\psi} \circ \widetilde{\varphi}$ is the desired lifting of $f$.
    \end{proof}
	
\begin{remark}
    The toric lifting of the toric pair used to solve case (ii) of \autoref{claim-good-lifting} can be thought as a canonical lifting as it is the unique lifting admitting a lifting of the Frobenius morphism compatible with the toric boundary (as defined in \cite{AWZII}).
\end{remark}
	
We can finally prove the main result of this article.

\begin{theorem}\label{t-3}
	Let $(X,D)$ be a normal projective globally $F$-split surface pair, where $D$ is a reduced Weil divisor. 
	Then $(X,D)$ is strongly liftable over $W(k)$.
\end{theorem}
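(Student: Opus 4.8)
The plan is to deduce the statement from the results established so far by a short case distinction, checking in each case that one obtains a log resolution $f\colon (Y, f_*^{-1}D+\Ex(f))\to(X,D)$ together with a lifting $\widetilde f$ of $f$ over $W(k)$ in the sense of \autoref{def-lift-morphism}; by \autoref{def-liftability} this is exactly the assertion that $(X,D)$ is strongly liftable. As a preliminary observation I would record that, since $(X,D)$ is globally $F$-split, $(X,D)$ is log canonical and $-(K_X+D)$ is $\mathbb{Q}$-effective by \autoref{l-Q-K+delta}, and, as noted after \autoref{p-lc-nottriv}, $H^0(X,\mathcal{O}_X(K_X))\neq 0$ if and only if $K_X\sim 0$; in the latter situation $X$ is Gorenstein, so each singularity of $X$ is either canonical or strictly log canonical.

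First I would treat the case in which $D\neq 0$, or $D=0$ and $H^0(X,\mathcal{O}_X(K_X))=0$. In either situation \autoref{p-lc-nottriv} applies directly (taking $K_X+D=K_X$ when $D=0$) and produces a log resolution $f$ of $(X,D)$ together with a lifting $\widetilde f\colon(\mathcal{Y},\mathcal{D}_{\mathcal{Y}}+\mathcal{E})\to(\mathcal{X},\mathcal{D})$ of $f$ over $W(k)$ with $\widetilde f_*\mathcal{O}_{\mathcal{Y}}=\mathcal{O}_{\mathcal{X}}$; this is precisely what strong liftability demands. One may further note that, since $\mathcal{X}$ is constructed as the relative $\Proj$ of the section ring of a big and nef line bundle on $\mathcal{Y}$, the morphism $\widetilde f$ is moreover birational on the generic fibre.

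It remains to consider $D=0$ with $H^0(X,\mathcal{O}_X(K_X))\neq 0$, equivalently $K_X\sim 0$. If $X$ is klt, then it is Gorenstein canonical, its minimal resolution $f\colon(Y,E)\to X$ extracts a tree of $(-2)$-curves and is therefore a log resolution, and \autoref{p-algebraicity-k-trivial} supplies a projective birational lifting $\widetilde f\colon(\mathcal{Y}_{\can},\mathcal{E}_{\can})\to\mathcal{X}_{\can}$ of $f$ over $W(k)$. If $X$ is not klt, I would instead pick the log resolution $f\colon(Y,\Ex(f))\to X$ with $K_Y+E=f^*K_X$ described after \autoref{p-strictlyLC-case} (the minimal resolution over the canonical points, extracting an elliptic curve or a cycle of rational curves over the strictly log canonical points) and apply \autoref{p-strictlyLC-case} to obtain a lifting $\widetilde f\colon(\mathcal{Y}_{\can},\mathcal{E})\to\mathcal{X}$ of $f$ over $W(k)$. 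In both subcases $(X,0)$ is strongly liftable, and since the three cases exhaust all possibilities, the theorem follows.

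At the level of this final statement the argument is essentially bookkeeping: the analytic and geometric work has been carried out in advance. The genuinely hard ingredient is \autoref{p-strictlyLC-case}, which handles globally $F$-split $K$-trivial surfaces with strictly log canonical singularities by descending a well-chosen (``canonical'') lifting of a crepant snc model through the non-klt locus; that is where the real obstacle lies, whereas the klt case and the case $D\neq 0$ or $K_X+D\not\sim 0$ are comparatively soft once \autoref{t-log-lift-GFS} and the deformation-theoretic toolbox of \autoref{s-def-toolbox} are available.
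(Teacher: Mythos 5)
Your proof is correct and follows essentially the same route as the paper: the paper likewise dispatches the case $D=0$, $K_X\sim 0$ via \autoref{p-algebraicity-k-trivial} and \autoref{p-strictlyLC-case}, and reduces the remaining cases to \autoref{p-lc-nottriv}. You have merely spelled out the case distinction and the equivalence $H^0(X,\mathcal{O}_X(K_X))\neq 0\Leftrightarrow K_X\sim 0$ a bit more explicitly than the paper does.
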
	
	
\begin{proof}
	If $D=0$ and $K_X \sim 0$, this is proven in \autoref{p-algebraicity-k-trivial} and \autoref{p-strictlyLC-case}.
	The remaining cases are proven in \autoref{p-lc-nottriv}.
\end{proof}
	
\section{Applications}\label{s-applications}
	
	In this section we show some applications of our results to the study of singularities on globally $F$-split surfaces and to the existence of special liftings of del Pezzo type globally $F$-split surfaces.

	\subsection{Singularities of globally $F$-split surfaces}
	
	The following result allows to compare the singularities of a variety admitting a log lifting with those in characteristic zero.  
    We recall the definition of a weighted dual graph of the exceptional locus for a surface singularity.

\begin{definition}\label{def-weighted-dual-graph} 
    Let $X$ be a normal projective surface over $k$ and let $f\colon (Y,\Ex(f)) \to X$ be a log resolution.
    The \emph{weighted dual graph} of $E$ is the graph whose vertices $\left\{v_i\right\}$ correspond to irreducible components $E_i$ of $\Ex(f)$ and two vertices $v_i$ and $v_j$ are connected by an edge for each of the intersection point of $E_i$ and $E_j$.
    Moreover, every vertex $v_i$ is labelled with the self-intersection $E_i^2$ and the genus $g(E_i)$.
\end{definition}
    	
	\begin{proposition}\label{p-good-lifting}
		Let $X$ be a normal projective surface over $k$ and let $f\colon (Y,E) \to X$ be a log resolution.
		Suppose there exists a projective lifting $\tilde{f}\colon (\mathcal{Y}, \mathcal{E}) \to \mathcal{X}$ of $f \colon (Y,E) \to X$ over $W(k)$.
		Then
		\begin{enumerate}
			\item[(a)] the weighted dual graph of $E$ is equal to that of $\mathcal{E}_{\overline{K}}$;
			\item[(b)] if $X$ has rational singularities, then  $\mathcal{X}$ has rational singularities;
			\item[(c)] if $X$ has klt singularities, then $\mathcal{X}$ is klt;
			\item[(d)] if $\Pic(\mathcal{Y}) \to \Pic(Y)$ is surjective and $X$ has rational singularities, then $\rho(X)=\rho(\cX_{\overline{K}})$.
		\end{enumerate}
	\end{proposition}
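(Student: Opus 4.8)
The plan is to establish part (a) first and then derive (b)--(d) from it using classical facts about surface singularities and specialisation of Néron--Severi groups. For (a), I would invoke \autoref{l-loc-triv}: since $(Y,E)$ is snc and $(\mathcal{Y},\mathcal{E})$ is a lifting over the regular local ring $W(k)$, the pair $(\mathcal{Y},\mathcal{E})$ is relatively snc over $W(k)$. Hence $\mathcal{Y}$, each prime component $\mathcal{E}_i$ of $\mathcal{E}$, and each intersection $\mathcal{E}_i\cap\mathcal{E}_j$ ($i\neq j$) are smooth over $W(k)$, the last of relative dimension $0$ and so finite \'etale. I would then observe that every invariant recorded in the weighted dual graph (\autoref{def-weighted-dual-graph}) is constant on $\Spec W(k)$: the $\mathcal{E}_i$ are disjoint, each with geometrically connected fibres (by semicontinuity $h^0(\mathcal{O}_{\mathcal{E}_{i,t}})\equiv 1$, since it equals $1$ on the closed point), so their number is unchanged; $g(\mathcal{E}_{i,t})=h^1(\mathcal{O}_{\mathcal{E}_{i,t}})$ is constant because $\chi$ is and $h^0\equiv 1$; $\mathcal{E}_{i,t}^2=\deg(\mathcal{O}_{\mathcal{Y}}(\mathcal{E}_i)|_{\mathcal{E}_{i,t}})$ is constant as the degree of a line bundle in a flat family of curves; and the number of edges between $v_i$ and $v_j$ equals the number of points of $\mathcal{E}_{i,t}\cap\mathcal{E}_{j,t}$, which is the (constant) degree of the finite \'etale morphism $\mathcal{E}_i\cap\mathcal{E}_j\to\Spec W(k)$. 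Comparing the fibres over the closed point, $(Y,E)$, and over $\overline{K}$, $(\mathcal{Y}_{\overline{K}},\mathcal{E}_{\overline{K}})$, gives (a).

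For (b) and (c), the point is that rationality and kltness of a normal surface germ are determined by the weighted dual graph of a log resolution. Here $\widetilde{f}_{\overline{K}}\colon\mathcal{Y}_{\overline{K}}\to\mathcal{X}_{\overline{K}}$ is a log resolution of a normal surface, with exceptional divisor $\mathcal{E}_{\overline{K}}$: indeed $\mathcal{Y}_{\overline{K}}$ is smooth, $\mathcal{E}_{\overline{K}}$ is snc by \autoref{l-loc-triv}, and $(\widetilde{f}_{\overline{K}})_*\mathcal{O}_{\mathcal{Y}_{\overline{K}}}=\mathcal{O}_{\mathcal{X}_{\overline{K}}}$ by flat base change from \autoref{def-lift-morphism}, which also forces $\mathcal{X}_{\overline{K}}$ to be normal (being the pushforward of the structure sheaf of a smooth surface along a proper birational map). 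By Artin's criterion, rationality of $(X,x)$ depends only on the signs of $p_a(D)=1+\tfrac12(D^2+D\cdot K_Y)$ for effective $D$ supported on $\Ex(f)$, hence only on the $\mathcal{E}_i^2$, the $\mathcal{E}_i\cdot\mathcal{E}_j$ and the $g(\mathcal{E}_i)$ via adjunction; by (a) this data agrees on both fibres, so $X$ rational implies $\mathcal{X}_{\overline{K}}$ rational. One may even upgrade this to $R^1\widetilde{f}_*\mathcal{O}_{\mathcal{Y}}=0$: from $0\to\mathcal{O}_{\mathcal{Y}}\xrightarrow{p}\mathcal{O}_{\mathcal{Y}}\to\mathcal{O}_Y\to 0$ together with $\widetilde{f}_*\mathcal{O}_{\mathcal{Y}}=\mathcal{O}_{\mathcal{X}}$ and flatness of $\mathcal{X}$ over $W(k)$, one sees that $p$ acts invertibly on $R:=R^1\widetilde{f}_*\mathcal{O}_{\mathcal{Y}}$, so $\Supp R\subseteq\mathcal{X}_K$, while $R|_{\mathcal{X}_K}=R^1(\widetilde{f}_K)_*\mathcal{O}_{\mathcal{Y}_K}=0$ by rationality of $\mathcal{X}_K$, whence $R=0$. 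For (c): $X$ klt implies $X$ rational, hence $\mathbb{Q}$-factorial by \cite[Proposition B.2]{Tan15}, and likewise $\mathcal{X}_{\overline{K}}$ is $\mathbb{Q}$-factorial by (b); thus $K_{\mathcal{X}_{\overline{K}}}$ is $\mathbb{Q}$-Cartier, and the discrepancies $a_i$ in $K_{\mathcal{Y}_{\overline{K}}}=\widetilde{f}_{\overline{K}}^*K_{\mathcal{X}_{\overline{K}}}+\sum a_i\mathcal{E}_{i,\overline{K}}$ are the unique solution of the negative definite linear system $\sum_i a_i(\mathcal{E}_{i,\overline{K}}\cdot\mathcal{E}_{j,\overline{K}})=2g(\mathcal{E}_{j,\overline{K}})-2-\mathcal{E}_{j,\overline{K}}^2$, which by (a) coincides with the system computing the discrepancies of $f$; since the latter are all $>-1$ ($X$ klt, $f$ a log resolution), so are the former, and $\mathcal{X}_{\overline{K}}$ is klt.

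For (d), let $n$ be the number of prime components of $E=\Ex(f)$, equal by (a) to the number of prime components of $\mathcal{E}_{\overline{K}}$. Since $X$ and (by (b)) $\mathcal{X}_{\overline{K}}$ have rational---hence $\mathbb{Q}$-factorial---singularities and the intersection matrix of the exceptional locus of a resolution of a normal surface is negative definite, pull-back along the two resolutions gives $\rho(Y)=\rho(X)+n$ and $\rho(\mathcal{Y}_{\overline{K}})=\rho(\mathcal{X}_{\overline{K}})+n$, so it suffices to prove $\rho(Y)=\rho(\mathcal{Y}_{\overline{K}})$. The specialisation map $\mathrm{sp}\colon\mathrm{NS}(\mathcal{Y}_{\overline{K}})\to\mathrm{NS}(Y)$ (take Zariski closures in the regular scheme $\mathcal{Y}$ and restrict to the closed fibre) is injective, being compatible with the intersection pairing, which is non-degenerate modulo torsion; and it is surjective here because, by the hypothesis that $\Pic(\mathcal{Y})\to\Pic(Y)$ is onto, every class on $Y$ is the restriction of a line bundle on $\mathcal{Y}$, whose restriction to $\mathcal{Y}_{\overline{K}}$ specialises back to it. Hence $\mathrm{sp}$ is an isomorphism, $\rho(Y)=\rho(\mathcal{Y}_{\overline{K}})$, and (d) follows. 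The main obstacle is really part (a)---extracting the relative snc picture from \autoref{l-loc-triv} and checking the constancy of every graph invariant in the flat family; parts (b)--(d) are then formal consequences, the one delicate point being that (c) must call on (b) first so that $K_{\mathcal{X}_{\overline{K}}}$ is known to be $\mathbb{Q}$-Cartier before discrepancies are compared.
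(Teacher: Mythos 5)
Your treatment of the flatness invariants in part (a) and your specialization argument in part (d) are sound, but there are two genuine gaps relative to the paper's proof.

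The first is that you never verify $\Ex(\widetilde{f}_{\overline{K}})=\mathcal{E}_{\overline{K}}$. In \autoref{def-lift-morphism} the subscheme $\mathcal{E}$ is merely required to be a flat lift of $E$; nothing in the definition forces it to coincide with the exceptional locus of $\widetilde{f}$, nor forces $\widetilde{f}_{\overline{K}}$ to contract each $\mathcal{E}_{i,\overline{K}}$. Your assertion that ``$\widetilde{f}_{\overline{K}}$ is a log resolution with exceptional divisor $\mathcal{E}_{\overline{K}}$'' is exactly the claim in question, and the three facts you invoke (smoothness of $\mathcal{Y}_{\overline{K}}$, snc-ness of $\mathcal{E}_{\overline{K}}$, and $\mathcal{O}_{\mathcal{X}_{\overline{K}}}=(\widetilde{f}_{\overline{K}})_*\mathcal{O}_{\mathcal{Y}_{\overline{K}}}$) do not imply it. The paper closes this gap at the start of its proof: pulling back an ample $\mathcal{A}$ on $\mathcal{X}$ to $\mathcal{L}:=\widetilde{f}^*\mathcal{A}$, one has $\mathcal{E}_{i,K}\cdot\mathcal{L}_K=E_i\cdot L=0$ by constancy of intersection numbers in a flat family, so each $\mathcal{E}_{i,K}$ is contracted; conversely any $\widetilde{f}_K$-exceptional prime divisor specialises to a divisor $F$ with $F\cdot L=0$, hence supported on $E$, and the negativity lemma identifies its component. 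Without this step, parts (b)--(d) hang in the air, and even the interpretation of (a) (where \autoref{def-weighted-dual-graph} is phrased for exceptional loci of resolutions) is not fully justified.

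The second gap is in (c): you establish that $\mathcal{X}_{\overline{K}}$ is klt, but the statement asserts that the total three-fold $\mathcal{X}$ over $W(k)$ is klt. This requires first proving that $K_{\mathcal{X}}$ is $\mathbb{Q}$-Cartier as a relative three-fold canonical divisor, which is not automatic from $\mathbb{Q}$-factoriality of the two surface fibres. The paper obtains it by the base-point-free theorem on the generic fibre, giving $K_{\mathcal{Y}_K}+\sum a_i\mathcal{E}_{i,K}\sim_{\Q,\mathcal{X}_K}0$, and then invoking \cite[Theorem 1.2]{Wit21} (a relative MMP/semiampleness result over $W(k)$) to upgrade this to $K_{\mathcal{Y}}+\sum a_i\mathcal{E}_i\sim_{\mathbb{Q},\mathcal{X}}0$, whence $K_{\mathcal{X}}$ is $\mathbb{Q}$-Cartier and $\mathcal{X}$ is klt; your discrepancy comparison on $\mathcal{X}_{\overline{K}}$ never touches this. (Similarly, the paper establishes normality of $\mathcal{X}$ itself, via $S_3$ + $R_1$, before discussing its singularities; you address only $\mathcal{X}_{\overline K}$.) The route you take for (b) via Artin's criterion is a valid alternative to the paper's Nakayama argument, and your specialisation-map treatment of (d) essentially reproduces the [MP12] input, but those do not save (a) or (c) from the gaps above.
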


	\begin{proof}
	We define $E\coloneqq \sum_{i} E_i$ and $\mathcal{E}=\sum_{i} \mathcal{E}_i$, where each $E_i$ is an irreducible component of $E$ and each $\mathcal{E}_{i}$ is a lifting of $E_i$.
	Let us show that $\Ex(\tilde{f})=\mathcal{E}$.
	To this end, it suffices to prove that $\Ex(\tilde{f}_{K})=\cE_{K}$.
	Let $\mathcal{A}$ be an ample divisor on $\mathcal{X}$.
	We denote the pull-back $\mathcal{L}:=\tilde{f}^*\mathcal{A}$ and its restriction to the closed fibre by $L:=\cL\otimes_{W(k)}k$.
	Take an irreducible component $\cE_{i,K}$ of $\cE_{K}$. Then $\cE_{i,K}\cdot \cL_{K}=E_{i}\cdot L=0$ and thus $\cE_{i,K}\subset \Ex(\tilde{f}_{K})$.
	Next, let $\cF_K$ be a prime divisor contained in $\Ex(\tilde{f}_{K})$, let $\cF$ be its closure in $\cY$, and let $F:=\cF\otimes_{W(k)}k$. 
	Then $L\cdot F=\cL_{K}\cdot \cF_K=0$  and thus $F=\sum_{i} m_{i}E_{i}$ for some $m_{i} \geq 0$.
	By the negativity lemma, there exists $E_j$ such that $F \cdot E_j<0$.
	Now $\cF_K\cdot \cE_{j,K} = F \cdot E_{j} <0$ and hence $\cF_K=\cE_{j,K}$.
	Thus we deduce $\Ex(\tilde{f}_{K})=\cE_{K}$ and $\Ex(\tilde{f})=\mathcal{E}$.
		
	In this paragraph, we show the assertion (a).
	We start by proving that $\mathcal{X}$ is normal.
	Since $X$ is $S_2$ and it is a Cartier divisor of $\mathcal{X}$, it follows that $\mathcal{X}$ is $S_3$ by \cite[Proposition 5.3]{km-book}.
	Furthermore, $\mathcal{X}$ is regular outside $\tilde{f}(\mathcal{E})$, which is a closed subset of codimension at least 2 hence $\mathcal{X}$ is $R_1$ and thus normal.
	Since $E$ and $\cE_{\overline{K}}$ have the same intersection matrix, we obtain assertion (a).
		
		As for (b), we 
		consider the short exact sequence $0 \to \mathcal{O}_\mathcal{Y}(-Y) \to \mathcal{O}_{\mathcal{Y}} \to \mathcal{O}_Y \to 0 $.
		As $\tilde{f} \colon \mathcal{Y} \to \mathcal{X}$ is a simultaneous resolution of the family $\mathcal{X}$, by hypothesis $R^i f_*\mathcal{O}_Y=0$ for $i>0$ and therefore we deduce that the multiplication $R^i \tilde{f}_* \mathcal{O}_\mathcal{Y}(-Y) \to R^i \tilde{f}_*\mathcal{O}_{\mathcal{Y}} $ is an isomorphism, concluding $R^i \tilde{f}_* \mathcal{O}_\mathcal{Y}=0$ by Nakayama's lemma.
        
        To prove (c), we know $K_Y+\sum a_i E_i =f^*K_X$ for $a_i<1$ by the klt hypothesis.
        Note that by (a), the surface $\mathcal{X}_K$ has klt singularities (as being klt can be checked from the dual graph for surfaces).
        As $K_{\mathcal{Y}_K}+\sum a_i\mathcal{E}_{i,K}$ is $f_K$-numerically trivial and $X_K$ has klt singularities, by the base-point-free theorem we conclude $K_{\mathcal{Y}_K}+\sum a_i\mathcal{E}_{i,K} \sim_{\mathbb{Q}, X} 0$. 
        By \cite[Theorem 1.2]{Wit21}, we conclude that $K_{\mathcal{Y}}+\sum a_i \mathcal{E}_i \sim_{\mathbb{Q}, \mathcal{X}} 0,$ so $K_{\mathcal{X}}=\tilde{f}_* (K_{\mathcal{Y}}+\sum a_i \mathcal{E}_i)$ is $\mathbb{Q}$-Cartier.
        As $\mathcal{Y} \to \mathcal{X}$ is a log resolution and $a_i<1$, we conclude $\mathcal{X}$ is klt.

		We now prove (d). By \cite[Proposition 3.6]{MP12}, we have that $\rho(Y) \geq \rho(\cY_{\overline{K}})$. 
		Since $\Pic(\mathcal{Y}) \to \Pic(Y)$ is surjective and $\Pic(\mathcal{Y})\cong \Pic(\mathcal{Y}_K)$ by \cite[Expos\'e X, Appendix 7.8]{SGA6}, we conclude $\rho(Y) \leq \rho(\cY_{\overline{K}})$, and so $\rho(Y)=\rho(\cY_{\overline{K}})$.
	    Let $n$ be the number of exceptional divisors in $\Ex(f)$. By (a), $n$ is also the number of exceptional divisors in $\Ex(\widetilde{f}_{\overline K})$. Thus, as $X$ and $\mathcal{X}_K$ are $\mathbb{Q}$-factorial by \cite[Proposition 10.9]{kk-singbook}, 
		\[
		\rho(X) = \rho(Y) - n = \rho(\mathcal{Y}_{\overline K}) - n = \rho(\mathcal{X}_{\overline K}),
		\]
		which concludes the proof of (d).
\end{proof}
	
	As an application we show the existence of a lifting of a globally $F$-split surface $X$ over $W(k)$ which preserves the Picard rank and the type of the singularities of $X$.
	
	\begin{proof}[Proof of \autoref{c-1}]
		We pick $f \colon (Y,E) \to X$ to be a log resolution, and take a lifting $(\mathcal{Y}, \mathcal{E}) \to \mathcal{X}$ of $f \colon (Y,E) \to X$ over $W(k)$ granted by \autoref{t-3}. 
		If $H^2(X,\mathcal{O}_X)=0$, then $\Pic(\mathcal{Y}) \to \Pic(Y)$ is surjective by \cite[Corollary 8.5.5]{FAG}, so we conclude by \autoref{p-good-lifting}.
		Suppose that $H^2(X, \mathcal{O}_X) \neq 0$. Then $H^0(X, \mathcal{O}_X(K_X)) \neq 0$ and thus $X$ is a globally $F$-split surface with $K_X \sim 0$.  
		We consider the canonical lifting $ \widetilde{f} \colon (\mathcal{Y}_{\can}, \mathcal{E}_{\can}) \to \mathcal{X}_{\can}$ constructed in \autoref{p-algebraicity-k-trivial} and \autoref{p-strictlyLC-case}. Again we apply \autoref{p-good-lifting}.
	\end{proof}
	As a consequence of \autoref{c-1} we deduce an explicit bound on the Gorenstein and global index of globally $F$-split klt $K$-trivial surfaces. We recall their definitions. 

	\begin{definition} \label{def:definition-of-global-index}
	    Let $X$ be a normal $\Q$-Gorenstein variety. The \textit{Gorenstein index of $X$} is the smallest integer $m>0$ such that $mK_X$ is Cartier.
	    If $X$ is projective and $K_X\sim_\Q 0$, the \textit{global index of $X$} is the smallest integer $n>0$ such that $nK_X \sim 0$.
	\end{definition}
	
	\begin{proof}[Proof of \autoref{c-index-CY}]
		By \autoref{c-1}, there exists a lifting $\mathcal{X}$ over $W(k)$ such that $\mathcal{X}_{\overline{K}}$ is a klt projective surface over an algebraically closed field of characteristic zero whose weighted dual graph of the minimal resolution of $\mathcal{X}_{\overline{K}}$ is the same as that of $X$. We claim that $K_{\mathcal{X}} \equiv 0$: take an irreducible curve $C_K \subset \mathcal{X}_K$.
        Let $\mathcal{C}$ be the closure of $C_K$ in $\mathcal{X}$ and let $C=\mathcal{C} \otimes_{W(k)} k$. Then
        $K_{\mathcal{X}_K} \cdot C_K = K_X \cdot C=0$. Thus $K_{\mathcal{X}} \equiv 0$.
        
		We note that the Gorenstein index of a klt surface $S$ is determined by the weighted dual graph of the minimal resolution $\pi \colon T\to S$ as follows.
		We can write $K_T+\sum_{i}a_iE_i=\pi^{*}K_S$ for some $a_i\in\Q_{>0}$.
		Since $nK_S$ is Cartier if and only if $\pi^{*}nK_S$ is Cartier by \cite[Lemma 2.1]{CTW17}, the Gorenstein index of $S$ is equal to $\min\{n\in\Z_{>0} \mid na_i\in\Z\,\,\textup{for}\,\,\textup{all}\,\,i\}$.
		Since $\mathcal{X}_{\overline{K}}$ has Gorenstein index at most 21 by \cite[Theorem C]{Bla95}, so does $X$.
		
		Finally, we show the assertion about the global index of $K_X$.
		If $X$ has non-canonical singularities, then the global index of $K_X$ coincides with the Gorenstein index by~\cite[Lemma 3.12]{Kaw21} and in particular it is at most $21$.
		On the other hand, if $X$ has only canonical singularities, then the global index is at most 6 by \cite[Theorem 1]{BM77}.
		Thus the assertion holds.
	\end{proof}

\subsection{Lifting globally $F$-split del Pezzo and Calabi--Yau pairs}
	
	In what follows, we show that we can always choose a lifting  of a globally $F$-split surface of del Pezzo type over $W(k)$ so that it is still a surface of del Pezzo type.
	
	\begin{lemma}\label{l-Zariski-decomp-dP}
		Let $X$ be a surface of del Pezzo type and let $f \colon Y \to X$ be the minimal resolution.
		Then there exists an effective $\Q$-divisor $D$ on $Y$ such that $\Supp(D)$ is snc, $\Ex(f) \subseteq \Supp(D)$, and the pair $(Y,D)$ is log del Pezzo.
	\end{lemma}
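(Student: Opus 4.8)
The plan is to build $D$ as a small perturbation, along $\Ex(f)$, of the crepant pullback to $Y$ of a carefully chosen klt log Fano boundary on $X$. First I would use that $X$, being of del Pezzo type, is in particular of Fano type, hence $\Q$-factorial and klt, and that there is an effective $\Q$-divisor $\Delta$ with $(X,\Delta)$ klt and $-(K_X+\Delta)$ ample. By a routine perturbation of $\Delta$ — replacing $\Delta$ by $(1-\varepsilon)\Delta+\varepsilon\Delta'$ for $0<\varepsilon\ll1$ and a sufficiently general effective $\Delta'\sim_{\Q}\Delta$, using Bertini together with the Fano type hypothesis — I would arrange in addition that the minimal resolution $f$ is \emph{simultaneously} a log resolution of $(X,\Delta)$, i.e.\ that $(Y,\,f_*^{-1}\Delta+\Ex(f))$ is an snc pair.

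Next I would pass to the crepant pullback. Writing $\Ex(f)=\sum_iE_i$ and $K_Y+\Delta_Y=f^*(K_X+\Delta)$ with $\Delta_Y=f_*^{-1}\Delta+\sum_ia_iE_i$, the fact that $f$ is the minimal resolution of a klt surface forces each discrepancy $a(E_i,X,0)\in(-1,0]$, and klt-ness of $(X,\Delta)$ then gives $0\le a_i<1$ for all $i$; in particular $\Delta_Y\ge0$. Moreover $-(K_Y+\Delta_Y)=-f^*(K_X+\Delta)$ is the pullback of an ample $\Q$-divisor, hence nef and big, and it is $f$-trivial by the projection formula. Since the intersection form on each connected component of $\Ex(f)$ is negative definite (each component being the exceptional locus over a klt surface singularity), there is an effective $\Q$-divisor $Z=\sum_iz_iE_i$ with $z_i>0$ for every $i$ and $Z\cdot E_j<0$ for every $j$; equivalently, $-Z$ is $f$-ample. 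I would then set $D:=\Delta_Y+\delta Z=f_*^{-1}\Delta+\sum_i(a_i+\delta z_i)E_i$ for $0<\delta\ll1$. Its support is $\Supp(f_*^{-1}\Delta)\cup\Ex(f)$, which is snc by the first step and visibly contains $\Ex(f)$ (the coefficient of each $E_i$ being $a_i+\delta z_i>0$); all coefficients of $D$ are $<1$ for $\delta$ small, so $(Y,D)$ is klt because its support is snc; and $-(K_Y+D)=-f^*(K_X+\Delta)+\delta(-Z)$ is the sum of the pullback of an ample $\Q$-divisor and $\delta$ times an $f$-ample $\Q$-divisor, hence ample for $0<\delta\ll1$. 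Thus $(Y,D)$ is log del Pezzo with the required properties.

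The crepant pullback computation and the two perturbation-by-small-$\delta$ arguments are formal, relying only on openness of the ample cone and on relative ampleness over $f$. The one step that requires genuine care is the reduction in the first paragraph: arranging that the \emph{minimal} resolution can be taken to be a log resolution of $(X,\Delta)$, i.e.\ choosing $\Delta$ general enough that its strict transform is snc on $Y$ and meets $\Ex(f)$ transversally. This is precisely where the del Pezzo type hypothesis (as opposed to mere klt-ness of $X$) is used, through the flexibility one has in choosing klt log Fano boundaries on a Fano type surface.
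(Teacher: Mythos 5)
Your approach is genuinely different from the paper's, and it has a gap precisely at the step you flagged as the one "that requires genuine care."

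The paper does not involve any boundary divisor pulled back from $X$ at all. It observes that $Y$ and $X$ have the same anti-canonical ring, passes to the anti-canonical model $Z=\Proj\bigoplus_{m\ge 0}H^0(X,\cO_X(-mK_X))$ (which is a klt del Pezzo surface, by \cite[Lemma~2.9]{BT22}), and uses the morphism $\pi\colon Y\to Z$. Writing $K_Y\sim_{\Q}\pi^*K_Z-F$, with $F\ge 0$ the $\pi$-exceptional ``Zariski negative part,'' one gets $-(K_Y+F)=\pi^*(-K_Z)$ big and nef, and then perturbs by $\varepsilon H$, where $-H:=A-\pi^*\pi_*A$ is $\pi$-ample with $\Supp(H)=\Ex(\pi)$. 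Here the whole boundary $D=F+\varepsilon H$ is supported on $\Ex(\pi)$, which is automatically snc because it is the exceptional locus over klt surface points (classification, \cite[3.40]{kk-singbook}). No general-position or Bertini argument is needed.

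In contrast, your construction needs $f$ to be a log resolution of $(X,\Delta)$, and the proposed perturbation $\Delta\mapsto(1-\varepsilon)\Delta+\varepsilon\Delta'$ does not achieve this. Since $\varepsilon<1$, every component of $\Delta$ remains in the support of the perturbed divisor with positive coefficient, so $\Supp\bigl(f^{-1}_*\bigl((1-\varepsilon)\Delta+\varepsilon\Delta'\bigr)\bigr)\cup\Ex(f)$ contains $\Supp(f^{-1}_*\Delta)$ unchanged; if $f^{-1}_*\Delta$ meets $\Ex(f)$ non-transversally or has non-snc components, the perturbation does nothing to fix it. Nor can you take $\varepsilon=1$ or otherwise move the bad components: on a Fano type surface, a klt log Fano boundary $\Delta$ can, and typically will, have rigid components (negative curves, fixed parts of linear systems) that simply cannot be replaced by a ``sufficiently general'' $\Delta'\sim_{\Q}\Delta$. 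The ``Fano type hypothesis'' gives the existence of \emph{some} klt log Fano $\Delta$, not the freedom to choose one transverse to a prescribed fixed configuration of exceptional curves. (The additional worry you gloss over, that Bertini in characteristic $p$ only behaves well for very ample systems, compounds this, but the structural issue above is the fatal one.)

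Everything after the Bertini step is fine: the bound $0\le a_i<1$ on the coefficients of the crepant pullback, the existence of an effective $f$-anti-ample $Z$ via negative definiteness of the exceptional intersection form, and the openness argument that $-f^*(K_X+\Delta)-\delta Z$ is ample for $0<\delta\ll1$ are all correct. The difference in robustness between the two routes is exactly that the paper engineers the boundary so that its support is \emph{a priori} exceptional, hence \emph{a priori} snc, whereas your construction smuggles in a strict transform that you cannot control. If you want to rescue your approach, you would need a different mechanism (for instance, replacing $\Delta$ by the Zariski negative part of $-K_X$ pulled back to $Y$), which in effect reproduces the paper's argument.
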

	
	\begin{proof}
		Since $f$ extracts only divisors with non-positive discrepancies, the anti-canonical rings of $Y$ and $X$ coincide.
		By \cite[Lemma 2.2]{ABL20}, $Y$ is a Mori dream space and there is a factorisation \[\pi \colon Y \to X \to Z:=\Proj \bigoplus_{m \geq 0} H^0(X, \mathcal{O}_X(-mK_X)).\]
		By \cite[Lemma 2.9]{BT22}, $Z$ is a klt del Pezzo surface, thus we have that $\Supp(\Ex(\pi))$ is snc by the classification results of \cite[Section 3.40]{kk-singbook}.
		
		We write $K_Y \sim_{\Q} \pi^*K_Z - F$ where $F$ is effective and it is contained in the $\pi$-exceptional locus.
		Thus we have that $(Y, \Supp(F))$ is snc, $(Y,F)$ is klt and $-(K_Y+F)$ is a big and nef $\Q$-Cartier divisor and that its null locus is contained in $\Ex(\pi)$.
		
		Let $A$ be an ample effective divisor on $Y$ and define $H:=\pi^*\pi_*A-A$.
		Note that $-H$ is $\pi$-ample and that $\Supp(H)$ coincides with $\Ex(\pi)$.
		Finally for sufficiently small $\varepsilon>0$, $(Y, F+\varepsilon H)$ is klt and $-(K_Y+F+\varepsilon H)$ is ample.
	\end{proof}
	
	\begin{theorem}\label{t-lift-GFSFanosurf}
		Let $X$ be a globally $F$-split surface of del Pezzo type.
		Let $f\colon (Y,E) \to X$ be its minimal resolution pair. 
		Then there exists a lifting $\tilde{f}\colon (\mathcal{Y}, \mathcal{E}) \to \mathcal{X}$ of $f$ over $W(k)$ such that
		\begin{enumerate}
			\item[(a)]   $\mathcal{X}$ is a normal threefold with klt and rational singularities and $\Ex(\tilde{f})=\mathcal{E}$;
			\item[(b)]   $\rho(X)=\rho(\cX_{\overline{K}})$ and the dual graph of $\Ex(f)$ is equal to $\Ex(\tilde{f}_{\overline{K}})$;
			\item[(c)] $\cY_K$  and $\cX_K$ are surfaces of del Pezzo type.
		\end{enumerate}
	\end{theorem}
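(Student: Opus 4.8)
The plan is to combine the Zariski-type decomposition of \autoref{l-Zariski-decomp-dP} with the lifting statement for snc pairs on globally $F$-split Fano-type varieties (\autoref{p-lift-Fanopairs}), the descent of contractions via \autoref{t-cynkvanstraten} (packaged in \autoref{p-extending-algebraisable}), and the singularity comparison of \autoref{p-good-lifting}. First I would record the geometry. Since $X$ is of del Pezzo type it is klt, hence has rational singularities, and the minimal resolution $f\colon (Y,E)\to X$ is a log resolution. By \autoref{l-Q-K+delta} there is an effective $\mathbb Q$-divisor $\Gamma$ with $(X,\Gamma)$ globally sharply $F$-split log Calabi--Yau; since the minimal resolution extracts only divisors of non-positive discrepancy, the crepant pullback $K_Y+\Gamma_Y=f^{\ast}(K_X+\Gamma)$ has $\Gamma_Y\ge 0$, so $(Y,\Gamma_Y)$ is globally sharply $F$-split by \autoref{l-GFR-pullback}, and in particular $Y$ is globally $F$-split. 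Then \autoref{l-Zariski-decomp-dP} produces an effective $\mathbb Q$-divisor $D=\sum_i d_iE'_i$ with $\lfloor D\rfloor=0$, with $E':=\Supp(D)$ reduced and snc, with $\Ex(f)\subseteq E'$, and with $(Y,D)$ log del Pezzo.

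Second, I would construct the lifting. Applying \autoref{p-lift-Fanopairs} with $\Delta=D$, reduced snc divisor $E'$, and $d=\dim Y=2\le p$, one obtains $H^i(Y,\mathcal O_Y)=0$ for $i>0$, $H^2(Y,T_Y(-\log E'))=0$, and an algebraic projective lifting $(\mathcal Y,\mathcal E')$ of $(Y,E')$ over $W(k)$ with $\mathcal Y$ smooth over $W(k)$; moreover $\Pic(\mathcal Y)\to\Pic(Y)$ is surjective by \cite[Corollary~8.5.5]{FAG}. Writing $\mathcal E$ for the sum of the components $\mathcal E'_i$ lifting the components of $\Ex(f)$, the pair $(\mathcal Y,\mathcal E)$ lifts $(Y,\Ex(f))$. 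As $X$ has rational singularities (so $R^1f_{\ast}\mathcal O_Y=0$) and each component of $\Ex(f)$ is a $\mathbb P^1$ contracted to a point, \autoref{p-extending-algebraisable} applied with empty boundary gives a formal lifting of $f$, which is algebraisable since $H^2(Y,\mathcal O_Y)=0$; this yields the desired projective lifting $\widetilde f\colon(\mathcal Y,\mathcal E)\to\mathcal X$ over $W(k)$.

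Parts (1) and (2) would then follow directly from \autoref{p-good-lifting}: $\mathcal X$ is a normal three-dimensional scheme with $\Ex(\widetilde f)=\mathcal E$, it has rational (resp.\ klt) singularities because $X$ does, the weighted dual graphs of $\Ex(f)$ and $\Ex(\widetilde f_{\overline K})$ coincide, and $\rho(X)=\rho(\mathcal X_{\overline K})$ because $\Pic(\mathcal Y)\to\Pic(Y)$ is surjective and $X$ has rational singularities. For part (3) I would set $\mathcal D:=\sum_i d_i\mathcal E'_i$, an effective $\mathbb Q$-divisor on $\mathcal Y$ lifting $D$; by \autoref{l-loc-triv} its generic fibre $\mathcal D_K$ is supported on the relatively snc divisor $\mathcal E'_K$ and $\lfloor\mathcal D_K\rfloor=0$, so $(\mathcal Y_K,\mathcal D_K)$ is a log smooth klt pair. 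Since $\mathcal Y$ is regular, $-(K_{\mathcal Y}+\mathcal D)$ is $\mathbb Q$-Cartier and restricts to the ample $\mathbb Q$-divisor $-(K_Y+D)$ on the closed fibre; as the ample locus is an open subset of $\Spec W(k)$ containing the closed point, it is all of $\Spec W(k)$, so $-(K_{\mathcal Y}+\mathcal D)$ is relatively ample and $-(K_{\mathcal Y_K}+\mathcal D_K)$ is ample, proving that $\mathcal Y_K$ is of del Pezzo type. Finally $\widetilde f_K\colon\mathcal Y_K\to\mathcal X_K$ is a birational morphism with $(\widetilde f_K)_{\ast}\mathcal O_{\mathcal Y_K}=\mathcal O_{\mathcal X_K}$, so $\mathcal X_K$ is a birational contraction of a surface of del Pezzo type, hence itself of del Pezzo type.

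The step I expect to be the main obstacle is part (3): arranging a lift $\mathcal D$ of the boundary $D$ of \autoref{l-Zariski-decomp-dP} that keeps $-(K_{\mathcal Y}+\mathcal D)$ relatively ample, and checking that the del Pezzo type property descends along the contraction $\widetilde f_K$. The remaining steps are essentially bookkeeping with the results already established in the excerpt.
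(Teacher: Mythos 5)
Your proposal is correct and follows essentially the same route as the paper: invoking \autoref{l-Zariski-decomp-dP} to produce a log del Pezzo structure $(Y,D)$ with $E\subseteq\Supp(D)$, lifting $(Y,\Supp(D))$ via \autoref{p-lift-Fanopairs}, descending to a lift $\widetilde f$ via \autoref{p-extending-algebraisable} using rationality of klt surface singularities, reading off (1)--(2) from \autoref{p-good-lifting}, and obtaining (3) from openness of ampleness over $\Spec W(k)$ together with persistence of del Pezzo type under birational contractions. The only minor cosmetic difference is that the paper cites an external reference for the last fact, whereas you justify it directly (and you also spell out the $\Pic$-surjectivity needed for (2), which the paper leaves implicit).
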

	\begin{proof}
		Since $X$ is a surface of del Pezzo type we can apply  \autoref{l-Zariski-decomp-dP} to find an effective $\Q$-divisor $D$ on $Y$ such that $\Supp(D)$ is snc, it contains $E$ and $(Y,D)$ is log del Pezzo.
		Then  by \autoref{p-liftpair-W2} and \autoref{p-lift-Fanopairs} there exists a lifting $(\mathcal{Y}, \Supp(\mathcal{D}))$ over $W(k)$.
		As klt surface singularities are rational, and $H^i(Y,\cO_Y)=0$ for $i>0$ by \cite[Lemma 5.1]{Ber21}, the morphism $f$ lifts to $\widetilde{f}$ by \autoref{p-extending-algebraisable}.
		Since ampleness is an open condition in families, the pair $(\cY_K, \cD_K)$ is a log del Pezzo pair.
		Assertions (a) and (b) then follow from \autoref{p-good-lifting}, while (c) is a consequence of \cite[Lemma 2.9]{BT22}.
	\end{proof}

We can now prove the Bogomolov bound on the singular points of klt del Pezzo surfaces.
	
	\begin{proof}[Proof of \autoref{c-bogomolovbound}]
		Let $f \colon (Y,E) \to X$ be the minimal resolution and consider the lifting $\widetilde{f} \colon (\mathcal{Y}, \mathcal{E}) \to \mathcal{X}$ over $W(k)$ given by \autoref{t-lift-GFSFanosurf}. As $-K_{\mathcal{X}_K}$ is an ample $\mathbb{Q}$-Cartier divisor, we conclude by the characteristic zero bound proven in \cite[Theorem 1.2]{LX21} and \autoref{c-1}.
	\end{proof}
	
	\begin{remark}By \cite[Theorem 5.1]{SS10} a variety of Fano type over characteristic zero has globally $F$-regular (in particular $F$-split) type.
	We just proved an inverse direction in dimension two: given a globally $F$-split surface of del Pezzo type, we can construct a lifting to characteristic zero which remains of del Pezzo type.
	The following example shows however that a general lift is not a surface of del Pezzo type.
    \end{remark}
	
	\begin{example}
		Let $e>0$ be an integer number such that $q=p^e>10$, and we fix $k=\mathbb{F}_q$. Consider the smooth $W(k)$-scheme $\mathcal{X}:=\mathbb{P}^2_{W(k)}$ and choose $\mathcal{P}_1, \dots, \mathcal{P}_9$ distinct smooth $W(k)$-sections such that 
		\begin{enumerate}
			\item[(a)] $\mathcal{P}_{1,K}, \dots, \mathcal{P}_{9,K}$ are in general position;
			\item[(b)] $\mathcal{P}_{1,k}, \dots, \mathcal{P}_{9,k}$ are distinct points lying on a $k$-line $L$.
		\end{enumerate}
		Let $\widetilde{\pi} \colon \mathcal{Y} \to \mathcal{X}$ be the blow-up along $\mathcal{P}_1, \dots, \mathcal{P}_9$.  
		We now check that $Y:= \mathcal{Y} \otimes_{W(k)}  k$ is globally $F$-split. 
		Let $\pi:=\widetilde{\pi}\vert_Y$: as $(\mathbb{P}^2_k, L)$ is globally $F$-split and $\pi^*(K_{\mathbb{P}^2_k} +L)=K_{Y}+\pi_*^{-1}L$, the pair $(Y, \pi_*^{-1}L)$ is globally $F$-split by \autoref{l-GFR-pullback} and it is a surface of del Pezzo type.
		However, $\mathcal{Y}_K$ is not a surface of del Pezzo type: indeed, as $\left\{\mathcal{P}_{i,K}\right\}_{i=1}^9$ are in general position, the divisor $-K_{\cY_K}$ is not even big.
	\end{example}
	
	We conclude by showing the existence of a lifting of globally $F$-split log Calabi--Yau surface pairs with log Calabi--Yau total space. The main difficulty is to prove the log canonical divisor of the total space is $\mathbb{Q}$-Cartier, for which we use the existence of a log lifting.
	
	\begin{theorem}
		Let $(X,D)$ be a globally $F$-split surface pair such that $D$ is reduced and $K_X+D \sim_{\mathbb{Q}} 0$. Then there exists a log canonical pair $(\mathcal{X}, \mathcal{D})$ lifting $(X,D)$ over $W(k)$ such that $K_{\mathcal{X}}+\mathcal{D}\sim_{\mathbb{Q}} 0$.
	\end{theorem}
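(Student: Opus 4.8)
First I would record, via \autoref{l-Q-K+delta}, that $(X,D)$ is log canonical and that the auxiliary $\mathbb{Q}$-divisor $\Gamma\geq 0$ produced there satisfies $\Gamma\sim_{\mathbb{Q}}0$ (since $K_X+D\sim_{\mathbb{Q}}0$), hence $\Gamma=0$; in particular $(p^e-1)(K_X+D)\sim 0$ for some $e>0$, so the $\mathbb{Q}$-Cartier index of $K_X+D$ divides $p^e-1$. By \autoref{t-3} we may then fix a log resolution $f\colon(Y,E+D_Y)\to(X,D)$ with $D_Y=f^{-1}_*D$ and $E=\Ex(f)$, admitting a lifting $\widetilde f\colon(\mathcal Y,\mathcal E+\mathcal D_Y)\to(\mathcal X,\mathcal D)$ over $W(k)$ with $\widetilde f_*\mathcal O_{\mathcal Y}=\mathcal O_{\mathcal X}$; put $\mathcal D:=\widetilde f_*\mathcal D_Y$, so that $\mathcal X$ is a lifting of $X$. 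Arguing as in \autoref{p-good-lifting}, $\mathcal X$ and its generic fibre $\mathcal X_K$ are normal, $\Ex(\widetilde f)=\mathcal E$, the morphism $\widetilde f$ is a log resolution of $(\mathcal X,\mathcal D)$ because $\mathcal Y$ is regular and $\mathcal E+\mathcal D_Y$ is relatively snc over $W(k)$ by \autoref{l-loc-triv}, and on every geometric fibre the weighted dual graph of $\mathcal E$, the genera of its components, and the incidence numbers with $\mathcal D_Y$ agree with the corresponding data for $E$ and $D_Y$.

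Next I would prove that $K_{\mathcal X}+\mathcal D$ is $\mathbb{Q}$-Cartier, by repeating the argument of \autoref{p-good-lifting}(c) with ``klt'' replaced by ``lc''. Write $K_Y+D_Y+\sum_i a_iE_i=f^*(K_X+D)$ with $a_i\leq 1$ and $(p^e-1)a_i\in\mathbb{Z}$, and set $\mathcal B:=K_{\mathcal Y}+\mathcal D_Y+\sum_i a_i\mathcal E_i$; then $\mathcal B|_{\mathcal Y_k}=f^*(K_X+D)$ is $\mathbb{Q}$-Cartier and $f$-trivial. On the generic fibre the same numbers $a_i$ give the crepant decomposition of $\widetilde f_K$ (they solve the same negative-definite linear system), and since the index of $K_X+D$ divides $p^e-1$ and $\mathcal D_Y$ avoids the non-rational points — as in the proofs of \autoref{l-Q-K+delta} and \autoref{c-index-CY} — one checks that $K_{\mathcal X_K}+\mathcal D_K$ is $\mathbb{Q}$-Cartier of index dividing $p^e-1$, so $(\mathcal X_K,\mathcal D_K)$ is log canonical and $\mathcal B|_{\mathcal Y_K}=\widetilde f_K^*(K_{\mathcal X_K}+\mathcal D_K)\sim_{\mathbb{Q},\mathcal X_K}0$ by the negativity lemma. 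Then \cite[Theorem 1.2]{Wit21} yields $\mathcal B\sim_{\mathbb{Q},\mathcal X}0$, whence $K_{\mathcal X}+\mathcal D=\widetilde f_*\mathcal B$ is $\mathbb{Q}$-Cartier, $\mathcal B=\widetilde f^*(K_{\mathcal X}+\mathcal D)$ by the negativity lemma, and $(\mathcal X,\mathcal D)$ is log canonical because $\widetilde f$ is a log resolution and all coefficients of $\mathcal D_Y+\sum a_i\mathcal E_i$ are $\leq 1$.

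Finally I would upgrade $K_{\mathcal X}+\mathcal D$ to being $\mathbb{Q}$-linearly trivial. The pair $(\mathcal X_K,\mathcal D_K)$ is log canonical on a projective surface over a field of characteristic zero, and $K_{\mathcal X_K}+\mathcal D_K$ is numerically trivial — its pull-back $\mathcal B|_{\mathcal Y_K}$ has zero intersection with every curve, as this can be computed on the closed fibre where $\mathcal B|_{\mathcal Y_k}=f^*(K_X+D)$ — so $K_{\mathcal X_K}+\mathcal D_K\sim_{\mathbb{Q}}0$ by two-dimensional log abundance, and hence $\mathcal B|_{\mathcal Y_K}\sim_{\mathbb{Q}}0$. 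As $\mathcal Y$ is regular and its closed fibre is the principal divisor $\operatorname{div}(p)$, restriction induces an isomorphism $\Pic(\mathcal Y)\xrightarrow{\sim}\Pic(\mathcal Y_K)$ (cf.\ the proof of \autoref{p-good-lifting}(d)), so $\mathcal B\sim_{\mathbb{Q}}0$ on $\mathcal Y$ and therefore $K_{\mathcal X}+\mathcal D=\widetilde f_*\mathcal B\sim_{\mathbb{Q}}0$, completing the proof.

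The step I expect to be hardest is the $\mathbb{Q}$-Cartierness of $K_{\mathcal X}+\mathcal D$: as in \autoref{p-good-lifting}(c), one must transfer relative $\mathbb{Q}$-linear triviality from the two fibres to the total space through \cite[Theorem 1.2]{Wit21}, and the ingredient that $(\mathcal X_K,\mathcal D_K)$ is log canonical — which is what makes abundance applicable on the generic fibre — cannot simply be assumed but has to be extracted from the preserved dual graph and boundary coefficients together with the index bound $p^e-1$ coming from global $F$-splitting.
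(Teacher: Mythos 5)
Your overall plan — lift a birational model, use abundance over the characteristic-zero generic fibre, and pull the $\mathbb{Q}$-linear triviality back through $\Pic(\mathcal Y)\simeq\Pic(\mathcal Y_K)$ — is the same one the paper uses. But you work with a non-crepant log resolution, whereas the paper works with a \emph{crepant dlt modification} $f\colon(Y,D_Y+E)\to(X,D)$ with $K_Y+D_Y+E=f^*(K_X+D)\sim_{\mathbb Q}0$. That single change makes the argument much cleaner: on the dlt model the log canonical class is $\mathbb{Q}$-Cartier and $\mathbb{Q}$-linearly trivial on the nose, so after lifting one applies Fujino's abundance to the log canonical pair $(\mathcal Y_K,\mathcal D_{Y,K}+\mathcal E_K)$ (whose $K+\Delta$ is numerically trivial and $\mathbb{Q}$-Cartier) and pushes down the trivial class to $\mathcal X$, reading off that $(\mathcal X,\mathcal D)$ is lc from the crepancy of $\widetilde f$. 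The downstairs $\mathbb{Q}$-Cartierness is thus a consequence, not a prerequisite.

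The genuine gap in your version is the middle step: you need to know \emph{a priori} that $K_{\mathcal X_K}+\mathcal D_K$ is $\mathbb{Q}$-Cartier before you can apply abundance on $\mathcal X_K$ and conclude $\mathcal B|_{\mathcal Y_K}=\widetilde f_K^*(K_{\mathcal X_K}+\mathcal D_K)$. You address this with the phrase ``one checks that $K_{\mathcal X_K}+\mathcal D_K$ is $\mathbb{Q}$-Cartier of index dividing $p^e-1$'', invoking the preserved dual graph, the index bound from global $F$-splitting, and the fact that $\mathcal D_K$ avoids the non-rational points. None of these three items, separately or together, yields $\mathbb{Q}$-Cartierness at the strictly lc points of $\mathcal X_K$ without extra work. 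The dual graph controls only the \emph{numerical} data — the solution $(a_i)$ of the negative-definite linear system — and not whether $K_{\mathcal X_K}$ is torsion in the local class group at a simple-elliptic or cusp point; the index of $K_X+D$ in characteristic $p$ is not obviously transported to the generic fibre (you would precisely need descent of $\mathbb{Q}$-Cartierness in a flat family, which is false in general); and avoiding the non-rational points only reduces the problem to showing $K_{\mathcal X_K}$ itself is $\mathbb{Q}$-Cartier there. (One can in fact argue that non-rational lc surface singularities over a characteristic-zero field are $\mathbb{Q}$-Gorenstein, but that is an additional classification input you have not supplied, and it is exactly the kind of delicacy the crepant-dlt-model route avoids altogether.) Reworked around the dlt modification — which the paper's \autoref{c-lift-min-dlt} and \autoref{t-3} already lift — your argument would close the gap and coincide with the paper's proof.
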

	
	\begin{proof}
	Let $f\colon (Y,D_Y+E) \to (X, D)$ be a dlt model which admits a lifting \[\widetilde{f} \colon (\mathcal{Y}, \mathcal{D}_{\mathcal{Y}}+ \mathcal{E}) \to (\mathcal{X},\mathcal{D})\] 
 over $W(k)$ given by \autoref{t-3}. As $K_{\mathcal{Y}}+\mathcal{D}_{\mathcal{Y}}+\mathcal{E} \equiv 0$ and it is a dlt pair, 
 we have \[K_{\mathcal{Y}_K}+\mathcal{D}_{\mathcal{Y}_K}+\mathcal{E}_K \sim_{\mathbb{Q}} 0\] by the abundance theorem for log canonical surfaces \cite[Corollary 1.2]{Fuj12}.  As $K_{\mathcal{Y}}+\mathcal{D}_{\mathcal{Y}}+\mathcal{E}$ is nef over $W(k)$, this implies $K_{\mathcal{Y}}+\mathcal{D}_{\mathcal{Y}}+\mathcal{E}\sim_{\Q}0$, hence $K_{\mathcal{X}}+\mathcal{D} =\widetilde{f}_*(K_{\mathcal{Y}}+\mathcal{D}_{\mathcal{Y}}+\mathcal{E})  \sim_\Q  0$. As $\widetilde{f} \colon (\mathcal{Y}, \mathcal{D}_{\mathcal{Y}}+ \mathcal{E}) \to (\mathcal{X},\mathcal{D})$ is crepant, we conclude the pair $(\mathcal{X},\mathcal{D})$ has log canonical singularities.
	\end{proof}

  	\bibliographystyle{amsalpha}
	\bibliography{refs}
\end{document}